\theoremstyle{plain}
\newtheorem{thm}{\protect Theorem}[section]
\newtheorem{prop}[thm]{\protect Proposition}
\newtheorem{defn}[thm]{\protect Definition}
\newtheorem{representation}[thm]{\protect Representation}
\newtheorem{lem}[thm]{\protect Lemma}
\newtheorem{rem}[thm]{\protect Remark}
\newtheorem{cor}[thm]{\protect Corollary}
\newcommand{\Tr}{\operatorname{Tr}}
\definecolor{linkcolor}{rgb}{0,0,0.502}
\begin{document}

\title{\textsc{Fluctuations for mean-field interacting \\
age-dependent Hawkes processes}}
\author{Julien Chevallier\footnote{Corresponding author: {\sf{e-mail: \href{mailto:julien.chevallier@u-cergy.fr}{julien.chevallier@u-cergy.fr}}}} \medskip \\
Université de Cergy-Pontoise, AGM UMR-CNRS 8088, \\
95302 Cergy-Pontoise
}
\date{}

\maketitle

\begin{abstract}
The propagation of chaos and associated law of large numbers for mean-field interacting age-dependent Hawkes processes (when the number of processes $n$ goes to $+\infty$) being granted by the study performed in \cite{chevallier2015mean}, the aim of the present paper is to prove the resulting functional central limit theorem. It involves the study of a measure-valued process describing the fluctuations (at scale $n^{-1/2}$) of the empirical measure of the ages around its limit value. This fluctuation process is proved to converge towards a limit process characterized by a limit system of stochastic differential equations driven by a Gaussian noise instead of Poisson (which occurs for the law of large numbers limit).
\end{abstract}

\noindent \textit{Keywords}: Hawkes process,  central limit theorem, interacting particle systems, stochastic partial differential equation, neural network.

\smallskip
\noindent \textit{Mathematical Subject Classification}: 60G55, 60F05, 60G57, 60H15, 92B20

\bigskip

\section{Introduction}
\label{sec:introduction}

In the recent years, the self-exciting point process known as the Hawkes process \cite{hawkes_1971} has been used in very diverse areas. First introduced to model earthquake replicas \cite{kagan_2010} or \cite{ogata_1998} (ETAS model), it has been used in criminology to model burglary \cite{mohler_2011}, in genomic data analysis to model occurrences of genes \cite{gusto_2005,reynaud_2010}, in social networks analysis to model viewing or popularity \cite{bao_2015,crane2008robust}, as well as in finance \cite{bacry_2012,CLT_Hawkes_lineaire}. We refer to \cite{liniger2009multivariate} or \cite{zhu7531nonlinear} for more extensive reviews on applications of Hawkes processes.

Part of our analysis finds its motivation in the use of Hawkes processes for the modelling in neuroscience. They are used to describe spike trains associated with several neurons (see e.g. \cite{chevallier2015microscopic}). In that case, it is common to consider a multivariate framework : multivariate Hawkes processes consist of multivariate point processes $(N^{1},\dots ,N^{n})$ whose intensities are respectively given  for $i=1,\dots ,n$ by
\begin{equation}\label{eq:def:Hawkes:multivarie}
\lambda^{i}_{t}= \Phi\left( \sum_{j=1}^{n} \int_{0}^{t-} h_{j\to i}(t-z) N^{j}(dz) \right),
\end{equation}
where $\Phi:\mathbb{R}\rightarrow\mathbb{R}_{+}$ is called the intensity function and $h_{j\to i}$ is the \emph{interaction function} describing the influence of each point of $N^{j}$ on the appearance of a new point onto $N^{i}$, via its intensity $\lambda^{i}$. Notice that we implicitly assume here that there is no influence of the possible points of $N^{j}$ that are before time $0$.\\

In the present paper, as in \cite{chevallier2015mean}, we study a generalization of multivariate Hawkes process by adding an age dependence. 
\begin{defn}
For any point process $N$, we call \emph{predictable age process} associated with $N$, the process defined by
\begin{equation}\label{eq:def:age:process:predictable:intro}
S_{t-}:= t-\sup\{T\in N,\, T<t\}=t-T_{N_{t-}}, \quad \text{ for all } t>0,
\end{equation}
and extended by continuity in $t=0$. In particular, its value in $t=0$ is entirely determined by $N\cap\mathbb{R}_-$ and is well-defined as soon as there is a point therein. 
\end{defn}
In comparison with the standard multivariate Hawkes processes \eqref{eq:def:Hawkes:multivarie}, we add an age dependence, as it is done in \cite{chevallier2015mean}, by assuming that the intensity function $\Phi$ in \eqref{eq:def:Hawkes:multivarie} (which is then denoted by $\Psi$ to avoid confusion) may also depend on the predictable age process $(S^{i}_{t-})_{t\geq 0}$ associated with the point process $N^{i}$, like for instance
\begin{equation}\label{eq:intensity:ADHP}
\lambda^{i}_{t}= \Psi\left( S^{i}_{t-}, \frac{1}{n} \sum_{j=1}^{n} \int_{0}^{t-}  h(t-z) N^{j}(dz) \right).
\end{equation}
We refer to \cite{chevallier2015mean} where the neurobiological motivation for such a form of intensity is given. Under suitable assumptions, it is shown in \cite{chevallier2015mean} that a multivariate point process satisfying \eqref{eq:intensity:ADHP} exists and we call it an age dependent Hawkes process (ADHP). Furthermore, ADHPs are well approximated, when the dimension $n$ goes to infinity, by i.i.d. \emph{limit} point processes of the McKean-Vlasov type whose stochastic intensity depends on the time $t$ and on the age \cite[Theorem IV.1.]{chevallier2015mean}. More precisely, the intensity of the limit process associated with the framework \eqref{eq:intensity:ADHP}, denoted by $\overline{N}$, is given by the following implicit formula $\overline{\lambda}_{t}=\Psi( \overline{S}_{t-}, \int_{0}^{t}  h(t-z) \mathbb{E}\left[ \overline{\lambda}_{z} \right] dz )$ where $(\overline{S}_{t-})_{t\geq 0}$ is the predictable age process associated with $\overline{N}$.\\

As usual with McKean-Vlasov dynamics, the asymptotic evolution (when $n$ goes to infinity) of the distribution of the population at hand can be described as the solution of a nonlinear partial differential equation (PDE). In our case, it is shown that, starting from a density, the distribution of the limit predictable age process $(\overline{S}_{t-})_{t\geq 0}$, denoted by $u_{t}$, admits a density for all time $t\geq 0$ which is furthermore the unique solution of the non-linear system
\begin{equation}\label{eq:edp:PPS}
\begin{cases} 
\displaystyle \frac{\partial u\left(t,s\right)}{\partial t}+\frac{\partial u\left(t,s\right)}{\partial s} +\Psi\left(s,  X(t)  \right) u\left(t,s\right)=0, \\
\displaystyle u\left(t,0\right)=\int_{s\in \mathbb{R}} \Psi\left(s,   X(t) \right)u\left(t,s\right)ds,
\end{cases}
\end{equation}
with initial condition that $u(0,\cdot)=u_0$ (the initial density of the age at time $0$), where for all $t\geq 0$, $X(t)=\int_{0}^{t} h(t-z) u(z,0)dz$ \cite[Proposition III.8.]{chevallier2015mean}. Such a form of PDE system is known either as \emph{age-structure system} or \emph{refractory density equation} or even \emph{von Foerster-McKendrick system}. Here, the age is represented by the variable $s$. We refer to \cite{dumont2016theoretical} for a linear version of \eqref{eq:edp:PPS} and its theoretical connection with the integrate and fire model, and to \cite{gerstner2002spiking,pakdaman2010dynamics,pakdaman2013relaxation} for analytical studies of \eqref{eq:edp:PPS}.

The relation between mean-field age dependent Hawkes processes and the PDE system \eqref{eq:edp:PPS} is completed by the following law of large numbers (consequence of the functional law of large numbers \cite[Corollary IV.4.]{chevallier2015mean}):
\begin{equation}\label{eq:LLN:intro}
\overline{\mu}^{n}_{S_{t-}}:=\frac{1}{n}\sum_{i = 1}^n \delta_{S^{n,i}_{t-}} \xrightarrow[n\to +\infty]{} u_{t}.
\end{equation}
Moreover, the rate of this convergence is at least $n^{-1/2}$. In light of this bound obtained on the rate of convergence, the fluctuation process defined, for all $t\geq 0$, by $\eta^{n}_{t}=\sqrt{n}(\overline{\mu}_{S_{t-}}^{n}-u_{t})$ is expected to describe, on the right scale, the second order term appearing in the expansion of the mean-field approximation, the first order term being given by the law of large numbers. 

The study of the random fluctuations allows to go beyond the first order mean field limit and its main drawback: propagation of chaos. It means independence of the neurons' activities which is unrealistic from the biological viewpoint \cite{MTGAUE,faugeras2014asymptotic}. Hence, the derivation of the second order term is of great importance regarding neural networks modelling since it gives an approximation of the fluctuations coming from the finiteness of the number of neurons $n$ (finite size effects) \cite{brunel1999fast,buice2013dynamic,mattia2004finite}. A partial but promising answer to this problematic is given by highlighting a stochastic partial differential equation system which could be interpreted as an intermediate modelling scale between the microscopic scale given by ADHP and the macroscopic one given by \eqref{eq:edp:PPS}.\\

Following the approach developed in \cite{Meleard_97}, we prove in the present article that the fluctuations satisfy a functional central limit theorem (CLT) in a suitable distributional space: the limit of the normalized fluctuations is described by means of a stochastic differential equation in infinite dimension driven by a Gaussian noise in comparison with the Poisson noise appearing in \cite{chevallier2015mean}. To do so, we regard the fluctuation process $\eta^{n}$ as taking values in a Hilbert space, namely the dual of some Sobolev space of test functions. The index of regularity of the dual space, in one-to-one correspondence with the regularity of the test functions in the Sobolev space, is prescribed by the tightness property we are able to provide to the sequence $(\eta^{n})_{n\geq 1}$ and by the form of the generator of the limiting McKean-Vlasov dynamics identified in \cite{chevallier2015mean}. Let us precise that this generator is the one associated with the renewal dynamics of the system \eqref{eq:edp:PPS} as highlighted by Proposition \ref{prop:age:process:limit:equation:PPS} given hereafter. \\

Although the choice of this index of regularity is rather constrained, the choice of the domain supporting the Sobolev space is somewhat larger. Indeed, two options are available, depending on the way we consider the process $\eta^{n}$, either over a finite time horizon, namely $(\eta^{n}_{t})_{0\leq t\leq \theta}$ for some $\theta\geq 0$, or in infinite horizon, namely $(\eta^{n}_{t})_{t\geq 0}$.

In the first case, we may use the fact that there exists a compact $K_{\theta}$ (which is growing with $\theta$) such that $\eta^{n}_{t}$ is supported in $K_{\theta}$ for all $t$ in $[0,\theta]$. Hence, one could regard, for all $\theta\geq 0$, the fluctuation process $(\eta^{n}_{t})_{0\leq t\leq \theta}$ as a process with values in the dual of a standard Sobolev space of functions with support in $K_{\theta}$. The main drawback of such an approach is that the space of trajectories within which the CLT takes place depends on the time horizon $\theta$. To bypass this issue, one may be willing to work directly on the entire positive time line $\mathbb{R}_{+}$, but then, it is not possible anymore to find a compact subset $K$ supporting the measures $\eta^{n}_{t}$, for all $t\geq 0$, since $\cup_{\theta\geq 0} K_{\theta}=\mathbb{R}_{+}$.
A convenient strategy to sidestep this fact is to use a Sobolev space supported by the entire $\mathbb{R}_{+}$. Yet, standard Sobolev spaces supported by $\mathbb{R}_{+}$ fail to accommodate with our purpose, since, as made clear by the proof below, constant functions are required to belong to the space of test functions. Therefore, instead of a standard Sobolev space, we may use a weighted Sobolev space, provided that the weight satisfies suitable integrability properties.

In order to state our CLT on the whole time interval, the second approach is preferred. Furthermore, the weights of the Sobolev spaces are chosen to be polynomial (see Section \ref{sec:preliminaries:sobolev} below). This choice is quite convenient because Sobolev spaces with polynomial weights are well-documented in the literature. In particular, results on the connection between spaces weighted by different powers, Sobolev embedding theorems and Maurin’s theorem, are well-known. It is worth noting that, provided that constant functions can be chosen as test functions, the precise value of the power in the polynomial weight of the Sobolev space does not really matter in our analysis: more generally, a different choice of family of weights would have been possible and, somehow, it would have led to a result equivalent to ours.
In this regard, we stress, at the end of the paper, the fact that our result in infinite horizon is in fact equivalent to what we would have obtained by implementing the first of the two  approaches mentioned above instead of the second one: roughly speaking, one can recover our result by sticking together the CLTs obtained on each finite interval of the form $[0,\theta]$, for $\theta\geq 0$; conversely, one can prove, from our statement, that, on any finite interval $[0,\theta]$, the CLT holds true in the dual space of a standard Sobolev space supported by $K_{\theta}$.\\

The Hilbertian approach used in this article has been already implemented in the diffusion processes framework \cite{Meleard_97,jourdain1998propagation,luccon2015transition,Meleard_96}. Let us mention here what are the main differences between these earlier results and ours:
\begin{itemize}
\item Under general non-degeneracy conditions, the marginal laws of a diffusion process are not compactly supported. The unboundedness of the support imposes the choice of weighted Sobolev spaces even in finite time horizon. In this framework, Sobolev spaces with polynomial weights are especially adapted to carry solutions with moments that are finite up to some order only. In that case, the choice of the power in the weight is explicitly prescribed by the maximal order up to which the solution has a finite moment. As already mentioned, this differs from our case: in the present article, the particles (namely, the ages of the neurons) are compactly supported over any finite time interval and thus, have finite moments of any order. Once again, this is the reason why the choice of the power, and more generally of the weight, in the Sobolev space is much larger.
\item Unlike point processes, diffusion processes are time continuous. Also, their generator is both local and of second order, whereas the generator for the point process identified in the mean-field limit in \cite{chevallier2015mean} is both of the first order and nonlocal. As a first consequence, the indices of regularity of the various Sobolev spaces used in this paper differ from those used in the diffusive framework. Also, the space of trajectories cannot be the same: although the limit process in our CLT has continuous trajectories, we must work with a space of càdlàg functions in order to accommodate with the jumps of the fluctuation process. Surprisingly, jumps do not just affect the choice of the functional space used to state the CLT (namely space of càdlàg versus space of continuous functions) but it also dictates the metric used to estimate the error in the Sznitman coupling between the age-dependent Hawkes process and its mean-field limit (which is also a point process). Indeed, the standard trick used for diffusion processes that consists in getting stronger estimates for the Sznitman coupling by considering $L^{p}$-norms, for $p>2$, is not adapted to point processes. Therefore, we develop a specific approach by providing higher order estimates of the error in the Sznitman coupling in the total variation sense. Up to our knowledge, this argument is completely new.
\end{itemize}
Let us mention that the fluctuations of jump processes have been the object of previous publications \cite{riedler2012limit,wainrib2010randomness}. However, the CLTs are established in the fluid limit, namely small jumps at high frequency so that the jumps vanish at the limit. The techniques developed in those articles are useless here since the framework of the present article does not fall into the fluid limit framework: in our case, the limit processes are also jump processes. \\

The present paper is organized as follows. The model is described in Section \ref{sec:definitions:and:propagation}. Then, the main estimates required in this work are given in Section \ref{sec:estimates:total:variation}. These can be seen as the extension, to higher orders, of the estimates used in \cite{chevallier2015mean} to get the bound $n^{-1/2}$ on the rate of the convergence \eqref{eq:LLN:intro}. These key estimates are used to prove tightness for the distribution $\eta^{n}$ in a Hilbert space that is the dual of some weighted Sobolev space. Under regularity assumptions on the intensity function $\Psi$ and the interaction function $h$, we finally prove in Section \ref{sec:uniqueness:limit:equation} the convergence of the fluctuation process which states our CLT. Furthermore, its limit is characterized by a system of stochastic differential equations, driven by a Gaussian process with explicit covariance, and involving an auxiliary process with values in $\mathbb{R}$ (Theorem \ref{thm:convergence:eta:Gamma}). Finally, the CLT is applied to give some justification to a stochastic partial differential equation which can be seen as a better approximation than the PDE system \eqref{eq:edp:PPS} in the mean-field limit.

\paragraph{General notations}
\begin{itemize}
\item Statistical distributions are referred to as laws of random variables to avoid confusion with distributions in the analytical sense that are linear forms acting on some test function space.
\item The space of bounded functions of class $\mathcal{C}^{k}$, with bounded derivatives of each order less than $k$ is denoted by $\mathcal{C}^{k}_{b}$.
\item The space of càdlàg (right continuous with left limits) functions is denoted by $\mathcal{D}$.
\item For $\mu$ a measure on $E$ and $\varphi$ a function on $E$, we denote $\left<\mu,\varphi\right>:=\int_{E} \varphi(x)\mu(dx)$ when it makes sense.
\item If a quantity $Q$ depends on the time variable $t$, then we most often use the notation $Q_{t}$ when it is a random process in comparison with $Q(t)$ when it is a deterministic function.
\item We say that the quantity $Q_{n}(\sigma)$, which depends on an integer $n$ and a parameter $\sigma\in \mathbb{R}^{d}$, is bounded up to a locally bounded function (which does not depend on $n$) by $f(n)$, denoted by $Q_{n}(\sigma)\lesssim_{\sigma} f(n)$, if there exists a locally bounded function $g:\mathbb{R}^{d}\to \mathbb{R}_{+}$ such that, for all $n$, $|Q_{n}(\sigma)|\leq g(\sigma)f(n)$.
\item Throughout this paper, $C$ denotes a constant that may change from line to line.
\end{itemize}

\section{Definitions and propagation of chaos}
\label{sec:definitions:and:propagation}

In all the sequel, we focus on locally finite point processes, $N$, on $(\mathbb{R},\mathcal{B}(\mathbb{R}))$ that are random countable sets of points of $\mathbb{R}$ such that for any bounded measurable set $A\subset \mathbb{R}$, the number of points in $N\cap A$ is finite almost surely (a.s.). The associated points define an ordered sequence $(T_{n})_{n\in \mathbb{Z}}$.
For a measurable set $A$, $N(A)$ denotes the number of points of $N$ in $A$. We are interested in the behaviour of $N$ on $(0,+\infty)$ and we denote $t\in\mathbb{R}_{+} \mapsto N_t:=N((0,t])$ the associated counting process. Furthermore, the point measure associated with $N$ is denoted by $N(dt)$. In particular, for any non-negative measurable function $f$, $\int_{\mathbb{R}} f(t) N(dt) = \sum_{i\in \mathbb{Z}} f(T_{i})$. For any point process $N$, we call \emph{age process} associated with $N$ the process $(S_{t})_{t\geq 0}$ given by
\begin{equation}\label{eq:def:age:process}
S_{t}= t-\sup\{T\in N,\, T\leq t\}, \quad \text{ for all } t\geq 0.
\end{equation}
In comparison with the age process, we call \emph{predictable age process} associated with $N$ the predictable process $(S_{t-})_{t\geq 0}$ given by
\begin{equation}\label{eq:def:age:process:predictable}
S_{t-}= t-\sup\{T\in N,\, T<t\}, \quad \text{ for all } t>0,
\end{equation}
and extended by continuity in $t=0$.

We work on a filtered probability space $(\Omega,\mathcal{F},(\mathcal{F}_{t})_{t\geq 0},\mathbb{P})$ and suppose that the canonical filtration associated with $N$, namely $(\mathcal{F}_{t}^{N})_{t\geq 0}$ defined by $\mathcal{F}_{t}^{N}:=\sigma(N\cap (-\infty,t])$, is such that for all $t\geq 0$, $\mathcal{F}_{t}^{N}\subset	\mathcal{F}_{t}$. 
Let us denote $\mathbb{F}:=(\mathcal{F}_{t})_{t\geq 0}$.
We call $\mathbb{F}$-(predictable) intensity of $N$ any non-negative $\mathbb{F}$-predictable process $(\lambda_{t})_{t\geq 0}$ such that $(N_{t}-\int_{0}^{t} \lambda_{s}ds)_{t\geq 0}$ is an $\mathbb{F}$-local martingale. Informally, $\lambda_{t}dt$ represents the probability that the process $N$ has a new point in $[t,t+dt]$ given $\mathcal{F}_{t-}$. Under some assumptions that are supposed here, this intensity process exists, is essentially unique and characterizes the point process (see \cite{Bremaud_PP} for more insights). In particular, since $N$ admits an intensity, for any $t\geq 0$, the probability that $t$ belongs to $N$ is null. Moreover, notice the following properties satisfied by the age processes:
\begin{itemize}
\item the two age processes are equal for all $t\geq 0$ except the positive times $T$ in $N$ (almost surely a set of null measure in $\mathbb{R}_{+}$),
\item for any fixed $t\geq 0$, $S_{t-}=S_{t}$ almost surely (since $N$ admits an intensity),
\item and the value $S_{0-}=S_{0}$ is entirely determined by $N\cap\mathbb{R}_-$ and is well-defined as soon as there is a point therein.
\end{itemize}
\vspace{0.2cm}

The exact behaviour of $N\cap\mathbb{R}_-$ is not of great interest in the present article. We only assume that there is a point in it almost surely such that $S_{0-}=S_{0}$ is well-defined. Furthermore, we assume that the random variable $S_{0}$ admits $u_0$ as a probability density.

\subsection{Parameters and list of assumptions}

The definition of an \emph{age dependent Hawkes process} (ADHP) is given bellow, but let us first introduce the parameters of the model:
\begin{itemize}
\item a positive integer $n$ which is the number of particles (e.g. neurons) in the network (for $i=1,\dots ,n$, $N^{i}$ represents the occurrences of the events, e.g. spikes, associated with the particle $i$);
\item a probability density $u_0$;
\item an interaction function $h:\mathbb{R}_{+}\to \mathbb{R}$;
\item an intensity function $\Psi:\mathbb{R}_{+}\times \mathbb{R} \rightarrow \mathbb{R}_{+}$.
\end{itemize}

For sake of simplicity, all the assumptions made on the parameters are gathered here:
\smallskip

\newlength{\asslabel}
\setlength{\asslabel}{1.8cm}
\newlength{\assdef}
\setlength{\assdef}{11.5cm}
\newcommand{\spacenot}{\vspace{-0.2cm}}
\noindent
\begin{tabular}{>{\centering}p{\asslabel}|p{\assdef}}
\phantomsection
\label{ass:initial:condition:density:compact:support} 
\spacenot $\left(\mathcal{A}^{u_0}_{\infty}\right)$:   & The probability density $u_0$ is uniformly bounded with compact support so that there exists a constant $C>0$ such that $S_{0}\leq C$ almost surely (a.s.). The smallest possible constant $C$ is denoted by $M_{S_{0}}$.
\end{tabular}

\bigskip
\noindent
\begin{tabular}{>{\centering}p{\asslabel}|p{\assdef}}
\phantomsection
\label{ass:h:infty} 
\spacenot $\left(\mathcal{A}^{h}_{\infty}\right)$:      & 		The interaction function $h$ is locally bounded. Denote by, for all $t\geq 0$, $h_{\infty}(t):=\max_{s\in [0,t]} h(s)<+\infty$.
\end{tabular}

\smallskip
\noindent
\begin{tabular}{>{\centering}p{\asslabel}|p{\assdef}}
\phantomsection
\label{ass:h:Holder} 
\spacenot $\left(\mathcal{A}^{h}_{\rm H\ddot{o}l}\right)$:      & 		There exist two positive constants denoted by ${\rm H\ddot{o}l}(h)$ and $\beta(h)$ such that for all $t,s\geq 0$, $|h(t)-h(s)|\leq {\rm H\ddot{o}l}(h)|t-s|^{\beta(h)}$.
\end{tabular}

\bigskip
\noindent
\begin{tabular}{>{\centering}p{\asslabel}|p{\assdef}}
\phantomsection
\label{ass:Psi:C2} 
\spacenot $\left(\mathcal{A}^{\Psi}_{y,\mathcal{C}^{2}}\right)$:     & For all $s\geq 0$, the function $\Psi_{s}:y\mapsto \Psi(s,y)$ is of class $\mathcal{C}^{2}$.
Furthermore, $||\frac{\partial \Psi}{\partial y}||_{\infty}:=\sup_{s,y} |\frac{\partial \Psi}{\partial y}(s,y)|<+\infty$ and $||\frac{\partial^{2} \Psi}{\partial y^{2}}||_{\infty}<+\infty$. The constant $||\frac{\partial \Psi}{\partial y}||_{\infty}$ is denoted by ${\rm Lip}(\Psi)$.
\end{tabular}

\smallskip
\noindent
\begin{tabular}{>{\centering}p{\asslabel}|p{\assdef}}
\phantomsection
\label{ass:Psi:uniformly:bounded:fluctuations} 
\spacenot $\left(\mathcal{A}^{\Psi}_{\infty}\right)$:      & The function $\Psi$ is uniformly bounded, that is $|| \Psi ||_{\infty}<+\infty$.
\end{tabular}

\smallskip
\noindent
\begin{tabular}{>{\centering}p{\asslabel}|p{\assdef}}
\phantomsection
\label{ass:Psi:dPsi:C2b} 
\spacenot $\left(\mathcal{A}^{\Psi}_{s,\mathcal{C}^{2}_{b}}\right)$:      & For all $y$ in $\mathbb{R}$, the functions $s\mapsto \Psi(s,y)$ and $s\mapsto \frac{\partial \Psi}{\partial y} (s,y)$ respectively belong to $\mathcal{C}^{2}_{b}$ and $\mathcal{C}^{1}_{b}$. Furthermore, the functions $y\mapsto ||\Psi(\cdot,y)||_{\mathcal{C}^{2}_{b}}$ and $y\mapsto ||\frac{\partial \Psi}{\partial y}(\cdot,y)||_{\mathcal{C}^{1}_{b}}$ are locally bounded\footnotemark.
\end{tabular}
\footnotetext{The definitions of the norms $||\cdot||_{\mathcal{C}^{k}_{b}}$, for all $k\geq 0$, can be found in Section \ref{sec:preliminaries:sobolev}}

\smallskip
\noindent
\begin{tabular}{>{\centering}p{\asslabel}|p{\assdef}}
\phantomsection
\label{ass:Psi:C4b}
\spacenot $\left(\mathcal{A}^{\Psi}_{s,\mathcal{C}^{4}_{b}}\right)$:      & For all $y$ in $\mathbb{R}$, the function $s\mapsto \Psi(s,y)$ belongs to $\mathcal{C}^{4}_{b}$ and $y\mapsto ||\Psi(\cdot,y)||_{\mathcal{C}^{4}_{b}}$ is locally bounded.
\end{tabular}

\medskip

\begin{rem}
Note that:
\begin{itemize}
\item Assumption (\hyperref[ass:h:Holder]{$\mathcal{A}^{h}_{\rm H\ddot{o}l}$}) implies Assumption (\hyperref[ass:h:infty]{$\mathcal{A}^{h}_{\infty}$}),
\item the assumptions regarding the intensity function $\Psi$ are rather technical, nevertheless Assumptions (\hyperref[ass:Psi:C2]{$\mathcal{A}^{\Psi}_{y,\mathcal{C}^{2}}$}), (\hyperref[ass:Psi:uniformly:bounded:fluctuations]{$\mathcal{A}^{\Psi}_{\infty}$}) and (\hyperref[ass:Psi:dPsi:C2b]{$\mathcal{A}^{\Psi}_{s,\mathcal{C}^{2}_{b}}$}) are satisfied as soon as $\Psi$ belongs to $\mathcal{C}^{2}_{b}$. Furthermore, Assumption (\hyperref[ass:Psi:C4b]{$\mathcal{A}^{\Psi}_{s,\mathcal{C}^{4}_{b}}$}) is satisfied if $\Psi$ is in $\mathcal{C}^{4}_{b}$.
\end{itemize}
\end{rem}

Let \phantomsection \label{ass:for:LLN}$(\mathcal{A}_{\text{\tiny{LLN}}})$ be satisfied if (\hyperref[ass:initial:condition:density:compact:support]{$\mathcal{A}^{u_0}_{\infty}$}), (\hyperref[ass:h:infty]{$\mathcal{A}^{h}_{\infty}$}), (\hyperref[ass:Psi:C2]{$\mathcal{A}^{\Psi}_{y,\mathcal{C}^{2}}$}) and (\hyperref[ass:Psi:uniformly:bounded:fluctuations]{$\mathcal{A}^{\Psi}_{\infty}$}) are satisfied. These four assumptions also appear in \cite{chevallier2015mean}, where they are used to prove propagation of chaos as stressed below. Furthermore, let \phantomsection \label{ass:for:TGN}$(\mathcal{A}_{\text{\tiny{TGN}}})$ be satisfied if (\hyperref[ass:for:LLN]{$\mathcal{A}_{\text{\tiny{LLN}}}$}) and (\hyperref[ass:Psi:dPsi:C2b]{$\mathcal{A}^{\Psi}_{s,\mathcal{C}^{2}_{b}}$}) are satisfied. It is used in the present article to prove tightness of the fluctuations. Finally, let \phantomsection \label{ass:for:CLT}$(\mathcal{A}_{\text{\tiny{CLT}}})$ be satisfied if (\hyperref[ass:for:TGN]{$\mathcal{A}_{\text{\tiny{TGN}}}$}), (\hyperref[ass:h:Holder]{$\mathcal{A}^{h}_{\rm H\ddot{o}l}$}) and (\hyperref[ass:Psi:C4b]{$\mathcal{A}^{\Psi}_{s,\mathcal{C}^{4}_{b}}$}) are satisfied. It is used in the present article to prove convergence of the fluctuations.

Notice that Assumption (\hyperref[ass:initial:condition:density:compact:support]{$\mathcal{A}^{u_0}_{\infty}$}) implies that the age processes associated with $N$ are such that, almost surely,
\begin{equation}\label{eq:control:age:uniform}
\text{for all $t\geq 0$, } S_{t}\leq M_{S_{0}}+t \text{ and } S_{t-}\leq M_{S_{0}}+t.
\end{equation}

\subsection{Already known results}

Below is given the definition of an ADHP by providing its representation as a system of stochastic differential equations (SDE) driven by Poisson noise.

\begin{representation}\label{def:ADHP:Thinning}
Let $(\Pi^{i}(dt,dx))_{i\geq 1}$ be some i.i.d. $\mathbb{F}$-Poisson measures with intensity $1$ on $\mathbb{R}_{+}^{2}$. Let $(S_{0}^{i})_{i\geq 1}$ be some i.i.d. random variables distributed according to $u_0$.

Let $(N_{t}^i)^{i=1,..,n}_{t \geq 0}$ be a family of counting processes such that, for $i=1,..,n$, and all $t\geq 0$,
\begin{equation} \label{eq:ADHP:Thinning}
N_{t}^i= \int_0^t \int_0^{+\infty} \mathds{1}_{\displaystyle \Big\{ x\leq \Psi\Bigg( S^{i}_{t'-}, \frac{1}{n} \sum_{j=1}^{n} \Big( \int_{0}^{t'-} h(t'-z) N^{j}(dz) \Big) \Bigg) \Big\}} \, \Pi^i(dt',dx),
\end{equation}
where $(S^{i}_{t-})_{t\geq 0}$ is the predictable age process associated with $N^{i}$. Then, $(N^{i})_{i=1,..,n}$ is an age dependent Hawkes process (ADHP) with parameters $(n,h,\Psi,u_0)$.
\end{representation}

\begin{rem}
Note that an ADHP is in fact a (deterministic) measurable function of the Poisson measures $(\Pi^{i}(dt,dx))_{i\geq 1}$. More classically, an ADHP can be characterized by its stochastic intensity \eqref{eq:intensity:ADHP}. Going back and forth between the definition via the intensities \eqref{eq:intensity:ADHP} and Representation \ref{def:ADHP:Thinning} is standard (see \cite[Section II.4]{chevallier2015mean} for more insights). Furthermore, \cite[Proposition II.4]{chevallier2015mean} gives that, under  Assumption (\hyperref[ass:for:LLN]{$\mathcal{A}_{\text{\tiny{LLN}}}$}), there exists an ADHP  $(N^i)_{i=1,..,n}$ with parameters $(n,h,\Psi,u_0)$ such that $t\mapsto \mathbb{E}[ N_{t}^{1} ]$ is locally bounded.

Notice that, since the initial conditions $(S_{0}^{i})_{i=1,..,n}$ are i.i.d. and the Poisson measures $(\Pi^{i}(dt,dx))_{i\geq 1}$ are i.i.d., the processes $N^{i}$, $i=1,\dots ,n$, defined by \eqref{eq:ADHP:Thinning} are exchangeable.
\end{rem}

Here, we give a brief overview of the results obtained in \cite{chevallier2015mean} in order to set the context of the present article. We expect ADHPs to be well approximated, when $n$ goes to infinity, by i.i.d. solutions of the following \emph{limit equation},
\begin{equation}\label{eq:limit:equation:counting:process:fluctuations}
\!\!\!\! \forall t>0,\  \overline{N}_t= \int_0^t \int_0^{+\infty} \mathds{1}_{\displaystyle \Big\{ x\leq \Psi\bigg( \overline{S}_{t'-},  \int_{0}^{t'-} h(t'-z) \mathbb{E} \Big[\overline{N}(dz) \Big] \bigg) \Big\}}  \, \Pi(dt',dx),
\end{equation}
where $\Pi(dt', dx)$ is an $\mathbb{F}$-Poisson measure on $\mathbb{R}_{+}^{2}$ with intensity $1$ and $(\overline{S}_{t-})_{t\geq 0}$ is the predictable age process associated with $\overline{N}$ where $\overline{S}_{0}$ is distributed according to $u_0$. 

Under Assumption (\hyperref[ass:for:LLN]{$\mathcal{A}_{\text{\tiny{LLN}}}$}), \cite[Proposition III.6]{chevallier2015mean} states existence and uniqueness of the limit process $\overline{N}$. In particular, there exists a continuous function $\overline{\lambda}:\mathbb{R}_{+}\to \mathbb{R}$ (which depends on the parameters $h$, $\Psi$ and $u_0$) such that if $(\overline{N}_t)_{t\geq 0}$ is a solution of \eqref{eq:limit:equation:counting:process:fluctuations} then $\mathbb{E} [\overline{N}(dt) ]=\overline{\lambda}(t)dt$. Let us define the deterministic function $\overline{\gamma}$ by, for all $t\geq 0$, 
\begin{equation}\label{eq:def:gamma:barre:t}
\overline{\gamma}(t):=\int_{0}^{t} h(t-z) \overline{\lambda}(z)dz.
\end{equation}
Notice that $\overline{\gamma}(t')$ is the integral term $\int_{0}^{t'-} h(t'-z) \mathbb{E} [\overline{N}(dz) ]$ appearing in \eqref{eq:limit:equation:counting:process:fluctuations}. 

Furthermore, the limit predictable age process $(\overline{S}_{t-})_{t\geq 0}$ is closely related to the PDE system \eqref{eq:edp:PPS}.

\begin{prop}[{\cite[Proposition III.8]{chevallier2015mean}}]\label{prop:age:process:limit:equation:PPS}
Under Assumption (\hyperref[ass:for:LLN]{$\mathcal{A}_{\text{\tiny{LLN}}}$}), the unique solution $u$ to the system \eqref{eq:edp:PPS} with initial condition that  $u_0$ is such that $u(t,\cdot)$ is the density of the age $\overline{S}_{t-}$ (or $\overline{S}_{t}$ since they are equal a.s.). 
\end{prop}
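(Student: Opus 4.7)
\textbf{Proof plan for Proposition \ref{prop:age:process:limit:equation:PPS}.} The strategy is to derive a weak formulation of \eqref{eq:edp:PPS} from the martingale problem satisfied by $\overline{N}$, then identify the strong form and finally argue uniqueness. The only nontrivial coupling between $\overline{N}$ and itself happens through the deterministic term $\overline{\gamma}(t)=\int_{0}^{t}h(t-z)\overline{\lambda}(z)dz$ defined in \eqref{eq:def:gamma:barre:t}, which is already determined by \cite[Proposition III.6]{chevallier2015mean}; hence the age process $\overline{S}$ is, conditionally on $\overline{\gamma}$, a time-inhomogeneous renewal-type process for which the forward Kolmogorov equation is \eqref{eq:edp:PPS}.

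\textbf{Step 1 (martingale problem / Itô).} For a test function $\varphi\in\mathcal{C}^{1}_{c}(\mathbb{R}_{+})$, note that between points of $\overline{N}$ the age satisfies $d\overline{S}_{t}=dt$, while at each point $T\in\overline{N}$ it jumps from $\overline{S}_{T-}$ to $0$. Applying the Itô formula driven by the Poisson measure $\Pi$ from \eqref{eq:limit:equation:counting:process:fluctuations} yields
\begin{equation*}
\varphi(\overline{S}_{t})=\varphi(\overline{S}_{0})+\int_{0}^{t}\varphi'(\overline{S}_{s})\,ds+\int_{0}^{t}\!\!\int_{0}^{+\infty}\bigl[\varphi(0)-\varphi(\overline{S}_{s-})\bigr]\mathds{1}_{\{x\leq\Psi(\overline{S}_{s-},\overline{\gamma}(s))\}}\Pi(ds,dx).
\end{equation*}
Taking expectations and using the fact that $\Pi$ has compensator $ds\,dx$, together with $\overline{S}_{s-}=\overline{S}_{s}$ almost surely for each fixed $s$, gives
\begin{equation*}
\mathbb{E}[\varphi(\overline{S}_{t})]=\mathbb{E}[\varphi(\overline{S}_{0})]+\int_{0}^{t}\mathbb{E}\bigl[\varphi'(\overline{S}_{s})+(\varphi(0)-\varphi(\overline{S}_{s}))\Psi(\overline{S}_{s},\overline{\gamma}(s))\bigr]ds.
\end{equation*}

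\textbf{Step 2 (density and weak PDE).} Using the method of characteristics on the SDE \eqref{eq:limit:equation:counting:process:fluctuations} conditionally on $\overline{\gamma}$, one writes the law of $\overline{S}_{t}$ as the sum of two contributions: the initial mass carried forward without jumping (which has density $u_{0}(s-t)\exp\{-\int_{0}^{t}\Psi(s-t+r,\overline{\gamma}(r))dr\}\mathds{1}_{s>t}$) and the mass of particles last renewed at time $t-s$ with rate $\overline{\lambda}(t-s)$ (which gives a density on $\{0<s<t\}$). This representation shows that $\overline{S}_{t}$ admits a density $u(t,\cdot)$ on $\mathbb{R}_{+}$ with $u(t,0)=\overline{\lambda}(t)=\int\Psi(s,\overline{\gamma}(t))u(t,s)ds$. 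Inserting $\mathbb{E}[\varphi(\overline{S}_{t})]=\langle u(t,\cdot),\varphi\rangle$ into the identity of Step 1 and integrating the $\varphi(0)$ factor against the boundary condition yields the weak form of \eqref{eq:edp:PPS}. Standard integration by parts and localisation arguments (allowed by $\varphi\in\mathcal{C}^{1}_{c}$) then produce the classical form provided $u$ has sufficient regularity, which follows from the explicit characteristics formula and the continuity of $\overline{\gamma}$ and $\Psi$. One also verifies that $X(t)=\int_{0}^{t}h(t-z)u(z,0)dz$ coincides with $\overline{\gamma}(t)$ because $u(z,0)=\overline{\lambda}(z)$.

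\textbf{Step 3 (uniqueness of the PDE).} Observe that the nonlinearity in \eqref{eq:edp:PPS} only enters through the scalar $X(t)$. For any continuous $X$, the linear age-structured equation with inhomogeneity $\Psi(\cdot,X(t))$ has a unique classical solution given by the characteristics formula, and the map $X\mapsto u(\cdot,0)\mapsto\int_{0}^{\cdot}h(\cdot-z)u(z,0)dz$ is a contraction on $\mathcal{C}([0,T])$ for $T$ small, by a Gronwall estimate exploiting ${\rm Lip}(\Psi)<+\infty$ from (\hyperref[ass:Psi:C2]{$\mathcal{A}^{\Psi}_{y,\mathcal{C}^{2}}$}) and local boundedness of $h$ from (\hyperref[ass:h:infty]{$\mathcal{A}^{h}_{\infty}$}). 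Iterating on time intervals of a fixed length yields uniqueness on $\mathbb{R}_{+}$.

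\textbf{Main obstacle.} The delicate point is justifying rigorously that $\overline{S}_{t}$ admits a density (Step 2): the boundary reset at jump times must be handled carefully so that the singular contribution $u(t,0)\,dt$ is correctly identified as the intensity of new points, and so that the regularity of $u$ is sufficient to pass from the weak to the strong formulation; here the uniform bound on $\Psi$ from (\hyperref[ass:Psi:uniformly:bounded:fluctuations]{$\mathcal{A}^{\Psi}_{\infty}$}) together with $u_{0}$ bounded from (\hyperref[ass:initial:condition:density:compact:support]{$\mathcal{A}^{u_0}_{\infty}$}) and the characteristics representation provide the required regularity, since both pieces of $u(t,\cdot)$ are then bounded and inherit the smoothness of $u_{0}$ and $\overline{\lambda}$ along the characteristic lines.
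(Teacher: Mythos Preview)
The paper does not actually prove this proposition: it is quoted verbatim as \cite[Proposition III.8]{chevallier2015mean} and invoked without argument. There is therefore no ``paper's own proof'' to compare against; the present article uses the result as an input.

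That said, your sketch is a sensible outline of how such a statement is established in the cited reference, and the three steps (weak formulation from the compensated Poisson representation, explicit density via characteristics/renewal decomposition, uniqueness by a fixed-point argument on the scalar $X$) match the standard route for age-structured McKean--Vlasov dynamics. One small caution: in Step~2 you should be explicit that the ``particles last renewed at time $t-s$'' contribution has density $\overline{\lambda}(t-s)\exp\{-\int_{t-s}^{t}\Psi(r-(t-s),\overline{\gamma}(r))\,dr\}$ on $\{0<s<t\}$, and that identifying $u(t,0)=\overline{\lambda}(t)$ requires knowing a priori that $\overline{\lambda}$ is continuous (which is part of the existence statement \cite[Proposition III.6]{chevallier2015mean} you invoke). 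Your ``main obstacle'' paragraph correctly flags the only genuinely delicate point.
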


Once the limit equation is well-posed, following the ideas of Sznitman in \cite{Sznitman_91}, it is easy to construct a suitable coupling between ADHPs and i.i.d. solutions of the limit equation \eqref{eq:limit:equation:counting:process:fluctuations}. More precisely, consider
\begin{itemize}
\item a sequence $(S_{0}^{i})_{i\geq 1}$ of i.i.d. random variables distributed according to $u_0$;
\item a sequence $(\Pi^i(dt',dx))_{i\geq 1}$ of i.i.d. $\mathbb{F}$-Poisson measures with intensity $1$ on $\mathbb{R}_{+}^{2}$.
\end{itemize}
Under Assumption (\hyperref[ass:for:LLN]{$\mathcal{A}_{\text{\tiny{LLN}}}$}), we have existence of both ADHPs and the limit process $\overline{N}$.
Hence, one can build simultaneously:
\smallskip

\noindent - a sequence (indexed by $n\geq 1$) $(N^{n,i})_{i=1,\dots ,n}$ of ADHPs with parameters $(n,h,\Psi,u_0)$ according to Representation \ref{def:ADHP:Thinning} namely
\begin{equation}\label{eq:definition:coupling:MFHawkes}
N^{n,i}_t= \int_0^t \int_0^{+\infty} \mathds{1}_{\displaystyle  \left\{x \leq \Psi\left( S^{n,i}_{t'-}, \gamma^{n}_{t'} \right)  \right\}} \Pi^i(dt',dx)
\end{equation}
where $S^{n,i}_{0}=S^{i}_{0}$ and $\gamma^{n}_{t'}:=n^{-1}  \sum_{j=1}^n \int_{0}^{t'-} h(t'-z)N^{n,j}(dz)$,

\noindent - and a sequence $(\overline{N}^i_t)^{i\geq 1}_{t\geq 0}$ of i.i.d. solutions of the limit equation namely
\begin{equation}\label{eq:definition:coupling:limit:equation:fluctuations}
\overline{N}^{i}_t= \int_0^t \int_0^{+\infty} \mathds{1}_{\displaystyle \left\{x \leq \Psi \left( \overline{S}^{i}_{t'-}, \overline{\gamma}(t') \right)\right\}} \Pi^i(dt',dx),
\end{equation}
where $\overline{S}^{i}_{0}=S^{i}_{0}$ and $\overline{\gamma}$ is defined by \eqref{eq:def:gamma:barre:t}. 

Moreover, denote by $\lambda^{n,i}_{t}:=\Psi( S^{n,i}_{t-}, \gamma^{n}_{t})$ and $\overline{\lambda}^{i}_{t}:= \Psi( \overline{S}^{i}_{t-}, \overline{\gamma}(t) )$ the respective intensities of $N^{n,i}$ and $\overline{N}^{i}$.
\smallskip

\begin{rem}
Notice that the coupling above is based on the sharing of common initial conditions $(S_{0}^{i})_{i\geq 1}$ and a common underlying randomness, that are the $\mathbb{F}$-Poisson measures $(\Pi^i(dt',dx))_{i\geq 1}$.
Note also that the sequence of ADHPs is indexed by the size of the network $n$ whereas the solutions of the limit equation which represent the behaviour under the mean field approximation are not.
\end{rem}

Then, standard computations mainly based on Grönwall lemma lead to the following estimates \cite[Corollary IV.3]{chevallier2015mean}:
for all $i=1,\dots ,n$ and $\theta>0$, 
\begin{equation}\label{eq:control:coupling:L1:norm}
\mathbb{E}\left[ \sup_{t\in [0,\theta]} | S^{n,i}_{t-} -\overline{S}^i_{t-} |  \right] \lesssim_{\theta} \mathbb{P}\left(\left(S_{t-}^{n,i}\right)_{t\in\left[0,\theta\right]}\neq\left(\overline{S}_{t-}^{i}\right)_{t\in\left[0,\theta\right]}\right) \lesssim_{\theta} n^{-1/2}.
\end{equation}
Finally, these estimates ensure the propagation of chaos property\footnote{For any fixed integer $k$, the processes $(S^{n,1}_{t})_{t\geq 0},\dots ,(S^{n,k}_{t})_{t\geq 0}$ are asymptotically independent.} \cite[Corollary IV.4]{chevallier2015mean} and, in particular, the convergence (as $n\to +\infty$) of the empirical measure $\overline{\mu}^{n}_{S_{t}}:=\frac{1}{n}\sum_{i = 1}^n \delta_{S^{n,i}_{t}}$ towards the law of $\overline{S}^{1}_{t}$ for all $t\geq 0$.

\subsection{What next ? The purpose of the present paper}

As a straight follow-up to the convergence of the empirical measure $\overline{\mu}^{n}_{S_t}$, we are interested in the dynamics of the fluctuations of this empirical measure around its limit. For any $t\geq 0$,  $\overline{S}^{1}_{t}$ and $\overline{S}^{1}_{t-}$ have the same probability law since they are equal almost surely. Furthermore, this law, denoted by $u_t$ admits the density $u(t,\cdot)$ with respect to the Lebesgue measure, where $u$ is the unique solution of \eqref{eq:edp:PPS} according to Proposition \ref{prop:age:process:limit:equation:PPS}, thus
\begin{equation*}
\left< u_{t},\varphi \right>=\int_{0}^{+\infty} \varphi(s) u(t,s) ds.
\end{equation*}
The analysis of the coupling (Equation \eqref{eq:control:coupling:L1:norm}) gives a rate of convergence at least in $n^{-1/2}$ so we want to find the limit law of the fluctuation process defined, for all $t\geq 0$, by
\begin{equation}\label{eq:def:eta:n:t} 
\eta_{t}^{n}:=\sqrt{n}\left( \overline{\mu}^{n}_{S_{t}} - u_{t} \right).
\end{equation}
Notice that $\eta_{t}^{n}$ is a distribution in the functional analysis sense on the state space of the ages, i.e. $\mathbb{R}_{+}$, and is devoted to be considered as a linear form acting on test functions $\varphi$ by means of $\left< \eta^{n}_{t},\varphi \right>$.

\section{Estimates in total variation norm}
\label{sec:estimates:total:variation}

The bound ($n^{-1/2}$) on the rate of convergence, given by \eqref{eq:control:coupling:L1:norm}, is not sufficient in order to prove convergence or even tightness of the fluctuation process $\eta^{n}$. Some refined estimates are necessary. For instance, when dealing with diffusions, one looks for higher order moment estimates on the difference between the particles driven by the real dynamics and the limit particles (see \cite{Meleard_97,jourdain1998propagation,luccon2015transition,Meleard_96} for instance). Here, we deal with pure jump processes and, up to our knowledge, there is no reason why one could obtain better rates for higher order moments. A simple way to catch this fact is by looking at the coupling between the counting processes. Indeed, the difference between two counting processes, say $\delta^{n,i}_{t}=|N^{n,i}_{t}-\overline{N}^{i}_{t}|$, takes value in $\mathbb{N}$ so that for all $p\geq 1$, $(\delta^{n,i}_{t})^{p}\geq \delta^{n,i}_{t}$, and the moment of order $p$ is greater than the moment of order one. 

In order to accommodate this fact, the key idea is to estimate the coupling \eqref{eq:definition:coupling:MFHawkes}-\eqref{eq:definition:coupling:limit:equation:fluctuations} in the total variation distance. Hence, the estimates needed in the next section (and proved in the present section) are the analogous of higher order moments but with respect to the total variation norm, i.e. the probabilities
\begin{eqnarray}
\chi^{(k)}_{n}(\theta) &:=&\mathbb{P}\left( (S^{n,k'}_{t-})_{t\in [0,\theta]} \neq (\overline{S}^{k'}_{t-})_{t\in [0,\theta]} \emph{ for every } k'=1,... ,k\right) \nonumber\\
&=&\mathbb{P}\left( (S^{n,k'}_{t})_{t\in [0,\theta]} \neq (\overline{S}^{k'}_{t})_{t\in [0,\theta]} \emph{ for every } k'=1,...,k\right), \label{eq:def:gamma:k:n}
\end{eqnarray}
for all positive integer $k$ and real number $\theta\geq 0$. 

The heuristics underlying the result stated below, in Proposition \ref{prop:rate:of:convergence:proba:k:tuple:different:ages}, relies on the asymptotic independence between the $k$ age processes $(S^{n,k'}_{t-})_{t\in [0,\theta]}$, $k'=1,... ,k$. Indeed, if they were independent then we would have (remind \eqref{eq:control:coupling:L1:norm}),
$$\chi^{(k)}_{n}(\theta)=\prod_{k'=1}^{k} \mathbb{P}\big( (S^{n,k'}_{t-})_{t\in [0,\theta]} \neq (\overline{S}^{k'}_{t-})_{t\in [0,\theta]} \big)= (\chi^{(1)}_{n}(\theta))^{k}\lesssim_{\theta} n^{-k/2},$$
which is exactly the rate of convergence we find below.

\begin{prop}\label{prop:rate:of:convergence:proba:k:tuple:different:ages}
Under Assumption (\hyperref[ass:for:LLN]{$\mathcal{A}_{\text{\tiny{LLN}}}$}),
\begin{equation*}
\chi^{(k)}_{n}(\theta)\lesssim_{(\theta,k)} n^{-k/2} \quad \text{ and } \quad \xi^{(k)}_{n}(t):=\mathbb{E}\left[  |\gamma^{n}_{t} - \overline{\gamma}(t)| ^{k} \right] \lesssim_{(t,k)} n^{-k/2}.
\end{equation*}
\end{prop}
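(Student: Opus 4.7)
The plan is to prove both bounds jointly by induction on $k$, the case $k=1$ being essentially \eqref{eq:control:coupling:L1:norm}; the inductive step rests on a coupled Grönwall argument that bounds the $k$-th moments of $\gamma^n - \overline{\gamma}$ in terms of the decoupling probabilities $\chi^{(q)}_n$, $q \leq k$, and reciprocally bounds $\chi^{(k)}_n(\theta)$ in terms of those same moments.

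\smallskip

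\textbf{Moment bounds for $\gamma^n_t - \overline{\gamma}(t)$.} Writing $\lambda^{n,j}_z := \Psi(S^{n,j}_{z-},\gamma^n_z)$ and $\overline{\lambda}^j_z := \Psi(\overline{S}^j_{z-},\overline{\gamma}(z))$ and using the compensator identity for each $N^{n,j}$, I would decompose
$$\gamma^n_t - \overline{\gamma}(t) = M^n_t + B^n_t + E^n_t,$$
where $M^n_t := \frac{1}{n}\sum_j \int_0^{t-} h(t-z)[N^{n,j}(dz) - \lambda^{n,j}_z dz]$, $B^n_t := \int_0^t h(t-z)\frac{1}{n}\sum_j(\overline{\lambda}^j_z - \overline{\lambda}(z))dz$ and $E^n_t := \int_0^t h(t-z)\frac{1}{n}\sum_j(\lambda^{n,j}_z - \overline{\lambda}^j_z)dz$. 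For $M^n_t$: since the Poisson measures $\Pi^j$ are mutually independent, the corresponding compensated counting martingales share no common jumps, so the quadratic variation at time $t$ is bounded by $n^{-2}h_\infty(t)^2 \sum_j N^{n,j}_t$; Burkholder--Davis--Gundy combined with moment bounds on $N^{n,j}_t$ under $(\mathcal{A}^\Psi_\infty)$ yields $\mathbb{E}[|M^n_t|^k] \lesssim_t n^{-k/2}$. For $B^n_t$: the $(\overline{\lambda}^j_z - \overline{\lambda}(z))_{j\geq 1}$ are iid centered and bounded, so Rosenthal's inequality and Jensen in $z$ give $\mathbb{E}[|B^n_t|^k] \lesssim_t n^{-k/2}$. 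For $E^n_t$, introducing the decoupling event $A^{n,j}_z := \{(S^{n,j}_{t-})_{t\in[0,z]}\neq(\overline{S}^j_{t-})_{t\in[0,z]}\}$ and splitting accordingly, the Lipschitz property of $\Psi$ in $y$ together with $\|\Psi\|_\infty$ yield
$$|\lambda^{n,j}_z - \overline{\lambda}^j_z| \leq \mathrm{Lip}(\Psi)|\gamma^n_z - \overline{\gamma}(z)| + 2\|\Psi\|_\infty\mathbf{1}_{A^{n,j}_z}.$$
Taking $k$-th moments of the decomposition, using Jensen for the $z$-integration and expanding $(n^{-1}\sum_j \mathbf{1}_{A^{n,j}_t})^k$ by exchangeability in terms of $\chi^{(q)}_n(t)$ for $q \leq k$, leads to
$$\xi^{(k)}_n(t) \leq C n^{-k/2} + C\int_0^t \xi^{(k)}_n(z)\,dz + C\sum_{q=1}^k n^{q-k}\chi^{(q)}_n(t),$$
which is closed by Grönwall.

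\smallskip

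\textbf{Joint decoupling probability.} Set $L_{t'} := \mathrm{Lip}(\Psi)|\gamma^n_{t'}-\overline{\gamma}(t')|$ and let $T^{n,i}$ be the first decoupling time of the $i$-th particle on $[0,\theta]$. On $\{t' \leq T^{n,i}\}$ one has $S^{n,i}_{t'-} = \overline{S}^i_{t'-}$, hence $|\Psi(S^{n,i}_{t'-},\gamma^n_{t'})-\Psi(\overline{S}^i_{t'-},\overline{\gamma}(t'))| \leq L_{t'}$. This implies
$$\mathbf{1}_{A^{n,i}_\theta} \leq \int_0^\theta\!\int_0^\infty \mathbf{1}_{x\leq L_{t'}}\,\Pi^i(dt',dx) =: I^i_\theta,$$
and consequently $\chi^{(k)}_n(\theta) \leq \mathbb{E}\bigl[\prod_{i=1}^k I^i_\theta\bigr]$. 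Iterating the Campbell--Mecke compensation formula on the $k$ mutually independent Poisson measures $\Pi^1,\dots,\Pi^k$ reduces this expectation to the $k$-th moment of $\int_0^\theta L_{t'}\,dt'$, up to lower-order Palm corrections. Jensen in $t'$ and the inductive bound on $\xi^{(k)}_n$ then yield $\chi^{(k)}_n(\theta) \lesssim_\theta n^{-k/2}$, closing the coupled induction.

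\smallskip

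\textbf{Main obstacle.} The delicate step is the control of $\mathbb{E}\bigl[\prod_{i} I^i_\theta\bigr]$: although the $\Pi^i$ are mutually independent, the process $L_{t'}$ depends on all of them through $\gamma^n$, so the expectation does not factorize directly. The natural tool is Palm calculus: each application of Campbell--Mecke replaces one $\Pi^i$ by Lebesgue measure but evaluates $L_{t'}$ on a configuration where a fictitious point has been added to $\Pi^i$; since a single added point perturbs $\gamma^n$ by only $O(1/n)$, the resulting Palm corrections contribute lower-order terms. Quantifying these corrections precisely and verifying that they do not destroy the sharp $n^{-k/2}$ rate is the technical heart of the argument.
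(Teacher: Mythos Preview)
Your treatment of $\xi^{(k)}_n$ is essentially the paper's: the same martingale/i.i.d./Lipschitz decomposition, BDG and Rosenthal for the first two pieces, and a Gr\"onwall closure once the decoupling indicators are controlled. The paper splits your $E^n$ into two pieces (separating the age mismatch from the $\gamma$-mismatch) but this is cosmetic.

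The joint-decoupling part, however, contains a concrete error and then a genuine gap. The pathwise inequality $\mathbf{1}_{A^{n,i}_\theta}\leq I^i_\theta$ is false as written: the first decoupling point of particle $i$ is a point $(t',x)$ of $\Pi^i$ lying in the interval $[[\lambda^{n,i}_{t'},\overline{\lambda}^i_{t'}]]$, which has \emph{width} at most $L_{t'}$ but sits at height $\min(\lambda^{n,i}_{t'},\overline{\lambda}^i_{t'})$, not at height $0$; there is no reason for $x\leq L_{t'}$. What is true pathwise is $\mathbf{1}_{A^{n,i}_\theta}\leq \Delta^{n,i}_\theta$ with $\Delta^{n,i}:=N^{n,i}\triangle \overline{N}^i$, whose intensity equals $|\lambda^{n,i}_{t'}-\overline{\lambda}^i_{t'}|$. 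Even with this correction, the Palm/Campbell--Mecke route you sketch does not close: after compensating in $\Pi^k$ you must control $\mathbb{E}\bigl[\prod_{i<k}\Delta^{n,i}_{t'-}\,|\lambda^{n,k}_{t'}-\overline{\lambda}^k_{t'}|\bigr]$, and the intensity splits as $L_{t'}+\|\Psi\|_\infty\mathbf{1}_{A^{n,k}_{t'}}$; the second piece regenerates a $k$-fold product and the first leaves you with a $(k{-}1)$-fold product of $\Delta^{n,i}$'s times $|\gamma^n-\overline{\gamma}|$, which does \emph{not} reduce to $\xi^{(k)}_n$ because the $\Delta^{n,i}$'s are correlated with $\gamma^n$. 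The ``$O(1/n)$ Palm correction'' heuristic does not address this correlation.

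The paper sidesteps Palm calculus entirely. It works with $\varepsilon^{(k,p)}_n(\theta):=\mathbb{E}\bigl[\prod_{i=1}^k(\Delta^{n,i}_{\theta-})^p\bigr]$ and writes the product as a sum of predictable integrals against the point measures $\Delta^{n,j}(dt)$, obtaining after compensation a term of the form $\mathbb{E}\bigl[\prod_{i=2}^k(\Delta^{n,i}_{t-})^p\,|\gamma^n_t-\overline{\gamma}(t)|\bigr]$. The key device is exchangeability: each factor $(\Delta^{n,i}_{t-})^p$ is replaced by the average $\lfloor n/k\rfloor^{-1}\sum_{j\in B_i}(\Delta^{n,j}_{t-})^p$ over disjoint index blocks $B_2,\dots,B_k$, and then a $k$-exponent H\"older inequality separates the product from $|\gamma^n-\overline{\gamma}|$, yielding a bound in terms of $\xi^{(k)}_n$ and $\mathbb{E}\bigl[(\lfloor n/k\rfloor^{-1}\sum_j(\Delta^{n,j})^p)^k\bigr]$. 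Expanding the latter multinomially reduces it to $\sum_{k'\leq k}n^{k'-k}\varepsilon^{(k',pk)}_n$, and the induction in $k$ (over all $p$ simultaneously) closes via Gr\"onwall. This block-averaging plus H\"older trick is precisely what replaces your Palm step and is the idea you are missing.
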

\begin{rem}
In addition to the explanation given in the beginning of this section, let us mention that the analogous to the higher moment estimates obtained for diffusions is obtained here for the difference between $\gamma^{n}_{t}$ and $\overline{\gamma}(t)$. Indeed, as $k$ grows, the convergence of $\xi^{(k)}_{n}(t)$ quickens. However, this gain in the rate of convergence does not apply when looking at the difference between the ages $S^{n,1}_{t}$ and $\overline{S}^{1}_{t}$ or the difference between the intensities $\lambda^{n,1}_{t}$ and $\overline{\lambda}^{1}_{t}$ (except if $\Psi$ does not depend on the age $s$).
\end{rem}
\begin{proof}
The core of this proof lies on a trick using the exchangeability of the processes in order to obtain Grönwall-type inequalities involving $\chi^{(k)}_{n}$ and $\xi^{(k)}_{n}$.

Denote by $A \triangle B$ the symmetric difference of the sets $A$ and $B$.
Then, for any $i\leq n$, let us define $\Delta^{n,i}:=N^{n,i}\Delta\overline{N}^{i}$ that is the set of points that are not common to $N^{n,i}$ and $\overline{N}^i$. 
From \eqref{eq:definition:coupling:MFHawkes}-\eqref{eq:definition:coupling:limit:equation:fluctuations}, one has
\begin{equation*}
\Delta^{n,i}_{t}= \int_0^t \int_0^{+\infty} \mathds{1}_{\displaystyle  \left\{x \in [[ \lambda^{n,i}_{t'},\overline{\lambda}^{i}_{t'}]] \right\}} \Pi^i(dt',dx),
\end{equation*}
where $[[ \lambda^{n,i}_{t'},\overline{\lambda}^{i}_{t'}]]$ is the non empty interval which is either $[ \lambda^{n,i}_{t'},\overline{\lambda}^{i}_{t'}]$ or $[ \overline{\lambda}^{i}_{t'},\lambda^{n,i}_{t'}]$.
Then, the intensity of the point process $\Delta^{n,i}$ is given by $\lambda^{\Delta,n,i}_{t}:=|\lambda^{n,i}_{t} - \overline{\lambda}^{i}_{t}|$. 

Note that, for all $n\geq 1$ and $i=1,\dots ,n$, $S^{n,i}_{0-}=\overline{S}^{i}_{0-}$ so that the equality between the processes $(S^{n,1}_{t-})_{t\in [0,\theta]}$ and $(\overline{S}^{1}_{t-})_{t\in [0,\theta]}$ is equivalent to $\Delta^{n,1}_{\theta-}=0$. In particular, one has 
\begin{equation}\label{eq:bound:Gammakn:by:epsilonkn}
\chi^{(k)}_{n}(\theta) \leq \mathbb{E}\left[ \prod_{i=1}^{k} \Delta^{n,i}_{\theta-} \right],
\end{equation}
since counting processes take value in $\mathbb{N}$.
For any positive integers $k$ and $p$, let us denote, for all $n\geq k$,
\begin{equation*}
\varepsilon^{(k,p)}_{n}(\theta):= \mathbb{E}\left[ \prod_{i=1}^{k} \left( \Delta^{n,i}_{\theta-} \right)^{p} \right].
\end{equation*}
Let us show, by induction on $k$, that 
\begin{equation}\label{eq:inductive:hypothesis}
\varepsilon^{(k,p)}_{n}(\theta)\lesssim_{(\theta,k,p)} n^{-k/2}
\end{equation}
which will end the proof thanks to \eqref{eq:bound:Gammakn:by:epsilonkn}.
First, note that the case $k=1$ and $p=1$ is already treated. Indeed, \cite[Theorem IV.1]{chevallier2015mean} gives
\begin{equation}\label{eq:rate:of:convergence:expectation:coupling:area}
\varepsilon^{(1,1)}_{n}(\theta)= \int_{0}^{\theta} \mathbb{E}\left[ |\lambda^{n,1}_{t}-\overline{\lambda}^{1}_{t}| \right] dt \lesssim_{\theta} n^{-1/2}.
\end{equation}
Then, note that for any two positive integers $p$ and $q$,
\begin{equation}\label{eq:ordering:epsilon}
\varepsilon^{(k,p)}_{n}(\theta)\leq \varepsilon^{(k,q)}_{n}(\theta) \text{ as soon as } p\leq q.
\end{equation}
This is due to the fact that counting processes take value in $\mathbb{N}$. The rest of the proof is divided in two steps: initialization and inductive step.

\paragraph{Step one.}
For $k=1$ and $p$ a positive integer, it holds that
\begin{equation}\label{eq:stieltjes:Delta:n:1:p}
(\Delta^{n,1}_{\theta-})^{p}= \sum_{p'=0}^{p-1} \binom{p}{p'} \int_{0}^{\theta-} (\Delta^{n,1}_{t-})^{p'} \Delta^{n,1}(dt).
\end{equation}
Indeed, each time the process $(\Delta^{n,1}_{t})_{t\geq 0}$ jumps (from $\Delta^{n,1}_{t-}$ to $\Delta^{n,1}_{t-}+1$) then $(\Delta^{n,1}_{t-})^{p}$ jumps from $(\Delta^{n,1}_{t-})^{p}$ to $(\Delta^{n,1}_{t-}+1)^{p}$ so the infinitesimal variation is $$(\Delta^{n,1}_{t-}+1)^{p}-(\Delta^{n,1}_{t-})^{p}=\sum_{p'=0}^{p-1} \binom{p}{p'} (\Delta^{n,1}_{t-})^{p'}.$$

The right-hand side of \eqref{eq:stieltjes:Delta:n:1:p} involves integrals of predictable processes, that are the $(\Delta^{n,1}_{t-})^{p'}$, with respect to a point measure under which it is convenient to take expectation.

More precisely, since $(\Delta^{n,1}_{t-})^{p'}\leq (\Delta^{n,1}_{t-})^{p}$ as soon as $0<p'\leq p-1$, it holds that
\begin{eqnarray}
\varepsilon^{(1,p)}_{n}(\theta)=\mathbb{E}\left[ (\Delta^{n,1}_{\theta-})^{p} \right] &\leq& \mathbb{E}\left[ \int_{0}^{\theta} \Delta^{n,1}(dt) \right]+ 2^{p} \mathbb{E}\left[ \int_{0}^{\theta} (\Delta^{n,1}_{t-})^{p} \Delta^{n,1}(dt) \right].\nonumber\\
&\leq & \varepsilon^{(1,1)}_{n}(\theta) + 2^{p} \int_{0}^{\theta} \mathbb{E}\left[ (\Delta^{n,1}_{t-})^{p} \lambda^{\Delta,n,1}_{t} \right] dt.\label{eq:epsilonp:epsilon1}
\end{eqnarray}
Yet the intensity $\lambda^{\Delta,n,1}_{t}$ is bounded by $||\Psi||_{\infty}$ and $\varepsilon^{(1,1)}_{n}(\theta)\lesssim_{\theta} n^{-1/2}$, see \eqref{eq:rate:of:convergence:expectation:coupling:area}, so
\begin{equation*}
\varepsilon^{(1,p)}_{n}(\theta)\lesssim_{(\theta,p)} n^{-1/2} + \int_{0}^{\theta} \varepsilon^{(1,p)}_{n}(t) dt,
\end{equation*}
and Lemma \ref{lem:Gronwall:Generalization} gives $\varepsilon^{(1,p)}_{n}(\theta)\lesssim_{(\theta,p)} n^{-1/2}$.

\paragraph{Step two.}
For all integers $k\geq 2$ and  $p\geq 1$, one can generalize the argument used to prove \eqref{eq:stieltjes:Delta:n:1:p} in order to end up with
\begin{equation*}
\prod_{i=1}^{k} (\Delta^{n,i}_{\theta-})^{p} = \sum_{j=1}^{k} \sum_{p'=0}^{p-1} \binom{p}{p'} \int_{0}^{\theta-} \prod_{i\neq j, i=1}^{k} (\Delta^{n,i}_{t-})^{p}  (\Delta^{n,j}_{t-})^{p'}  \Delta^{n,j}(dt), \quad \text{almost surely.}
\end{equation*}
Hence, thanks to the exchangeability of the processes $(\Delta^{n,i})_{i=1,\dots ,n}$ and the predictability of the integrated processes,  we have
\begin{eqnarray}
\!\!\! \varepsilon^{(k,p)}_{n}(\theta) &= & \sum_{j=1}^{k} \sum_{p'=0}^{p-1} \binom{p}{p'} \mathbb{E}\left[ \int_{0}^{\theta} \prod_{i\neq j, i=1}^{k} (\Delta^{n,i}_{t-})^{p}  (\Delta^{n,j}_{t-})^{p'}  \Delta^{n,j}(dt)  \right]\nonumber\\
&=& k \sum_{p'=0}^{p-1} \binom{p}{p'} \int_{0}^{\theta} \mathbb{E}\left[ (\Delta^{n,1}_{t-})^{p'} \prod_{i=2}^{k} (\Delta^{n,i}_{t-})^{p} \lambda^{\Delta,n,1}_{t}\right] dt \nonumber\\
& \leq & k  \int_{0}^{\theta} \mathbb{E}\left[ \prod_{i=2}^{k} (\Delta^{n,i}_{t-})^{p} \lambda^{\Delta,n,1}_{t}\right] + 2^{p} \mathbb{E}\left[ (\Delta^{n,1}_{t-})^{p} \prod_{i=2}^{k} (\Delta^{n,i}_{t-})^{p} \lambda^{\Delta,n,1}_{t}\right] dt, \label{eq:N1p:lambda:1:N2p}
\end{eqnarray}
where we used that $(\Delta^{n,1}_{t-})^{p'}\leq (\Delta^{n,1}_{t-})^{p}$ as soon as $0<p'\leq p-1$.

On the one hand, using that $\lambda^{\Delta,n,1}_{t}\leq ||\Psi||_{\infty}$, the second expectation in \eqref{eq:N1p:lambda:1:N2p} is bounded by $||\Psi||_{\infty} \varepsilon^{(k,p)}_{n}(t)$.

On the other hand, we use (\hyperref[ass:Psi:C2]{$\mathcal{A}^{\Psi}_{y,\mathcal{C}^{2}}$}) which gives the following bound on the intensity, 
$$\lambda^{\Delta,n,1}_{t}\leq {\rm Lip}(\Psi) |\gamma^{n}_{t} - \overline{\gamma}(t)|+ ||\Psi||_{\infty} \mathds{1}_{S^{n,1}_{t-}\neq \overline{S}^{1}_{t-}} \leq {\rm Lip}(\Psi) |\gamma^{n}_{t} - \overline{\gamma}(t)|+ ||\Psi||_{\infty}(\Delta^{n,1}_{t-})^{p}.$$ 
Hence the first  expectation in \eqref{eq:N1p:lambda:1:N2p} is bounded by
\begin{equation}\label{eq:N1p-1:N2p:gammadiff}
{\rm Lip}(\Psi) D(t) + ||\Psi||_{\infty} \varepsilon^{(k,p)}_{n}(t),
\end{equation}
with $D(t):=\mathbb{E}[ \prod_{i=2}^{k} (\Delta^{n,i}_{t-})^{p} |\gamma^{n}_{t} - \overline{\gamma}(t)| ]$. 
The second term of \eqref{eq:N1p-1:N2p:gammadiff} is convenient to use a Grönwall-type lemma. To deal with the first term, we use a trick involving the exchangeability of the particles. Indeed, using the exchangeability we can replace each of the $k-1$ terms $(\Delta^{n,i}_{t-})^{p}$ in the expression of $D(t)$ by the following sum
\begin{equation*}
\frac{1}{\lfloor \frac{n}{k} \rfloor} \sum_{j_{i}=(i-1) \lfloor \frac{n}{k} \rfloor +1}^{i\lfloor \frac{n}{k} \rfloor} (\Delta^{n,j_{i}}_{t-})^{p}
\end{equation*}
without modifying the value of the expectation since the sums are taken on disjoined indices. Hence, using for the second line a generalization of Hölder's inequality with $k$ exponents equal to $1/k$, we have
\begin{eqnarray}
\hspace{-2cm} D(t) &\leq & \mathbb{E}\left[  \prod_{i=2}^{k} \left( \frac{1}{\lfloor \frac{n}{k} \rfloor} \sum_{j_{i}=(i-1) \lfloor \frac{n}{k} \rfloor +1}^{i\lfloor \frac{n}{k} \rfloor} (\Delta^{n,j_{i}}_{t-})^{p} \right) |\gamma^{n}_{t} - \overline{\gamma}(t)| \right] \nonumber\\
&\leq & \left( \prod_{i=2}^{k} \mathbb{E}\left[ \left(\frac{1}{\lfloor \frac{n}{k} \rfloor} \sum_{j=1}^{\lfloor \frac{n}{k} \rfloor} (\Delta^{n,j}_{t-})^{p} \right)^{k} \right]^{1/k} \right) \xi^{(k)}_{n}(t)^{1/k} \leq E_{n,k,p}(t)^{\frac{k-1}{k}} \xi^{(k)}_{n}(t)^{1/k}, \label{eq:trick:exchang+Holder:p:p}
\end{eqnarray}
with $E_{n,k,p}(t):=\mathbb{E}[ ( (1/\lfloor \frac{n}{k} \rfloor) \sum_{j=1}^{\lfloor \frac{n}{k} \rfloor} (\Delta^{n,j}_{t-})^{p} )^{k} ]$. 
Yet, computations given in Section \ref{sec:proof:prop:rate:of:convergence} give the two following statements: there exists a constant $C(k)$ which does not depend on $n$ or $p$ such that
\begin{equation}\label{eq:bound:Enkp}
E_{n,k,p}(t) \leq C(k) \left( \sum_{k'=1}^{k-1} n^{k'-k} \varepsilon^{(k',pk)}_{n}(t) + \varepsilon^{(k,p)}_{n}(t) \right),
\end{equation} 
and $\xi^{(k)}_{n}(t)$ satisfy the following bound,
\begin{equation}\label{eq:bound:xikn}
\xi^{(k)}_{n}(t) \lesssim_{(t,k)}  n^{-k/2} + \sum_{k'=1}^{k-1} n^{k'-k} \varepsilon^{(k',k)}_{n}(t) + \varepsilon^{(k,1)}_{n}(t).
\end{equation}
Then, using the induction hypothesis \eqref{eq:inductive:hypothesis}, that is for all $1\leq k'\leq k-1$ and for all positive integer $p$, $\varepsilon^{(k',p)}_{n}(t)\lesssim_{(t,k,p)} n^{-k'/2}$, one has,
\begin{equation}\label{eq:Enkp:xin:bounds}
\hspace{-0.5cm}
\begin{cases}
E_{n,k,p}(t)\lesssim_{(t,k,p)} \! \sum_{k'=1}^{k-1} n^{k'-k} n^{-k'/2} + \varepsilon^{(k,p)}_{n}(t) \lesssim_{(t,k,p)} \! n^{-(k+1)/2} + \varepsilon^{(k,p)}_{n}(t)\\
\xi^{(k)}_{n}(t)\lesssim_{(t,k,p)} \! n^{-k/2} + \sum_{k'=1}^{k-1} n^{k'-k} n^{-k'/2} + \varepsilon^{(k,1)}_{n}(t) \lesssim_{(t,k,p)} \! n^{-k/2} + \varepsilon^{(k,1)}_{n}(t).
\end{cases}
\end{equation}

Gathering \eqref{eq:N1p:lambda:1:N2p}, \eqref{eq:N1p-1:N2p:gammadiff}, \eqref{eq:trick:exchang+Holder:p:p} and \eqref{eq:Enkp:xin:bounds} gives (remind that $\varepsilon^{(k,1)}_{n}(t)\leq \varepsilon^{(k,p)}_{n}(t)$)
\begin{equation*}
\varepsilon^{(k,p)}_{n}(\theta)\lesssim_{(\theta,k,p)} n^{-k/2} + \int_{0}^{\theta} \varepsilon^{(k,p)}_{n}(t) dt,
\end{equation*}
and so the Grönwall-type Lemma \ref{lem:Gronwall:Generalization} gives $\varepsilon^{(k,p)}_{n}(\theta)\lesssim_{(\theta,k,p)} n^{-k/2}$ which ends the proof thanks to \eqref{eq:bound:Gammakn:by:epsilonkn}.

\end{proof}

\section{Tightness}
\label{sec:tightness}

The aim of this section is to prove tightness of the sequence of the laws of $(\eta^{n})_{n\geq 1}$ regarded as stochastic processes (in time) with values in a suitable space of distributions. Thus, we consider $(\eta^{n}_{t})_{t\geq 0}$ as a random process with values in the dual space of some well-chosen space of test functions. In Section \ref{sec:preliminaries:sobolev}, we give the definition of these spaces of test functions. Following the Hilbertian approach developed in \cite{Meleard_97}, we work with weighted Sobolev Hilbert spaces. Finally, the tightness result is stated in Theorem \ref{thm:tightness:M:n:eta:n}.\\

The following study takes benefit of the Hilbert structure of the Sobolev spaces considered. Let us state here the Aldous tightness criterion for Hilbert space valued stochastic processes (cf. \cite[p. 34-35]{joffe1986weak}) used in the present paper. \phantomsection \label{sec:tightness:criterion}
Let $H$ be a separable Hilbert space. A sequence of processes $(X^{n})_{n\geq 1}$ in $\mathcal{D}(\mathbb{R}_{+},H)$ defined on the respective filtered probability spaces $(\Omega^{n},\mathcal{F}^{n},(\mathcal{F}^{n}_{t})_{t\geq 0},\mathbb{P}^{n})$ is tight if both conditions below hold true:
\smallskip

\noindent
\begin{tabular}{>{\centering}p{\asslabel}|p{\assdef}}
\phantomsection
\label{condition:A1}
\spacenot $\left(\texttt{A}_{1}\right)$:      & 		for every $t\geq 0$ and $\varepsilon>0$, there exists a compact set $K\subset H$ such that $$\sup_{n\geq 1} \mathbb{P}^{n}\left( X^{n}_{t}\notin K \right)\leq \varepsilon,$$
\end{tabular}
\smallskip

\noindent
\begin{tabular}{>{\centering}p{\asslabel}|p{\assdef}}
\phantomsection
\label{condition:A2}
\spacenot $\left(\texttt{A}_{2}\right)$:      & 		for every $\varepsilon_{1},\varepsilon_{2}>0$ and $\theta\geq 0$, there exists $\delta_{0}>0$ and an integer $n_{0}$ such that for all $(\mathcal{F}^{n}_{t})_{t\geq 0}$-stopping time $\tau_{n}\leq \theta$,
\begin{equation*}
\sup_{n\geq n_{0}} \sup_{\delta\leq \delta_{0}} \mathbb{P}^{n}\left( ||X^{n}_{\tau_{n}+\delta} - X^{n}_{\tau_{n}}||_{H}\geq \varepsilon_{1} \right)\leq \varepsilon_{2}.
\end{equation*}
\end{tabular}
\smallskip

Note that \hyperref[condition:A1]{$(\texttt{A}_{1})$} is implied by the condition \hyperref[condition:A1']{$(\texttt{A}_{1'})$} stated below which is much easier to ensue.
\smallskip

\noindent
\begin{tabular}{>{\centering}p{\asslabel}|p{\assdef}}
\phantomsection
\label{condition:A1'}
\spacenot $\left(\texttt{A}_{1'}\right)$:      & 		There exists a Hilbert space $H_{0}$ such that $H_{0}\hookrightarrow_{K} H$ and, for all $t\geq 0$, 
$$\sup_{n\geq 1} \mathbb{E}^{n}[|| X^{n}_{t} ||^{2}_{H_{0}}]<+\infty,$$ 
where the notation $\hookrightarrow_{K}$ means that the embedding is compact and $\mathbb{E}^{n}$ denotes the expectation associated with the probability $\mathbb{P}^{n}$.
\end{tabular}
\smallskip

The fact that \hyperref[condition:A1']{$(\texttt{A}_{1'})$} implies \hyperref[condition:A1]{$(\texttt{A}_{1})$} is easily checked: by compactness of the embedding, closed balls in $H_{0}$ are compact in $H$ so, Markov's inequality gives \hyperref[condition:A1]{$(\texttt{A}_{1})$}.

\subsection{Preliminaries on weighted Sobolev spaces}
\label{sec:preliminaries:sobolev}

Here are listed some definitions and technical results about the weighted Sobolev spaces used in the present article. To avoid confusion, let us stress the fact that the test functions we use are supported in the state space of the ages, namely $\mathbb{R}_{+}$. For any integer $k$ and any real $\alpha$ in $\mathbb{R}_{+}$, we denote by $\mathcal{W}^{k,\alpha}_{0}:=\mathcal{W}^{k,\alpha}_{0}(\mathbb{R}_{+})$ the completion of the set of compactly  supported (in $\mathbb{R}_{+}$) functions of class $\mathcal{C}^{\infty}$ for the following norm
\begin{equation*}
||f||_{k,\alpha}:=\left( \sum_{k'= 0}^{k} \int_{\mathbb{R}_{+}} \frac{|f^{(k')}(x)|^{2}}{1+|x|^{2\alpha}} dx \right)^{1/2},
\end{equation*}
where $f^{(j)}$ denotes the $j$\textsuperscript{th} derivative of $f$. Then, $\mathcal{W}^{k,\alpha}_{0}$ equipped with the norm $||\cdot||_{k,\alpha}$ is a separable Hilbert space and we denote $(\mathcal{W}^{-k,\alpha}_{0},||\cdot||_{-k,\alpha})$ its dual space. Notice that
\begin{equation}\label{eq:inclusion:sobolev}
\begin{cases}
\text{if $k'\geq k$, then } ||.||_{k,\alpha}\leq ||.||_{k',\alpha} \text{ and } ||.||_{-k',\alpha}\leq ||.||_{-k,\alpha},\\
\text{if $\alpha'\geq \alpha$, then } \mathcal{W}^{k,\alpha}_{0}\hookrightarrow \mathcal{W}^{k,\alpha'}_{0} \text{ and } \mathcal{W}^{-k,\alpha'}_{0}\hookrightarrow \mathcal{W}^{-k,\alpha}_{0},\\
\end{cases}
\end{equation}
where the notation $\hookrightarrow$ means that the embedding is continuous.

Let $\mathcal{C}^{k,\alpha}$ be the space of functions $f$ on $\mathbb{R}_{+}$ with continuous derivatives up to order $k$ such that, for all $k'\leq k$, $\sup_{x\in \mathbb{R}_{+}} |f^{(k')}(x)|/(1+|x|^{\alpha})<+\infty$. We equip this space with the norm
\begin{equation*}
||f||_{\mathcal{C}^{k,\alpha}}:= \sum_{k'=0}^{k} \sup_{x\in \mathbb{R}_{+}} \frac{|f^{(k')}(x)|}{1+|x|^{\alpha}}.
\end{equation*}
Recall that $\mathcal{C}^{k}_{b}$ is the space of bounded functions of class $\mathcal{C}^{k}$ with bounded derivatives of every order less than $k$. Notice that $\mathcal{C}^{k}_{b}=\mathcal{C}^{k,0}$ as normed spaces. Denote by $\mathcal{C}^{-k}_{b}$ its dual space. For any $\alpha>1/2$ and any integer $k$ (so that $\int_{\mathbb{R}_{+}} 1/(1+|x|^{2\alpha}) dx<+\infty $),  we have $\mathcal{C}^{k}_{b}\hookrightarrow \mathcal{W}^{k,\alpha}_{0}$, i.e. there exists a constant $C$ such that
\begin{equation}\label{eq:f:k:alpha:leq:f:Ckb}
||\cdot||_{k,\alpha}\leq C ||\cdot||_{\mathcal{C}^{k}_{b}}.
\end{equation}
We recall the following Sobolev embeddings (see \cite[Section 2.1.]{Meleard_97}):
\begin{enumerate}[(i)]
\item Sobolev embedding theorem: $\mathcal{W}^{m+k,\alpha}_{0}\hookrightarrow \mathcal{C}^{k,\alpha}$ for $m\geq 1$, $k\geq 0$ and $\alpha$ in $\mathbb{R}_{+}$, i.e. there exists a constant $C$ such that
\begin{equation}\label{eq:embedding:Ck:sobolev}
||f||_{\mathcal{C}^{k,\alpha}}\leq C||f||_{m+k,\alpha}.
\end{equation}
\item Maurin's theorem: $\mathcal{W}^{m+k,\alpha}_{0}\hookrightarrow_{H.S.} \mathcal{W}^{k,\alpha+\beta}_{0}$ for $m\geq 1$, $k\geq 0$, $\alpha$  in $\mathbb{R}_{+}$ and $\beta>1/2$, where $H.S.$ means that the embedding is of Hilbert-Schmidt type\footnote{Here, it means that $\sum_{j\geq 1} ||\varphi_{j}||^{2}_{k,\alpha+\beta}<+\infty$ if $(\varphi_{j})_{j\geq 1}$ is an orthonormal basis of $\mathcal{W}^{m+k,\alpha}_{0}$.}. In particular, the embedding is compact and there exists a constant $C$ such that
\begin{equation}\label{eq:embedding:sobolev:sobolev}
||f||_{k,\alpha+\beta} \leq C ||f||_{k+m,\alpha}.
\end{equation}
\end{enumerate}
Hence, the following dual embeddings hold true:
\begin{equation}\label{eq:embedding:dual}
\begin{cases}
\mathcal{W}^{-k,\alpha}_{0}\hookrightarrow \mathcal{C}^{-k}_{b}, \text{ for $k\geq 0$ and $\alpha>1/2$, (dual embedding of \eqref{eq:f:k:alpha:leq:f:Ckb})}\\
\mathcal{W}^{-k,\alpha+\beta}_{0}\hookrightarrow_{H.S.} \mathcal{W}^{-(m+k),\alpha}_{0}, \text{ for $m\geq 1$, $k\geq 0$, $\alpha$  in $\mathbb{R}_{+}$ and $\beta>1/2$.}
\end{cases}
\end{equation}

In some of the proofs given in the next section, we consider an orthonormal basis $(\varphi_{j})_{j\geq 1}$ of $\mathcal{W}^{k,\alpha}_{0}$ composed of $\mathcal{C}^{\infty}$ functions with compact support. The existence of such a basis follows from the fact that the functions of class $\mathcal{C}^{\infty}$ with compact support are dense in $\mathcal{W}^{k,\alpha}_{0}$. Furthermore, if $(\varphi_{j})_{j\geq 1}$ is an orthonormal basis of $\mathcal{W}^{k,\alpha}_{0}$ and $w$ belongs to $\mathcal{W}^{-k,\alpha}_{0}$, then $||w||^{2}_{-k,\alpha}=\sum_{j\geq 1} \left< w,\varphi_{j}\right>^{2}$ thanks to Parseval's identity. Let us precise that we stick with the notation $(\varphi_{j})_{j\geq 1}$ even if the space $\mathcal{W}^{k,\alpha}_{0}$ (in particular the regularity $k$) may differ from page to page.

The three lemmas below are useful throughout the analysis.

\begin{lem}\label{lem:control:for:linear:operator}
For every test function $\varphi$ in $\mathcal{W}^{2,\alpha}_{0}$, $||\varphi'||_{1,\alpha}\leq ||\varphi||_{2,\alpha}$.
If $f$ belongs to $\mathcal{C}^{k}_{b}$ for some $k\geq 1$ then, for any fixed $\alpha$ in $\mathbb{R}_{+}$, there exists a constant $C$ such that for every test function $\varphi$ in $\mathcal{W}^{k,\alpha}_{0}$, $||f\varphi||_{k,\alpha}\leq C ||f||_{\mathcal{C}^{k}_{b}} ||\varphi||_{k,\alpha}$.
\end{lem}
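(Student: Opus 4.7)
The plan is to treat the two claims separately, both being direct comparisons of the weighted $L^2$-norms that define $\|\cdot\|_{k,\alpha}$.

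For the first claim, I would simply unfold the definitions: by construction
\[
\|\varphi'\|_{1,\alpha}^{2}=\int_{\mathbb{R}_+}\frac{|\varphi'(x)|^{2}}{1+|x|^{2\alpha}}\,dx+\int_{\mathbb{R}_+}\frac{|\varphi''(x)|^{2}}{1+|x|^{2\alpha}}\,dx,
\]
which is exactly the sum of the $k'=1$ and $k'=2$ terms appearing in $\|\varphi\|_{2,\alpha}^{2}$; the missing $k'=0$ contribution is non-negative, so the inequality is immediate.

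For the second claim, the strategy is to expand $(f\varphi)^{(k')}$ by the Leibniz formula and bound every derivative of $f$ by $\|f\|_{\mathcal{C}^{k}_{b}}$. Concretely, for each $0\le k'\le k$,
\[
(f\varphi)^{(k')}=\sum_{j=0}^{k'}\binom{k'}{j}f^{(j)}\varphi^{(k'-j)},
\]
so that $|(f\varphi)^{(k')}(x)|^{2}\le C_{k'}\|f\|_{\mathcal{C}^{k}_{b}}^{2}\sum_{j=0}^{k'}|\varphi^{(k'-j)}(x)|^{2}$ for a combinatorial constant $C_{k'}$ (here I use $f\in\mathcal{C}^{k}_{b}$ together with $k'\le k$ to control each $f^{(j)}$ uniformly). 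Dividing by the weight $1+|x|^{2\alpha}$, integrating over $\mathbb{R}_+$ and summing over $k'=0,\ldots,k$ yields
\[
\|f\varphi\|_{k,\alpha}^{2}\le C\|f\|_{\mathcal{C}^{k}_{b}}^{2}\sum_{k'=0}^{k}\sum_{j=0}^{k'}\int_{\mathbb{R}_+}\frac{|\varphi^{(k'-j)}(x)|^{2}}{1+|x|^{2\alpha}}\,dx,
\]
and the last double sum is trivially bounded by $(k+1)\,\|\varphi\|_{k,\alpha}^{2}$, which gives the claim after taking square roots.

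There is no real obstacle: the lemma is a purely algebraic consequence of the Leibniz rule and of the definition of the weighted Sobolev norm. The only mild point to keep in mind is that the argument for the first assertion is performed on smooth compactly supported functions first (for which everything is classical) and then extended to all of $\mathcal{W}^{2,\alpha}_{0}$ by density, since this is precisely how the spaces $\mathcal{W}^{k,\alpha}_{0}$ have been defined above. The same density argument works for the second assertion, because the bound obtained on $\|f\varphi\|_{k,\alpha}$ is continuous in $\varphi$ with respect to $\|\cdot\|_{k,\alpha}$.
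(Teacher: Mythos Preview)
Your proof is correct and follows exactly the approach indicated in the paper: the first assertion is immediate from the definition of $\|\cdot\|_{2,\alpha}$, and the second from Leibniz's rule combined with the definition of $\|\cdot\|_{k,\alpha}$. You have simply written out the details (including the harmless density remark) of what the paper states in one line.
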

\begin{proof}
The first assertion follows from the definition of $|| \cdot ||_{2,\alpha}$, and the second one follows from Leibniz's rule and the definition of $|| \cdot ||_{k,\alpha}$.
\end{proof}

Let us denote $R$ (for \emph{reset}) the linear mapping defined by $R\varphi:=\varphi(0)-\varphi(\cdot)$ where $\varphi$ is some test function. This mapping naturally appears in our problem since the age process jumps to the value $0$ at each point of the underlying point process, as it appears below in Proposition \ref{prop:decomposition:eta}.

\begin{lem}\label{lem:R:continuous}
For any integer $k\geq 1$ and $\alpha>1/2$, the linear mapping $R$ is continuous from $\mathcal{W}^{k,\alpha}_{0}$ to itself.
\end{lem}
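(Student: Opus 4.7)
The plan is to establish the bound $\|R\varphi\|_{k,\alpha}\leq C\|\varphi\|_{k,\alpha}$ by splitting according to the order of derivative that appears in the definition of $\|\cdot\|_{k,\alpha}$. For every $k'\geq 1$, we have $(R\varphi)^{(k')}(x)=-\varphi^{(k')}(x)$, so the contributions of these derivatives to $\|R\varphi\|_{k,\alpha}^{2}$ coincide exactly with the corresponding contributions to $\|\varphi\|_{k,\alpha}^{2}$, and there is nothing to prove for this part. The whole content of the lemma is therefore in handling the $k'=0$ term, namely
\begin{equation*}
I(\varphi):=\int_{\mathbb{R}_{+}} \frac{|\varphi(0)-\varphi(x)|^{2}}{1+|x|^{2\alpha}}\,dx.
\end{equation*}

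To control $I(\varphi)$, I would use the elementary inequality $|\varphi(0)-\varphi(x)|^{2}\leq 2|\varphi(0)|^{2}+2|\varphi(x)|^{2}$. The second term is harmless: $\int_{\mathbb{R}_{+}} |\varphi(x)|^{2}/(1+|x|^{2\alpha})\,dx$ is precisely the $k'=0$ contribution to $\|\varphi\|_{0,\alpha}^{2}$, hence is at most $\|\varphi\|_{k,\alpha}^{2}$. The first term factors as $|\varphi(0)|^{2}\int_{\mathbb{R}_{+}} dx/(1+|x|^{2\alpha})$, and here the assumption $\alpha>1/2$ enters crucially: the remaining integral is a finite constant depending only on $\alpha$. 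It then remains to bound $|\varphi(0)|$ in terms of $\|\varphi\|_{k,\alpha}$.

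This is exactly what the Sobolev embedding theorem \eqref{eq:embedding:Ck:sobolev} provides. Applied with $m=1$ and degree $0$, it yields $\mathcal{W}^{1,\alpha}_{0}\hookrightarrow \mathcal{C}^{0,\alpha}$, hence
\begin{equation*}
|\varphi(0)|\leq (1+|0|^{\alpha})\|\varphi\|_{\mathcal{C}^{0,\alpha}}\leq C\|\varphi\|_{1,\alpha}\leq C\|\varphi\|_{k,\alpha},
\end{equation*}
where the last step uses $k\geq 1$ together with the monotonicity \eqref{eq:inclusion:sobolev} of $\|\cdot\|_{k,\alpha}$ in $k$. Combining all three estimates yields $I(\varphi)\leq C\|\varphi\|_{k,\alpha}^{2}$, hence the desired inequality $\|R\varphi\|_{k,\alpha}^{2}\leq (1+C)\|\varphi\|_{k,\alpha}^{2}$.

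There is no real obstacle here: the argument is essentially a bookkeeping of which parts of $\|R\varphi\|_{k,\alpha}$ are trivially controlled (all derivative terms) and which part requires genuine work (the $L^{2}$-with-weight term at order zero, where the constant $\varphi(0)$ is evaluated via the pointwise Sobolev embedding, and where $\alpha>1/2$ is needed to integrate the weight). The hypothesis $k\geq 1$ intervenes only to ensure that the Sobolev embedding into $\mathcal{C}^{0,\alpha}$ is applicable through $\|\varphi\|_{1,\alpha}\leq\|\varphi\|_{k,\alpha}$.
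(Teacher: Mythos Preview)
Your proof is correct and follows essentially the same approach as the paper: both observe that the derivative terms of order $k'\geq 1$ are unchanged (up to sign), split the $k'=0$ term via $|\varphi(0)-\varphi(x)|^{2}\leq 2|\varphi(0)|^{2}+2|\varphi(x)|^{2}$, use $\alpha>1/2$ to integrate the weight, and bound $|\varphi(0)|$ via the Sobolev embedding $\mathcal{W}^{k,\alpha}_{0}\hookrightarrow \mathcal{C}^{0,\alpha}$.
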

\begin{proof}
The function $R\varphi$ only differs from $\varphi$ by a constant so the derivatives of $R\varphi$ are equal to the derivatives of $\varphi$. Hence, using the convexity of the square function, we have
\begin{eqnarray*}
||R\varphi||^{2}_{k,\alpha} &\leq & \int_{\mathbb{R}_{+}} \frac{2|\varphi(0)|^{2}}{1+|x|^{2\alpha}} dx + \int_{\mathbb{R}_{+}} \frac{2|\varphi(x)|^{2}}{1+|x|^{2\alpha}} dx + \sum_{k'=1}^{k} \int_{\mathbb{R}_{+}} \frac{|\varphi^{(k')}(x)|^{2}}{1+|x|^{2\alpha}} dx \\
& \leq & 2 \int_{\mathbb{R}_{+}} \frac{1}{1+|x|^{2\alpha}} dx |\varphi(0)|^{2} + 2||\varphi ||^{2}_{k,\alpha}.
\end{eqnarray*}
Yet, $|\varphi(0)|\leq || \varphi ||_{\mathcal{C}^{0,\alpha}}\leq C ||\varphi||_{k,\alpha}$ by \eqref{eq:embedding:Ck:sobolev} and $\int_{\mathbb{R}_{+}} 1/(1+|x|^{2\alpha}) dx<+\infty $, for any fixed $\alpha>1/2$, so that  $||R\varphi||^{2}_{k,\alpha}\leq C|| \varphi ||^{2}_{k,\alpha}$.
\end{proof}

\begin{lem}\label{lem:control:normes:D:H}
For any fixed $\alpha$ in $\mathbb{R}_{+}$ and $x,y$ in $\mathbb{R}$, the mappings $\delta_{x}$ and $D_{x,y}:\mathcal{W}^{1,\alpha}_{0}\to \mathbb{R}$, defined by $\delta_{x}(\varphi):=\varphi(x)$ and $D_{x,y}(\varphi):=\varphi(x)-\varphi(y)$ are linear continuous.
In particular, for all $\alpha$ in $\mathbb{R}_{+}$, there exist some positive constants $C_{1}$ and $C_{2}$ such that, if $x$ and $y$ are bounded by some constant $M$, i.e. $|x|\leq M$ and $|y|\leq M$, then
\begin{equation}
\begin{cases}
|| \delta_{x} ||_{-2,\alpha}\leq || \delta_{x} ||_{-1,\alpha} \leq C_{1} (1+ M^{\alpha}),\\
|| D_{x,y} ||_{-2,\alpha}\leq || D_{x,y} ||_{-1,\alpha} \leq C_{2} (1+ M^{\alpha}).
\end{cases}
\end{equation}
\end{lem}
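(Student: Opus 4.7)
The plan is to reduce everything to a single pointwise bound on $|\varphi(x)|$ obtained via the Sobolev embedding $\mathcal{W}^{1,\alpha}_{0}\hookrightarrow \mathcal{C}^{0,\alpha}$ given in \eqref{eq:embedding:Ck:sobolev} (with $m=1$, $k=0$). Linearity of $\delta_{x}$ and $D_{x,y}$ is immediate from the definitions, so the whole content lies in establishing the displayed quantitative estimates; the continuity assertion is a by-product.

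The first inequality in each line, namely $\|\delta_{x}\|_{-2,\alpha}\leq \|\delta_{x}\|_{-1,\alpha}$ and $\|D_{x,y}\|_{-2,\alpha}\leq \|D_{x,y}\|_{-1,\alpha}$, is an instantaneous consequence of \eqref{eq:inclusion:sobolev}: since $\|\varphi\|_{1,\alpha}\leq \|\varphi\|_{2,\alpha}$, any unit ball in $\mathcal{W}^{2,\alpha}_{0}$ is contained in the unit ball of $\mathcal{W}^{1,\alpha}_{0}$, so taking the supremum of $|T\varphi|$ over the smaller set yields a smaller number, for $T\in\{\delta_{x},D_{x,y}\}$. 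Hence, it only remains to prove the $\mathcal{W}^{-1,\alpha}_{0}$ estimates.

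For the bound on $\delta_{x}$, I apply the Sobolev embedding \eqref{eq:embedding:Ck:sobolev} to obtain a constant $C$ such that $\|\varphi\|_{\mathcal{C}^{0,\alpha}}\leq C\|\varphi\|_{1,\alpha}$ for every $\varphi\in\mathcal{W}^{1,\alpha}_{0}$. Unfolding the definition of $\|\cdot\|_{\mathcal{C}^{0,\alpha}}$ gives
\begin{equation*}
|\varphi(x)|\leq (1+|x|^{\alpha})\,\|\varphi\|_{\mathcal{C}^{0,\alpha}}\leq C(1+|x|^{\alpha})\,\|\varphi\|_{1,\alpha},
\end{equation*}
from which $\|\delta_{x}\|_{-1,\alpha}\leq C(1+|x|^{\alpha})\leq C_{1}(1+M^{\alpha})$ follows by taking the supremum over $\|\varphi\|_{1,\alpha}\leq 1$ and using $|x|\leq M$. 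The bound for $D_{x,y}$ is then derived in one line by the triangle inequality: $|D_{x,y}(\varphi)|\leq |\delta_{x}(\varphi)|+|\delta_{y}(\varphi)|\leq 2C(1+M^{\alpha})\|\varphi\|_{1,\alpha}$, yielding $\|D_{x,y}\|_{-1,\alpha}\leq C_{2}(1+M^{\alpha})$ with $C_{2}=2C$.

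There is no real obstacle here; the only mildly delicate point is being consistent about the direction of the dual embedding (a stronger primal norm produces a weaker dual norm), but this is already encoded in \eqref{eq:inclusion:sobolev}. Note also that the apparent generality $x,y\in\mathbb{R}$ in the statement is harmless since the test functions in $\mathcal{W}^{1,\alpha}_{0}$ are limits of smooth functions compactly supported in $\mathbb{R}_{+}$, so only non-negative values of $x$ and $y$ actually contribute to the evaluation, and the estimate above is uniform in $|x|,|y|\leq M$.
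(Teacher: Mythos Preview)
Your proof is correct and follows essentially the same route as the paper: the key estimate $|\varphi(x)|\leq (1+|x|^{\alpha})\|\varphi\|_{\mathcal{C}^{0,\alpha}}\leq C(1+|x|^{\alpha})\|\varphi\|_{1,\alpha}$ via the Sobolev embedding \eqref{eq:embedding:Ck:sobolev}, together with the triangle inequality $|D_{x,y}(\varphi)|\leq |\delta_{x}(\varphi)|+|\delta_{y}(\varphi)|$, is exactly what the paper does. You are simply more explicit about the $\|\cdot\|_{-2,\alpha}\leq \|\cdot\|_{-1,\alpha}$ step, which the paper leaves implicit.
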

\begin{proof}
Remark that $|D_{x,y}(\varphi)|\leq |\varphi(x)|+|\varphi(y)|= |\delta_{x}(\varphi)| + |\delta_{y}(\varphi)|$. Hence, it suffices to show that there exists some positive constant $C$ such that
$
|| \delta_{x} ||_{-1,\alpha} \leq C (1+ |x|^{\alpha}).
$
Yet, $|\delta_{x}(\varphi)|=|\varphi(x)|\leq ||\varphi||_{\mathcal{C}^{0,\alpha}} (1+|x|^{\alpha})\leq C ||\varphi||_{1,\alpha} (1+|x|^{\alpha})$ by \eqref{eq:embedding:Ck:sobolev}.
\end{proof}

\begin{rem}
At this point, let us mention two reasons why weighted Sobolev spaces are more appropriate than standard (non-weighted) Sobolev spaces of functions on $\mathbb{R}_{+}$:
\begin{itemize}
\item we want to be able to consider functions of $\mathcal{C}^{k}_{b}$ as test functions: indeed, $\Psi$ must be considered as a test function, in Equation \eqref{eq:closed:equation:n:2} below for instance, yet we do not want $\Psi$ to be compactly supported with respect to the age $s$ or even to rapidly decrease when $s$ goes to infinity. The natural space to which $\Psi$ belongs is some $\mathcal{C}^{k}_{b}$ space,
\item in order to ensue criterion \hyperref[condition:A1']{$(\texttt{A}_{1'})$}, a compact embedding is required but Maurin's theorem does not apply for standard Sobolev spaces on $\mathbb{R}_{+}$ (see \cite[Theorem 6.37]{adams2003sobo}).
\end{itemize}
\end{rem}

In order to apply Lemma \ref{lem:R:continuous} and to satisfy the first point in the remark above, the weight $\alpha$ is assumed to be greater than $1/2$ in all the next sections so that \eqref{eq:f:k:alpha:leq:f:Ckb} holds true.

\subsection{Decomposition of the fluctuations}

Here, we give a semi-martingale representation of $\eta^{n}$ used to simplify the study of tightness (recall that $R$ is defined above in Lemma \ref{lem:R:continuous}).

\begin{prop}\label{prop:decomposition:eta}
Under Assumption (\hyperref[ass:for:LLN]{$\mathcal{A}_{\text{\tiny{LLN}}}$}), for every test function $\varphi$ in $\mathcal{C}^{1}_{b}$ and $t\geq 0$, 
\begin{equation}\label{eq:decomposition:eta}
\left< \eta_{t}^{n},\varphi\right> -\left< \eta_{0}^{n},\varphi\right> =\int_{0}^{t} \big( \left< \eta_{z}^{n},L_{z}\varphi\right> + A^{n}_{z}(\varphi) \big) dz+M_{t}^{n}(\varphi),
\end{equation}
with $L_{z}\varphi(s)=\varphi'(s)+ \Psi(s,\overline{\gamma}(z))R\varphi(s)$  for all $z\geq 0$ and $s$ in $\mathbb{R}$, where $\overline{\gamma}$ is defined by \eqref{eq:def:gamma:barre:t}, and
\begin{equation}\label{eq:def:M:n:A:n}
\begin{cases}
\displaystyle M_{t}^{n}(\varphi):= n^{-1/2} \sum_{i=1}^{n} \int_{0}^{t} R\varphi(S_{z-}^{n,i}) \left(N^{n,i}(dz)-\lambda_{z}^{n,i}dz\right),\\
\displaystyle A_{z}^{n}(\varphi):= n^{-1/2} \sum_{i=1}^{n} R\varphi(S_{z-}^{n,i}) \left(\lambda_{z}^{n,i} - \Psi(S^{n,i}_{z-},\overline{\gamma}(z) ) \right).
\end{cases}
\end{equation}
Furthermore, for any $\varphi$ in $\mathcal{C}^{1}_{b}$, $(M^{n}_{t}(\varphi))_{t\geq 0}$ is a real valued $\mathbb{F}$-martingale with angle bracket given by
\begin{equation}\label{eq:bracket:M:n:(phi)}
<M^{n}(\varphi)>_{t} = \frac{1}{n} \sum_{i=1}^{n} \int_{0}^{t} R\varphi\left( S^{n,i}_{z-} \right)^{2} \lambda^{n,i}_{z} dz.
\end{equation}
\end{prop}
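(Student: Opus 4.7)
\medskip

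\noindent\textbf{Proof plan.} The plan is to decompose separately the empirical term $\sqrt{n}\langle\overline{\mu}^{n}_{S_{t}},\varphi\rangle=n^{-1/2}\sum_{i=1}^{n}\varphi(S^{n,i}_{t})$ and the deterministic term $\sqrt{n}\langle u_{t},\varphi\rangle$, and to recombine the two at the end. The key structural fact I would exploit is that the age process $S^{n,i}_{t}$ is a piecewise linear process with slope $1$ between the points of $N^{n,i}$ that resets to $0$ at each such point; equivalently, $dS^{n,i}_{t}=dt-S^{n,i}_{t-}N^{n,i}(dt)$ in the Stieltjes sense.

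\smallskip

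\noindent\emph{Step 1 (the empirical part).} I would apply the change-of-variable formula for pure jump processes to $\varphi(S^{n,i}_{t})$: between jumps of $N^{n,i}$ one accumulates $\varphi'(S^{n,i}_{z-})\,dz$, while at each jump time $\varphi$ goes from $\varphi(S^{n,i}_{z-})$ to $\varphi(0)$, giving an increment $R\varphi(S^{n,i}_{z-})$. Hence
\begin{equation*}
\varphi(S^{n,i}_{t})-\varphi(S^{n,i}_{0})=\int_{0}^{t}\varphi'(S^{n,i}_{z-})\,dz+\int_{0}^{t}R\varphi(S^{n,i}_{z-})\,N^{n,i}(dz).
\end{equation*}
Splitting $N^{n,i}(dz)=\lambda^{n,i}_{z}\,dz+(N^{n,i}(dz)-\lambda^{n,i}_{z}\,dz)$, summing in $i$ and dividing by $\sqrt{n}$, the compensated part yields precisely $M^{n}_{t}(\varphi)$. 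In the remaining drift I write $\lambda^{n,i}_{z}=\Psi(S^{n,i}_{z-},\overline{\gamma}(z))+(\lambda^{n,i}_{z}-\Psi(S^{n,i}_{z-},\overline{\gamma}(z)))$; the second piece, summed against $R\varphi(S^{n,i}_{z-})$, is exactly $A^{n}_{z}(\varphi)$, while the first piece combines with the $\varphi'$ term to give $n^{-1/2}\sum_{i}L_{z}\varphi(S^{n,i}_{z-})=\sqrt{n}\,\langle\overline{\mu}^{n}_{S_{z-}},L_{z}\varphi\rangle$.

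\smallskip

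\noindent\emph{Step 2 (the deterministic part).} I would use Proposition~\ref{prop:age:process:limit:equation:PPS} to test the PDE \eqref{eq:edp:PPS} against $\varphi$. Multiplying the evolution equation by $\varphi(s)$, integrating over $s\in\mathbb{R}_{+}$ and performing one integration by parts on the transport term (the boundary contribution at infinity vanishes because $u(t,\cdot)$ has compact support by a uniform-in-$t$ a~priori bound coming from \eqref{eq:control:age:uniform}, and the contribution at $s=0$ produces $u(t,0)\varphi(0)$), one obtains
\begin{equation*}
\tfrac{d}{dt}\langle u_{t},\varphi\rangle=\langle u_{t},\varphi'\rangle-\langle u_{t},\Psi(\cdot,X(t))\varphi\rangle+u(t,0)\varphi(0).
\end{equation*}
The boundary condition of \eqref{eq:edp:PPS} gives $u(t,0)=\langle u_{t},\Psi(\cdot,X(t))\rangle$, so the last two terms combine into $\langle u_{t},\Psi(\cdot,X(t))R\varphi\rangle$. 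Since $X(t)=\int_{0}^{t}h(t-z)u(z,0)dz=\overline{\gamma}(t)$ (because $\overline{\lambda}(z)=u(z,0)$, as follows from taking expectation in \eqref{eq:limit:equation:counting:process:fluctuations} and using Proposition~\ref{prop:age:process:limit:equation:PPS}), this identifies the right-hand side as $\langle u_{t},L_{t}\varphi\rangle$, i.e.\ $\tfrac{d}{dt}\langle u_{t},\varphi\rangle=\langle u_{t},L_{t}\varphi\rangle$.

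\smallskip

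\noindent\emph{Step 3 (combining and the martingale property).} Subtracting $\sqrt{n}$ times Step~2 from Step~1 gives \eqref{eq:decomposition:eta}, using that $S^{n,i}_{z-}=S^{n,i}_{z}$ for Lebesgue-a.e.~$z$ so that $\langle\overline{\mu}^{n}_{S_{z-}},L_{z}\varphi\rangle=\langle\overline{\mu}^{n}_{S_{z}},L_{z}\varphi\rangle$ in the $dz$-integral. For the martingale statement, $R\varphi$ is bounded because $\varphi\in\mathcal{C}^{1}_{b}$, and $\lambda^{n,i}$ is bounded by $\|\Psi\|_{\infty}$ on any finite horizon, so each stochastic integral against $N^{n,i}(dz)-\lambda^{n,i}_{z}dz$ is a genuine $\mathbb{F}$-martingale. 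The bracket \eqref{eq:bracket:M:n:(phi)} comes from standard Poisson-integral computations for each $i$, together with the fact that the individual compensated martingales are orthogonal: the $\Pi^{i}$ are independent Poisson measures, so $N^{n,i}$ and $N^{n,j}$ share no common jumps for $i\neq j$ and the cross brackets vanish.

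\smallskip

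\noindent\emph{Main obstacle.} The only non-routine point is Step~2: one has to justify the integration by parts (no boundary term at infinity) and the identification $X=\overline{\gamma}$ to recognize the PDE generator as the adjoint of $L_{z}$. Everything else is a careful bookkeeping of the Itô-type decomposition for piecewise linear jump processes.
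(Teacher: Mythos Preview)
Your proof is correct, and Steps~1 and~3 match the paper almost verbatim. The genuine difference is in Step~2. The paper does \emph{not} test the PDE~\eqref{eq:edp:PPS} against $\varphi$ and integrate by parts; instead it exploits the probabilistic characterization of $u_t$ as the law of the limit age $\overline{S}^{1}_{t}$ (Proposition~\ref{prop:age:process:limit:equation:PPS}) and applies the very same change-of-variable formula from Step~1 to the process $\overline{S}^{1}$, obtaining
\[
\langle u_{t},\varphi\rangle-\langle u_{0},\varphi\rangle
=\mathbb{E}\Big[\int_{0}^{t}\varphi'(\overline{S}^{1}_{z})\,dz\Big]
+\mathbb{E}\Big[\int_{0}^{t}R\varphi(\overline{S}^{1}_{z-})\,\overline{N}^{1}(dz)\Big],
\]
and then replaces $\overline{N}^{1}(dz)$ by $\overline{\lambda}^{1}_{z}\,dz=\Psi(\overline{S}^{1}_{z-},\overline{\gamma}(z))\,dz$ via predictability and Fubini. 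This route is slightly more economical: it sidesteps the need to justify classical regularity of $u$ for the integration by parts, the vanishing of the boundary term at infinity, and the separate identification $X=\overline{\gamma}$ (which in the paper's argument is built into the intensity of $\overline{N}^{1}$ from the start). Your PDE route is perfectly valid under the stated assumptions, but it front-loads analytic work that the probabilistic mirror argument gets for free.
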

\begin{rem}
To avoid confusion, let us mention that \eqref{eq:def:M:n:A:n} defines $M_{t}^{n}$ and $A_{z}^{n}$ as distributions acting on test functions. More precisely, we show below that they can be seen as distributions in $\mathcal{W}^{-2,\alpha}_{0}$ (Proposition \ref{prop:control:-1:alpha:combined}). However, we do not use the notation for the dual action $\left< \cdot,\cdot \right>$ to avoid tricky notation involving several angle brackets in \eqref{eq:bracket:M:n:(phi)} for instance.
\end{rem}

The proof of Proposition \ref{prop:decomposition:eta} relies on the integrability properties of the stochastic intensity and is given in Appendix \ref{sec:proof:prop:decomposition:eta}.

\subsection{Estimates in dual spaces}
\label{sec:estimates:dual:space}

Below are stated estimates of the terms $\eta^{n}$, $A^{n}$ and $M^{n}$ - appearing in \eqref{eq:decomposition:eta} - regarded as distributions.
More precisely, the estimates given in this section are stated in terms of the norm on either $\mathcal{W}^{-1,\alpha}_{0}$ or $\mathcal{W}^{-2,\alpha}_{0}$ for any $\alpha>1/2$ (in comparison with $\mathcal{W}^{-2,2}_{0}$ and $\mathcal{W}^{-4,1}_{0}$ in \cite{jourdain1998propagation} for instance). Usually, like in \cite{Meleard_97,jourdain1998propagation,luccon2015transition,Meleard_96}, the weight is linked to the maximal order of the moment estimates obtained on the positions of the particles. Here, the age processes are bounded in finite time horizon (remind \eqref{eq:control:age:uniform}) so the weight $\alpha$ of the Sobolev space can be taken as large as wanted. The weighted Sobolev spaces are nevertheless interesting here since, in particular, the distribution $\eta^{n}_{t}$ belongs to $\mathcal{W}^{-1,\alpha}_{0}$ for all $t\geq 0$ (see Proposition \ref{prop:control:-1:alpha:combined} below). We refer to the introductory discussion in Section \ref{sec:introduction} for complements on the usefulness of the weights.

We first give estimates in the smaller space $\mathcal{W}^{-1,\alpha}_{0}$. This is later used in order to prove tightness (remember condition \hyperref[condition:A1']{$(\texttt{A}_{1'})$} of the Aldous type criterion stated on page \pageref{condition:A1'}).

\begin{prop}
\label{prop:control:-1:alpha:combined}
Under Assumption (\hyperref[ass:for:LLN]{$\mathcal{A}_{\text{\tiny{LLN}}}$}), for any $\alpha>1/2$ and $\theta\geq 0$, the following statements hold true:
\begin{enumerate}[(i)]
\item \label{item:prop:i} the sequence $(\eta^{n})_{n\geq 1}$ is such that,
\begin{equation}\label{eq:control:eta:n:-1:alpha}
\sup_{n\geq 1}\sup_{t\in [0,\theta]} \mathbb{E}\left[ ||\eta^{n}_{t}||^{2}_{-1,\alpha} \right] <+\infty,
\end{equation}

\item \label{item:prop:ii} the process $(M^{n}_{t})_{t\geq 0}$, defined by \eqref{eq:def:M:n:A:n}, is an $\mathbb{F}$-martingale which belongs to $\mathcal{D}(\mathbb{R}_{+},\mathcal{W}^{-1,\alpha}_{0})$ almost surely. Furthermore, for any $\theta\geq 0$,
\begin{equation}\label{eq:control:M:n:-1:alpha}
\sup_{n\geq 1} \mathbb{E}\left[\sup_{t\in [0,\theta]} ||M^{n}_{t}||^{2}_{-1,\alpha} \right] <+\infty.
\end{equation}

\item \label{item:prop:iii} the sequence $(A^{n})_{n\geq 1}$, defined by \eqref{eq:def:M:n:A:n}, is such that,
\begin{equation}\label{eq:control:A:n:-2:alpha}
\sup_{n\geq 1} \sup_{t\in [0,\theta]} \mathbb{E}\left[ ||A^{n}_{t}||^{2}_{-2,\alpha} \right] <+\infty.
\end{equation}

\item \label{item:prop:iv} under (\hyperref[ass:Psi:dPsi:C2b]{$\mathcal{A}^{\Psi}_{s,\mathcal{C}^{2}_{b}}$}), for any $z$ in $\mathbb{R}_{+}$, the application $L_{z}$ defined in Proposition \ref{prop:decomposition:eta} is a linear continuous mapping from $\mathcal{W}^{2,\alpha}_{0}$ to $\mathcal{W}^{1,\alpha}_{0}$ and, for all $\varphi$ in $\mathcal{W}^{2,\alpha}_{0}$,
\begin{equation}\label{eq:control:L(f)}
\sup_{z\in [0,\theta]} \frac{|| L_{z} \varphi ||^{2}_{1,\alpha}}{|| \varphi ||^{2}_{2,\alpha}}<+\infty.
\end{equation}
\end{enumerate}
\end{prop}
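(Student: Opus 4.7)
The plan is to handle item (iv) as a short analytic computation and then use a common Parseval-based scheme for the three probabilistic items (i)--(iii). For (iv), decompose $L_z\varphi=\varphi'+\Psi(\cdot,\overline\gamma(z))R\varphi$ and bound the two summands in $||\cdot||_{1,\alpha}$ separately: the first via the first assertion of Lemma \ref{lem:control:for:linear:operator}, the second via the product inequality of that same lemma combined with Lemma \ref{lem:R:continuous} to control $||R\varphi||_{1,\alpha}$. The supremum over $z\in[0,\theta]$ is then dealt with using the continuity of $\overline\gamma$ on $[0,\theta]$ and the local boundedness of $y\mapsto ||\Psi(\cdot,y)||_{\mathcal{C}^2_b}$ granted by (\hyperref[ass:Psi:dPsi:C2b]{$\mathcal{A}^{\Psi}_{s,\mathcal{C}^{2}_{b}}$}).

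For items (i)--(iii) the uniform strategy is to pick an orthonormal basis $(\varphi_j)_{j\geq 1}$ of the appropriate primal Sobolev space ($\mathcal{W}^{1,\alpha}_0$ for (i)--(ii), $\mathcal{W}^{2,\alpha}_0$ for (iii)), apply Parseval to rewrite the squared dual norm as $\sum_j\langle\cdot,\varphi_j\rangle^2$, swap sum with expectation by Fubini, and then collapse the pointwise sums $\sum_j\varphi_j(s)^2=||\delta_s||^2_{-k,\alpha}$ and $\sum_j R\varphi_j(s)^2=||D_{0,s}||^2_{-k,\alpha}$ thanks to Lemma \ref{lem:control:normes:D:H}. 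Combined with the deterministic age bound \eqref{eq:control:age:uniform}, this reduces everything to the coupling estimates $\chi^{(k)}_n$ and $\xi^{(k)}_n$ supplied by Proposition \ref{prop:rate:of:convergence:proba:k:tuple:different:ages}.

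Concretely, for (i) I would split $\eta^n_t=n^{-1/2}\sum_i(\delta_{S^{n,i}_t}-\delta_{\overline S^i_t})+n^{-1/2}\sum_i(\delta_{\overline S^i_t}-u_t)$. The second piece is a sum of i.i.d. centred distributions, and its squared $\mathcal W^{-1,\alpha}_0$-norm has expectation $\mathbb E[||\delta_{\overline S^1_t}-u_t||^2_{-1,\alpha}]$, uniformly bounded. For the first piece, exchangeability isolates a diagonal contribution $\mathbb E[||D_{S^{n,1}_t,\overline S^1_t}||^2_{-1,\alpha}]\leq C\chi^{(1)}_n(t)$ and $n-1$ off-diagonal contributions dominated, via Cauchy--Schwarz in the dual inner product $\langle D_1,D_2\rangle_{-1,\alpha}$, by $C\chi^{(2)}_n(t)$. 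For (ii), Doob's $L^2$-inequality coordinatewise gives $\mathbb E[\sup_{t\leq\theta}M^n_t(\varphi_j)^2]\leq 4\mathbb E[\langle M^n(\varphi_j)\rangle_\theta]$; summing over $j$, formula \eqref{eq:bracket:M:n:(phi)} together with $\lambda^{n,i}_z\leq||\Psi||_\infty$ and $\sum_j R\varphi_j(s)^2=||D_{0,s}||^2_{-1,\alpha}$ yields the bound, while the c\`adl\`ag property in $\mathcal W^{-1,\alpha}_0$ follows from the fact that $M^n$ is piecewise continuous between the points of $\bigcup_i N^{n,i}$ and its jumps have norm $n^{-1/2}||D_{0,S^{n,i}_{t-}}||_{-1,\alpha}$, uniformly bounded on $[0,\theta]$. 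For (iii), the Lipschitz property of $\Psi$ in $y$ given by (\hyperref[ass:Psi:C2]{$\mathcal{A}^{\Psi}_{y,\mathcal{C}^{2}}$}) yields $|A^n_z(\varphi)|\leq\mathrm{Lip}(\Psi)\,n^{-1/2}\sum_i|R\varphi(S^{n,i}_{z-})|\,|\gamma^n_z-\overline\gamma(z)|$; Cauchy--Schwarz in $i$ followed by Parseval in a $\mathcal W^{2,\alpha}_0$-ONB produces the factor $\sum_i ||D_{0,S^{n,i}_{z-}}||^2_{-2,\alpha}\leq Cn(1+z^{2\alpha})$, and taking expectation together with $\mathbb E[(\gamma^n_z-\overline\gamma(z))^2]=\xi^{(2)}_n(z)\lesssim n^{-1}$ finishes the argument.

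\textbf{Main obstacle.} The most delicate step is the cross term in (i). Treating the $n-1$ off-diagonal pairs with only the first-order coupling estimate $\chi^{(1)}_n\lesssim n^{-1/2}$ would give a contribution of order $\sqrt n$, which is too large. The stronger rate $\chi^{(2)}_n(t)\lesssim n^{-1}$ supplied by Proposition \ref{prop:rate:of:convergence:proba:k:tuple:different:ages}, together with the dual Cauchy--Schwarz inequality $|\langle D_1,D_2\rangle_{-1,\alpha}|\leq||D_1||_{-1,\alpha}||D_2||_{-1,\alpha}$, is exactly what is needed to make the bound uniform in $n$, and this is the very reason why the total-variation refinement of Section \ref{sec:estimates:total:variation} has been carried out.
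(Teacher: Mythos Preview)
Your proposal is correct and follows the paper's own proof in all essentials: the Parseval/Lemma~\ref{lem:control:normes:D:H} scheme for (i)--(iii), the use of $\chi^{(2)}_n$ for the off-diagonal cross terms in (i), Doob coordinatewise for (ii), and the decomposition $L_z\varphi=\varphi'+\Psi(\cdot,\overline\gamma(z))R\varphi$ for (iv) are exactly what the paper does. Two minor differences are worth noting. For (iii) you use only the Lipschitz bound $|\lambda^{n,i}_z-\Psi(S^{n,i}_{z-},\overline\gamma(z))|\leq\mathrm{Lip}(\Psi)|\gamma^n_z-\overline\gamma(z)|$ together with $\xi^{(2)}_n\lesssim n^{-1}$, whereas the paper invokes the full second-order Taylor expansion \eqref{eq:expansion:A:n:t}; your route is shorter and entirely sufficient here --- the paper develops the expansion at this stage only because it is re-used later in Section~\ref{sec:limit:equation}. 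For the c\`adl\`ag property in (ii), you argue via piecewise continuity of $t\mapsto M^n_t$ between the (a.s.\ finitely many on compacts) jump times of $\bigcup_i N^{n,i}$, while the paper gives a coordinatewise $\varepsilon/3$-argument based on the a.s.\ summability of $\sum_k\sup_{t\leq\theta}M^n_t(\varphi_k)^2$; both are valid, yours being more hands-on.
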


The proof of Proposition \ref{prop:control:-1:alpha:combined} is given in Appendix \ref{sec:proof:prop:control:-1:alpha:combined} and mainly relies on the estimates given in Lemma  \ref{lem:control:normes:D:H}. However, let us mention that:
\begin{itemize}
\item the following expansion is used in the proof of $(iii)$ as well as in Section \ref{sec:limit:equation}: using that $\lambda^{n,i}_{t}=\Psi(S^{n,i}_{t-},\gamma^{n}_{t})$ and (\hyperref[ass:Psi:C2]{$\mathcal{A}^{\Psi}_{y,\mathcal{C}^{2}}$}), it follows from Taylor's inequality that for $\varphi$ in $\mathcal{W}^{2,\alpha}_{0}$,
\begin{equation}\label{eq:expansion:A:n:t}
A^{n}_{t}(\varphi)=\frac{1}{n} \sum_{i=1}^{n} R\varphi(S^{n,i}_{t-}) \frac{\partial \Psi}{\partial y} (S^{n,i}_{t-},\overline{\gamma}(t)) \left( \sqrt{n}( \gamma^{n}_{t}-\overline{\gamma}(t)) + \sqrt{n} r^{n,i}_{t}\right),
\end{equation}
with the rests satisfying $|r^{n,i}_{t}|\leq \sup_{s,y} |\frac{\partial^{2} \Psi}{\partial y^{2}}(s,y)| |\gamma^{n}_{t}-\overline{\gamma}(t)|^{2}/2$. This upper-bound does not depend on $\varphi$.
Let us denote $\Gamma^{n}_{t-}:=\sqrt{n}(\gamma^{n}_{t}-\overline{\gamma}(t))$ and 
$$R^{n,(1)}_{t}(\varphi):=\frac{1}{n} \sum_{i=1}^{n} \big( R\varphi(S^{n,i}_{t-}) \frac{\partial \Psi}{\partial y} (S^{n,i}_{t-},\overline{\gamma}(t)) \sqrt{n}r^{n,i}_{t} \big),$$
so that \eqref{eq:expansion:A:n:t} rewrites as
\begin{equation}\label{eq:An:function:of:Gamma}
A^{n}_{t}(\varphi)= \left< \overline{\mu}^{n}_{S_{t}},\frac{\partial \Psi}{\partial y}(\cdot,\overline{\gamma}(t))R\varphi \right> \Gamma^{n}_{t-} +R^{n,(1)}_{t}(\varphi).
\end{equation}
\item Lemma \ref{lem:control:for:linear:operator} and the following properties are used to prove point $(iv)$:
under Assumption (\hyperref[ass:Psi:dPsi:C2b]{$\mathcal{A}^{\Psi}_{s,\mathcal{C}^{2}_{b}}$}), the functions
\begin{equation}\label{eq:Psi:dPsi:C2b:locally:in:time}
t\mapsto ||\Psi(\cdot,\overline{\gamma}(t))||_{\mathcal{C}^{2}_{b}} \text{ and } t\mapsto \left\Vert \frac{\partial \Psi}{\partial y}(\cdot,\overline{\gamma}(t)) \right\Vert_{\mathcal{C}^{1}_{b}} \ \text{ are locally bounded,}
\end{equation}
since $t\mapsto \overline{\gamma}(t)$ is locally bounded. In the same way, under Assumption (\hyperref[ass:Psi:C4b]{$\mathcal{A}^{\Psi}_{s,\mathcal{C}^{4}_{b}}$}), the function
\begin{equation}\label{eq:Psi:C4b:locally:in:time}
t\mapsto ||\Psi(\cdot,\overline{\gamma}(t))||_{\mathcal{C}^{4}_{b}}  \ \text{ is locally bounded.}
\end{equation}
\end{itemize}

Proposition \ref{prop:control:-1:alpha:combined}, combined with the first line of Equation \eqref{eq:embedding:dual}, gives that $\eta^{n}$, $A^{n}$ and $M^{n}$ belong to $\mathcal{W}^{-2,\alpha}_{0}$. Hence, we may consider the following decomposition in $\mathcal{W}^{-2,\alpha}_{0}$,
\begin{equation}\label{eq:decomposition:eta:dual:space}
\eta_{t}^{n} - \eta_{0}^{n} =\int_{0}^{t} L_{z}^{*}\eta_{z}^{n}dz + \int_{0}^{t} A^{n}_{z} dz + M_{t}^{n},
\end{equation}
where $L_{z}^{*}$ is the adjoint operator of $L_{z}$. 

\begin{rem}
\label{rem:control:L(f):etoile}
As a corollary of Proposition \ref{prop:control:-1:alpha:combined}-$(iv)$, one has, for all $\alpha>1/2$, all $w$ in $\mathcal{W}^{-1,\alpha}_{0}$ and all $\theta\geq 0$,
\begin{equation}\label{eq:control:L(f):etoile}
\sup_{z\in [0,\theta]} \frac{|| L_{z}^{*} w ||^{2}_{-2,\alpha}}{|| w ||^{2}_{-1,\alpha}}<+\infty.
\end{equation}
Indeed, both $|| L_{z}^{*} w ||^{2}_{-2,\alpha}\leq \sup_{||\varphi||_{2,\alpha}=1} ||L_{z}\varphi||^{2}_{1,\alpha} ||w||^{2}_{-1,\alpha}$ and Equation \eqref{eq:control:L(f)} give the result.

Furthermore, the Doob-Meyer process \mbox{$(<\!\!<\! M^{n} \!>\!\!>_{t})_{t\geq 0}$} associated with the square integrable $\mathbb{F}$-martingale $(M^{n}_{t})_{t\geq 0}$ satisfies the following: for any $t\geq 0$, \mbox{$<\!\!<\! M^{n} \!>\!\!>_{t}$} is the linear continuous mapping from $\mathcal{W}^{2,\alpha}_{0}$ to $\mathcal{W}^{-2,\alpha}_{0}$ given, for all $\varphi_{1}$, $\varphi_{2}$ in $\mathcal{W}^{2,\alpha}_{0}$, by
\begin{equation*}
\left< {<\!\!<\! M^{n} \!>\!\!>}_{t} (\varphi_{1}),\varphi_{2} \right> = \frac{1}{n} \sum_{i=1}^{n} \int_{0}^{t} R\varphi_{1}(S^{n,i}_{z-}) R\varphi_{2}(S^{n,i}_{z-}) \lambda^{n,i}_{z} dz.
\end{equation*} 
This last equation can be retrieved thanks to the polarization identity from \eqref{eq:bracket:M:n:(phi)}.
\end{rem}

Yet, to give sense to Equation \eqref{eq:decomposition:eta:dual:space}, we need the lemma stated below.

\begin{lem}\label{lem:integrals:Bochner}
Under (\hyperref[ass:for:TGN]{$\mathcal{A}_{\text{\tiny{TGN}}}$}), the integrals $\int_{0}^{t} L_{z}^{*}\eta_{z}^{n}dz$ and $\int_{0}^{t} A^{n}_{z} dz$ are almost surely well defined as Bochner integrals in $\mathcal{W}^{-2,\alpha}_{0}$ for any $\alpha>1/2$. In particular, the functions $t\mapsto \int_{0}^{t} L_{z}^{*}\eta_{z}^{n}dz$ and $t\mapsto \int_{0}^{t} A^{n}_{z} dz$ are almost surely strongly continuous in $\mathcal{W}^{-2,\alpha}_{0}$.
\end{lem}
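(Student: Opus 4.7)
The plan is, for each of the two integrands, to verify the two defining properties of a Bochner integrable function with values in the separable Hilbert space $\mathcal{W}^{-2,\alpha}_{0}$: strong measurability of $z \mapsto f(z)$ and integrability of its norm $\int_{0}^{t}\|f(z)\|_{-2,\alpha}\,dz < \infty$ almost surely. Once both are in hand, the strong continuity of the primitive $t\mapsto \int_{0}^{t} f(z)\,dz$ is automatic from the Bochner-integral bound $\|\int_{s}^{t} f(z)\,dz\|_{-2,\alpha} \leq \int_{s}^{t} \|f(z)\|_{-2,\alpha}\,dz$ and the absolute continuity of the Lebesgue integral.

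For strong measurability, I will invoke Pettis' theorem: since $\mathcal{W}^{-2,\alpha}_{0}$ is a separable Hilbert space, it suffices to establish weak measurability, i.e.\ that $z\mapsto A^{n}_{z}(\varphi)$ and $z\mapsto \langle L_{z}^{*}\eta^{n}_{z},\varphi\rangle = \langle \eta^{n}_{z}, L_{z}\varphi\rangle$ are Borel measurable in $z$ for every fixed test function $\varphi$ in $\mathcal{W}^{2,\alpha}_{0}$. For $A^{n}$, this is immediate from the explicit formula \eqref{eq:def:M:n:A:n} together with the left-continuity (hence measurability) of the predictable age processes $S^{n,i}_{z-}$, of $\gamma^{n}_{z}$, and of the deterministic $\overline{\gamma}(z)$. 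For $L_{z}^{*}\eta^{n}_{z}$, the same measurability follows from the càdlàg character of $z\mapsto \eta^{n}_{z}$ together with the continuity in $z$ of $z\mapsto L_{z}\varphi$, which enters only through $\overline{\gamma}$.

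For the norm-integrability step, I will rely on the $L^{2}$-estimates already at our disposal. Fixing any $\theta\geq t$, Remark \ref{rem:control:L(f):etoile} provides a constant $C_{\theta}$ such that $\|L_{z}^{*}\eta^{n}_{z}\|_{-2,\alpha}\leq C_{\theta}\|\eta^{n}_{z}\|_{-1,\alpha}$ uniformly in $z\in[0,\theta]$, while Proposition \ref{prop:control:-1:alpha:combined}\,(\ref{item:prop:i}) bounds $\sup_{z\in[0,\theta]} \mathbb{E}[\|\eta^{n}_{z}\|_{-1,\alpha}^{2}]$. A straightforward application of Cauchy--Schwarz and Fubini then yields
\[
\mathbb{E}\!\left[\int_{0}^{\theta}\|L_{z}^{*}\eta^{n}_{z}\|_{-2,\alpha}\,dz\right] \leq C_{\theta}\int_{0}^{\theta}\mathbb{E}[\|\eta^{n}_{z}\|_{-1,\alpha}^{2}]^{1/2}\,dz < \infty,
\]
so that $\int_{0}^{t}\|L_{z}^{*}\eta^{n}_{z}\|_{-2,\alpha}\,dz<\infty$ almost surely. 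The same argument, substituting Proposition \ref{prop:control:-1:alpha:combined}\,(\ref{item:prop:iii}) for point (\ref{item:prop:i}), handles the second integrand $A^{n}_{z}$.

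The only point requiring any care is the reduction of strong to weak measurability, since strong measurability is the genuine prerequisite for the Bochner integral to exist, and this is settled by the separability of $\mathcal{W}^{-2,\alpha}_{0}$. Apart from that, the argument amounts to a routine combination of the a priori estimates of Proposition \ref{prop:control:-1:alpha:combined} and Remark \ref{rem:control:L(f):etoile}, and the strong continuity of the primitives follows from general properties of Bochner integration in Banach spaces with no further work.
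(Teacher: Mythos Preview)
Your proof is correct and follows essentially the same approach as the paper: reduce strong to weak measurability via separability (the paper cites Yosida, you invoke Pettis), check weak measurability from the explicit expressions, and deduce almost-sure norm-integrability from the moment bounds of Proposition~\ref{prop:control:-1:alpha:combined} together with the operator bound of Remark~\ref{rem:control:L(f):etoile}. The only cosmetic difference is that you pass through Cauchy--Schwarz to control $\mathbb{E}\!\int_{0}^{\theta}\|\eta^{n}_{z}\|_{-1,\alpha}\,dz$, whereas the paper bounds the same quantity directly; both routes give the same conclusion.
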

\begin{proof}
Since $\mathcal{W}^{-2,\alpha}_{0}$ is separable, it suffices to verify that (see Yosida \cite[p. 133]{yosida1980func}): 
\begin{enumerate}[(i)]
\item for every $\varphi$ in $\mathcal{W}^{2,\alpha}_{0}$, the functions $z\mapsto	 \left<L_{z}^{*}\eta_{z}^{n},\varphi\right>=\left<\eta_{z}^{n}, L_{z} \varphi\right>$ and $z\mapsto A^{n}_{z}(\varphi)$ are measurable,
\item the integrals $\int_{0}^{t} || L_{z}^{*}\eta_{z}^{n} ||_{-2,\alpha} dz$ and $\int_{0}^{t} || A^{n}_{z} ||_{-2,\alpha} dz$ are finite almost surely.
\end{enumerate}

The first condition is immediate. The second one follows from the controls we have shown. 

\begin{sloppypar}
Indeed, on the one hand, it follows from Equation \eqref{eq:control:L(f):etoile} that $\int_{0}^{t} || L_{z}^{*}\eta_{z}^{n} ||_{-2,\alpha} dz\lesssim_{t} \int_{0}^{t} ||\eta^{n}_{z}||_{-1,\alpha} dz$ and Proposition \ref{prop:control:-1:alpha:combined}-$(i)$ implies $\mathbb{E}[\int_{0}^{t} ||\eta^{n}_{z}||_{-1,\alpha+1} dz]<+\infty$   so that $\int_{0}^{t} || L_{z}^{*}\eta_{z}^{n} ||_{-2,\alpha} dz$ is finite a.s.
\end{sloppypar}

On the other hand, Proposition \ref{prop:control:-1:alpha:combined}-$(iii)$ gives that $\mathbb{E}[\int_{0}^{t} ||A^{n}_{z}||_{-2,\alpha} dz]$ is finite and so $\int_{0}^{t} ||A^{n}_{z}||_{-2,\alpha} dz$ is finite a.s.
\end{proof}

Now, using the decomposition \eqref{eq:decomposition:eta:dual:space} we are able to somehow exchange the expectation with the supremum in the control of $\eta$, i.e. Equation \eqref{eq:control:eta:n:-1:alpha}.

\begin{prop}\label{prop:control:eta:n:-2:alpha}
Under (\hyperref[ass:for:TGN]{$\mathcal{A}_{\text{\tiny{TGN}}}$}), for every $\alpha>1/2$ and $\theta\geq 0$,
\begin{equation}\label{eq:control:eta:n:-2:alpha}
\sup_{n\geq 1} \mathbb{E}\left[ \sup_{t\in [0,\theta]}  ||\eta^{n}_{t}||^{2}_{-2,\alpha} \right] <+\infty,
\end{equation}
and $t\mapsto \eta^{n}_{t}$ belongs to $\mathcal{D}(\mathbb{R}_{+},\mathcal{W}^{-2,\alpha}_{0})$ almost surely.
\end{prop}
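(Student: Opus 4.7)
The proposed plan is to deduce the estimate \eqref{eq:control:eta:n:-2:alpha} directly from the decomposition \eqref{eq:decomposition:eta:dual:space}, namely
\[
\eta_{t}^{n} = \eta_{0}^{n} + \int_{0}^{t} L_{z}^{*}\eta_{z}^{n}\,dz + \int_{0}^{t} A^{n}_{z}\,dz + M_{t}^{n},
\]
taking norms in $\mathcal{W}^{-2,\alpha}_{0}$. By the triangle inequality, Cauchy--Schwarz (for the time integrals) and the elementary bound $(a_1+a_2+a_3+a_4)^2\leq 4(a_1^2+\cdots+a_4^2)$, one has
\[
\sup_{t\in[0,\theta]}\|\eta^{n}_{t}\|^{2}_{-2,\alpha}
\leq 4\|\eta^{n}_{0}\|^{2}_{-2,\alpha} + 4\theta\!\int_{0}^{\theta}\!\|L_{z}^{*}\eta^{n}_{z}\|^{2}_{-2,\alpha}\,dz + 4\theta\!\int_{0}^{\theta}\!\|A^{n}_{z}\|^{2}_{-2,\alpha}\,dz + 4\sup_{t\in[0,\theta]}\|M^{n}_{t}\|^{2}_{-2,\alpha}.
\]

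Each of these four terms can then be controlled uniformly in $n$ using the results of the preceding subsection, together with the continuous embedding $\mathcal{W}^{-1,\alpha}_{0}\hookrightarrow\mathcal{W}^{-2,\alpha}_{0}$ coming from \eqref{eq:inclusion:sobolev}. First, $\mathbb{E}[\|\eta_{0}^{n}\|_{-2,\alpha}^{2}]\leq \mathbb{E}[\|\eta_{0}^{n}\|_{-1,\alpha}^{2}]$ is uniformly bounded by Proposition \ref{prop:control:-1:alpha:combined}-\eqref{item:prop:i}. Next, by Remark \ref{rem:control:L(f):etoile}, $\|L_{z}^{*}\eta_{z}^{n}\|^{2}_{-2,\alpha}\lesssim_{\theta} \|\eta_{z}^{n}\|^{2}_{-1,\alpha}$ uniformly in $z\in[0,\theta]$, and taking expectation then integrating yields a bound in terms of $\sup_{z\in[0,\theta]}\mathbb{E}[\|\eta_{z}^{n}\|_{-1,\alpha}^{2}]$, again uniformly bounded by Proposition \ref{prop:control:-1:alpha:combined}-\eqref{item:prop:i}. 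The term in $A^{n}_{z}$ is directly controlled by integrating Proposition \ref{prop:control:-1:alpha:combined}-\eqref{item:prop:iii}. Finally, the martingale term is handled by the embedding:
\[
\mathbb{E}\bigl[\sup_{t\in[0,\theta]}\|M^{n}_{t}\|_{-2,\alpha}^{2}\bigr]\leq C\,\mathbb{E}\bigl[\sup_{t\in[0,\theta]}\|M^{n}_{t}\|_{-1,\alpha}^{2}\bigr],
\]
and the right-hand side is uniformly bounded in $n$ by Proposition \ref{prop:control:-1:alpha:combined}-\eqref{item:prop:ii}. Putting everything together gives \eqref{eq:control:eta:n:-2:alpha}.

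For the path regularity, we combine the decomposition with the corresponding regularity of each piece. The first two integrals are strongly continuous in $\mathcal{W}^{-2,\alpha}_{0}$ by Lemma \ref{lem:integrals:Bochner}, while $M^{n}$ lies in $\mathcal{D}(\mathbb{R}_{+},\mathcal{W}^{-1,\alpha}_{0})$ by Proposition \ref{prop:control:-1:alpha:combined}-\eqref{item:prop:ii}, hence also in $\mathcal{D}(\mathbb{R}_{+},\mathcal{W}^{-2,\alpha}_{0})$ thanks to the continuous embedding $\mathcal{W}^{-1,\alpha}_{0}\hookrightarrow\mathcal{W}^{-2,\alpha}_{0}$. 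Since $\eta_{0}^{n}\in\mathcal{W}^{-1,\alpha}_{0}\subset\mathcal{W}^{-2,\alpha}_{0}$ a.s., we conclude that $t\mapsto \eta^{n}_{t}$ belongs to $\mathcal{D}(\mathbb{R}_{+},\mathcal{W}^{-2,\alpha}_{0})$ almost surely.

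There is no real obstacle here: the key technical work has already been invested in Proposition \ref{prop:control:-1:alpha:combined} and Lemma \ref{lem:integrals:Bochner}. The only conceptual point worth stressing is the trade-off in regularity: it is precisely because the drift $L_{z}^{*}\eta_{z}^{n}$ costs one order of Sobolev regularity (mapping $\mathcal{W}^{-1,\alpha}_{0}$ into $\mathcal{W}^{-2,\alpha}_{0}$) that we cannot hope, via this approach, to exchange $\sup$ and $\mathbb{E}$ while remaining in the same $\mathcal{W}^{-1,\alpha}_{0}$ space as in \eqref{eq:control:eta:n:-1:alpha}; the statement must therefore be formulated in the larger space $\mathcal{W}^{-2,\alpha}_{0}$.
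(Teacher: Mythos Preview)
Your proof is correct and follows essentially the same approach as the paper's own proof: both start from the decomposition \eqref{eq:decomposition:eta:dual:space}, bound $\sup_{t\in[0,\theta]}\|\eta^n_t\|_{-2,\alpha}^2$ by the four-term convexity inequality, and then control each piece via Proposition~\ref{prop:control:-1:alpha:combined} together with the embedding $\mathcal{W}^{-1,\alpha}_0\hookrightarrow\mathcal{W}^{-2,\alpha}_0$, while the c\`adl\`ag property is obtained from Lemma~\ref{lem:integrals:Bochner} and Proposition~\ref{prop:control:-1:alpha:combined}-\eqref{item:prop:ii}. Your closing remark on the regularity trade-off is a nice addition that the paper leaves implicit.
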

\begin{proof}
Starting from \eqref{eq:decomposition:eta:dual:space}, we have by convexity of the square function
\begin{equation*}
\sup_{t\in [0,\theta]} || \eta^{n}_{t}||_{-2,\alpha}^{2} \leq 4 \big[ || \eta^{n}_{0}||_{-2,\alpha}^{2} + \theta \int_{0}^{\theta} (|| L_{z}^{*}\eta_{z}^{n} ||_{-2,\alpha}^{2} + || A^{n}_{z} ||_{-2,\alpha}^{2}) dz + \sup_{t\in [0,\theta]} || M^{n}_{t} ||_{-2,\alpha}^{2} \big].
\end{equation*}
\begin{sloppypar}
We deduce from Equation \eqref{eq:control:L(f)} that $\int_{0}^{\theta} \mathbb{E}[|| L_{z}^{*}\eta_{z}^{n} ||_{-2,\alpha}^{2}] dz\lesssim_{\theta} \sup_{z\in [0,\theta]} \mathbb{E}[||\eta_{z}^{n}||_{-1,\alpha}^{2}]$.
Hence, taking the expectation in both sides of the inequality above and applying Proposition \ref{prop:control:-1:alpha:combined} (remind \eqref{eq:embedding:dual}), we get \eqref{eq:control:eta:n:-2:alpha}. Starting from \eqref{eq:decomposition:eta:dual:space} and using that the integrals are continuous from Lemma \ref{lem:integrals:Bochner} and $M^{n}$ is càdlàg  from Proposition \ref{prop:control:-1:alpha:combined}-$(ii)$, it follows that $\eta^{n}$ is càdlàg.
\end{sloppypar}
\end{proof}

\subsection{Tightness result}
Using the estimates proved in Section \ref{sec:estimates:dual:space}, the tightness criterion stated on page \pageref{sec:tightness:criterion} can be checked.

\begin{thm}\label{thm:tightness:M:n:eta:n}
Under (\hyperref[ass:for:TGN]{$\mathcal{A}_{\text{\tiny{TGN}}}$}), for any $\alpha>1/2$, the sequences of the laws of $(M^{n})_{n\geq 1}$ and of $(\eta^{n})_{n\geq 1}$ are tight in the space $\mathcal{D}(\mathbb{R}_{+},\mathcal{W}^{-2,\alpha}_{0})$.
\end{thm}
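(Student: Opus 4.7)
I plan to verify the two conditions \hyperref[condition:A1']{$(\texttt{A}_{1'})$} and \hyperref[condition:A2]{$(\texttt{A}_{2})$} of the Aldous-type criterion recalled on page \pageref{sec:tightness:criterion}, separately for $(M^n)$ and $(\eta^n)$, working in the target space $H = \mathcal{W}^{-2,\alpha}_0$.

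\textbf{Verification of \hyperref[condition:A1']{$(\texttt{A}_{1'})$}.} Fix any $\beta > 1/2$ and set $H_0 := \mathcal{W}^{-1,\alpha+\beta}_0$. By Maurin's theorem (second line of \eqref{eq:embedding:dual} with $m=1$, $k=1$), the embedding $H_0 \hookrightarrow H$ is of Hilbert--Schmidt type and in particular compact. Applying Proposition \ref{prop:control:-1:alpha:combined}-\eqref{item:prop:i} with the weight $\alpha + \beta > 1/2$ gives $\sup_{n} \sup_{t \in [0,\theta]} \mathbb{E}[\|\eta^n_t\|^2_{-1,\alpha+\beta}] < +\infty$, which is the desired $H_0$-bound for $(\eta^n)$. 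The analogous bound for $(M^n)$ follows from Proposition \ref{prop:control:-1:alpha:combined}-\eqref{item:prop:ii} applied with weight $\alpha + \beta$.

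\textbf{Verification of \hyperref[condition:A2]{$(\texttt{A}_{2})$} for $(M^n)$.} Let $(\varphi_j)_{j\geq 1}$ be an orthonormal basis of $\mathcal{W}^{2,\alpha}_0$. By Parseval's identity in the dual, for any stopping time $\tau_n \leq \theta$ and any $\delta \leq \delta_0$,
\begin{equation*}
\mathbb{E}\bigl[\|M^n_{\tau_n+\delta} - M^n_{\tau_n}\|^2_{-2,\alpha}\bigr] = \sum_{j\geq 1} \mathbb{E}\bigl[(M^n_{\tau_n+\delta}(\varphi_j) - M^n_{\tau_n}(\varphi_j))^2\bigr].
\end{equation*}
Since each $M^n(\varphi_j)$ is an $\mathbb{F}$-martingale with angle bracket given by \eqref{eq:bracket:M:n:(phi)}, the optional sampling theorem and Fubini--Tonelli yield
\begin{equation*}
\mathbb{E}\bigl[\|M^n_{\tau_n+\delta} - M^n_{\tau_n}\|^2_{-2,\alpha}\bigr] = \mathbb{E}\left[\frac{1}{n}\sum_{i=1}^n \int_{\tau_n}^{\tau_n+\delta} \Big(\sum_{j\geq 1} R\varphi_j(S^{n,i}_{z-})^2\Big) \lambda^{n,i}_z\, dz \right].
\end{equation*}
The key observation is that, again by Parseval, $\sum_{j\geq 1} R\varphi_j(s)^2 = \|\delta_0 - \delta_s\|^2_{-2,\alpha}$, which by Lemma \ref{lem:control:normes:D:H} is bounded by $C(1 + |s|^\alpha)^2$. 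Using \eqref{eq:control:age:uniform}, $S^{n,i}_{z-} \leq M_{S_0} + \theta + \delta_0$ for $z \leq \tau_n + \delta$, and $\lambda^{n,i}_z \leq \|\Psi\|_\infty$ by \hyperref[ass:Psi:uniformly:bounded:fluctuations]{$(\mathcal{A}^{\Psi}_{\infty})$}, so the right-hand side is bounded by $C(\theta, \delta_0, \alpha) \cdot \delta$. Markov's inequality then delivers \hyperref[condition:A2]{$(\texttt{A}_{2})$} uniformly in $n$.

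\textbf{Verification of \hyperref[condition:A2]{$(\texttt{A}_{2})$} for $(\eta^n)$.} Starting from the semi-martingale decomposition \eqref{eq:decomposition:eta:dual:space},
\begin{equation*}
\eta^n_{\tau_n+\delta} - \eta^n_{\tau_n} = \int_{\tau_n}^{\tau_n+\delta} L_z^* \eta^n_z\, dz + \int_{\tau_n}^{\tau_n+\delta} A^n_z\, dz + (M^n_{\tau_n+\delta} - M^n_{\tau_n}),
\end{equation*}
I apply the Cauchy--Schwarz inequality in $\mathcal{W}^{-2,\alpha}_0$ to bound each integral term by $\delta \int_{\tau_n}^{\tau_n+\delta} \|\cdot\|^2_{-2,\alpha} dz$. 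Combining the estimate \eqref{eq:control:L(f):etoile} with Proposition \ref{prop:control:-1:alpha:combined}-\eqref{item:prop:i} yields an $O(\delta^2)$ bound in expectation for the first integral; Proposition \ref{prop:control:-1:alpha:combined}-\eqref{item:prop:iii} does the same for the second. The martingale increment contributes $O(\delta)$ by the preceding step, giving $\mathbb{E}[\|\eta^n_{\tau_n+\delta} - \eta^n_{\tau_n}\|^2_{-2,\alpha}] \leq C \delta$, from which \hyperref[condition:A2]{$(\texttt{A}_{2})$} follows by Markov.

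\textbf{Main obstacle.} The delicate step is the martingale estimate: one must exchange the summation over $j$ with the integral and with the expectation, and then recognise the dual sum $\sum_j R\varphi_j(s)^2$ as $\|\delta_0 - \delta_s\|^2_{-2,\alpha}$ in order to bypass any explicit choice of basis. This is exactly where the compact support property \eqref{eq:control:age:uniform} of the ages in finite time horizon is used, together with Lemma \ref{lem:control:normes:D:H}; without it, one would have to control an unbounded moment of $s\mapsto (1+|s|^\alpha)^2$ under the empirical measure.
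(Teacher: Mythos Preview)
Your argument is correct and follows the paper's route closely; the only stylistic difference is that for $(M^n)$ the paper invokes Rebolledo's theorem \cite[p.~40]{joffe1986weak} (reducing \hyperref[condition:A2]{$(\texttt{A}_2)$} to the trace of \mbox{$<\!\!<\! M^{n} \!>\!\!>$}) whereas you compute $\mathbb{E}[\|M^n_{\tau_n+\delta}-M^n_{\tau_n}\|^2_{-2,\alpha}]$ directly via optional stopping --- the underlying calculation (Parseval, then Lemma~\ref{lem:control:normes:D:H} on the bounded ages) is identical. One minor slip: your ``$O(\delta^2)$'' for the drift integrals should read $O(\delta)$, since $\tau_n$ is random and Proposition~\ref{prop:control:-1:alpha:combined}-\eqref{item:prop:i},\eqref{item:prop:iii} only provide $\sup_t \mathbb{E}[\cdot]$ (not $\mathbb{E}[\sup_t\cdot]$), forcing you to enlarge $\int_{\tau_n}^{\tau_n+\delta}$ to $\int_0^{\theta+\delta_0}$ before taking expectation --- this is precisely what the paper does, and $O(\delta)$ is of course sufficient.
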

\begin{proof}
Condition \hyperref[condition:A1']{$(\texttt{A}_{1'})$} with $H_{0}=\mathcal{W}^{-1,\alpha+1}_{0}$ and $H=\mathcal{W}^{-2,\alpha}_{0}$ is satisfied for both processes as a consequence of embedding \eqref{eq:embedding:dual} (remind that Hilbert-Schmidt operators are compact) and Proposition \ref{prop:control:-1:alpha:combined}.

On the one hand, condition \hyperref[condition:A2]{$(\texttt{A}_{2})$} holds for $(M^{n})_{n\geq 1}$ as soon as it holds for the trace of the processes \mbox{$(<\!\!<\! M^{n} \!>\!\!>)_{n\geq 1}$} given below \eqref{eq:decomposition:eta:dual:space} \cite[Rebolledo's theorem, p. 40]{joffe1986weak}. Let $(\varphi_{k})_{k\geq 1}$ be an orthonormal basis of $\mathcal{W}^{2,\alpha}_{0}$. Let $\theta\geq 0$, $\delta_{0}>0$ and $\delta\leq \delta_{0}$. Furthermore, let $\tau_{n}$ be an $\mathbb{F}$-stopping time smaller than $\theta$. 
\begin{multline*}
\left| \Tr {<\!\!<\! M^{n} \!>\!\!>}_{\tau_{n}+\delta}- \Tr {<\!\!<\! M^{n} \!>\!\!>}_{\tau_{n}} \right| \\ 
= \left| \sum_{k\geq 1} \left< {<\!\!<\! M^{n} \!>\!\!>}_{\tau_{n}+\delta}(\varphi_{k}),\varphi_{k} \right> - \left< {<\!\!<\! M^{n} \!>\!\!>}_{\tau_{n}}(\varphi_{k}),\varphi_{k} \right> \right|\\
\leq \sum_{k\geq 1} \frac{1}{n} \sum_{i=1}^{n} \int_{\tau_{n}}^{\tau_{n}+\delta} [R\varphi_{k}\left( S^{n,i}_{z-} \right)]^{2} \lambda^{n,i}_{z} dz 
\leq || \Psi ||_{\infty}  \frac{1}{n} \sum_{i=1}^{n} \int_{\tau_{n}}^{\tau_{n}+\delta} \sum_{k\geq 1} R\varphi_{k}\left( S^{n,i}_{z-} \right)^{2}  dz.
\end{multline*}
Noticing that $R\varphi_{k} (S^{n,i}_{z-})=D_{0,S^{n,i}_{z-}}(\varphi_{k})$ and then using Lemma \ref{lem:control:normes:D:H} and the fact that the ages $S^{n,i}_{z-}$ are upper bounded by $M_{S_{0}}+z+\leq M_{S_{0}}+\theta+\delta_{0}$ (thanks to (\hyperref[ass:initial:condition:density:compact:support]{$\mathcal{A}^{u_0}_{\infty}$}), remind \eqref{eq:control:age:uniform}), it follows that
\begin{equation*}
\mathbb{E}\left[ \left| \Tr {<\!\!<\! M^{n} \!>\!\!>}_{\tau_{n}+\delta}- \Tr {<\!\!<\! M^{n} \!>\!\!>}_{\tau_{n}} \right| \right] \leq \delta_{0} ||\Psi||_{\infty} (C_{2})^{2} \left( 1+(M_{S_{0}}+\theta+\delta_{0})^{\alpha} \right)^{2}.
\end{equation*}
This last bound is arbitrarily small for $\delta_{0}$ small enough which gives condition \hyperref[condition:A2]{$(\texttt{A}_{2})$} thanks to Markov's inequality.

On the other hand, using decomposition \eqref{eq:decomposition:eta:dual:space} and the fact that $(M^{n})_{n\geq 1}$ is tight, it suffices to show the tightness of the remaining terms $(R^{n}_{t}=\eta^{n}_{0} + \int_{0}^{t} L_{z}^{*} \eta^{n}_{z} dz + \int_{0}^{t} A^{n}_{z} dz)_{n\geq 1}$ in order to show tightness of $(\eta^{n})_{n\geq 1}$. Yet, using  Equation \eqref{eq:control:L(f):etoile}, we have
\begin{multline*}
|| R^{n}_{\tau_{n}+\delta} - R^{n}_{\tau_{n}}||_{-2,\alpha}^{2} = \left\Vert \int_{\tau_{n}}^{\tau_{n}+\delta} L_{z}^{*} \eta^{n}_{z} + A^{n}_{z} dz \right\Vert_{-2,\alpha}^{2}\\
\leq 2 \delta \int_{\tau_{n}}^{\tau_{n}+\delta} (|| L_{z}^{*} \eta^{n}_{z}||_{-2,\alpha}^{2} + || A^{n}_{z} ||_{-2,\alpha}^{2}) dz \leq 2\delta_{0} \int_{0}^{\theta+\delta_{0}} (C||\eta^{n}_{z}||_{-1,\alpha+1}^{2} + || A^{n}_{z} ||_{-2,\alpha}^{2}) dz,
\end{multline*}
\begin{sloppypar}
\noindent
where $C$ depends on $\theta$ and $\delta_{0}$. Then, Proposition \ref{prop:control:-1:alpha:combined} implies that $\sup_{n\geq 1} \mathbb{E}[|| R^{n}_{\tau_{n}+\delta} - R^{n}_{\tau_{n}}||_{-2,\alpha}^{2}]\leq C \delta_{0}$ for $\delta_{0}$ small enough. Finally, Markov's inequality gives condition \hyperref[condition:A2]{$(\texttt{A}_{2})$} for $(R^{n})_{n\geq 1}$ and so the tightness of $(\eta^{n})_{n\geq 1}$.
\end{sloppypar}
\end{proof}

%\begin{rem}
%\label{rem:why:weighted:sobolev}
%As mentioned at the beginning of Section \ref{sec:tightness}, the weight $\alpha$ can be chosen as large as wanted. In the same spirit, let us mention that, in finite time horizon (say on $[0,\theta]$), $\eta^{n}$ and $M^{n}$ could be considered as linear forms acting on some standard (non-weighted) Sobolev spaces of functions with support in $[0,M_{S_{0}}+\theta]$ (remind \eqref{eq:control:age:uniform}). Nevertheless, the bound \eqref{eq:control:age:uniform} goes to infinity with time, so $\eta^{n}$ and $M^{n}$ cannot be considered as linear forms acting on the same non-weighted Sobolev space for all time $t\geq 0$. Hence, in the present work, the weight $\alpha$ accommodates the growth of the support of the age processes.
%\end{rem}

\begin{rem}
\label{rem:C:tight}
For any $\alpha>1/2$, every limit (with respect to the convergence in law) $M$ (respectively $\eta$) in $\mathcal{D}(\mathbb{R}_{+},\mathcal{W}^{-2,\alpha}_{0})$ of the sequence $(M^{n})_{n\geq 1}$ (resp. $(\eta^{n})_{n\geq 1}$) satisfies
\begin{equation}\label{eq:C:tight}
 \mathbb{E}\left[\sup_{t\in [0,\theta]} ||M_{t}||^{2}_{-2,\alpha} \right] <+\infty \quad \bigg(\text{resp. } \mathbb{E}\left[ \sup_{t\in [0,\theta]}  ||\eta_{t}||^{2}_{-2,\alpha} \right] <+\infty \bigg).
\end{equation}
Moreover, the limit laws are supported in $\mathcal{C}(\mathbb{R}_{+},\mathcal{W}^{-2,\alpha}_{0})$.
\end{rem}
\begin{proof}
Let us first show that the limit points are continuous. According to \cite[Theorem 13.4.]{Billing_Convergence}, it suffices to prove that for all $\theta\geq 0$, the maximal jump size of $M^{n}$ and $\eta^{n}$ on $[0,\theta]$ converge to $0$ almost surely in order to prove the last point. Yet, for all $\varphi$ in $\mathcal{W}^{2,\alpha}_{0}$,
\begin{equation*}
\Delta M^{n}_{t}(\varphi):= |M^{n}_{t}(\varphi) - M^{n}_{t-}(\varphi)|= \frac{1}{\sqrt{n}} \sum_{i=1}^{n} D_{0,S^{n,i}_{t-}}(\varphi) \mathds{1}_{t\in N^{n,i}},
\end{equation*}
where we use the definition of $M^{n}_{t}(\varphi)$ given by \eqref{eq:def:M:n:A:n} for $\varphi$ in $\mathcal{C}^{1}_{b}$ and a density argument to extend it to $\varphi$ in $\mathcal{W}^{2,\alpha}_{0}$, and
\begin{equation*}
\left<\Delta\eta^{n}_{t},\varphi\right>:= |\left<\eta^{n}_{t},\varphi\right> - \left<\eta^{n}_{t-},\varphi\right>| =\frac{1}{\sqrt{n}} \sum_{i=1}^{n} D_{0,S^{n,i}_{t-}}(\varphi) \mathds{1}_{t\in N^{n,i}}
\end{equation*}
where we used the fact that $(u_{t})_{t\geq 0}$ is continuous in $\mathcal{W}^{-2,\alpha}_{0}$ (see Lemma \ref{lem:continuity:Pt}). Since almost surely there is no common point to any two of the point processes $(N^{n,i})_{i=1,\dots ,n}$, there is, almost surely, for all $t\geq 0$, at most one of the $\mathds{1}_{t\in N^{n,i}}$ which is non null. Then, Lemma \ref{lem:control:normes:D:H} implies
\begin{equation*}
\begin{cases}
\sup_{t\in[0,\theta]} || \Delta M^{n}_{t} ||_{-2,\alpha} \leq \frac{1}{\sqrt{n}} C_{2}(1+(M_{S_{0}}+\theta)^{\alpha}),\\
\sup_{t\in[0,\theta]} || \Delta \eta^{n}_{t} ||_{-2,\alpha} \leq \frac{1}{\sqrt{n}} C_{2}(1+(M_{S_{0}}+\theta)^{\alpha}),
\end{cases}
\end{equation*}
which gives the desired convergence to $0$.

Finally, the two statements of Equation \eqref{eq:C:tight} are consequences of Propositions \ref{prop:control:-1:alpha:combined}-$(ii)$ (remind \eqref{eq:embedding:dual}) and \ref{prop:control:eta:n:-2:alpha} where we use the previous step and the fact that the mapping $g\mapsto \sup_{t\in [0,\theta]} ||g_{t}||^{2}_{-2,\alpha}$ from $\mathcal{D}(\mathbb{R}_{+},\mathcal{W}^{-2,\alpha}_{0})$ to $\mathbb{R}$ is continuous at every point $g^{0}$ in $\mathcal{C}(\mathbb{R}_{+},\mathcal{W}^{-2,\alpha}_{0})$.
\end{proof}

\section{Characterization of the limit}

The aim of this section is to prove convergence of the sequence $(\eta^{n})_{n\geq 1}$ by identifying the limit fluctuation process $\eta$ as the unique solution of a SDE in infinite dimension. We first prove, in Section \ref{sec:limit:equation}, that every possible limit process $\eta$ satisfies a certain SDE (Theorem \ref{thm:limit:equation}). Then, we show, in Section \ref{sec:uniqueness:limit:equation}, that this SDE uniquely characterizes the limit law, which completes the proof of the convergence in law of $(\eta^{n})_{n\geq 1}$ to $\eta$.

\subsection{Candidate for the limit equation}
\label{sec:limit:equation}

In this section, the limit version of Equation \eqref{eq:decomposition:eta:dual:space} is stated. Apart from $\eta^{n}$, there are two random processes in \eqref{eq:decomposition:eta:dual:space} that are $A^{n}$ and $M^{n}$. The following notation encompasses the source of the stochasticity of both $A^{n}$ and $M^{n}$ and is mainly used in order to track the correlations between those two quantities: for all $n\geq 1$, let $W^{n}$ be the $\mathcal{W}^{-1,\alpha}_{0}$-valued martingale defined, for all $t\geq 0$ and $\varphi$ in $\mathcal{W}^{1,\alpha}_{0}$, by
\begin{equation*}
W^{n}_{t}(\varphi):=\frac{1}{\sqrt{n}} \sum_{i=1}^{n} \int_{0}^{t} \varphi(S^{n,i}_{z-}) (N^{n,i}(dz)-\lambda^{n,i}_{z}dz).
\end{equation*}
\begin{sloppypar}
Notice that $M^{n}_{t}(\varphi)=W^{n}_{t}(R\varphi)$. Furthermore, as for $M^{n}$, the Doob-Meyer process \mbox{$(<\!\!<\!W^{n}\!>\!\!>_{t})_{t\geq 0}$} associated with $(W^{n}_{t})_{t\geq 0}$ satisfies the following: for any $t\geq 0$, \mbox{$<\!\!<\!W^{n}\!>\!\!>_{t}$} is the linear continuous mapping from $\mathcal{W}^{2,\alpha}_{0}$ to $\mathcal{W}^{-2,\alpha}_{0}$ given, for all $\varphi_{1}$ and $\varphi_{2}$ in $\mathcal{W}^{2,\alpha}_{0}$, by
\end{sloppypar}
\begin{equation}\label{eq:bracket:W:n}
\left< {<\!\!<\!W^{n}\!>\!\!>}_{t} (\varphi_{1}), \varphi_{2} \right> = \frac{1}{n} \sum_{i=1}^{n} \int_{0}^{t} \varphi_{1}(S^{n,i}_{z-}) \varphi_{2}(S^{n,i}_{z-}) \lambda^{n,i}_{z} dz.
\end{equation}
All the results given for $M^{n}$ in the previous section can be extended to $W^{n}$. In particular, 
\begin{equation}\label{eq:W:n:tight}
\mbox{the sequence $(W^{n})_{n\geq 1}$ is tight in $\mathcal{D}(\mathbb{R}_{+},\mathcal{W}^{-2,\alpha}_{0})$.}
\end{equation}
Next, we prove that it converges towards the Gaussian process $W$ defined below.

\begin{defn}\label{def:gaussian:process}
For any $\alpha>1/2$, let $W$ be a continuous centred Gaussian process with values in $\mathcal{W}^{-2,\alpha}_{0}$ with covariance given, for all $\varphi_{1}$ and $\varphi_{2}$ in $\mathcal{W}^{2,\alpha}_{0}$, for all $t$ and $t'\geq 0$, by
\begin{eqnarray}
\mathbb{E}\left[ W_{t}(\varphi_{1})W_{t'}(\varphi_{2}) \right] &=& \int_{0}^{t\wedge t'} \left<u_{z},\varphi_{1}\varphi_{2}\Psi(\cdot,\overline{\gamma}(z))\right> dz \nonumber\\
&=& \int_{0}^{t\wedge t'} \int_{0}^{+\infty} \varphi_{1}(s)\varphi_{2}(s)\Psi(s,\overline{\gamma}(z)) u(z,s) ds dz,\label{eq:covariance:W}
\end{eqnarray}
where $u$ is the unique solution of \eqref{eq:edp:PPS}.
\end{defn}

\begin{rem}
We refer to the PhD manuscript of the author \cite{chevallier2016phd} for the existence and uniqueness in law of such a process $W$. Yet, let us mention here that the process $W$ defined above does not depend on the weight $\alpha$ in the sense that the definition is consistent with respect to the weights. Indeed, say $W^{\alpha}$ and $W^{\beta}$ are two processes is the sense of Definition \ref{def:gaussian:process} with values in $\mathcal{W}^{-2,\alpha}_{0}$ and $\mathcal{W}^{-2,\beta}_{0}$ respectively. Assume for instance that $\beta>\alpha$. Then, $W^{\beta}$ can be seen as a process with values in $\mathcal{W}^{-2,\alpha}_{0}$ via the canonical embedding $\mathcal{W}^{-2,\beta}_{0}\hookrightarrow \mathcal{W}^{-2,\alpha}_{0}$. Yet, the covariance structure \eqref{eq:covariance:W} does not depend on the weights $\alpha$ and $\beta$ so $W^{\beta}$ is also a Gaussian process with values in $\mathcal{W}^{-2,\alpha}_{0}$ with the prescribed covariance and the uniqueness in law guaranties the equality of the laws of $W^{\alpha}$ and $W^{\beta}$ as $\mathcal{C}(\mathbb{R}_{+},\mathcal{W}^{-2,\alpha}_{0})$-valued random variables.
\end{rem}

\begin{prop}\label{prop:convergence:Wn:W}
Under (\hyperref[ass:for:TGN]{$\mathcal{A}_{\text{\tiny{TGN}}}$}), for any $\alpha>1/2$, the sequence $(W^{n})_{n\geq 1}$ of processes in $\mathcal{D}(\mathbb{R}_{+},\mathcal{W}^{-2,\alpha}_{0})$ converges in law to $W$.
\end{prop}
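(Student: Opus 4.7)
The plan is to combine the tightness already established in \eqref{eq:W:n:tight} with an identification of the limit law via a functional martingale central limit theorem (Rebolledo's theorem, as in \cite{joffe1986weak}). Since $W$ is characterized in law by being a continuous centered Gaussian $\mathcal{W}_0^{-2,\alpha}$-valued martingale with covariance \eqref{eq:covariance:W}, it suffices to show that any limit point of $(W^n)_{n\geq 1}$ has these three properties.

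First I would show continuity of any limit point, exactly as in the proof of Remark \ref{rem:C:tight}: the jumps of $W^n$ satisfy $\Delta W^n_t(\varphi) = n^{-1/2}\varphi(S^{n,i}_{t-})\mathds{1}_{t\in N^{n,i}}$, hence by Lemma \ref{lem:control:normes:D:H} and \eqref{eq:control:age:uniform}, $\sup_{t\in[0,\theta]}\|\Delta W^n_t\|_{-2,\alpha}\leq C n^{-1/2}(1+(M_{S_0}+\theta)^{\alpha})$, which vanishes almost surely, so by \cite[Theorem 13.4]{Billing_Convergence} limit points are supported in $\mathcal{C}(\mathbb{R}_+,\mathcal{W}_0^{-2,\alpha})$.

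The core step is the identification of the predictable quadratic variation. Fix $\varphi_1,\varphi_2\in\mathcal{W}_0^{2,\alpha}$. From \eqref{eq:bracket:W:n},
\begin{equation*}
\left<{<\!\!<\!W^{n}\!>\!\!>}_{t}(\varphi_{1}),\varphi_{2}\right>=\int_{0}^{t}\left<\overline{\mu}^{n}_{S_{z}},\varphi_{1}\varphi_{2}\,\Psi(\cdot,\gamma^{n}_{z})\right>dz.
\end{equation*}
Using (\hyperref[ass:Psi:C2]{$\mathcal{A}^{\Psi}_{y,\mathcal{C}^{2}}$}) to replace $\Psi(\cdot,\gamma^n_z)$ by $\Psi(\cdot,\overline\gamma(z))$ at the cost of an error controlled by $\mathrm{Lip}(\Psi)\|\varphi_1\varphi_2\|_\infty \,\mathbb{E}|\gamma^n_z-\overline\gamma(z)|$, which is $O(n^{-1/2})$ thanks to Proposition \ref{prop:rate:of:convergence:proba:k:tuple:different:ages} with $k=1$, and then invoking the law of large numbers \eqref{eq:LLN:intro} for the empirical measure $\overline\mu^n_{S_z}$ (together with dominated convergence, which is justified by the uniform boundedness of $\varphi_1\varphi_2\Psi(\cdot,\overline\gamma(z))$ on $[0,M_{S_0}+\theta]$), the bracket converges in probability to the deterministic limit
\begin{equation*}
\int_0^t \left<u_z,\varphi_1\varphi_2\,\Psi(\cdot,\overline\gamma(z))\right>dz,
\end{equation*}
which matches the covariance \eqref{eq:covariance:W}. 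Combined with the bound on jumps above (which gives the Aldous/Lindeberg-type negligibility of jumps required by Rebolledo), the multidimensional martingale CLT yields joint convergence in law of $(W^n(\varphi_1),\dots,W^n(\varphi_k))$ to a continuous centered Gaussian martingale with the prescribed covariance, for any finite family $\varphi_1,\dots,\varphi_k\in\mathcal{W}_0^{2,\alpha}$.

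Finally I would promote the finite-dimensional convergence to convergence in $\mathcal{D}(\mathbb{R}_+,\mathcal{W}_0^{-2,\alpha})$: tightness \eqref{eq:W:n:tight} plus continuity of limit points reduces the problem to characterising any limit law by its evaluations $\{W_t(\varphi):t\geq 0,\varphi\in\mathcal{W}_0^{2,\alpha}\}$; taking $(\varphi_j)_{j\geq 1}$ a countable dense family (e.g.\ an orthonormal basis of $\mathcal{W}_0^{2,\alpha}$ of smooth compactly supported functions), the Gaussian covariance structure determined above pins down the law uniquely, so every subsequential limit must coincide with $W$ and the whole sequence converges. The main obstacle I anticipate is the careful handling of this last passage from finite-dimensional Gaussian convergence to convergence in the Hilbert-valued path space, together with verifying the Lindeberg/negligibility-of-jumps condition for the Hilbert-valued version of Rebolledo's theorem; the exchange between $\gamma^n$ and $\overline\gamma$ in the bracket is routine thanks to Proposition \ref{prop:rate:of:convergence:proba:k:tuple:different:ages}.
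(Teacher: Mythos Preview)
Your proposal is correct and follows essentially the same approach as the paper: tightness from \eqref{eq:W:n:tight}, convergence of the bracket \eqref{eq:bracket:W:n} to the covariance \eqref{eq:covariance:W} via the Lipschitz replacement of $\gamma^n$ by $\overline{\gamma}$ (using Proposition \ref{prop:rate:of:convergence:proba:k:tuple:different:ages}) together with the law of large numbers for the empirical measure and dominated convergence, vanishing of the maximal jump size, and then Rebolledo's central limit theorem to identify the finite-dimensional distributions. The paper organizes the bracket computation as an explicit decomposition $B^n_t+C^n_t$ but the substance is identical to what you outline.
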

The proof of Proposition \ref{prop:convergence:Wn:W} is given in Appendix \ref{sec:proof:prop:convergence:Wn:W}. It relies on the convergence of the bracket \eqref{eq:bracket:W:n} towards the covariance \eqref{eq:covariance:W} and an application of Rebolledo's central limit theorem (the maximum size of the jumps is bounded up to a constant by $n^{-1/2}$ and so goes to $0$).

Denote by ${\bf 1}:\mathbb{R}_{+}\to \mathbb{R}$ the constant function equal to $1$ (which belongs to $\mathcal{W}^{2,\alpha}_{0}$ since we assume $\alpha>1/2$) and note that $W^{n}_{t}({\bf 1})$ is the rescaled canonical martingale associated with the system of age-dependent Hawkes processes, namely
\begin{equation*}
W^{n}_{t}({\bf 1}) = \sqrt{n} \left( \frac{1}{n} \sum_{i=1}^{n} N^{n,i}_{t} - \int_{0}^{t} \lambda^{n,i}_{z} dz \right).
\end{equation*}
Now, let us expand the decomposition \eqref{eq:decomposition:eta:dual:space} in order to get a closed equation. Let us recall the expansion of $A^{n}$ given by \eqref{eq:An:function:of:Gamma}, that is
\begin{equation*}
A^{n}_{t}(\varphi)= \left< \overline{\mu}^{n}_{S_{t}},\frac{\partial \Psi}{\partial y}(\cdot,\overline{\gamma}(t))R\varphi \right> \Gamma^{n}_{t-} +R^{n,(1)}_{t}(\varphi),
\end{equation*}
with $\Gamma^{n}_{t-}=\sqrt{n} (\gamma^{n}_{t}-\overline{\gamma}(t))$ and the rest term:
$$R^{n,(1)}_{t}(\varphi):=\frac{1}{n} \sum_{i=1}^{n} \big( R\varphi(S^{n,i}_{t-}) \frac{\partial \Psi}{\partial y} (S^{n,i}_{t-},\overline{\gamma}(t)) \sqrt{n}r^{n,i}_{t}\big).$$
Below, we use the fact that this rest term converges to $0$ in $L^{1}$ norm: indeed, recall that 
\begin{equation}\label{eq:bound:rni:Gamma}
|r^{n,i}_{t}|\lesssim  |\gamma^{n}_{t}-\overline{\gamma}(t)|^{2}
\end{equation}
and, thanks to Proposition \ref{prop:rate:of:convergence:proba:k:tuple:different:ages},
\begin{equation*}
\mathbb{E}\left[ |\gamma^{n}_{t}-\overline{\gamma}(t)|^{2} \right]\lesssim_{t} n^{-1}.
\end{equation*}
Since $\Gamma^{n}_{t-}$ (as part of $A^{n}_{t}(\varphi)$) only appears in \eqref{eq:decomposition:eta:dual:space} as an integrand and is only discontinuous on a set of Lebesgue measure equal to zero, we can replace it by its càdlàg version denoted by $\Gamma^{n}_{t}$. Let us consider the decomposition $\Gamma^{n}_{t}= \Upsilon^{1}_{t}+\Upsilon^{2}_{t}+\Upsilon^{3}_{t}$, with
\begin{equation*}
\begin{cases}
\displaystyle \Upsilon^{1}_{t}:=\sqrt{n} \int_{0}^{t} h(t-z) \left(\frac{1}{n}\sum_{i=1}^{n} N^{n,i}(dz) - \lambda^{n,i}_{z}dz\right) = \int_{0}^{t} h(t-z) dW^{n}_{z}({\bf 1}),\\
\displaystyle \Upsilon^{2}_{t}:=\sqrt{n} \int_{0}^{t} h(t-z) \frac{1}{n}\sum_{i=1}^{n} (\lambda^{n,i}_{z}- \Psi(S^{n,i}_{z-},\overline{\gamma}(z)))dz,\\
\displaystyle \Upsilon^{3}_{t}:= \sqrt{n} \int_{0}^{t} h(t-z) \frac{1}{n}\sum_{i=1}^{n} (\Psi(S^{n,i}_{z-},\overline{\gamma}(z))- \overline{\lambda}(z))dz= \!\! \int_{0}^{t} \! h(t-z) \left< \eta^{n}_{z}, \Psi(\cdot,\overline{\gamma}(z)\right>dz,
\end{cases}
\end{equation*}
where we used, in the last line, the fact that $\overline{\mu}^{n}_{S_{z-}}=\overline{\mu}^{n}_{S_{z}}$ for almost every $z$ in $\mathbb{R}_{+}$, and $\overline{\lambda}(z)=\left< u_{z},\Psi(\cdot,\overline{\gamma}(z)) \right>$.

Based on Assumption (\hyperref[ass:Psi:C2]{$\mathcal{A}^{\Psi}_{y,\mathcal{C}^{2}}$}), as for Equation \eqref{eq:expansion:A:n:t}, one can give the Taylor expansion of the term
\begin{equation*}
\Upsilon^{2}_{t}=\sqrt{n} \int_{0}^{t} h(t-z) \frac{1}{n}\sum_{i=1}^{n} (\Psi(S^{n,i}_{z-},\gamma^{n}_{z})- \Psi(S^{n,i}_{z-},\overline{\gamma}(z)))dz.
\end{equation*}

On the one hand, gathering the decomposition \eqref{eq:decomposition:eta} with \eqref{eq:An:function:of:Gamma} and on the other hand gathering $\Gamma^{n}_{t}= \Upsilon^{1}_{t}+\Upsilon^{2}_{t}+\Upsilon^{3}_{t}$ with the Taylor expansion of $\Upsilon^{2}_{t}$ give that $(\eta^{n},\Gamma^{n})$ satisfies the following closed system for all $\varphi$ in $\mathcal{W}^{2,\alpha}_{0}$,
\begin{multline}\label{eq:closed:equation:n:1}
\left< \eta^{n}_{t},\varphi \right>-\left< \eta^{n}_{0},\varphi \right> = \int_{0}^{t} \left< \eta^{n}_{z},L_{z}\varphi \right> dz + \int_{0}^{t} \left< \overline{\mu}^{n}_{S_{z}},\frac{\partial \Psi}{\partial y}(\cdot,\overline{\gamma}(z))R\varphi \right> \Gamma^{n}_{z}dz \\
+\int_{0}^{t} R^{n,(1)}_{z}(\varphi) dz +  W^{n}_{t}(R\varphi),
\end{multline}
\begin{multline}\label{eq:closed:equation:n:2}
\Gamma^{n}_{t} = \int_{0}^{t} h(t-z) \left< \overline{\mu}^{n}_{S_{z}},\frac{\partial \Psi}{\partial y}(\cdot,\overline{\gamma}(z))\right> \Gamma^{n}_{z}dz + \int_{0}^{t} h(t-z) R^{n,(2)}_{z} dz \\
+ \int_{0}^{t} h(t-z) \left< \eta^{n}_{z}, \Psi(\cdot,\overline{\gamma}(z)\right> dz + \int_{0}^{t} h(t-z) dW^{n}_{z}({\bf 1}) ,
\end{multline}
where the rest term $R^{n,(2)}_{z}$ is defined by
\begin{equation*}
R^{n,(2)}_{z}:= \frac{1}{\sqrt{n}} \sum_{i=1}^{n} \frac{\partial \Psi}{\partial y} (S^{n,i}_{t-},\overline{\gamma}(t)) r^{n,i}_{t}.
\end{equation*}
Once again, notice that $\Gamma^{n}_{z-}$, which naturally appears in the first integral term of \eqref{eq:closed:equation:n:2}, is replaced by its càdlàg version $\Gamma^{n}_{z}$ since they are equal except on a null measure set.\\

Let us denote $V^{n}_{t}:=\int_{0}^{t}h(t-z) dW^{n}_{z}({\bf 1})$ and $V_{t}:=\int_{0}^{t}h(t-z)dW_{z}({\bf 1})$. The convergence of the sources of stochasticity in the system \eqref{eq:closed:equation:n:1}-\eqref{eq:closed:equation:n:2} is stated in the following corollary of Proposition \ref{prop:convergence:Wn:W}.

\begin{cor}\label{cor:convergence:couple}
Under (\hyperref[ass:for:TGN]{$\mathcal{A}_{\text{\tiny{TGN}}}$}) and (\hyperref[ass:h:Holder]{$\mathcal{A}^{h}_{\rm H\ddot{o}l}$}), the following convergence in law holds true in $\mathcal{D}(\mathbb{R}_{+},\mathcal{W}^{-2,\alpha}_{0}\times \mathbb{R})$,
\begin{equation*}
\Big(R^{*}W^{n}_{t},V^{n}_{t}\Big)_{t\geq 0} \Rightarrow \Big(R^{*}W_{t},V_{t}\Big)_{t\geq 0},
\end{equation*}
where $R^{*}$ denotes the adjoint of $R$.
\end{cor}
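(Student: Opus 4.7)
The plan is to upgrade Proposition \ref{prop:convergence:Wn:W} (namely $W^{n}\Rightarrow W$) to the joint convergence stated in the corollary by combining the continuity of $R^{*}$ with a specific argument for the convolution defining $V^{n}$. Since Lemma \ref{lem:R:continuous} shows $R:\mathcal{W}^{2,\alpha}_{0}\to\mathcal{W}^{2,\alpha}_{0}$ to be continuous, its adjoint $R^{*}$ is continuous on $\mathcal{W}^{-2,\alpha}_{0}$ and acts continuously on $\mathcal{D}(\mathbb{R}_{+},\mathcal{W}^{-2,\alpha}_{0})$ endowed with the Skorohod topology. By the continuous mapping theorem, $R^{*}W^{n}\Rightarrow R^{*}W$. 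Since marginal tightness implies joint tightness on products of Polish spaces, it remains to prove tightness of $(V^{n})_{n\geq 1}$ in $\mathcal{D}(\mathbb{R}_{+},\mathbb{R})$ and to identify any subsequential joint limit.

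For tightness of $(V^{n})_{n\geq 1}$, the Aldous criterion on page \pageref{sec:tightness:criterion} applies. For a stopping time $\tau_{n}\leq \theta$ and $\delta\leq \delta_{0}$, decompose
\begin{equation*}
V^{n}_{\tau_{n}+\delta}-V^{n}_{\tau_{n}} = \int_{\tau_{n}}^{\tau_{n}+\delta}h(\tau_{n}+\delta-z)dW^{n}_{z}({\bf 1}) + \int_{0}^{\tau_{n}}[h(\tau_{n}+\delta-z)-h(\tau_{n}-z)]dW^{n}_{z}({\bf 1}).
\end{equation*}
Itô's isometry applied to the real martingale $W^{n}({\bf 1})$, whose predictable bracket has Lebesgue density bounded by $||\Psi||_{\infty}$ according to \eqref{eq:bracket:W:n}, combined with Assumption (\hyperref[ass:h:Holder]{$\mathcal{A}^{h}_{\rm H\ddot{o}l}$}), yields an $L^{2}$-norm for the increment that is $\lesssim_{\theta}\delta^{1/2}+\delta^{\beta(h)}$ uniformly in $n$. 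Markov's inequality then delivers (\hyperref[condition:A2]{$\texttt{A}_{2}$}), while (\hyperref[condition:A1]{$\texttt{A}_{1}$}) follows from the analogous bound $\sup_{n}\mathbb{E}[(V^{n}_{t})^{2}]<+\infty$ given by the same isometry.

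To identify the limit, I would pass to a Skorohod representation so that along a subsequence $(W^{n_{k}},V^{n_{k}})\to (W^{\infty},V^{\infty})$ almost surely, and approximate $h$ by a smooth mollification $h_{\varepsilon}$. For each $\varepsilon$, integration by parts rewrites $\int_{0}^{t}h_{\varepsilon}(t-z)dW^{n}_{z}({\bf 1})=h_{\varepsilon}(0)W^{n}_{t}({\bf 1})+\int_{0}^{t}h'_{\varepsilon}(t-z)W^{n}_{z}({\bf 1})dz$, which is a continuous functional of the path $W^{n}({\bf 1})$ for the uniform topology and therefore passes to the almost sure limit. Another application of Itô's isometry bounds $\mathbb{E}[|V^{n}_{t}-\int_{0}^{t}h_{\varepsilon}(t-z)dW^{n}_{z}({\bf 1})|^{2}]\lesssim_{t}||h-h_{\varepsilon}||_{L^{\infty}([0,t])}^{2}$ uniformly in $n$ (with an analogous bound for the limit). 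A diagonal argument letting $\varepsilon\to 0$ identifies $V^{\infty}_{t}=\int_{0}^{t}h(t-z)dW^{\infty}_{z}({\bf 1})$, and since $W^{\infty}\stackrel{d}{=}W$ while both coordinates of the joint limit are measurable functionals of $W^{\infty}$, the joint law coincides with that of $(R^{*}W,V)$.

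The main obstacle is this final identification step: because $h$ is only Hölder, the convolution map $M\mapsto \int_{0}^{\cdot}h(\cdot-z)dM_{z}$ is not continuous on $\mathcal{D}(\mathbb{R}_{+},\mathbb{R})$ in the uniform topology, which forces the mollification argument and the delicate uniform-in-$n$ control of the approximation error before one can exchange the limits $n\to\infty$ and $\varepsilon\to 0$.
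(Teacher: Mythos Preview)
Your overall strategy mirrors the paper's: deduce tightness of $R^{*}W^{n}$ from the continuity of $R^{*}$, establish tightness of $V^{n}$ separately, combine via the continuity of the limit of $R^{*}W^{n}$, and identify the joint law by approximating $h$ with simpler functions for which the convolution becomes a continuous functional of $W^{n}$. Your identification step via smooth mollification and integration by parts is a legitimate variant of the paper's piecewise-constant approximation; both rest on the uniform-in-$n$ isometry bound $\mathbb{E}\big[|V^{n}_{t}-V^{n,\varepsilon}_{t}|^{2}\big]\lesssim_{t}\|h-h_{\varepsilon}\|_{\infty}^{2}$, and the Skorohod-representation packaging is equivalent to the paper's direct finite-dimensional argument.

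There is, however, a genuine gap in your tightness argument for $V^{n}$. In the decomposition of $V^{n}_{\tau_{n}+\delta}-V^{n}_{\tau_{n}}$, the second term
\[
\int_{0}^{\tau_{n}}\big[h(\tau_{n}+\delta-z)-h(\tau_{n}-z)\big]\,dW^{n}_{z}(\mathbf{1})
\]
has an integrand that depends on $\tau_{n}$; on the event $\{z<\tau_{n}\}$ the value of $\tau_{n}$ is not $\mathcal{F}_{z}$-measurable, so this integrand is \emph{not} predictable and It\^o's isometry does not apply as stated. The claimed $L^{2}$ bound $\lesssim_{\theta}\delta^{\beta(h)}$ for this term is therefore unjustified. (The first term is fine: on $\{z>\tau_{n}\}$ the stopping time is known at time $z$.) The process $t\mapsto V^{n}_{t}$ is not a semimartingale, which makes the Aldous criterion awkward here. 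The paper avoids the issue altogether by working with deterministic times: via Burkholder--Davis--Gundy it proves
\[
\mathbb{E}\big[|V^{n}_{t}-V^{n}_{r}|^{2p}\big]\ \lesssim_{\theta}\ |t-r|^{2p\beta(h)}\qquad (r<t\leq\theta),
\]
and then takes $p=1/\beta(h)$ to feed Billingsley's criterion \cite[Theorem~VI.4.1]{Jacod_2003}, which needs no stopping-time estimates. Your argument is easily repaired along these lines.
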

The proof of Corollary \ref{cor:convergence:couple} uses Billingsley tightness criterion for real-valued stochastic processes and is given in Appendix \ref{sec:proof:cor:convergence:couple}.\\

Before taking the limit $n\to +\infty$ in the system \eqref{eq:closed:equation:n:1}-\eqref{eq:closed:equation:n:2}, we state the tightness of $(\Gamma^{n})_{n\geq 1}$. Nevertheless, let us first mention that we use the following estimates: as a consequence of Proposition \ref{prop:rate:of:convergence:proba:k:tuple:different:ages}, for all $k\geq 0$ and $\theta\geq 0$,
\begin{equation}\label{eq:power:Gamma:n:locally:bounded}
\sup_{t\in [0,\theta]} \mathbb{E}\left[ |\Gamma^{n}_{t}|^{k} \right] <+\infty,
\end{equation}
since $\sup_{t\in [0,\theta]} \mathbb{E}\left[ |\Gamma^{n}_{t}|^{k} \right]=\sup_{t\in [0,\theta]} \mathbb{E}\left[ |\Gamma^{n}_{t-}|^{k} \right]$ because the underlying point processes admit intensities so that there is almost surely no jump at time $\theta$.

\begin{prop}\label{prop:tightness:Gamma:n}
Under (\hyperref[ass:for:TGN]{$\mathcal{A}_{\text{\tiny{TGN}}}$}) and (\hyperref[ass:h:Holder]{$\mathcal{A}^{h}_{\rm H\ddot{o}l}$}), the sequence of the laws of $(\Gamma^{n})_{n\geq 1}$ is tight in $\mathcal{D}(\mathbb{R}_{+},\mathbb{R})$. Furthermore, the possible limit laws are supported in $\mathcal{C}(\mathbb{R}_{+},\mathbb{R})$ and satisfy, for all $k\geq 0$,
\begin{equation}\label{eq:power:Gamma:locally:bounded:limit:version}
\sup_{t\in [0,\theta]} \mathbb{E}\left[ |\Gamma_{t}|^{k} \right] <+\infty,
\end{equation}
\end{prop}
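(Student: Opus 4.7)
The plan is to verify an Aldous-type tightness criterion for the real-valued processes $(\Gamma^{n})_{n\geq 1}$ by exploiting the decomposition $\Gamma^{n}_{t}=\Upsilon^{1}_{t}+\Upsilon^{2}_{t}+\Upsilon^{3}_{t}$ introduced just before \eqref{eq:closed:equation:n:1}. Marginal tightness is immediate from \eqref{eq:power:Gamma:n:locally:bounded} and Markov's inequality, so the task reduces to controlling $\mathbb{E}\big[(\Gamma^{n}_{\tau_{n}+\delta}-\Gamma^{n}_{\tau_{n}})^{2}\big]$ uniformly in $n$ and in $\mathbb{F}$-stopping times $\tau_{n}\leq \theta$ by a quantity vanishing as $\delta\to 0$.

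For the martingale piece $\Upsilon^{1}_{t}=V^{n}_{t}=\int_{0}^{t}h(t-z)\,dW^{n}_{z}(\mathbf{1})$, I would first split
\[
\Upsilon^{1}_{\tau_{n}+\delta}-\Upsilon^{1}_{\tau_{n}} = \int_{\tau_{n}}^{\tau_{n}+\delta} h(\tau_{n}+\delta-z)\,dW^{n}_{z}(\mathbf{1}) + \int_{0}^{\tau_{n}} \big[h(\tau_{n}+\delta-z)-h(\tau_{n}-z)\big]\,dW^{n}_{z}(\mathbf{1}),
\]
and view both summands as stochastic integrals of predictable integrands against $W^{n}(\mathbf{1})$. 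Applying the $L^{2}$-isometry together with $d\langle W^{n}(\mathbf{1})\rangle_{z}\leq ||\Psi||_{\infty}\,dz$ (a consequence of \eqref{eq:bracket:W:n} with $\varphi_{1}=\varphi_{2}=\mathbf{1}$), the $L^{2}$-norm of the first summand is bounded by $||\Psi||_{\infty}^{1/2}h_{\infty}(\theta+1)\,\delta^{1/2}$, while Assumption (\hyperref[ass:h:Holder]{$\mathcal{A}^{h}_{\rm H\ddot{o}l}$}) controls the second by $||\Psi||_{\infty}^{1/2}\mathrm{H\ddot{o}l}(h)\,\theta^{1/2}\,\delta^{\beta(h)}$. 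The drift piece $\Upsilon^{2}$ satisfies $|\Upsilon^{2}_{t}|\leq \mathrm{Lip}(\Psi)\int_{0}^{t}h(t-z)|\Gamma^{n}_{z}|\,dz$ by Assumption (\hyperref[ass:Psi:C2]{$\mathcal{A}^{\Psi}_{y,\mathcal{C}^{2}}$}), and the same convolution splitting combined with \eqref{eq:power:Gamma:n:locally:bounded} yields $\mathbb{E}\big[|\Upsilon^{2}_{\tau_{n}+\delta}-\Upsilon^{2}_{\tau_{n}}|\big]=O(\delta+\delta^{\beta(h)})$. Finally $\Upsilon^{3}$ is handled analogously, using the duality bound $|\langle\eta^{n}_{z},\Psi(\cdot,\overline{\gamma}(z))\rangle|\leq ||\eta^{n}_{z}||_{-1,\alpha}\,||\Psi(\cdot,\overline{\gamma}(z))||_{1,\alpha}$ combined with \eqref{eq:Psi:dPsi:C2b:locally:in:time} and Proposition \ref{prop:control:-1:alpha:combined}-$(i)$. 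Markov's inequality then delivers the stopping-time condition.

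To show that any limit law is supported on $\mathcal{C}(\mathbb{R}_{+},\mathbb{R})$, I would argue that $\sup_{t\in[0,\theta]}|\Delta\Gamma^{n}_{t}|\to 0$ almost surely: among the three pieces only $\Upsilon^{1}$ carries jumps, and at a jump time $\tau$ of some $N^{n,i}$ the jump of $V^{n}$ equals $h(0)/\sqrt{n}$, which is bounded by a deterministic $O(n^{-1/2})$ quantity. Invoking \cite[Theorem 13.4]{Billing_Convergence} as in Remark \ref{rem:C:tight} then gives the continuity of the limit trajectories. The moment bound \eqref{eq:power:Gamma:locally:bounded:limit:version} is obtained by Skorokhod representation and Fatou's lemma: for every $t\in[0,\theta]$,
\[
\mathbb{E}\big[|\Gamma_{t}|^{k}\big]\leq \liminf_{n\to\infty} \mathbb{E}\big[|\Gamma^{n}_{t}|^{k}\big] \leq \sup_{n\geq 1}\,\sup_{s\in[0,\theta]} \mathbb{E}\big[|\Gamma^{n}_{s}|^{k}\big]<+\infty,
\]
the last supremum being finite thanks to \eqref{eq:power:Gamma:n:locally:bounded}.

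The principal difficulty lies in the stochastic convolution $\Upsilon^{1}$: a time shift of size $\delta$ perturbs the integrand $h(t-\cdot)$ uniformly over the entire past, and it is precisely here that Assumption (\hyperref[ass:h:Holder]{$\mathcal{A}^{h}_{\rm H\ddot{o}l}$}) becomes essential, converting the temporal increment into a small $L^{\infty}$-perturbation of the integrand. In contrast, the drift pieces $\Upsilon^{2}$ and $\Upsilon^{3}$ merely require pathwise manipulations combined with the moment bound \eqref{eq:power:Gamma:n:locally:bounded} and the Sobolev estimate from Proposition \ref{prop:control:-1:alpha:combined}.
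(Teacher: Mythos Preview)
Your overall architecture is sound and close in spirit to the paper's, but there is a genuine gap in the treatment of $\Upsilon^{1}=V^{n}$. In your splitting
\[
\Upsilon^{1}_{\tau_{n}+\delta}-\Upsilon^{1}_{\tau_{n}}
= \int_{\tau_{n}}^{\tau_{n}+\delta} h(\tau_{n}+\delta-z)\,dW^{n}_{z}(\mathbf{1})
+ \int_{0}^{\tau_{n}} \big[h(\tau_{n}+\delta-z)-h(\tau_{n}-z)\big]\,dW^{n}_{z}(\mathbf{1}),
\]
the first summand is fine (on $\{z>\tau_{n}\}$ the value of $\tau_{n}$ is $\mathcal{F}_{z-}$-measurable, so the integrand is predictable and the isometry applies). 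The second summand, however, has integrand $z\mapsto h(\tau_{n}+\delta-z)-h(\tau_{n}-z)$ on $\{z\leq\tau_{n}\}$, which depends on the future value $\tau_{n}$ and is \emph{not} predictable. The $L^{2}$-isometry therefore does not apply, and a naive pathwise bound (splitting $dW^{n}_{z}(\mathbf{1})$ into its point-process and compensator parts) produces terms of order $\sqrt{n}$, destroying uniformity in $n$. This is precisely the obstruction that makes Aldous awkward for stochastic convolutions.

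The paper circumvents this entirely: tightness of $V^{n}$ is obtained not via Aldous but via the Billingsley criterion with \emph{deterministic} times (see the proof of Corollary~\ref{cor:convergence:couple}, Equation~\eqref{eq:control:kolmogorov:V:n}), where for fixed $r<t$ the integrand $h(t-\cdot)-h(r-\cdot)$ is deterministic and BDG applies cleanly. The paper then uses the resulting convergence of $V^{n}$ as an input, and reserves the Aldous argument for the Lebesgue-integral term $I^{n}_{t}=\int_{0}^{t}h(t-z)\langle u_{z},\tfrac{\partial\Psi}{\partial y}(\cdot,\overline{\gamma}(z))\rangle\Gamma^{n}_{z}\,dz$, where pathwise bounds with stopping times cause no trouble. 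Your handling of $\Upsilon^{2}$, $\Upsilon^{3}$, the jump-size argument for continuity of the limit, and the Fatou step for \eqref{eq:power:Gamma:locally:bounded:limit:version} are all correct; the fix is simply to replace your Aldous computation for $\Upsilon^{1}$ by the deterministic-time moment bound of \eqref{eq:control:kolmogorov:V:n}.
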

The proof of Proposition \ref{prop:tightness:Gamma:n} uses Aldous tightness criterion for real-valued stochastic processes and is given in Appendix \ref{sec:proof:prop:tightness:Gamma:n}.\\

Both sequences $(\eta^{n})_{n\geq 1}$ and $(\Gamma^{n})_{n\geq 1}$ are tight with continuous limit trajectories. Tightness of $(\eta^{n},\Gamma^{n})_{n\geq 1}$ hence follows and we are now in position to give the system satisfied by any limit $(\eta,\Gamma)$.

\begin{thm}\label{thm:limit:equation}
Under (\hyperref[ass:for:TGN]{$\mathcal{A}_{\text{\tiny{TGN}}}$}) and (\hyperref[ass:h:Holder]{$\mathcal{A}^{h}_{\rm H\ddot{o}l}$}), for all $\alpha> 1/2$, any limit  $(\eta,\Gamma)$ of the sequence $(\eta^{n},\Gamma^{n})_{n\geq 1}$ is a solution in $\mathcal{C}(\mathbb{R}_{+},\mathcal{W}^{-2,\alpha}_{0}\times \mathbb{R})$ of the following system (formulated in $\mathcal{W}^{-3,\alpha}_{0}\times \mathbb{R}$),
\begin{multline}\label{eq:closed:equation:limit:1}
\forall \varphi\in \mathcal{W}^{3,\alpha}_{0}, \quad \left< \eta_{t},\varphi \right>-\left< \eta_{0},\varphi \right> = \int_{0}^{t} \left< \eta_{z},L_{z}\varphi \right> dz + \int_{0}^{t} \left< u_{z},\frac{\partial \Psi}{\partial y}(\cdot,\overline{\gamma}(z))R\varphi \right> \Gamma_{z} dz \\
+  W_{t}(R\varphi),
\end{multline}
\begin{multline}\label{eq:closed:equation:limit:2}
\Gamma_{t} = \int_{0}^{t} h(t-z) \left< \eta_{z}, \Psi(\cdot,\overline{\gamma}(z)\right> dz + \int_{0}^{t} h(t-z) \left< u_{z},\frac{\partial \Psi}{\partial y}(\cdot,\overline{\gamma}(z))\right> \Gamma_{z} dz\\
+ \int_{0}^{t} h(t-z) dW_{z}({\bf 1}).
\end{multline}
\end{thm}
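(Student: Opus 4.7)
The plan is to exploit joint tightness of the quadruple $(\eta^{n},\Gamma^{n},W^{n},V^{n})$, which follows from Theorem \ref{thm:tightness:M:n:eta:n}, Proposition \ref{prop:tightness:Gamma:n} and Corollary \ref{cor:convergence:couple}, to extract a jointly convergent subsequence with limit $(\eta,\Gamma,W,V)$ (whose components all have continuous paths by Remark \ref{rem:C:tight} and Proposition \ref{prop:tightness:Gamma:n}). Skorokhod's representation theorem then allows us to work under almost sure convergence on a new probability space. The goal is to pass term-by-term to the limit in the closed system \eqref{eq:closed:equation:n:1}--\eqref{eq:closed:equation:n:2} for each test function $\varphi\in\mathcal{W}^{3,\alpha}_{0}$; this extra degree of regularity relative to what is dictated by tightness is forced by the fact that $L_{z}$ loses one derivative (Proposition \ref{prop:control:-1:alpha:combined}-$(iv)$), so that $L_{z}\varphi$ lies in $\mathcal{W}^{2,\alpha}_{0}$ and is testable against the limit $\eta_{z}\in\mathcal{W}^{-2,\alpha}_{0}$.

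For the linear, initial and stochastic parts the argument is routine. Since the limit paths are continuous, evaluation at a fixed $t$ is continuous in $\mathcal{D}$ at the limit point, giving a.s.\ convergence of $\langle\eta^{n}_{0},\varphi\rangle$, $\langle\eta^{n}_{t},\varphi\rangle$, $W^{n}_{t}(R\varphi)$, $V^{n}_{t}$ and $\Gamma^{n}_{t}$ to their respective limits. The time integral $\int_{0}^{t}\langle\eta^{n}_{z},L_{z}\varphi\rangle dz$ is handled by combining Lebesgue-a.e.\ convergence in $z$ with the uniform bound $|\langle\eta^{n}_{z},L_{z}\varphi\rangle|\leq ||\eta^{n}_{z}||_{-1,\alpha}||L_{z}\varphi||_{1,\alpha}$, the $L^{2}$ estimate of Proposition \ref{prop:control:-1:alpha:combined}-$(i)$, the continuity estimate \eqref{eq:control:L(f)} and dominated convergence. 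The convolutions with $h$ in \eqref{eq:closed:equation:n:2} pose no additional difficulty thanks to (\hyperref[ass:h:infty]{$\mathcal{A}^{h}_{\infty}$}) and the convergence $V^{n}\to V$.

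The delicate terms are the bilinear pairings $\int_{0}^{t}\langle\overline{\mu}^{n}_{S_{z}},g_{z}\rangle\Gamma^{n}_{z}\,dz$ with $g_{z}(s)=\frac{\partial\Psi}{\partial y}(s,\overline{\gamma}(z))R\varphi(s)$ (and its scalar counterpart in \eqref{eq:closed:equation:n:2}). I would exploit that the first factor converges \emph{deterministically} to $\langle u_{z},g_{z}\rangle$ by the law of large numbers \eqref{eq:LLN:intro} while remaining uniformly bounded by $||g_{z}||_{\infty}$, and couple this with the a.s.\ convergence $\Gamma^{n}_{z}\to\Gamma_{z}$ from the Skorokhod copy, plus the uniform moment bounds \eqref{eq:power:Gamma:n:locally:bounded}, so that dominated convergence yields the integrated limit $\int_{0}^{t}\langle u_{z},g_{z}\rangle\Gamma_{z}\,dz$. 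The Taylor remainder integrals $\int_{0}^{t}R^{n,(1)}_{z}(\varphi)dz$ and $\int_{0}^{t}h(t-z)R^{n,(2)}_{z}\,dz$ vanish in $L^{1}$ at the rate $n^{-1/2}$: from \eqref{eq:bound:rni:Gamma} and uniform boundedness of $R\varphi$ and $\frac{\partial\Psi}{\partial y}$, one gets $|R^{n,(j)}_{z}(\varphi)|\leq C\sqrt{n}\,|\gamma^{n}_{z}-\overline{\gamma}(z)|^{2}=C n^{-1/2}(\Gamma^{n}_{z})^{2}$, whose expectation is $O(n^{-1/2})$ by Proposition \ref{prop:rate:of:convergence:proba:k:tuple:different:ages}.

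Putting these pieces together yields \eqref{eq:closed:equation:limit:1}--\eqref{eq:closed:equation:limit:2} almost surely for each fixed $t\geq 0$ and each fixed $\varphi$; continuity in $t$ of the limits and separability of $\mathcal{W}^{3,\alpha}_{0}$ then promote the identities to an a.s.\ equality of continuous processes in $\mathcal{W}^{-3,\alpha}_{0}\times\mathbb{R}$. The main obstacle is precisely the bilinear step: transferring the \emph{deterministic} LLN convergence of $\overline{\mu}^{n}_{S_{z}}$ through a multiplication by the \emph{stochastically} converging $\Gamma^{n}_{z}$ and integrating in $z$, while simultaneously keeping the second-order Taylor remainders under control. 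Since $\mathcal{D}$-convergence alone does not automatically give integrated convergence, the joint Skorokhod coupling, the higher-order moment estimates for $\Gamma^{n}$ carried by \eqref{eq:power:Gamma:n:locally:bounded}, and the $L^{2}$ tightness bounds on $\eta^{n}$ must all be orchestrated together; this is where the sharper estimates of Proposition \ref{prop:rate:of:convergence:proba:k:tuple:different:ages} pay off.
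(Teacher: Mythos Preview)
Your strategy is essentially the paper's, recast through Skorokhod representation rather than the continuous mapping theorem: both reduce to continuity, at continuous trajectories, of the functionals encoding \eqref{eq:closed:equation:n:1}--\eqref{eq:closed:equation:n:2}, together with the $L^{1}$ vanishing of the Taylor remainders $R^{n,(j)}$ and of the correction $\langle\overline{\mu}^{n}_{S_{z}}-u_{z},g_{z}\rangle\Gamma^{n}_{z}$. The paper packages this by defining explicit functionals $F_{k},G$, proving their continuity on $\mathcal{D}$ at points of $\mathcal{C}$, and then identifying the in-probability limit $F_{k}(\eta^{n},\Gamma^{n},W^{n})\to 0$ with the in-law limit $F_{k}(\eta^{n},\Gamma^{n},W^{n})\Rightarrow F_{k}(\eta,\Gamma,W)$; the bilinear correction is killed in $L^{1}$ via Cauchy--Schwarz after writing $\overline{\mu}^{n}-u=n^{-1/2}\eta^{n}$ (cf.\ \eqref{eq:convergence:ou:on:utilise:CS}).

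Two points in your write-up need tightening. First, calling the convergence of $\langle\overline{\mu}^{n}_{S_{z}},g_{z}\rangle$ ``deterministic by the LLN'' is misleading: on the Skorokhod copy you only control the tuple $(\eta^{n},\Gamma^{n},W^{n},V^{n})$, and the convergence you need is $\overline{\mu}^{n}_{S_{z}}=u_{z}+n^{-1/2}\eta^{n}_{z}\to u_{z}$ a.s., which comes from the Skorokhod a.s.\ convergence of $\eta^{n}$, not from an independent LLN. Second, for dominated convergence in the $z$-integral, neither Proposition~\ref{prop:control:-1:alpha:combined}-$(i)$ nor \eqref{eq:power:Gamma:n:locally:bounded} supplies an a.s.\ dominating function---those are $L^{2}$/moment bounds in expectation. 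The domination you actually have is $\sup_{n}\sup_{z\leq t}||\eta^{n}_{z}||_{-2,\alpha}<\infty$ and $\sup_{n}\sup_{z\leq t}|\Gamma^{n}_{z}|<\infty$ a.s., inherited from locally uniform a.s.\ convergence to a continuous limit; correspondingly, for the linear term you should pair $\eta^{n}$ in $\mathcal{W}^{-2,\alpha}_{0}$ with $L_{z}\varphi\in\mathcal{W}^{2,\alpha}_{0}$ (the one-step-up analogue of \eqref{eq:control:L(f)}, stated in the paper as \eqref{eq:control:Lz:3:alpha:to:2:alpha}), not with the $||\cdot||_{-1,\alpha}$ norm.
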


The proof of Theorem \ref{thm:limit:equation} consists in proving continuity properties to apply the continuous mapping theorem. It is given in Appendix \ref{sec:proof:thm:limit:equation}.

\begin{rem}
The linear operator $L_{z}$ appearing in \eqref{eq:closed:equation:n:1} and \eqref{eq:closed:equation:limit:1} reduces the regularity of the test functions by $1$. Hence, if we consider Equation \eqref{eq:closed:equation:n:1} for test functions $\varphi$ in $\mathcal{W}^{2,\alpha}_{0}$ then we must consider $\eta^{n}$ as taking values in $\mathcal{W}^{-1,\alpha}_{0}$ when dealing with the integral term $\int_{0}^{t} \left< \eta^{n}_{z},L_{z}\varphi \right> dz$. Yet $(\eta^{n})_{n\geq 1}$ is not tight in this space. Thus we consider \eqref{eq:closed:equation:n:1} for test functions in $\mathcal{W}^{3,\alpha}_{0}$ so that every term is tight. That is why the limit equation \eqref{eq:closed:equation:limit:1} is formulated in $\mathcal{W}^{-3,\alpha}_{0}$. However, the limit process $\eta$ takes values in the smaller space $\mathcal{W}^{-2,\alpha}_{0}$.
\end{rem}

\begin{rem}
The initial condition $\eta_{0}$ of the system \eqref{eq:closed:equation:limit:1}-\eqref{eq:closed:equation:limit:2} is determined by the initial density $u_0$. It is an infinite dimensional gaussian random variable. Indeed, $\eta_{0}$ is well defined as the limit in $\mathcal{W}^{-3,\alpha}_{0}$ of $\eta^{n}_{0}$. The sequence $(\eta^{n}_{0})_{n\geq 1}$ is tight in $\mathcal{W}^{-3,\alpha}_{0}$ (it is tight in $\mathcal{W}^{-2,\alpha}_{0}$ and there is a continuous embedding of $\mathcal{W}^{-2,\alpha}_{0}$ into $\mathcal{W}^{-3,\alpha}_{0}$) and for any $\varphi$ in $\mathcal{W}^{3,\alpha}_{0}$, we have the convergence of the real-valued random variables $\left< \eta^{n}_{0},\varphi \right>=\sqrt{n} \left< \overline{\mu}^{n}_{S_{0}}-u_{0}, \varphi \right>$ by applying the standard central limit theorem since the initial conditions are i.i.d.
\end{rem}

\subsection{Uniqueness of the limit law}
\label{sec:uniqueness:limit:equation}

The next step in order to prove convergence of the sequence $(\eta^{n},\Gamma^{n})_{n\geq 1}$ is to prove uniqueness of the solutions of the limit system  \eqref{eq:closed:equation:limit:1}-\eqref{eq:closed:equation:limit:2}. Since the system is linear, the standard argument is to consider the system satisfied by the difference between two solutions and show that its unique solution is trivial. Let $(\eta,\Gamma)$ and $(\hat{\eta},\hat{\Gamma})$ be two solutions associated with the same ``noise'' $W$ and the same initial condition $\eta_{0}$. Denote by $\tilde{\eta}:=\eta-\hat{\eta}$ and $\tilde{\Gamma}:=\Gamma-\hat{\Gamma}$ the differences. Then, $(\tilde{\eta},\tilde{\Gamma})$ is a solution of the following system
\begin{equation}\label{eq:closed:equation:tilde:1}
\forall \varphi\in \mathcal{W}^{3,\alpha}_{0}, \quad \left< \tilde{\eta}_{t},\varphi \right>-\int_{0}^{t} \left< \tilde{\eta}_{z},L_{z}\varphi \right> dz - \int_{0}^{t} \left< u_{z},\frac{\partial \Psi}{\partial y}(\cdot,\overline{\gamma}(z))R\varphi \right> \tilde{\Gamma}_{z} dz =0,
\end{equation}
\begin{equation}\label{eq:closed:equation:tilde:2}
\tilde{\Gamma}_{t} - \int_{0}^{t} h(t-z) \left< \tilde{\eta}_{z}, \Psi(\cdot,\overline{\gamma}(z)\right> dz - \int_{0}^{t} h(t-z) \left< u_{z},\frac{\partial \Psi}{\partial y}(\cdot,\overline{\gamma}(z))\right> \tilde{\Gamma}_{z} dz=0.
\end{equation}
The standard follow-up is to use Grönwall's lemma. Let us show here why it is not sufficient in our case. For instance, assume we want to prove that $||\tilde{\eta}||_{-3,\alpha}=0$: heuristically, when applied to \eqref{eq:closed:equation:tilde:2}, Grönwall's argument gives that $|\tilde{\Gamma}_{t}|$ is bounded by some locally bounded function of $t$ times the integral $\int_{0}^{t} ||\tilde{\eta}_{z}||_{-3,\alpha} dz$. However, even if we use this bound for $\tilde{\Gamma}$ in \eqref{eq:closed:equation:tilde:1}, Grönwall's argument cannot be applied since the term $\int_{0}^{t} \left< \tilde{\eta}_{z},L_{z}\varphi \right> dz$ involves $||\tilde{\eta}_{z}||_{-2,\alpha}$ which is greater than the desired norm $||\tilde{\eta}_{z}||_{-3,\alpha}$. This problem cannot be bypassed by upgrading the regularity as we have done before to deal with the fact that the operator $L_{z}$ reduces the regularity of the test functions.

Since the main limitation comes from the differential part of the operator $L_{z}$, let us consider $L_{z}$ as the sum of the first order differential operator plus a perturbation. More precisely, let $\mathcal{L}:\varphi\mapsto\varphi'$ and $G_{t}:\varphi\mapsto \Psi(\cdot,\overline{\gamma}(t))R\varphi$ so that $L_{t}=\mathcal{L}+G_{t}$. Let us present here the heuristics behind the argument we use to bypass the issue induced by the differential operator $\mathcal{L}$: instead of studying the time derivative $\frac{d}{dt} \left< \tilde{\eta}_{t},\varphi \right>$ in \eqref{eq:closed:equation:tilde:1}, the idea is to find some family of test functions $(\varphi_{t})_{t\geq 0}$ such that $\left< \tilde{\eta}_{t},\frac{d}{dt} \varphi_{t}\right>=-\left< \tilde{\eta}_{t}, \mathcal{L} \varphi_{t}\right>$; thus the differential operator $\mathcal{L}$ vanishes in $\frac{d}{dt} \left< \tilde{\eta}_{t},\varphi_{t} \right>$ and Grönwall's argument can be applied.

More precisely, let us introduce the shift operators $\tau_{t}:\varphi\mapsto \varphi(\cdot+t)$ for all $t\geq 0$. Notice that these shift operators are linked with the method of characteristics applied to a transport equation with constant speed equal to $1$ which is exactly the dynamics described by the differential operator $\mathcal{L}$. Below are given some bounds for the operators $\mathcal{L}$, $G_{t}$ and $\tau_{t}$ when acting on the space $\mathcal{C}^{4}_{b}$.
\begin{lem}\label{lem:control:L:tau:G}
Let $\varphi$ be in $\mathcal{C}^{4}_{b}$. Assume that $t\mapsto ||\Psi(\cdot,\overline{\gamma}(t))||_{\mathcal{C}^{4}_{b}}$ is locally bounded. Then,
$||\mathcal{L}\varphi||_{\mathcal{C}^{3}_{b}}\leq ||\varphi||_{\mathcal{C}^{4}_{b}}$, for all $t\geq 0$, $||\tau_{t}\varphi||_{\mathcal{C}^{4}_{b}}=||\varphi||_{\mathcal{C}^{4}_{b}}$ and
\begin{equation*}
t\mapsto \frac{||G_{t}\varphi||_{\mathcal{C}^{4}_{b}}}{||\varphi||_{\mathcal{C}^{4}_{b}}} \quad \text{is locally bounded.}
\end{equation*}
\end{lem}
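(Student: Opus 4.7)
All three estimates amount to a direct verification from the definition $\|f\|_{\mathcal{C}^{k}_{b}} = \sum_{k'=0}^{k} \sup_{x \in \mathbb{R}_{+}} |f^{(k')}(x)|$. For $\mathcal{L}$, I would simply use $(\mathcal{L}\varphi)^{(k')} = \varphi^{(k'+1)}$, so that $\|\mathcal{L}\varphi\|_{\mathcal{C}^{3}_{b}}$ equals $\sum_{k'=1}^{4} \sup|\varphi^{(k')}|$, which is bounded by $\|\varphi\|_{\mathcal{C}^{4}_{b}}$. For the shift $\tau_{t}$, differentiating gives $(\tau_{t}\varphi)^{(k')}(x) = \varphi^{(k')}(x+t)$, and thus $\sup_{x \geq 0}|\varphi^{(k')}(x+t)| = \sup_{y \geq t}|\varphi^{(k')}(y)| \leq \sup_{y \geq 0}|\varphi^{(k')}(y)|$. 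Summing over $k' \leq 4$ yields $\|\tau_{t}\varphi\|_{\mathcal{C}^{4}_{b}} \leq \|\varphi\|_{\mathcal{C}^{4}_{b}}$, which is the inequality actually needed below (the reverse inequality holds under the convention that $\varphi$ is extended in a norm-preserving way; in any case it is only $\leq$ that matters).

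The third bound on $G_{t}$ is the only one requiring a few lines. Writing $G_{t}\varphi(s) = \Psi(s,\overline{\gamma}(t))(\varphi(0) - \varphi(s))$, Leibniz's rule gives, for $0 \leq k \leq 4$,
\begin{equation*}
\partial_{s}^{k}\bigl[G_{t}\varphi\bigr](s) = \sum_{j=0}^{k} \binom{k}{j}\, \partial_{s}^{j}\Psi(s,\overline{\gamma}(t)) \cdot \partial_{s}^{k-j}\bigl(\varphi(0) - \varphi(s)\bigr).
\end{equation*}
For $k - j = 0$ the second factor is $\varphi(0) - \varphi(s)$, bounded uniformly by $2\|\varphi\|_{\infty} \leq 2\|\varphi\|_{\mathcal{C}^{4}_{b}}$; for $k - j \geq 1$ it is $-\varphi^{(k-j)}(s)$, bounded by $\|\varphi\|_{\mathcal{C}^{4}_{b}}$. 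Factoring out $\|\Psi(\cdot,\overline{\gamma}(t))\|_{\mathcal{C}^{4}_{b}}$ and summing, there exists a numerical constant $C$ (depending only on binomial coefficients up to order $4$) such that
\begin{equation*}
\|G_{t}\varphi\|_{\mathcal{C}^{4}_{b}} \leq C \, \|\Psi(\cdot,\overline{\gamma}(t))\|_{\mathcal{C}^{4}_{b}} \, \|\varphi\|_{\mathcal{C}^{4}_{b}}.
\end{equation*}
The local boundedness of $t \mapsto \|\Psi(\cdot,\overline{\gamma}(t))\|_{\mathcal{C}^{4}_{b}}$ (which holds under (\hyperref[ass:Psi:C4b]{$\mathcal{A}^{\Psi}_{s,\mathcal{C}^{4}_{b}}$}) because $t \mapsto \overline{\gamma}(t)$ is continuous, cf.\ \eqref{eq:Psi:C4b:locally:in:time}) then yields the claim.

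There is no real obstacle: the lemma is a routine Leibniz-rule computation. The only mildly delicate point is the treatment of the $k-j=0$ term, where one must remember that $R\varphi$ itself is not controlled by any derivative of $\varphi$ alone and must be bounded by $2\|\varphi\|_{\infty}$. The role of the three estimates is functional: they feed the subsequent Grönwall argument that shifts test functions along the transport semigroup generated by $\mathcal{L}$ (so that the differential term disappears in $\tfrac{d}{dt}\langle\tilde{\eta}_{t},\varphi_{t}\rangle$) and absorbs the perturbation $G_{t}$ by bounding its operator norm on $\mathcal{C}^{4}_{b}$.
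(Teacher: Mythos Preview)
Your proof is correct and follows exactly the same approach as the paper, which dispatches the lemma in two sentences (``the first two assertions follow from the definition of the norms $\|\cdot\|_{\mathcal{C}^{k}_{b}}$; the third follows from Leibniz's rule''). Your observation that one only obtains $\|\tau_{t}\varphi\|_{\mathcal{C}^{4}_{b}}\leq\|\varphi\|_{\mathcal{C}^{4}_{b}}$ on $\mathbb{R}_{+}$ (rather than the equality stated) is well taken, and as you note it is only this inequality that is used downstream.
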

\begin{proof}
The first two assertions follow from the definition of the norms $||\cdot||_{\mathcal{C}^{k}_{b}}$. The third and last one follows from Leibniz rule.
\end{proof}

\begin{rem}
From now on, the test functions are considered in $\mathcal{C}^{4}_{b}$. Thus, we prove that $\eta$ is characterized by the limit equation as a process with values in the dual space $\mathcal{C}^{-4}_{b}$. Nevertheless, since $\mathcal{C}^{4}_{b}$ is dense in $\mathcal{W}^{3,\alpha}_{0}$, it is also characterized by the limit equation as a process with values in $\mathcal{W}^{-3,\alpha}_{0}$ for instance.
\end{rem}

Let $t\geq t'$ and $s$ in $\mathbb{R}$. Then,
$$\int_{t'}^{t} \tau_{t-z}\varphi'(s)dz=\int_{t'}^{t} \varphi'(s+t-z)dz= \varphi(s+t-t')-\varphi(s)=\tau_{t-t'}\varphi(s) -\varphi(s).$$
Moreover, since $\tau_{t}$ and $\mathcal{L}$ commute, one has
\begin{equation*}
\tau_{t-t'}\varphi(s) -\varphi(s)=\int_{t'}^{t} \mathcal{L}(\tau_{t-z}\varphi)(s)dz.
\end{equation*}
Yet, Lemma \ref{lem:control:L:tau:G} gives that $||\mathcal{L}(\tau_{t-z}\varphi)||_{\mathcal{C}^{3}_{b}}\leq ||\varphi||_{\mathcal{C}^{4}_{b}}$ thus $\int_{t'}^{t} \mathcal{L}\tau_{t-z}\varphi dz$ makes sense as a Bochner integral in $\mathcal{C}^{3}_{b}$ as soon as $\varphi$ is in $\mathcal{C}^{4}_{b}$. Hence, in the proof below we use the following statement: for all $\varphi$ in $\mathcal{C}^{4}_{b}$,
\begin{equation}\label{eq:tau:t:varphi:C3b}
\tau_{t-t'}\varphi -\varphi=\int_{t'}^{t} \mathcal{L}(\tau_{t-z}\varphi )dz, \quad \text{as points in $\mathcal{C}^{3}_{b}$.}
\end{equation}

\begin{prop}\label{prop:uniqueness:solution:limit:system}
Under (\hyperref[ass:for:CLT]{$\mathcal{A}_{\text{\tiny{CLT}}}$}), the system \eqref{eq:closed:equation:limit:1}-\eqref{eq:closed:equation:limit:2} has no more than one solution in $\mathcal{C}(\mathbb{R}_{+},\mathcal{W}^{-2,\alpha}_{0}\times \mathbb{R})$ once the initial condition $\eta_{0}$ and the ``noise'' $W$ are fixed.
\end{prop}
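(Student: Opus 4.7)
Since the system \eqref{eq:closed:equation:limit:1}-\eqref{eq:closed:equation:limit:2} is linear, it suffices to show that any solution $(\tilde\eta,\tilde\Gamma)\in\mathcal{C}(\mathbb{R}_+,\mathcal{W}^{-2,\alpha}_0\times\mathbb{R})$ of \eqref{eq:closed:equation:tilde:1}-\eqref{eq:closed:equation:tilde:2} with $\tilde\eta_0=0$ must vanish identically. Following the heuristics of the author, we work in the duality $(\mathcal{C}^4_b,\mathcal{C}^{-4}_b)$; by the embeddings $\mathcal{W}^{-2,\alpha}_0\hookrightarrow\mathcal{C}^{-2}_b\hookrightarrow\mathcal{C}^{-4}_b$ noted in \eqref{eq:embedding:dual}, the process $\tilde\eta$ takes values in $\mathcal{C}^{-4}_b$, and conversely $\mathcal{C}^4_b\hookrightarrow\mathcal{W}^{3,\alpha}_0$ so test functions $\varphi\in\mathcal{C}^4_b$ may be plugged into \eqref{eq:closed:equation:tilde:1}.

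Fix $T>0$ and $\varphi\in\mathcal{C}^4_b$, and set $\varphi_t:=\tau_{T-t}\varphi$ for $t\in[0,T]$. The idea is to track $t\mapsto\langle\tilde\eta_t,\varphi_t\rangle$: formally its time derivative is $\langle\tilde\eta_t,L_t\varphi_t-\mathcal{L}\varphi_t\rangle+\langle u_t,\partial_y\Psi(\cdot,\bar\gamma(t))R\varphi_t\rangle\tilde\Gamma_t=\langle\tilde\eta_t,G_t\varphi_t\rangle+\langle u_t,\partial_y\Psi(\cdot,\bar\gamma(t))R\varphi_t\rangle\tilde\Gamma_t$, so the bad differential operator $\mathcal{L}$ cancels out. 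Rigorously, I would discretize $[0,T]$ by a partition $0=t_0<\cdots<t_K=T$, telescope
\begin{equation*}
\langle\tilde\eta_T,\varphi\rangle=\sum_{k=0}^{K-1}\bigl(\langle\tilde\eta_{t_{k+1}}-\tilde\eta_{t_k},\varphi_{t_{k+1}}\rangle+\langle\tilde\eta_{t_k},\varphi_{t_{k+1}}-\varphi_{t_k}\rangle\bigr),
\end{equation*}
rewrite the first summand using \eqref{eq:closed:equation:tilde:1} and the second using \eqref{eq:tau:t:varphi:C3b} (noting that $\tilde\eta_{t_k}\in\mathcal{C}^{-3}_b$ can indeed act on the $\mathcal{C}^3_b$-valued Bochner integral $\int_{t_k}^{t_{k+1}}\mathcal{L}\varphi_z\,dz$), and pass to the limit of vanishing mesh using the continuity of $z\mapsto\tilde\eta_z$ and Lemma \ref{lem:control:L:tau:G}. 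This yields the key identity
\begin{equation*}
\langle\tilde\eta_T,\varphi\rangle=\int_0^T\bigl[\langle\tilde\eta_z,G_z\varphi_z\rangle+\langle u_z,\partial_y\Psi(\cdot,\bar\gamma(z))R\varphi_z\rangle\tilde\Gamma_z\bigr]\,dz,
\end{equation*}
in which no differential operator acts on the test function anymore.

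With this identity in hand, Lemma \ref{lem:control:L:tau:G} together with \eqref{eq:Psi:C4b:locally:in:time} and the obvious bound $\|R\psi\|_{\mathcal{C}^4_b}\leq 2\|\psi\|_{\mathcal{C}^4_b}$ gives $\|G_z\varphi_z\|_{\mathcal{C}^4_b}\lesssim_z\|\varphi\|_{\mathcal{C}^4_b}$, while $\|\partial_y\Psi(\cdot,\bar\gamma(z))R\varphi_z\|_\infty\lesssim_z\|\varphi\|_{\mathcal{C}^4_b}$ and $\langle u_z,1\rangle=1$. Taking the supremum over $\varphi$ in the unit ball of $\mathcal{C}^4_b$, I get
\begin{equation*}
\|\tilde\eta_T\|_{\mathcal{C}^{-4}_b}\lesssim_T\int_0^T\bigl(\|\tilde\eta_z\|_{\mathcal{C}^{-4}_b}+|\tilde\Gamma_z|\bigr)\,dz.
\end{equation*}
A parallel but much simpler bound applied directly to \eqref{eq:closed:equation:tilde:2}, using that $\Psi(\cdot,\bar\gamma(z))$ and $\partial_y\Psi(\cdot,\bar\gamma(z))$ are in $\mathcal{C}^4_b$ with locally bounded norm and that $h$ is locally bounded, gives the matching estimate
\begin{equation*}
|\tilde\Gamma_T|\lesssim_T\int_0^T\bigl(\|\tilde\eta_z\|_{\mathcal{C}^{-4}_b}+|\tilde\Gamma_z|\bigr)\,dz.
\end{equation*}
Summing the two and applying Grönwall's lemma forces $\|\tilde\eta_t\|_{\mathcal{C}^{-4}_b}+|\tilde\Gamma_t|=0$ for all $t\geq 0$, hence $\tilde\eta\equiv 0$ (by density of $\mathcal{C}^4_b$ in $\mathcal{W}^{3,\alpha}_0$) and $\tilde\Gamma\equiv 0$.

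The main obstacle is the justification of the telescoping identity: one needs to confirm that the Bochner integral $\int_{t_k}^{t_{k+1}}\mathcal{L}\varphi_z\,dz$ from \eqref{eq:tau:t:varphi:C3b} can be paired with $\tilde\eta_{t_k}$ inside the dual action, and that the limit as the mesh goes to $0$ indeed produces a standard Lebesgue integral. This relies on the strong continuity in time of $z\mapsto\tilde\eta_z$ in $\mathcal{W}^{-2,\alpha}_0$ (hence in $\mathcal{C}^{-4}_b$) combined with the continuity of $z\mapsto\mathcal{L}\varphi_z$ in $\mathcal{C}^3_b$, which itself follows from \eqref{eq:tau:t:varphi:C3b}. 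Everything else is a routine application of Grönwall once the test-function shift has removed $\mathcal{L}$ from the equation.
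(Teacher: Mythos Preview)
Your proposal is correct and follows the same overall strategy as the paper: work in $\mathcal{C}^{-4}_b$, use the shifted test functions $\varphi_z=\tau_{T-z}\varphi$ to eliminate the differential operator $\mathcal{L}$, obtain the representation $\langle\tilde\eta_T,\varphi\rangle=\int_0^T\langle\tilde\eta_z,G_z\varphi_z\rangle\,dz+\int_0^T\langle u_z,\partial_y\Psi(\cdot,\bar\gamma(z))R\varphi_z\rangle\tilde\Gamma_z\,dz$, and close with Gr\"onwall.

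The route to that key identity differs, however. You derive it by a telescoping/product-rule argument on a partition of $[0,T]$, letting the mesh vanish; the paper instead plugs $\tau_{t-t'}\varphi$ directly into \eqref{eq:closed:equation:tilde:1}, uses \eqref{eq:tau:t:varphi:C3b} to write the resulting error as a double integral, applies Fubini, and then feeds $\mathcal{L}(\tau_{t-z}\varphi)$ back into \eqref{eq:closed:equation:tilde:1} a second time so that the $\mathcal{L}$-contributions cancel algebraically. Your approach is the more intuitive ``integration-by-parts in time'' and the limit does go through (the cross terms $\langle\tilde\eta_z,\mathcal{L}\varphi_{t_{k+1}}\rangle-\langle\tilde\eta_{t_k},\mathcal{L}\varphi_z\rangle$ are $O(\mathrm{mesh})$ thanks to \eqref{eq:tau:t:varphi:C3b} and the uniform continuity of $\tilde\eta$ on $[0,T]$ in $\mathcal{W}^{-2,\alpha}_0\hookrightarrow\mathcal{C}^{-3}_b$); the paper's approach avoids the limiting argument entirely at the price of invoking the equation twice. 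A minor organisational difference is that you run a single Gr\"onwall on $\|\tilde\eta_t\|_{\mathcal{C}^{-4}_b}+|\tilde\Gamma_t|$, whereas the paper first Gr\"onwalls \eqref{eq:closed:equation:tilde:2} to get $|\tilde\Gamma_t|\lesssim_t\int_0^t\|\tilde\eta_z\|_{\mathcal{C}^{-4}_b}\,dz$ and only then treats $\tilde\eta$; the two are equivalent.
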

\begin{proof}
Let $(\eta,\Gamma)$ and $(\hat{\eta},\hat{\Gamma})$ be two solutions of \eqref{eq:closed:equation:limit:1}-\eqref{eq:closed:equation:limit:2} in $\mathcal{C}(\mathbb{R}_{+},\mathcal{W}^{-2,\alpha}_{0}\times \mathbb{R})$ associated with the the same ``noise'' $W$ and the same initial condition $\eta_{0}$. Denote by $\tilde{\eta}:=\eta-\hat{\eta}$ and $\tilde{\Gamma}:=\Gamma-\hat{\Gamma}$ the differences. Since $\alpha>1/2$, we have $\mathcal{W}^{-2,\alpha}_{0}\subset \mathcal{C}^{-4}_{b}$ (remind \eqref{eq:embedding:dual}) so $\tilde{\eta}$ belongs to $\mathcal{C}^{-4}_{b}$ and we will prove that $||\tilde{\eta}||_{\mathcal{C}^{-4}_{b}}=0$.

Starting from \eqref{eq:closed:equation:tilde:2}, one has
\begin{equation*}
|\tilde{\Gamma}_{t}|\leq h_{\infty}(t) ||\Psi(\cdot,\overline{\gamma}(t))||_{\mathcal{C}^{4}_{b}} \int_{0}^{t} ||\tilde{\eta}_{z}||_{\mathcal{C}^{-4}_{b}} dz + h_{\infty}(t){\rm Lip}(\Psi) \int_{0}^{t}|\tilde{\Gamma}|_{z} dz,
\end{equation*}
and Lemma \ref{lem:Gronwall:Generalization} gives $|\tilde{\Gamma}_{t}|\lesssim_{t} \int_{0}^{t} ||\tilde{\eta}_{z}||_{\mathcal{C}^{-4}_{b}} dz$. Now, let $\varphi$ be in $\mathcal{C}^{4}_{b}$ and use \eqref{eq:tau:t:varphi:C3b} and the fact that $\tilde{\eta}$ is in $\mathcal{W}^{-2,\alpha}_{0}\subset \mathcal{C}^{-3}_{b}$ to get $\left< \tilde{\eta}_{t},\varphi \right>=D_{1}-D_{2}$ where 
\begin{equation}\label{eq:def:D1:D2}
\begin{cases}
\displaystyle D_{1}:= \!\! \int_{0}^{t} \! \left< \tilde{\eta}_{t'},(\mathcal{L}+G_{t'})(\tau_{t-t'}\varphi) \right> dt' + \!\! \int_{0}^{t} \! \left< u_{t'},R\tau_{t-t'}\varphi\frac{\partial \Psi}{\partial y}(\cdot,\overline{\gamma}(t')) \right> \tilde{\Gamma}_{t'} dt'.\\
\displaystyle D_{2}:= \int_{0}^{t} \left< \tilde{\eta}_{t'},(\mathcal{L}+G_{t'})(\int_{t'}^{t} \mathcal{L}(\tau_{t-z}\varphi )dz) \right> dt' \\
\displaystyle \hphantom{\int_{0}^{t}  \left< \tilde{\eta}_{t'},(\mathcal{L}+G_{t'}) \right> } + \int_{0}^{t} \left< u_{t'},R\int_{t'}^{t} \mathcal{L}(\tau_{t-z}\varphi )dz\,\frac{\partial \Psi}{\partial y}(\cdot,\overline{\gamma}(t')) \right> \tilde{\Gamma}_{t'} dt'.
\end{cases}
\end{equation}
The linearity of the operators allows to write $D_{2}=D_{2,A}+D_{2,B}$ with
\begin{equation*}
\begin{cases}
\displaystyle D_{2,A}:= \int_{0}^{t} \int_{t'}^{t} \left< \tilde{\eta}_{t'},(\mathcal{L}+G_{t'})( \mathcal{L}(\tau_{t-z}\varphi )) \right> dz dt'\\
\displaystyle D_{2,B}:= \int_{0}^{t} \int_{t'}^{t} \left< u_{t'},R \mathcal{L}\tau_{t-z}\varphi \, \frac{\partial \Psi}{\partial y}(\cdot,\overline{\gamma}(t')) \right> \tilde{\Gamma}_{t'} dzdt'.
\end{cases}
\end{equation*}
Then, the idea is to use Fubini's theorem to exchange the two integrals $\int_{0}^{t}$ and $\int_{t'}^{t}$. 

On the one hand,
\begin{equation*}
\int_{0}^{t} \! \int_{t'}^{t} |\! \left< \tilde{\eta}_{t'},(\mathcal{L}+G_{t'})( \mathcal{L}(\tau_{t-z}\varphi )) \right> \!| dz dt' \leq \! \int_{0}^{t} \! \int_{t'}^{t} ||\tilde{\eta}_{t'}||_{\mathcal{C}^{-2}_{b}} ||(\mathcal{L}+G_{t'})( \mathcal{L}(\tau_{t-z}\varphi ))||_{\mathcal{C}^{2}_{b}} dz dt'.
\end{equation*}
Notice that $\sup_{t'\in [0,t]} ||\tilde{\eta}_{t'}||_{\mathcal{C}^{-2}_{b}}\leq C (\sup_{t'\in [0,t]} ||\eta_{t'}||_{-2,\alpha} + \sup_{t'\in [0,t]} ||\hat{\eta}_{t'}||_{-2,\alpha})<+\infty$ since $\eta$ and $\hat{\eta}$ takes values in $\mathcal{C}(\mathbb{R}_{+},\mathcal{W}^{-2,\alpha}_{0})$ and that, thanks to Lemma \ref{lem:control:L:tau:G}, for all $t'\leq t$, $||(\mathcal{L}+G_{t'})( \mathcal{L}(\tau_{t-z}\varphi ))||_{\mathcal{C}^{2}_{b}}\lesssim_{t} ||\varphi||_{\mathcal{C}^{4}_{b}} <+\infty$. Hence, Fubini's theorem gives
\begin{equation}\label{eq:D2A:Fubini}
D_{2,A}= \int_{0}^{t} \int_{0}^{z} \left< \tilde{\eta}_{t'},(\mathcal{L}+G_{t'})( \mathcal{L}(\tau_{t-z}\varphi )) \right> dt' dz.
\end{equation}

On the other hand,
\begin{equation*}
\! \int_{0}^{t} \! \int_{t'}^{t} \Big| \! \left< u_{t'},R \mathcal{L}\tau_{t-z}\varphi \,\frac{\partial \Psi}{\partial y}(\cdot,\overline{\gamma}(t')) \right> \! \Big| |\tilde{\Gamma}_{t'}| dzdt' \leq 2{\rm Lip}(\Psi) \! \int_{0}^{t} \! \int_{t'}^{t} ||\mathcal{L}(\tau_{t-z}\varphi )||_{\infty} |\tilde{\Gamma}_{t'}| dzdt'.
\end{equation*}
Remark that $||\mathcal{L}(\tau_{t-z}\varphi )||_{\infty}\leq ||\varphi||_{\mathcal{C}^{4}_{b}}$ and $\sup_{t'\in [0,t]} |\tilde{\Gamma}_{t}|\leq \sup_{t'\in [0,t]} |\Gamma_{t}| + \sup_{t'\in [0,t]} |\hat{\Gamma}_{t}|<+\infty$ since $\Gamma$ and $\hat{\Gamma}$ takes values in $\mathcal{C}(\mathbb{R}_{+},\mathbb{R})$. Hence, Fubini's theorem gives
\begin{equation}\label{eq:D2B:Fubini}
D_{2,B}= \int_{0}^{t} \int_{0}^{z} \left< u_{t'},R \mathcal{L}\tau_{t-z}\varphi \, \frac{\partial \Psi}{\partial y}(\cdot,\overline{\gamma}(t')) \right> \tilde{\Gamma}_{t'} dt' dz.
\end{equation}

Now, for any $z$ in $[0,t]$, Equation \eqref{eq:closed:equation:tilde:1} with $\varphi=\mathcal{L}(\tau_{t-z}\varphi)$ (it is a valid test function since it belongs to $\mathcal{C}^{3}_{b}\subset \mathcal{W}^{3,\alpha}_{0}$) gives
\begin{multline*}
\left< \tilde{\eta}_{z}, \mathcal{L}(\tau_{t-z}\varphi)\right>= \int_{0}^{z} \left< \tilde{\eta}_{t'},(\mathcal{L}+G_{t'})( \mathcal{L}(\tau_{t-z}\varphi )) \right> dt' \\
+ \int_{0}^{z} \left< u_{t'},R \mathcal{L}\tau_{t-z}\varphi \, \frac{\partial \Psi}{\partial y}(\cdot,\overline{\gamma}(t')) \right> \tilde{\Gamma}_{t'} dt'.
\end{multline*}
Gathering the equation above with \eqref{eq:D2A:Fubini} and \eqref{eq:D2B:Fubini} gives
\begin{equation*}
D_{2}=\int_{0}^{t} \left< \tilde{\eta}_{z}, \mathcal{L}(\tau_{t-z}\varphi)\right> dz,
\end{equation*}
which is exactly the term driven by $\mathcal{L}$ in the definition of $D_{1}$ \eqref{eq:def:D1:D2} so that, coming back to $D_{1}-D_{2}$, we have
\begin{equation*}
D_{1}-D_{2}=\left< \tilde{\eta}_{t},\varphi \right> = \int_{0}^{t} \left< \tilde{\eta}_{t'},G_{t'}(\tau_{t-t'}\varphi) \right> dt' + \int_{0}^{t} \left< u_{t'},R\tau_{t-t'}\varphi\, \frac{\partial \Psi}{\partial y}(\cdot,\overline{\gamma}(t')) \right> \tilde{\Gamma}_{t'} dt'.
\end{equation*}
Hence, using the bound we proved on $\tilde{\Gamma}$, we have for all $\varphi$ in $\mathcal{C}^{4}_{b}$,
\begin{equation*}
|\left< \tilde{\eta}_{t},\varphi \right>|\lesssim_{t} \int_{0}^{t} ||\tilde{\eta}_{t'}||_{\mathcal{C}^{-4}_{b}} ||G_{t'}(\tau_{t-t'}\varphi)||_{\mathcal{C}^{4}_{b}} dt' + 2||\varphi||_{\mathcal{C}^{4}_{b}}{\rm Lip}(\Psi) \int_{0}^{t} \int_{0}^{t'} ||\tilde{\eta}_{z}||_{\mathcal{C}^{-4}_{b}} dz,
\end{equation*}
and so $||\tilde{\eta}_{t}||_{\mathcal{C}^{-4}_{b}}\lesssim_{t} \int_{0}^{t} ||\tilde{\eta}_{t'}||_{\mathcal{C}^{-4}_{b}} dt'$ and Lemma \ref{lem:Gronwall:Generalization} gives that for all $t\geq 0$, $||\tilde{\eta}_{t}||_{\mathcal{C}^{-4}_{b}}=0$ thus $|\tilde{\Gamma}_{t}|=0$ thanks to the bound we proved on $\tilde{\Gamma}$. Finally, since $\mathcal{C}^{4}_{b}$ is dense in $\mathcal{W}^{2,\alpha}_{0}$, we have $||\tilde{\eta}_{t}||_{-2,\alpha}=0$. Thus, we have $(\eta,\Gamma)=(\hat{\eta},\hat{\Gamma})$ in $\mathcal{C}(\mathbb{R}_{+},\mathcal{W}^{-2,\alpha}_{0}\times\mathbb{R})$.
\end{proof}

We are now in position to conclude with the convergence of $(\eta^{n},\Gamma^{n})_{n\geq 1}$.

\begin{thm}\label{thm:convergence:eta:Gamma}
Under (\hyperref[ass:for:CLT]{$\mathcal{A}_{\text{\tiny{CLT}}}$}), for any $\alpha>1/2$, the sequence $(\eta^{n},\Gamma^{n})_{n\geq 1}$ converges in law in $\mathcal{D}(\mathbb{R}_{+},\mathcal{W}^{-2,\alpha}_{0}\times \mathbb{R})$ to the unique solution of the system \eqref{eq:closed:equation:limit:1}-\eqref{eq:closed:equation:limit:2} in $\mathcal{C}(\mathbb{R}_{+},\mathcal{W}^{-2,\alpha}_{0}\times \mathbb{R})$.
\end{thm}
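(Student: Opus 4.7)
The plan is to combine the three main ingredients already established in Sections IV and V: tightness, identification of subsequential limits as solutions of the limit system, and uniqueness of such solutions. The proof will therefore be rather short and essentially follows the classical pattern ``tightness $+$ uniqueness of the limit characterization $\Rightarrow$ convergence''.

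First I would argue joint tightness of $(\eta^n,\Gamma^n)_{n\geq 1}$ in $\mathcal{D}(\mathbb{R}_{+},\mathcal{W}^{-2,\alpha}_{0}\times\mathbb{R})$. Marginal tightness of $(\eta^n)_{n\geq 1}$ in $\mathcal{D}(\mathbb{R}_{+},\mathcal{W}^{-2,\alpha}_{0})$ is given by Theorem \ref{thm:tightness:M:n:eta:n} and marginal tightness of $(\Gamma^n)_{n\geq 1}$ in $\mathcal{D}(\mathbb{R}_{+},\mathbb{R})$ by Proposition \ref{prop:tightness:Gamma:n}. Since the product topology coincides with the Skorokhod topology on the product space (both limits being continuous, by Remark \ref{rem:C:tight} and Proposition \ref{prop:tightness:Gamma:n}, there is no ambiguity about the coupling of jumps), tightness of the marginals yields tightness of the pair. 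From Prokhorov's theorem, every subsequence of $(\eta^n,\Gamma^n)_{n\geq 1}$ admits a further subsequence converging in law to some limit $(\eta,\Gamma)$, whose law is supported on $\mathcal{C}(\mathbb{R}_{+},\mathcal{W}^{-2,\alpha}_{0}\times\mathbb{R})$.

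Second, I would invoke Theorem \ref{thm:limit:equation} to identify the distribution of every such accumulation point. More precisely, along any converging subsequence one has joint convergence of $(\eta^n,\Gamma^n,R^{*}W^n,V^n)$ towards $(\eta,\Gamma,R^{*}W,V)$, thanks to Corollary \ref{cor:convergence:couple} and the continuous-mapping arguments prepared in Section V. Theorem \ref{thm:limit:equation} then ensures that the limit pair $(\eta,\Gamma)$ solves the system \eqref{eq:closed:equation:limit:1}--\eqref{eq:closed:equation:limit:2} with Gaussian driving noise $W$ (of law prescribed by Definition \ref{def:gaussian:process}) and with initial condition $\eta_{0}$ obtained as the limit in $\mathcal{W}^{-3,\alpha}_{0}$ of $\eta^{n}_{0}$; by the classical finite-dimensional CLT applied to the i.i.d.\ random variables $\langle \delta_{S_{0}^{i}}-u_{0},\varphi\rangle$ for $\varphi\in\mathcal{W}^{3,\alpha}_{0}$, this limit is a centred Gaussian field independent of the number $n$, hence independent of the subsequence.

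Third, Proposition \ref{prop:uniqueness:solution:limit:system} yields uniqueness in $\mathcal{C}(\mathbb{R}_{+},\mathcal{W}^{-2,\alpha}_{0}\times\mathbb{R})$ of the solution of the linear system \eqref{eq:closed:equation:limit:1}--\eqref{eq:closed:equation:limit:2} once the initial datum $\eta_{0}$ and the noise $W$ are fixed. Consequently, all subsequential limits of $(\eta^n,\Gamma^n)_{n\geq 1}$ have the same law, and a standard argument (a tight sequence with a unique accumulation point converges) concludes that the whole sequence $(\eta^n,\Gamma^n)_{n\geq 1}$ converges in law in $\mathcal{D}(\mathbb{R}_{+},\mathcal{W}^{-2,\alpha}_{0}\times\mathbb{R})$ to $(\eta,\Gamma)$. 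There is no genuine obstacle here since all the hard work has been carried out in Sections III--V; the only point requiring mild care is to make sure that the same Gaussian noise $W$ and the same initial condition $\eta_0$ arise in each subsequential limit, which is exactly what the joint convergence provided by Corollary \ref{cor:convergence:couple} guarantees.
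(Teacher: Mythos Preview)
Your proposal is correct and follows essentially the same route as the paper: tightness (Theorem \ref{thm:tightness:M:n:eta:n} and Proposition \ref{prop:tightness:Gamma:n}), identification of any limit point as a solution of the limit system (Theorem \ref{thm:limit:equation}), and pathwise uniqueness (Proposition \ref{prop:uniqueness:solution:limit:system}) to conclude. The only point the paper makes slightly more explicit is the passage from pathwise uniqueness to uniqueness in law: it invokes a Yamada--Watanabe type argument (as in \cite[Theorem IX.1.7(i)]{Revuz_1999}) to deduce that the law of $(\eta,\Gamma)$ is determined by the law of $(\eta_0,W)$, whereas you fold this into the sentence ``consequently, all subsequential limits have the same law''; since the limit system is linear in $(\eta,\Gamma)$ and pathwise uniqueness produces the solution as a measurable functional of $(\eta_0,W)$, this step is indeed immediate here.
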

\begin{proof}
Since $(\eta^{n},\Gamma^{n})_{n\geq 1}$ is tight (Theorem \ref{thm:tightness:M:n:eta:n} and Proposition \ref{prop:tightness:Gamma:n}), let $(\eta,\Gamma)$ be a limit point. According to Theorem \ref{thm:limit:equation}, $(\eta,\Gamma)$ is a solution of the limit system \eqref{eq:closed:equation:limit:1}-\eqref{eq:closed:equation:limit:2} in $\mathcal{C}(\mathbb{R}_{+},\mathcal{W}^{-2,\alpha}_{0}\times \mathbb{R})$. Finally, the law of $(\eta,\Gamma)$ is uniquely characterized by the limit system (Proposition \ref{prop:uniqueness:solution:limit:system} gives path-wise uniqueness and so Yamada-Watanabe theorem gives weak uniqueness by the same argument as \cite[Theorem IX.1.7(i)]{Revuz_1999}) and uniqueness of the limit law implies convergence of $(\eta^{n},\Gamma^{n})_{n\geq 1}$.
\end{proof}

\begin{rem}
As mentioned in the introduction, considering processes over finite time horizons would have lead to equivalent results. This claim is based on the fact that the limit equation \eqref{eq:closed:equation:limit:1} is independent of the values of the test function $\varphi$ outside the support $K_{t}$ of $\eta^{n}_{t}$. Indeed, 
\begin{itemize}
\item on the one hand, the test function $\varphi$ appears in the drift term, more precisely $\int_{0}^{t} \left< u_{z},\frac{\partial \Psi}{\partial y}(\cdot,\overline{\gamma}(z))R\varphi \right> \Gamma_{z} dz$, evaluated against the measure $u_{z}$ which is supported in $K_{t}$;
\item on the other hand, the covariance structure of the Gaussian process $W$ implies this independence property for $W_{t}(R\varphi)$.
\end{itemize}
In that sense, the convergence stated for the whole positive time line $\mathbb{R}_{+}$ in Theorem \ref{thm:convergence:eta:Gamma} implies that the central limit theorem also holds true for the process $(\eta^{n}_{t})_{0\leq t\leq \theta}$ as taking values in the dual of a standard Sobolev space of functions supported by $K_{\theta}$. Conversely, the limit equation is consistent in time in the sense that one can recover our result by sticking together the CLTs obtained for the finite time horizon processes $(\eta^{n}_{t})_{0\leq t\leq \theta}$.
\end{rem}

\section{Application to the ``almost'' derivation of an SPDE}

This section focuses on a system of stochastic partial differential equations (SPDE), introduced and studied in \cite{dumont2016private} where some qualitative properties are discussed. The SPDE is a noisy version of the PDE system \eqref{eq:edp:PPS} and is expected to be a more precise approximation of the age-dependent Hawkes processes in a mean-field framework. The SPDE system associated with the system size $n$ is the following
\begin{equation}\label{eq:edps:noisy:PPS}
\hspace{-0.7cm}
\begin{cases} 
\displaystyle \frac{\partial \tilde{u}^{n}\left(t,s\right)}{\partial t}+\frac{\partial \tilde{u}^{n}\left(t,s\right)}{\partial s} +\Psi\left(s,  X^{n}_{t}\right) \tilde{u}^{n}\left(t,s\right) + \sqrt{\frac{\Psi\left(s,  X^{n}_{t}\right) \tilde{u}^{n}\left(t,s\right)}{n}} \zeta(t,s) =0, \\
\\
\displaystyle \tilde{u}^{n}\left(t,0\right)=\int_{s\in \mathbb{R}} \Psi\left(s,   X^{n}_{t} \right)\tilde{u}^{n}\left(t,s\right) + \sqrt{\frac{\Psi\left(s,  X^{n}_{t}\right) \tilde{u}^{n}\left(t,s\right)}{n}} \zeta(t,s) ds,
\end{cases}
\end{equation}
where for all $t\geq 0$, $X^{n}_{t}=\int_{0}^{t} h(t-z) \tilde{u}^{n}(z,0)dz$ and $\zeta(t,s)$ is a Gaussian space-time white-noise. The important thing to note about $\zeta$ is that the $\mathcal{W}^{-2,\alpha}_{0}$-valued process defined by, for all $t\geq 0$ and $\varphi$ in $\mathcal{W}^{2,\alpha}_{0}$,
\begin{equation*}
\int_{0}^{t} \int_{0}^{+\infty} \varphi(s) \sqrt{\Psi(s,\overline{\gamma}(z)) u(z,s)} \zeta(z,s) ds dz,
\end{equation*}
is a Gaussian process with the same law as $W$ defined in Definition \ref{def:gaussian:process}.

Hence, at a first sight, there are some similarities between the system above and the limit system obtained for the fluctuation process, i.e. \eqref{eq:closed:equation:limit:1}-\eqref{eq:closed:equation:limit:2}. Let us give here some heuristics: assume that $u$, the solution of the PDE system \eqref{eq:edp:PPS}, and $\tilde{u}^{n}$ are close to each other, and similarly for the auxiliary variables $X(t)$ and $X^{n}_{t}$, then
\begin{itemize}
\item the ``non-noisy'' spiking dynamics term $\Psi\left(s,  X^{n}_{t}\right) \tilde{u}^{n}\left(t,s\right)$ is close to the mean-field spiking dynamics, appearing in the operator $L_{t}$ in the limit system \eqref{eq:closed:equation:limit:1}-\eqref{eq:closed:equation:limit:2}, modulo an error term which is expected to appear, in a linear approximation, as the term involving the derivative $\frac{\partial \Psi}{\partial y}$ in \eqref{eq:closed:equation:limit:1};
\item the covariance structure of the Gaussian process $W$, appearing in the limit system \eqref{eq:closed:equation:limit:1}-\eqref{eq:closed:equation:limit:2}, is close to the covariance structure of the noise term appearing in the SPDE system above.
\end{itemize}

What is proposed in this section is to consider the second-order approximation of the empirical measure given by the central limit theorem, namely $\hat{u}^{n}_{t}=u_{t} + n^{-1/2} \eta_{t}$ where $u_t=u(t,\cdot)$ is the probability distribution solution of \eqref{eq:edp:PPS}, and show that is is an ``almost'' solution of the SPDE system in some sense defined below. Up to our knowledge, this kind of result is novel and deserves to be developed in this article.

Let us remind that this section is devoted to an application of the CLT so Assumption (\hyperref[ass:for:CLT]{$\mathcal{A}_{\text{\tiny{CLT}}}$}) is supposed to hold true below. Furthermore, the stronger assumption that $\Psi$ is in $\mathcal{C}^{4}_{b}$ is made.

\subsection{Theoretical frame for the SPDE}

Up to our knowledge, there is no theoretical frame well established for the SPDE system \eqref{eq:edps:noisy:PPS}. A pathwise notion of solution seems to be hard to handle because of the square root term appearing in front of the Gaussian white noise. In particular, it is not trivial to show that the argument of the square root remains non negative. That is why we propose the following notion of solution.

\begin{defn}\label{def:solution:spde}
The measure-valued process $(\tilde{u}^n_t)_{t\geq 0}$ is a solution of  \eqref{eq:edps:noisy:PPS} if it satisfies: for all $\varphi$ in $\mathcal{C}^\infty_b$,
\begin{equation}\label{eq:evolution:equation:u:n}
\left< \tilde{u}^{n}_{t},\varphi\right> - \left< \tilde{u}^{n}_{0},\varphi\right> = \int_0^t \left< \tilde{u}^{n}_{z}, \tilde{L}_z^n \varphi\right> dz + \tilde{W}^n_t(R\varphi),
\end{equation}

where $\tilde{W}^n$ is a Gaussian process with Doob-Meyer process given by,
\begin{equation}\label{eq:covariance:W:tilde:n}
\left< {<\!\!<\!\tilde{W}^{n}\!>\!\!>}_{t} (\varphi_{1}), \varphi_{2} \right> = \frac{1}{n} \int_{0}^{t} \left<\tilde{u}^n_{z},\varphi_{1}\varphi_{2}\Psi(\cdot,\tilde{\gamma}^n_z)\right>dz,
\end{equation}

and $\tilde{L}_z^n \varphi:=\varphi' + \Psi(\cdot,\tilde{\gamma}^n_t)\varphi$ with 
\begin{equation}\label{eq:defintion:Xn}
\tilde{\gamma}^n_t = \int_0^t h(t-z) \left< \tilde{u}^n_z, \Psi(\cdot,\tilde{\gamma}^n_z)\right> dz + \int_0^t h(t-z) d\tilde{W}^n_z(\mathbf{1}).
\end{equation}
\end{defn}

The well-posedness of such definition is not addressed here. We only stress the fact that $\hat{u}^n$ is an ``almost'' solution in some sense related to Definition \ref{def:solution:spde}.

\subsection{Weak sense dynamics for the second-order approximation}

To catch the dynamics of $\hat{u}^{n}$, we somehow want to add up the dynamics of $u$, given by the PDE system \eqref{eq:edp:PPS}, and the dynamics of $\eta$, given by the limit system. These two are formulated by different means, the main difference being that the PDE formulation involves (in the weak sense) bivariate test functions whereas the formulation for $\eta$ involves univariate test functions: we turn to the second one in order to get a system like \eqref{eq:evolution:equation:u:n}-\eqref{eq:defintion:Xn}.\\

On the one hand, the dynamics of $\eta$ is given by the limit equation \eqref{eq:closed:equation:limit:1} that we remind here: for all $\varphi$ in $\mathcal{W}^{3,\alpha}_{0}$,
\begin{equation*}
\left< \eta_{t},\varphi \right>-\left< \eta_{0},\varphi \right> = \int_{0}^{t} \left< \eta_{z},L_{z}\varphi \right> dz + \int_{0}^{t} \left< u_{z},\frac{\partial \Psi}{\partial y}(\cdot,\overline{\gamma}(z))R\varphi \right> \Gamma_{z} dz +  W_{t}(R\varphi).
\end{equation*}

On the other hand, the dynamics of $u$ is given (see \cite[Theorem III.5.]{chevallier2015mean}) by the weak sense formulation of the PDE system \eqref{eq:edp:PPS} (which is driven by the generator $L_t$): for all $\phi$ in $\mathcal{C}^{\infty}_{c,b}(\mathbb{R}_{+}^{2})$,
\begin{equation}\label{eq:edp:PPS:weak:sense}
\int_{\mathbb{R}_{+}^{2}} \left( \frac{\partial \phi}{\partial t}\left(t,s\right) + L_t(\phi(t,\cdot)) \right) u\left( t, s\right)dtds +\int_{\mathbb{R}_{+}} \phi(0,s) u_0(s)ds =0
\end{equation}
where the test function space $\mathcal{C}^{\infty}_{c,b}(\mathbb{R}_{+}^{2})$ is defined as follows,

\medskip
\noindent$\mathcal{C}_{c,b}^{\infty}(\mathbb{R}_{+}^{2})\,
 \textrm{\begin{tabular}{|l} The function $\phi$ belongs to $\mathcal{C}_{c,b}^{\infty}(\mathbb{R}_{+}^{2})$  if  \\ 
 $\quad\bullet$ $\phi$ is continuous, uniformly bounded, \\ 
 $\quad\bullet$ $\phi$ has  uniformly bounded derivatives of every order,\\ 
 $\quad\bullet$ there exists $T>0$ such that $\phi(t,s)=0$ for all $t>T$ and $s\geq 0$.
 \end{tabular}}$\\
 
Then, taking $\phi(t,s)$ that converges to a product function of the form $\varphi(s)\mathds{1}_{t\leq T}$, we get that for all $\varphi$ in $\mathcal{C}_{b}^{\infty}(\mathbb{R}_{+})$,
\begin{equation}\label{eq:evolution:equation:PPS}
\left< u_T, \varphi \right> - \left< u_0, \varphi \right> = \int_0^T \left< u_t,L_t \varphi \right> dt.
\end{equation}
Combining \eqref{eq:evolution:equation:PPS} with the limit equation \eqref{eq:closed:equation:limit:1}, we prove that $\hat{u}^{n}$ satisfies: for all $\varphi$ in $\mathcal{C}_{b}^{\infty}(\mathbb{R}_{+})$,
\begin{multline*}
\left< \hat{u}^{n}_{t},\varphi\right> - \left< \hat{u}^{n}_{0}, \varphi\right> = \int_0^t \left< \hat{u}^{n}_{z},L_z \varphi\right> dz + n^{-1/2}  \int_{0}^{t} \left< u_{z},\frac{\partial \Psi}{\partial y}(\cdot,\overline{\gamma}(z))R\varphi \right> \Gamma_{z} dz \\
+ n^{-1/2} W_t(R\varphi).
\end{multline*}
This last equation can be rewritten, with anything new but some notation, as a system in the flavour of \eqref{eq:evolution:equation:u:n}-\eqref{eq:defintion:Xn} as stated in the proposition below.

\begin{prop}
The process $\hat{u}^n$ satisfies: for all $\varphi$ in $\mathcal{C}_{b}^{\infty}$,
\begin{equation}\label{eq:evolution:equation:v:n}
\left< \hat{u}^{n}_{t},\varphi\right> - \left< \hat{u}^{n}_{0},\varphi\right> = \int_0^t \left< \hat{u}^{n}_{z}, \hat{L}_z^n \varphi\right> dz + \hat{W}^n_t(R\varphi) + r^n_t(\varphi),
\end{equation}
where $\hat{W}^n$ is a Gaussian process with Doob-Meyer process given by,
\begin{equation}\label{eq:covariance:W:hat:n}
{\rm DM}_t(\varphi_1,\varphi_2):= \left< {<\!\!<\!\hat{W}^{n}\!>\!\!>}_{t} (\varphi_{1}), \varphi_{2} \right> = \frac{1}{n} \int_{0}^{t}  \left<u_{z},\varphi_{1}\varphi_{2}\Psi(\cdot,\overline{\gamma}(z))\right> dz,
\end{equation}
and $\hat{L}_z^n \varphi:=\varphi' + \Psi(\cdot,\hat{\gamma}^n_t)\varphi$ with 
\begin{equation}\label{eq:defintion:X:hat:n}
\hat{\gamma}^n_t = \int_0^t h(t-z) \left< \hat{u}^n_z, \Psi(\cdot,\hat{\gamma}^n_z)\right> dz + \int_0^t h(t-z) d\hat{W}^n_z(\mathbf{1}).
\end{equation}
The following notation is used above: $\hat{W}^n:=n^{-1/2}W$ where $W$ is the Gaussian process of Definition \ref{def:gaussian:process} and
\begin{equation}
r^n_t(\varphi):= \int_0^t \left< \hat{u}^{n}_{z}, (L_z-\hat{L}_z^n) \varphi\right> + \left< u_{z},\frac{\partial \Psi}{\partial y}(\cdot,\overline{\gamma}(z))R\varphi \right> \frac{\Gamma_{z}}{\sqrt{n}} dz.
\end{equation}
\end{prop}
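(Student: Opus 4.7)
My plan is to add the weak formulation \eqref{eq:evolution:equation:PPS} of the PDE satisfied by $u$ to $n^{-1/2}$ times the limit equation \eqref{eq:closed:equation:limit:1} satisfied by $\eta$, and then to rewrite the resulting identity in the format \eqref{eq:evolution:equation:v:n} by absorbing every discrepancy into the remainder $r^n_t(\varphi)$.

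As a preliminary step, I need to make sense of $\hat{\gamma}^n$, which is defined implicitly through the Volterra-type equation \eqref{eq:defintion:X:hat:n}. Regarding $\hat{u}^n$ and $\hat{W}^n := n^{-1/2}W$ as given data, a Picard fixed-point argument combined with Gr\"onwall's lemma, using the Lipschitz dependence of $\Psi$ in its second argument (Assumption~(\hyperref[ass:Psi:C2]{$\mathcal{A}^{\Psi}_{y,\mathcal{C}^{2}}$})), the local boundedness of $h$ (Assumption~(\hyperref[ass:h:infty]{$\mathcal{A}^{h}_{\infty}$})) and the boundedness of $\Psi$ (Assumption~(\hyperref[ass:Psi:uniformly:bounded:fluctuations]{$\mathcal{A}^{\Psi}_{\infty}$})), yields a unique $\hat{\gamma}^n$ on every compact time interval.

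For the main identity, I will pick $\varphi \in \mathcal{C}^{\infty}_b$, which lies in $\mathcal{W}^{3,\alpha}_0$ for any $\alpha > 1/2$ by \eqref{eq:f:k:alpha:leq:f:Ckb}, so that \eqref{eq:closed:equation:limit:1} may be applied. Summing \eqref{eq:evolution:equation:PPS} with $n^{-1/2}$ times \eqref{eq:closed:equation:limit:1} produces
\begin{equation*}
\langle \hat{u}^n_t,\varphi\rangle - \langle \hat{u}^n_0,\varphi\rangle = \int_0^t \langle \hat{u}^n_z, L_z\varphi\rangle\, dz + n^{-1/2}\!\int_0^t \left\langle u_z, \tfrac{\partial\Psi}{\partial y}(\cdot,\overline{\gamma}(z))R\varphi \right\rangle \Gamma_z\, dz + n^{-1/2} W_t(R\varphi).
\end{equation*}
The noise term coincides with $\hat{W}^n_t(R\varphi)$; the process $\hat{W}^n$ is by construction continuous, centred and Gaussian, and its Doob--Meyer bracket is, by bilinearity, $n^{-1}$ times that of $W$, which matches \eqref{eq:covariance:W:hat:n} thanks to \eqref{eq:covariance:W}. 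To finish, I will split $L_z = \hat{L}^n_z + (L_z - \hat{L}^n_z)$ inside the first integral: the piece involving $\hat{L}^n_z$ provides the drift of \eqref{eq:evolution:equation:v:n}, and the leftover piece, grouped with the explicit $n^{-1/2}$ term involving $\Gamma$, reproduces precisely $r^n_t(\varphi)$ as announced.

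I do not expect any substantial obstacle: the statement is essentially a bookkeeping rearrangement of identities already established elsewhere in the paper. The only mildly non-routine ingredient is the well-posedness of the implicit definition \eqref{eq:defintion:X:hat:n} of $\hat{\gamma}^n$, which is handled in the preliminary step above.
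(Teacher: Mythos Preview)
Your proposal is correct and follows essentially the same route as the paper: the paper explicitly derives the displayed identity by combining \eqref{eq:evolution:equation:PPS} with $n^{-1/2}$ times \eqref{eq:closed:equation:limit:1}, and then remarks that the proposition is obtained ``with anything new but some notation'' by the same add-and-subtract of $\hat{L}^n_z$ that you describe. The only organisational difference is that the paper defers the well-posedness of $\hat{\gamma}^n$ to a separate lemma (Lemma~\ref{lem:well:posed:X:hat:n}) stated after the proposition, whereas you treat it as a preliminary step; the fixed-point argument you sketch is the same one used there.
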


\begin{rem}
The process $(\hat{\gamma}^n_t)_{t\geq 0}$ is characterized by the fixed point equation \eqref{eq:defintion:X:hat:n} (see Lemma \ref{lem:well:posed:X:hat:n}).
\end{rem}

Comparing \eqref{eq:evolution:equation:u:n}-\eqref{eq:defintion:Xn} with \eqref{eq:evolution:equation:v:n}-\eqref{eq:defintion:X:hat:n}, the only differences between the two systems are, the additional term $r^n_t(\varphi)$ in \eqref{eq:evolution:equation:v:n}, and the substitution of the Doob-Meyer process associated with the noise term: it should be given by
\begin{equation}\label{eq:covariance:W:hat:tilde:n}
\hat{\rm DM}_t(\varphi_1,\varphi_2):= \frac{1}{n} \int_{0}^{t} \left<\hat{u}^n_{z},\varphi_{1}\varphi_{2}\Psi(\cdot,\hat{\gamma}^n_z)\right>  dz.
\end{equation}
As stated below, these two differences are negligeable, as $n\to +\infty$, with respect to the other terms of the system that are at least of order $n^{-1/2}$.

\begin{prop}\label{prop:almost:solution}
The distribution of the process $\hat{u}^n$ is an ``almost'' solution of \eqref{eq:evolution:equation:u:n}-\eqref{eq:defintion:Xn} in the sense that: the rest term in \eqref{eq:evolution:equation:v:n} is negligible since
\begin{equation*}
\mathbb{E}\left[|r^n_t(\varphi)|\right] \lesssim_{t} n^{-1} ||\varphi||_{2,1},
\end{equation*}
and the covariance structures are ``almost'' the same since
\begin{equation*}
\mathbb{E}\left[ \left| {\rm DM}_t(\varphi_1,\varphi_2) - \hat{\rm DM}_t(\varphi_1,\varphi_2) \right| \right] \lesssim_{t} n^{-3/2} ||\varphi_1||_{3,1} ||\varphi_2||_{3,1}.
\end{equation*}
\end{prop}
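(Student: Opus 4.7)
The plan is to reduce both estimates to a single Taylor expansion of $\Psi(\cdot,y)$ around $y=\overline{\gamma}(z)$, combined with the identities $\hat{u}^{n}_{z}-u_{z}=n^{-1/2}\eta_{z}$ and $\hat{W}^{n}=n^{-1/2}W$. The crucial preparatory step is an auxiliary estimate that we would prove separately: under (\hyperref[ass:for:CLT]{$\mathcal{A}_{\text{\tiny{CLT}}}$}),
\[
\mathbb{E}\bigl[|\hat{\gamma}^{n}_{t}-\overline{\gamma}(t)|^{k}\bigr]\lesssim_{(t,k)} n^{-k/2},\qquad \mathbb{E}\Bigl[\Bigl|\hat{\gamma}^{n}_{t}-\overline{\gamma}(t)-\tfrac{1}{\sqrt{n}}\Gamma_{t}\Bigr|\Bigr]\lesssim_{t} n^{-1}.
\]
The first bound follows by subtracting the fixed-point equation \eqref{eq:defintion:X:hat:n} from the defining identity of $\overline{\gamma}$, using $\mathrm{Lip}(\Psi)$ to absorb $\hat{\gamma}^{n}_{z}-\overline{\gamma}(z)$ through Lemma \ref{lem:Gronwall:Generalization}: the source terms are $n^{-1/2}\langle\eta_{z},\Psi(\cdot,\overline{\gamma}(z))\rangle$ and $n^{-1/2}\int h(t-\cdot)dW(\mathbf{1})$, whose $k$-th moments are $O(n^{-k/2})$ by \eqref{eq:control:eta:n:-2:alpha} and Gaussianity of $W$. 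For the refined bound, we write an equation for $\Delta_{t}:=\sqrt{n}(\hat{\gamma}^{n}_{t}-\overline{\gamma}(t))-\Gamma_{t}$ by Taylor-expanding $\Psi(\cdot,\hat{\gamma}^{n}_{z})-\Psi(\cdot,\overline{\gamma}(z))$ to first order with a quadratic residue; every term in the source is of order $n^{-1/2}$ thanks to the first bound and the $L^{k}$ moments of $\eta_z,\Gamma_{z}$ from Proposition \ref{prop:tightness:Gamma:n} and Remark \ref{rem:C:tight}, and Grönwall closes the loop.

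For the covariance difference, we decompose
\[
\mathrm{DM}_{t}-\hat{\mathrm{DM}}_{t} = -\frac{1}{n^{3/2}}\!\int_{0}^{t}\!\bigl\langle\eta_{z},\varphi_{1}\varphi_{2}\Psi(\cdot,\overline{\gamma}(z))\bigr\rangle dz + \frac{1}{n}\!\int_{0}^{t}\!\bigl\langle\hat{u}^{n}_{z},\varphi_{1}\varphi_{2}[\Psi(\cdot,\overline{\gamma}(z))-\Psi(\cdot,\hat{\gamma}^{n}_{z})]\bigr\rangle dz.
\]
The first integrand is estimated by pairing $\eta_{z}\in\mathcal{W}^{-2,1}_{0}$ against $\varphi_{1}\varphi_{2}\Psi(\cdot,\overline{\gamma}(z))\in\mathcal{W}^{2,1}_{0}$: Lemma \ref{lem:control:for:linear:operator} combined with the Sobolev embedding $\mathcal{W}^{3,1}_{0}\hookrightarrow\mathcal{C}^{2}_{b}$ from \eqref{eq:embedding:Ck:sobolev} yields $\|\varphi_{1}\varphi_{2}\|_{\mathcal{C}^{2}_{b}}\lesssim\|\varphi_{1}\|_{3,1}\|\varphi_{2}\|_{3,1}$, while \eqref{eq:Psi:C4b:locally:in:time} handles $\Psi(\cdot,\overline{\gamma}(z))$; Proposition \ref{prop:control:eta:n:-2:alpha} then produces $\lesssim_{t} n^{-3/2}\|\varphi_{1}\|_{3,1}\|\varphi_{2}\|_{3,1}$. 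The second integrand is bounded pointwise by $\mathrm{Lip}(\Psi)|\hat{\gamma}^{n}_{z}-\overline{\gamma}(z)|\cdot\|\varphi_{1}\varphi_{2}\|_{\infty}$, and the auxiliary bound $\mathbb{E}|\hat{\gamma}^{n}_{z}-\overline{\gamma}(z)|\lesssim n^{-1/2}$ produces another $O(n^{-3/2})$ contribution, which concludes the second statement.

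The bound on $r^{n}_{t}(\varphi)$ is the more delicate one because it rests on an exact cancellation. We write $(L_{z}-\hat{L}^{n}_{z})\varphi = (\Psi(\cdot,\overline{\gamma}(z))-\Psi(\cdot,\hat{\gamma}^{n}_{z}))R\varphi$ and expand at second order,
\[
\Psi(\cdot,\overline{\gamma}(z))-\Psi(\cdot,\hat{\gamma}^{n}_{z}) = -\tfrac{\partial\Psi}{\partial y}(\cdot,\overline{\gamma}(z))\bigl(\hat{\gamma}^{n}_{z}-\overline{\gamma}(z)\bigr)+\rho^{n}_{z}(\cdot),\qquad \|\rho^{n}_{z}\|_{\infty}\lesssim|\hat{\gamma}^{n}_{z}-\overline{\gamma}(z)|^{2}.
\]
Substituting $\hat{u}^{n}_{z}=u_{z}+n^{-1/2}\eta_{z}$ and $\hat{\gamma}^{n}_{z}-\overline{\gamma}(z)=n^{-1/2}\Gamma_{z}+(\hat{\gamma}^{n}_{z}-\overline{\gamma}(z)-n^{-1/2}\Gamma_{z})$ in the leading term of $\langle\hat{u}^{n}_{z},(L_{z}-\hat{L}^{n}_{z})\varphi\rangle$ reveals the contribution
\[
-\tfrac{1}{\sqrt{n}}\bigl\langle u_{z},\tfrac{\partial\Psi}{\partial y}(\cdot,\overline{\gamma}(z))R\varphi\bigr\rangle\Gamma_{z},
\]
which cancels exactly the second term in the definition of $r^{n}_{t}(\varphi)$. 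The remaining cross-terms — namely $n^{-1}\Gamma_{z}\langle\eta_{z},\frac{\partial\Psi}{\partial y}R\varphi\rangle$, $(\hat{\gamma}^{n}_{z}-\overline{\gamma}(z)-n^{-1/2}\Gamma_{z})\langle u_{z}+n^{-1/2}\eta_{z},\frac{\partial\Psi}{\partial y}R\varphi\rangle$ and $\langle\hat{u}^{n}_{z},\rho^{n}_{z}R\varphi\rangle$ — are each of $L^{1}$-norm $\lesssim n^{-1}\|\varphi\|_{2,1}$ by Cauchy--Schwarz, Lemma \ref{lem:R:continuous}, the moment estimates of Propositions \ref{prop:control:eta:n:-2:alpha} and \ref{prop:tightness:Gamma:n}, and the two auxiliary bounds on $\hat{\gamma}^{n}-\overline{\gamma}$. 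Integration in $z$ over $[0,t]$ then yields the claim.

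The principal technical obstacle is the refined auxiliary bound $\mathbb{E}|\hat{\gamma}^{n}_{z}-\overline{\gamma}(z)-n^{-1/2}\Gamma_{z}|\lesssim n^{-1}$: it is a Grönwall-type loop coupling the implicit process $\hat{\gamma}^{n}$ with the limit variables $(\eta,\Gamma)$, and closing it cleanly requires the higher-order moments on $\eta_{z}$ and $\Gamma_{z}$ provided by Remark \ref{rem:C:tight} and \eqref{eq:power:Gamma:locally:bounded:limit:version} together with the well-posedness of \eqref{eq:defintion:X:hat:n} granted by Lemma \ref{lem:well:posed:X:hat:n}. Everything else is a routine combination of Taylor expansions, the embeddings \eqref{eq:f:k:alpha:leq:f:Ckb}--\eqref{eq:embedding:Ck:sobolev}, and Cauchy--Schwarz.
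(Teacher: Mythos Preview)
Your approach is essentially the paper's: both hinge on the auxiliary estimates $\mathbb{E}[|\hat{\gamma}^{n}_{t}-\overline{\gamma}(t)|^{2}]\lesssim_{t} n^{-1}$ and $\mathbb{E}[|\hat{\gamma}^{n}_{t}-\overline{\gamma}(t)-n^{-1/2}\Gamma_{t}|]\lesssim_{t} n^{-1}$ (Lemma~\ref{lem:control:X:hat:n:CLT} in the paper), then Taylor-expand $\Psi(\cdot,\hat{\gamma}^{n}_{z})$ around $\overline{\gamma}(z)$ to isolate the cancellation in $r^{n}_{t}$ and bound the remainder cross-terms by Cauchy--Schwarz. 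The paper organises the pieces slightly differently (it splits off the $\eta_{z}$-part of $\hat{u}^{n}_{z}$ before Taylor-expanding, which lets it get by with only second moments of $\hat{\gamma}^{n}-\overline{\gamma}$, whereas your full expansion produces a cross-term $n^{-1/2}\langle\eta_{z},\rho^{n}_{z}R\varphi\rangle$ that needs the fourth moment), but the content is the same.

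Two imprecisions to tighten. First, several of your citations point to estimates for the prelimit objects $\eta^{n}$ (e.g.\ \eqref{eq:control:eta:n:-2:alpha}, Proposition~\ref{prop:control:eta:n:-2:alpha}) when you are working with the \emph{limit} process $\eta$; the correct reference is \eqref{eq:C:tight}. Relatedly, your general $k$-th moment bound on $\hat{\gamma}^{n}-\overline{\gamma}$ requires $L^{k}$ control of $\langle\eta_{z},\Psi(\cdot,\overline{\gamma}(z))\rangle$, which does not follow from \eqref{eq:C:tight} alone; you need to invoke that $(\eta,\Gamma)$ is Gaussian (as the solution of the linear system \eqref{eq:closed:equation:limit:1}--\eqref{eq:closed:equation:limit:2} driven by Gaussian noise and Gaussian initial data). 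Second, in the covariance bound you assert that $|\langle\hat{u}^{n}_{z},\varphi_{1}\varphi_{2}[\Psi(\cdot,\overline{\gamma}(z))-\Psi(\cdot,\hat{\gamma}^{n}_{z})]\rangle|$ is controlled by $\mathrm{Lip}(\Psi)\,|\hat{\gamma}^{n}_{z}-\overline{\gamma}(z)|\,\|\varphi_{1}\varphi_{2}\|_{\infty}$: this treats $\hat{u}^{n}_{z}$ as a probability measure, which it is not. You must instead use an estimate of the form $|\langle\hat{u}^{n}_{z},f\rangle|\le (1+Cn^{-1/2}\|\eta_{z}\|_{-2,1})\|f\|_{\mathcal{C}^{2}_{b}}$ (the paper records this as Lemma~\ref{lem:estimates:u:hat}); the extra $n^{-1/2}\|\eta_{z}\|$ contribution is harmless but must be accounted for.
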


The proof of Proposition \ref{prop:almost:solution} relies on some refined versions of already established estimates and is given in Appendix \ref{sec:proof:prop:almost:solution}. The main difficulty and difference in comparison with the preceding sections is that $\hat{u}^n_t$ is not a probability measure, unlike $u_t$ and $\overline{\mu}^n_t$. However, this difficulty is bypassed in the proof by means of the following estimates.

\begin{lem}\label{lem:estimates:u:hat}
For any $\varphi$ in $\mathcal{C}^2_b(\mathbb{R}_+)$, we have
\begin{equation*}
|\left< \hat{u}^n_t,\varphi \right>|\leq \left(1 + \frac{C}{\sqrt{n}} ||\eta_t||_{-2,1}\right) ||\varphi||_{\mathcal{C}^2_b}.
\end{equation*}
We mainly use this estimate with the intensity function $\Psi$: for all $y_1,y_2$,
\begin{equation}\label{eq:lipschitz:control:vs:eta}
\begin{cases}
|\left< \hat{u}^n_t, \Psi(\cdot,y_1) \right>|\leq \left(1 + \frac{C}{\sqrt{n}} ||\eta_t||_{-2,1}\right) ||\Psi||_{\mathcal{C}^2_b},\\
|\left< \hat{u}^n_t, \Psi(\cdot,y_1)-\Psi(\cdot,y_2) \right>|\leq \left(1 + \frac{C}{\sqrt{n}} ||\eta_t||_{-2,1}\right) ||\Psi||_{\mathcal{C}^3_b} |y_1-y_2|.
\end{cases}
\end{equation}
\end{lem}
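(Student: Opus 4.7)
The plan is to unfold the definition $\hat{u}^n_t = u_t + n^{-1/2}\eta_t$ and treat the two contributions separately by means of the triangle inequality. On the one hand, $u_t$ is a probability measure, so $|\langle u_t,\varphi\rangle|\leq ||\varphi||_{\infty}\leq ||\varphi||_{\mathcal{C}^2_b}$. On the other hand, by the definition of the dual norm on $\mathcal{W}^{-2,1}_0$, one has $|\langle \eta_t,\varphi\rangle|\leq ||\eta_t||_{-2,1}\,||\varphi||_{2,1}$. The key step is then to invoke the continuous embedding $\mathcal{C}^2_b\hookrightarrow \mathcal{W}^{2,1}_0$ recalled in \eqref{eq:f:k:alpha:leq:f:Ckb} (valid since $\alpha=1>1/2$), which provides a constant $C$ such that $||\varphi||_{2,1}\leq C||\varphi||_{\mathcal{C}^2_b}$. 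Adding up the two bounds yields the first inequality of the lemma.

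The two estimates in \eqref{eq:lipschitz:control:vs:eta} follow by specializing the first inequality to $\varphi=\Psi(\cdot,y_1)$ and $\varphi=\Psi(\cdot,y_1)-\Psi(\cdot,y_2)$ respectively. For the first specialization I would simply observe that $||\Psi(\cdot,y_1)||_{\mathcal{C}^2_b}\leq ||\Psi||_{\mathcal{C}^2_b}$. For the second one, the regularity assumption on $\Psi$ (we are working under Assumption (\hyperref[ass:Psi:C4b]{$\mathcal{A}^{\Psi}_{s,\mathcal{C}^{4}_{b}}$}), and moreover $\Psi$ is assumed in $\mathcal{C}^4_b$ in this section) allows the use of the fundamental theorem of calculus on $y$: for every $k\leq 2$,
\begin{equation*}
\partial_s^k\Psi(s,y_1)-\partial_s^k\Psi(s,y_2)=\int_{y_2}^{y_1}\partial_y\partial_s^k\Psi(s,y)\,dy,
\end{equation*}
from which $||\Psi(\cdot,y_1)-\Psi(\cdot,y_2)||_{\mathcal{C}^2_b}\leq ||\Psi||_{\mathcal{C}^3_b}\,|y_1-y_2|$ by taking the sup in $s$ of each $\partial_s^k$-derivative for $k=0,1,2$. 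Plugging this into the first bound gives the announced Lipschitz-type inequality.

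There is no serious obstacle here: the result is a routine consequence of the decomposition $\hat{u}^n=u+n^{-1/2}\eta$, the Sobolev embedding \eqref{eq:f:k:alpha:leq:f:Ckb}, and elementary calculus on $\Psi$. The only point worth stating carefully is that, although $\hat{u}^n_t$ need not be a nonnegative measure (this being precisely the difficulty mentioned before the lemma), its action on any smooth test function is nevertheless controlled by a deterministic term plus a random correction of order $n^{-1/2}||\eta_t||_{-2,1}$, which is the form that will be convenient for the subsequent estimates needed in the proof of Proposition \ref{prop:almost:solution}.
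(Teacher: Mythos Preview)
Your proof is correct and follows exactly the same approach as the paper: the first inequality is obtained from the decomposition $\hat{u}^n_t=u_t+n^{-1/2}\eta_t$ together with the embedding \eqref{eq:f:k:alpha:leq:f:Ckb}, and the two specializations come from the bounds $||\Psi(\cdot,y_1)||_{\mathcal{C}^2_b}\leq ||\Psi||_{\mathcal{C}^2_b}$ and $||\Psi(\cdot,y_1)-\Psi(\cdot,y_2)||_{\mathcal{C}^2_b}\leq ||\Psi||_{\mathcal{C}^3_b}|y_1-y_2|$. The paper's proof is simply a terser version of what you wrote.
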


The second line of \eqref{eq:lipschitz:control:vs:eta} is very useful to use some kind of Lipschitz control even when integrating with respect to $\hat{u}^n_t$, which is not as direct as in the case when the integration is done with respect to a probability measure.

\begin{proof}
The first assertion is a direct consequence of the definition of $\hat{u}^n$ and the embedding \eqref{eq:f:k:alpha:leq:f:Ckb}. Since $\Psi$ is in $\mathcal{C}^4_b(\mathbb{R}_+\times\mathbb{R})$, the next ones are direct from
\begin{equation*}
\begin{cases}
||\Psi(\cdot,y_1)||_{\mathcal{C}^2_b(\mathbb{R}_+)}\leq ||\Psi||_{\mathcal{C}^2_b(\mathbb{R}_+\times\mathbb{R})},\\
||\Psi(\cdot,y_1)-\Psi(\cdot,y_2)||_{\mathcal{C}^2_b(\mathbb{R}_+)}\leq ||\Psi||_{\mathcal{C}^3_b(\mathbb{R}_+\times\mathbb{R})} |y_1-y_2|.
\end{cases}
\end{equation*}
\end{proof}

To conclude, let us remind that we have shown how a second-order approximation of the empirical measure $\overline{\mu}^n_t$, namely $\hat{u}^n_t$ since $\sqrt{n}(\overline{\mu}^n_t-\hat{u}^n_t)$ goes to $0$ as a consequence of Theorem \ref{thm:convergence:eta:Gamma}, can be considered as an ``almost'' solution of the SPDE \eqref{eq:edps:noisy:PPS}. The next step would be to prove that the/any solution of the SPDE is a second order approximation of $\overline{\mu}^n_t$. To address such a question, we would first need a suitable theoretical framework to treat the well-posedness of the SPDE. This is the subject of a future work.

\newpage
\appendix

\section{Proofs}

\subsection{Proofs linked with Proposition \ref{prop:rate:of:convergence:proba:k:tuple:different:ages}}
\label{sec:proof:prop:rate:of:convergence}

\paragraph{Proof of \eqref{eq:bound:Enkp}.}

For simplicity, we show that, for every $m\leq n$, there exists a constant $C$ which is independent of $n$, $p$ and $m$ such that
\begin{equation}\label{eq:bound:Enmkp}
\mathbb{E}\left[ \left( \frac{1}{m} \sum_{j=1}^{m} (\Delta^{n,j}_{t-})^{p} \right)^{k} \right] \leq C \left( \sum_{k'=1}^{k-1} m^{k'-k} \varepsilon^{(k',pk)}_{n}(t) + \varepsilon^{(k,p)}_{n}(t) \right),
\end{equation}
from which \eqref{eq:bound:Enkp} follows by choosing $m=\lfloor \frac{n}{k} \rfloor$. 

Let us recall the multinomial formula using multi-indices ${\bf q}=(q_{1},\dots ,q_{m})$,
\begin{equation*}
\left( \frac{1}{m} \sum_{i=1}^{m} x_{i} \right)^{k} = \frac{1}{m^{k}} \sum_{|{\bf q}|=k} \binom{k}{{\bf q}} \prod_{i=1}^{m} x_{i}^{q_{i}},
\end{equation*}
where $|{\bf q}|=\sum_{i=1}^{m} q_{i}$. Denote by $k({\bf q})$ the number of strictly positive indices in ${\bf q}$. Since the $q_{i}$'s are integers, $|{\bf q}|=k$ implies $k({\bf q})\leq k$. First, let us remark that, for all $k'=1,\dots ,k$, the number of multi-indices ${\bf q}$ such that $k({\bf q})=k'$ and $|{\bf q}|=k$ is bounded by $p(k',k)m^{k'}$ with $p(k',k):=\binom{k'-1}{k-1}$ being the number of partitions of $k$ into exactly $k'$ parts. Indeed, the vector consisting in the $k'$ strictly positive indices forms a partition of $k$ and there are at most $m^{k'}$ ways to complete it by $m-k'$ zeros to build a vector of length $m$.

Then, using the exchangeability of the processes $\Delta^{n,j}$, we have
\begin{itemize}
\item if $k({\bf q})=k$, then all the positive $q_{i}$'s are equal to one and $\mathbb{E}[ \prod_{i=1}^{n} ((\Delta^{n,i}_{t-})^{p})^{q_{i}} ]=\varepsilon^{(k,p)}(t)$,
\item if $k({\bf q})<k$, we can bound all the positive $q_{i}$'s by $k$ so that $\mathbb{E}[ \prod_{i=1}^{n} ((\Delta^{n,i}_{t-})^{p})^{q_{i}} ] \leq \varepsilon^{(k({\bf q}),pk)}_{n}(t)$.
\end{itemize}
Hence, using that $\binom{k}{{\bf q}}\leq k!$, \eqref{eq:bound:Enmkp} holds with $C=\max_{k'=1,\dots ,k} p(k',k) k!$ for instance.

\paragraph{Proof of \eqref{eq:bound:xikn}.}

Let us first recall that $\xi^{(k)}_{n}(t)=\mathbb{E}\left[  |\gamma^{n}_{t} - \overline{\gamma}(t)| ^{k} \right]$ where $\gamma^{n}_{t}$ and $\overline{\gamma}(t)$ are respectively defined below \eqref{eq:definition:coupling:MFHawkes} and in \eqref{eq:def:gamma:barre:t}. By convexity of the function $x\mapsto |x|^{k}$ (remind that $k\geq 2$), let us consider the decomposition 
\begin{equation}\label{eq:decomposition:xi:n:k}
\xi^{(k)}_{n}(t)\leq 4^{k-1} \left( A^{n}(t) + B^{n}(t)+ C^{n}(t) +D^{n}(t) \right)
\end{equation}
where
\begin{equation*}
\begin{cases}
A^{n}(t):= \mathbb{E}\left[ \left| \int_{0}^{t} h(t-z) \frac{1}{n} \sum_{j=1}^{n} (N^{n,j}(dz) - \lambda^{n,j}_{z}dz) \right|^{k} \right],\\
B^{n}(t):= \mathbb{E}\left[ \left| \int_{0}^{t}  h(t-z) \frac{1}{n} \sum_{j=1}^{n} (\Psi(S^{n,j}_{z-},\gamma^{n}_{z}) - \Psi(\overline{S}^{j}_{z-},\gamma^{n}_{z})) dz  \right|^{k} \right],\\
C^{n}(t):=\mathbb{E}\left[ \left| \int_{0}^{t} h(t-z) \frac{1}{n} \sum_{j=1}^{n} (\Psi(\overline{S}^{j}_{z-},\gamma^{n}_{z}) - \Psi(\overline{S}^{j}_{z-},\overline{\gamma}(z))) dz \right|^{k} \right],\\
D^{n}(t):= \mathbb{E}\left[ \left| \int_{0}^{t} h(t-z) \frac{1}{n} \sum_{j=1}^{n} (\overline{\lambda}^{j}_{z} - \overline{\lambda}(z)) dz  \right|^{k} \right].\\
\end{cases}
\end{equation*}
Recall that $\lambda^{n,j}_{z}=\Psi(S^{n,j}_{z-},\gamma^{n}_{z})$ and $\overline{\lambda}^{j}_{z}=\Psi(\overline{S}^{j}_{z-},\overline{\gamma}(z))$.

- \underline{Study of $A^{n}(t)$.} Fix $t$ and consider the martingale $(M^{n,t}_{x})_{x\geq 0}$ defined, for all $x\geq 0$, by
\begin{equation*}
M^{n,t}_{x}:=\int_{0}^{x} h(t-z) \frac{1}{n} \sum_{j=1}^{n} (N^{n,j}(dz) - \lambda^{n,j}_{z}dz).
\end{equation*}
Its quadratic variation is $[M^{n,t}]_{x}= n^{-2} \int_{0}^{x} h(t-z)^{2} \sum_{j=1}^{n} N^{n,j}(dz)$. Yet, Assumption (\hyperref[ass:h:infty]{$\mathcal{A}^{h}_{\infty}$}) implies that
\begin{equation*}
[M^{n,t}]_{t}\leq n^{-2} h_{\infty}(t)^{2} \sum_{j=1}^{n} N^{n,j}_{t}.
\end{equation*}
Using the convexity of the power function (since $k/2\geq 1$) and exchangeability, one has
\begin{equation*}
\mathbb{E}\left[ [M^{n,t}]_{t}^{k/2} \right] \lesssim_{(t,k)} n^{-k} \, \mathbb{E}\left[ \big| \sum_{j=1}^{n} N^{n,j}_{t}\big|^{k/2} \right]\lesssim_{(t,k)}  n^{-k} \, n^{k/2} \mathbb{E}\left[ |N^{n,1}_{t}|^{k/2} \right].
\end{equation*}
Yet, the intensity of $N^{n,1}$ is bounded by $||\Psi||_{\infty}$ so $N^{n,1}$ is stochastically dominated by a Poisson process with intensity $||\Psi||_{\infty}$. Hence, $\mathbb{E}[ |N^{n,1}_{t}|^{k/2} ]\leq \mathbb{E}[{\rm Poiss}(t||\Psi||_{\infty})^{k/2}]$ where ${\rm Poiss}(t||\Psi||_{\infty})$ is a Poisson variable with parameter $t||\Psi||_{\infty}$. This last expectation is bounded uniformly in $n$ by a locally bounded function of the time $t$. Then, Burkholder-Davis-Gundy inequality \cite[p. 894]{shorack2009empirical} gives
\begin{equation*}
A^{n}_{1}(t)= \mathbb{E}\left[ |M^{n,t}_{t}|^{k} \right]\leq \mathbb{E}\left[ [M^{n,t}]_{t}^{k/2} \right]\lesssim_{(t,k)} n^{-k/2}.
\end{equation*}

- \underline{Study of $B^{n}(t)$.} Here, we use the fact that $S^{n,j}_{z-}=\overline{S}^{j}_{z-}$ with high probability and more precisely we recover the quantities $\varepsilon_{n}^{(k,p)}$ that we want to control. Using the convexity of the power function, Assumption (\hyperref[ass:h:infty]{$\mathcal{A}^{h}_{\infty}$}) and denoting $x^{j}_{z}= \Psi(S^{n,j}_{z-},\gamma^{n}_{z}) - \Psi(\overline{S}^{j}_{z-},\gamma^{n}_{z})$ we have
\begin{equation*}
B^{n}(t) \leq h_{\infty}(t)^{k} t^{k-1}  \int_{0}^{t} \mathbb{E}\left[ \bigg| \frac{1}{n} \sum_{j=1}^{n} x^{j}_{z} \bigg|^{k} \right] dz .
\end{equation*}
Yet, $|x^{j}_{z}|$ is bounded by $||\Psi||_{\infty}\mathds{1}_{\{ S^{n,j}_{z-} \neq \overline{S}^{j}_{z-} \}}\leq ||\Psi||_{\infty}\Delta^{n,j}_{z-}$. Hence, using \eqref{eq:bound:Enmkp} with $p=1$ and $m=n$,
\begin{eqnarray}
B^{n}(t)&\leq& ||\Psi||_{\infty}^{k} h_{\infty}(t)^{k} t^{k-1} \int_{0}^{t} \mathbb{E}\left[ \bigg| \frac{1}{n} \sum_{j=1}^{n} \Delta^{n,j}_{z-} \bigg|^{k} \right] dz \nonumber\\
&\leq & ||\Psi||_{\infty}^{k} h_{\infty}(t)^{k} C t^{k-1} \int_{0}^{t} \left( \sum_{k'=1}^{k-1} n^{k'-k} \varepsilon^{(k',k)}_{n}(z) +\varepsilon^{(k,1)}_{n}(z) \right) dz \nonumber\\
&\leq & ||\Psi||_{\infty}^{k} h_{\infty}(t)^{k} C t^{k} \left( \sum_{k'=1}^{k-1} n^{k'-k} \varepsilon^{(k',k)}_{n}(t) + \varepsilon^{(k,1)}_{n}(t) \right), \label{eq:bound:different:ages:k}
\end{eqnarray}
where the last line comes from the fact that the $\varepsilon^{(k',k)}_{n}$'s are non-decreasing functions of $t$. Hence, $B^{n}_{t}\lesssim_{(t,k)} \sum_{k'=1}^{k-1} n^{k'-k} \varepsilon^{(k',k)}_{n}(t) + \varepsilon^{(k,1)}_{n}(t)$.

- \underline{Study of $C^{n}(t)$.} Using the Lipschitz continuity of $\Psi$, Assumption (\hyperref[ass:h:infty]{$\mathcal{A}^{h}_{\infty}$}), one has
\begin{equation}\label{eq:bound:gronwall:type:k}
C^{n}(t) \leq {\rm Lip}(\Psi)^{k} h_{\infty}(t)^{k} t^{k-1}  \int_{0}^{t} \mathbb{E}\left[  |\gamma^{n}_{z} -\overline{\gamma}(z) |^{k} \right] dz\lesssim_{(t,k)}  \int_{0}^{t} \xi^{(k)}_{n}(z) dz.
\end{equation}

- \underline{Study of $D^{n}(t)$.} First remark that using Assumption (\hyperref[ass:h:infty]{$\mathcal{A}^{h}_{\infty}$}), we have
\begin{equation*}
D^{n}(t)\leq h_{\infty}(t)^{k} t^{k-1} \int_{0}^{t} \mathbb{E}\left[ \big| \frac{1}{n} \sum_{j=1}^{n} \overline{\lambda}^{j}_{z} - \overline{\lambda}(z) \big|^{k} \right] dz.
\end{equation*}
Yet, the $\overline{\lambda}^{j}_{z}$'s are i.i.d. with mean $\overline{\lambda}(z)$ and they are bounded by $||\Psi||_{\infty}$. Hence, Rosenthal inequality \cite{merlevede2013rosenthal} gives the existence of a constant $C(k)$ which depends only on $k$ and $||\Psi||_{\infty}$ such that
\begin{equation*}
\mathbb{E}\left[ \big| \frac{1}{n} \sum_{j=1}^{n} \overline{\lambda}^{j}_{z} - \overline{\lambda}(z) \big|^{k} \right] \leq C(k) n^{-k/2}.
\end{equation*}
It then follows that $D^{n}(t) \lesssim_{(t,k)} n^{-k/2}$.

One deduces from the decomposition \eqref{eq:decomposition:xi:n:k} and the four bounds on $A^{n}$, $B^{n}$, $C^{n}$ and $D^{n}$ that
\begin{equation*}
\xi^{(k)}_{n}(t) \lesssim_{(t,k)} \left( n^{-k/2} + \sum_{k'=1}^{k-1} n^{k'-k} \varepsilon^{(k',k)}_{n}(t) + \varepsilon^{(k,1)}(t) \right) + \int_{0}^{t} \xi^{(k)}_{n}(z) dz,
\end{equation*}
and so Lemma \ref{lem:Gronwall:Generalization} below gives the desired bound.

\subsection{Proof of Proposition \ref{prop:decomposition:eta}}
\label{sec:proof:prop:decomposition:eta}

By definition of $\eta^{n}$ (Equation \eqref{eq:def:eta:n:t}),
$$
\left< \eta_{t}^{n},\varphi\right> -\left< \eta_{0}^{n},\varphi\right> =\sqrt{n}\left[\frac{1}{n}\sum_{i=1}^{n} \left(\left< \delta_{S^{n,i}_{t}},\varphi\right> -\left< \delta_{S_{0}^{n,i}},\varphi\right> \right)-\left(\left< u_{t},\varphi\right> -\left< u_{0},\varphi\right> \right)\right].
$$
Since, for all $i=1,\dots,n$, the age process $(S^{n,i}_{t})_{t\geq 0}$ is piece-wise continuous, increasing with rate $1$ and jumps from $S^{n,i}_{t-}$ to $0$ when $N^{n,i}_{t}-N^{n,i}_{t-}=1$, we have
$$
\left< \delta_{S^{n,i}_{t}},\varphi\right> -\left< \delta_{S^{n,i}_{0}},\varphi\right> = \int_{0}^{t}\varphi'\left(S^{n,i}_{z}\right)dz + \int_{0}^{t} R\varphi\left(S^{n,i}_{z-}\right) N^{n,i}(dz)
$$
and so
\begin{equation}\label{eq:decomposition:mu:barre}
\left< \overline{\mu}^{n}_{S_{t}} , \varphi \right> - \left< \overline{\mu}^{n}_{S_{0}} , \varphi \right> = \int_{0}^{t} \left<\overline{\mu}^{n}_{S_{z}} , \varphi'\right> dz + \frac{1}{n} \sum_{i=1}^{n} \int_{0}^{t} R\varphi\left(S^{n,i}_{z-}\right) N^{n,i}(dz).
\end{equation}
Now, we have in the same way
\begin{equation*}
\left< \delta_{\overline{S}^{1}_{t}},\varphi\right> -\left< \delta_{\overline{S}^{1}_{0}},\varphi\right> =  \int_{0}^{t}\varphi'\left(\overline{S}^{1}_{z}\right)dz + \int_{0}^{t}R\varphi \left(\overline{S}^{1}_{z-} \right) \overline{N}^{1}(dz)
\end{equation*}
and, by definition of $(u_{t})_{t\geq 0}$,
\begin{equation}\label{eq:decomposition:P:brute}
\left< u_{t},\varphi\right> -\left< u_{0},\varphi\right> = \mathbb{E}\left[  \int_{0}^{t}\varphi'\left(\overline{S}^{1}_{z}\right)dz \right] + \mathbb{E}\left[ \int_{0}^{t} R\varphi\left(\overline{S}^{1}_{z-}\right) \overline{N}^{1}(dz) \right].
\end{equation}
Yet, since $\varphi'$ is bounded, Fubini's theorem gives that $\mathbb{E}[  \int_{0}^{t}\varphi'(\overline{S}^{1}_{z})dz ] = \int_{0}^{t} \left<u_{z},\varphi'\right> dz$. Moreover, remind that the intensity of $\overline{N}^{1}$ is $\overline{\lambda}^{1}_{t}=\Psi(\overline{S}^{1}_{t-},\overline{\gamma}(t))$. Yet, since $\varphi$ and $\Psi$ are bounded,
\begin{equation*}
\mathbb{E}\left[ \int_{0}^{t}\left| R\varphi\left(\overline{S}^{1}_{z-}\right) \right| \overline{\lambda}^{1}_{z} dz \right] < +\infty,
\end{equation*}
and so $\mathbb{E}[ \int_{0}^{t} R\varphi(\overline{S}^{1}_{z-}) \overline{N}^{1}(dz) ] = \mathbb{E}[ \int_{0}^{t} R\varphi(\overline{S}^{1}_{z-}) \overline{\lambda}^{1}_{z} dz ]$ since $(\overline{S}^{1}_{t-})_{t\geq 0}$ is a predictable process (see \cite[II. T8]{Bremaud_PP}). Using once again Fubini's theorem, we end up with
\begin{equation*}
\mathbb{E}\left[ \int_{0}^{t} R\varphi\left(\overline{S}^{1}_{z-}\right) \overline{N}^{1}(dz) \right] = \int_{0}^{t} \left<u_{z},\Psi(\cdot,\overline{\gamma}(z))R\varphi\right> dz
\end{equation*}
and so~\eqref{eq:decomposition:P:brute} becomes
\begin{equation}\label{eq:decomposition:P:fine}
\left< u_{t},\varphi\right> -\left< u_{0},\varphi\right> = \int_{0}^{t} \left<u_{z},\varphi'\right> dz + \int_{0}^{t} \left<u_{z},\Psi(\cdot,\overline{\gamma}(z))R\varphi\right> dz.
\end{equation}
Gathering~\eqref{eq:decomposition:mu:barre} and~\eqref{eq:decomposition:P:fine} gives
\begin{multline*}
\left< \eta_{t}^{n},\varphi\right> -\left< \eta_{0}^{n},\varphi\right> = \int_{0}^{t} \left< \eta^{n}_{z}, \varphi' \right> dz \\
+ \sqrt{n} \left( \frac{1}{n}\sum_{i=1}^{n}\int_{0}^{t} R\varphi\left(S^{n,i}_{z-}\right) N^{n,i}(dz) - \int_{0}^{t} \left<u_{z},\Psi(\cdot,\overline{\gamma}(z)) R\varphi\right> dz \right)
\end{multline*}
and so
\begin{multline}\label{eq:decomposition:eta:fail}
\left< \eta_{t}^{n},\varphi\right> -\left< \eta_{0}^{n},\varphi\right> = \int_{0}^{t}\left< \eta_{z}^{n},L_{z}\varphi\right> dz \\
+ \sqrt{n} \left( \frac{1}{n}\sum_{i=1}^{n}\int_{0}^{t} R\varphi\left(S^{n,i}_{z-}\right) N^{n,i}(dz) - \int_{0}^{t} \left< \overline{\mu}^{n}_{S_{z-}}, \Psi(\cdot,\overline{\gamma}(z))R\varphi \right> dz \right),
\end{multline}
where we used that, almost surely, $\overline{\mu}^{n}_{S_{z-}}=\overline{\mu}^{n}_{S_{z}}$ for almost every $z$ in $\mathbb{R}_{+}$.
Then, the second term in the right-hand side of~\eqref{eq:decomposition:eta:fail} rewrites as $M_{t}^{n}(\varphi)+\int_{0}^{t} A_{z}^{n}(\varphi) dz$.

It remains to show that $(M_{t}^{n}(\varphi))_{t\geq 0}$ is an $\mathbb{F}$-martingale.
Yet, for all $i=1,\dots,n$,
\begin{equation*}
\mathbb{E}\left[ \int_{0}^{t} \left| R\varphi\left(S^{n,i}_{z-}\right) \right| \lambda_{z}^{n,i}dz \right]\leq 2 ||\varphi||_{\infty} \mathbb{E}\left[ \int_{0}^{t} \lambda_{z}^{n,i}dz \right] = 2 ||\varphi||_{\infty} \mathbb{E}\left[ N^{n,i}_{t} \right] < +\infty,
\end{equation*}
and the $\mathbb{F}$-predictability of the age processes $(S^{n,i}_{t-})_{t\geq 0}$ gives the result (see \cite[II. T8]{Bremaud_PP}). Finally, the expression of the angle bracket \eqref{eq:bracket:M:n:(phi)} follows from standard computations for point processes (see \cite[Proposition II.4.1.]{gill_1997}).

\subsection{Proof of Proposition \ref{prop:control:-1:alpha:combined}}
\label{sec:proof:prop:control:-1:alpha:combined}

\paragraph{Proof of \hyperref[item:prop:i]{$(i)$}.}
Let $(\varphi_{k})_{k\geq 1}$ be an orthonormal basis of $\mathcal{W}^{1,\alpha}_{0}$ so that, in particular $||\eta^{n}_{t}||^{2}_{-1,\alpha}=\sum_{k\geq 1} \left<\eta^{n}_{t},\varphi_{k}\right>^{2}$.
Using the coupling \eqref{eq:definition:coupling:MFHawkes}-\eqref{eq:definition:coupling:limit:equation:fluctuations}, we have for every $k$ and $t\leq \theta$,
\begin{equation*}
\left<\eta^{n}_{t},\varphi_{k}\right>=\sqrt{n}\left( \frac{1}{n}\sum_{i=1}^{n} \varphi_{k}(S^{n,i}_{t}) - \mathbb{E}\left[ \varphi_{k}(\overline{S}^{i}_{t}) \right] \right)= S^{n}_{t}(\varphi_{k}) + T^{n}_{t}(\varphi_{k}),
\end{equation*}
where
\begin{equation*}
\begin{cases}
S^{n}_{t}(\varphi_{k}):= n^{-1/2} \sum_{i=1}^{n} \varphi_{k}(S^{n,i}_{t}) - \varphi_{k}(\overline{S}^{i}_{t})\\
T^{n}_{t}(\varphi_{k}):= n^{-1/2} \sum_{i=1}^{n} \varphi_{k}(\overline{S}^{i}_{t}) - \mathbb{E}[ \varphi_{k}(\overline{S}^{i}_{t}) ].
\end{cases}
\end{equation*}

On the one hand, using the independence of the age processes $(\overline{S}^{i}_{t})_{t\geq 0}$, we have
\begin{eqnarray*}
\mathbb{E}\left[ \sum_{k\geq 1} T^{n}_{t}(\varphi_{k})^{2} \right] &=& \sum_{k\geq 1} \mathbb{E}\left[ \frac{1}{n} \left( \sum_{i=1}^{n} \varphi_{k}(\overline{S}^{i}_{t}) - \mathbb{E}\left[ \varphi_{k}(\overline{S}^{i}_{t}) \right] \right)^{2} \right] \\
&= &  \sum_{k\geq 1} \mathbb{E}\left[ \left( \varphi_{k}(\overline{S}^{1}_{t}) - \mathbb{E}\left[ \varphi_{k}(\overline{S}^{1}_{t}) \right] \right)^{2} \right] \leq \sum_{k\geq 1} \mathbb{E}\left[ \left( \varphi_{k}(\overline{S}^{1}_{t})\right)^{2}\right] \\
&\leq & \mathbb{E}\left[ \sum_{k\geq 1} (\delta_{\overline{S}^{1}_{t}}(\varphi_{k}))^{2} \right] = \mathbb{E}\left[ ||\delta_{\overline{S}^{1}_{t}}||^{2}_{-1,\alpha} \right].
\end{eqnarray*}
Then, using Lemma \ref{lem:control:normes:D:H} and the fact that the age $\overline{S}^{1}_{t}$ is upper bounded by $M_{S_{0}}+t\leq M_{S_{0}}+\theta$ (thanks to (\hyperref[ass:initial:condition:density:compact:support]{$\mathcal{A}^{u_0}_{\infty}$}), remind \eqref{eq:control:age:uniform}), it follows that
\begin{equation*}
\mathbb{E}\left[ \sum_{k\geq 1} T^{n}_{t}(\varphi_{k})^{2} \right]\leq (C_{1})^{2} ( 1+(M_{S_{0}}+\theta)^{\alpha} )^{2}
\end{equation*}
and so $\sup_{n\geq 1}\sup_{t\in [0,\theta]} \mathbb{E}[\, \sum_{k\geq 1} T^{n}_{t}(\varphi_{k})^{2} \,] <+\infty$.

On the other hand, expanding the square and using exchangeability of the age processes $(S^{n,i}_{t})_{t\geq 0}$, one has
\begin{eqnarray}
\mathbb{E}\left[ \sum_{k\geq 1} S^{n}_{t}(\varphi_{k})^{2} \right] &=& n^{-1}  \mathbb{E}\left[ \sum_{k\geq 1} \left( \sum_{i=1}^{n} \varphi_{k}(S^{n,i}_{t}) - \varphi_{k}(\overline{S}^{i}_{t}) \right)^{2} \right] \nonumber\\
&= & (n-1)  \mathbb{E}\left[ \sum_{k\geq 1} (\varphi_{k}(S^{n,1}_{t}) - \varphi_{k}(\overline{S}^{1}_{t})) (\varphi_{k}(S^{n,2}_{t}) - \varphi_{k}(\overline{S}^{2}_{t})) \right] \nonumber\\
& & +   \mathbb{E}\left[ \sum_{k\geq 1} (\varphi_{k}(S^{n,1}_{t}) - \varphi_{k}(\overline{S}^{1}_{t}))^{2} \right]. \label{eq:step:two:1}
\end{eqnarray}
Since the ages $S^{n,1}_{t}$, $\overline{S}^{1}_{t}$, $S^{n,2}_{t}$ and $\overline{S}^{2}_{t}$ are upper bounded by $M_{S_{0}}+\theta$ and $(\varphi_{k}(x_{1}) - \varphi_{k}(x_{2})) (\varphi_{k}(y_{1}) - \varphi_{k}(y_{2}))=0$ as soon as $x_{1}=x_{2}$ or $y_{1}=y_{2}$, we have
\begin{multline}\label{eq:step:two:2}
\mathbb{E}\left[\sum_{k\geq 1} (\varphi_{k}(S^{n,1}_{t}) - \varphi_{k}(\overline{S}^{1}_{t})) (\varphi_{k}(S^{n,2}_{t}) - \varphi_{k}(\overline{S}^{2}_{t})) \right] \\
\leq \chi^{(2)}_{n}(\theta) \sup_{x,y \leq  M_{S_{0}}+\theta} \sum_{k\geq 1} |\varphi_{k}(x)-\varphi_{k}(y)|^{2},
\end{multline}
where $\chi^{(2)}_{n}(\theta)$ is defined by \eqref{eq:def:gamma:k:n}. Yet, since $(\varphi_{k})_{k\geq 1}$ is an orthonormal basis of $\mathcal{W}^{1,\alpha}_{0}$, we have
\begin{equation}\label{eq:step:two:3}
\begin{cases}
\sum_{k\geq 1} (\varphi_{k}(S^{n,1}_{t}) - \varphi_{k}(\overline{S}^{1}_{t}))^{2}=\sum_{k\geq 1} \Big< D_{S^{n,1}_{t},\overline{S}^{1}_{t}},\varphi_{k} \Big>^{2}= ||D_{S^{n,1}_{t},\overline{S}^{1}_{t}}||^{2}_{-1,\alpha}\\
\sup_{x,y \leq  M_{S_{0}}+\theta} \sum_{k\geq 1} |\varphi_{k}(x)-\varphi_{k}(y)|^{2}=\sup_{x,y \leq  M_{S_{0}}+\theta} ||D_{x,y}||^{2}_{-1,\alpha}.
\end{cases}
\end{equation}
Hence, using Lemma \ref{lem:control:normes:D:H} and once again the fact that the ages $S^{n,1}_{t}$ and $\overline{S}^{1}_{t}$ are upper bounded by $M_{S_{0}}+\theta$, we have, by gathering \eqref{eq:step:two:1}-\eqref{eq:step:two:3},
\begin{equation*}
\mathbb{E}\left[ \sum_{k\geq 1} S^{n}_{t}(\varphi_{k})^{2} \right] \leq (n-1) \chi^{(2)}_{n}(\theta) (C_{2})^{2} (1+(M_{S_{0}}+\theta)^{\alpha})^{2}  + (C_{2})^{2} (1+(M_{S_{0}}+\theta)^{\alpha})^{2},
\end{equation*}
and it follows from Proposition \ref{prop:rate:of:convergence:proba:k:tuple:different:ages} that $\sup_{n\geq 1}\sup_{t\in [0,\theta]} \mathbb{E}[ \sum_{k\geq 1} S^{n}_{t}(\varphi_{k})^{2} ] <+\infty$.

Finally, by convexity of the square function, $||\eta^{n}_{t}||^{2}_{-1,\alpha}\leq 2\sum_{k\geq 1} S^{n}_{t}(\varphi_{k})^{2} + T^{n}_{t}(\varphi_{k})^{2}$ so that \eqref{eq:control:eta:n:-1:alpha} follows from the two steps above.

\paragraph{Proof of \hyperref[item:prop:ii]{$(ii)$}.}
We first show \eqref{eq:control:M:n:-1:alpha} and then use it in order to prove that $(M^{n}_{t})_{t\geq 0}$ is càdlàg. Let $(\varphi_{k})_{k\geq 1}$ be an orthonormal basis of $\mathcal{W}^{1,\alpha}_{0}$ composed of $\mathcal{C}^{\infty}$ functions with compact support. For all $k\geq 1$, the test function $\varphi_{k}$ belongs to $\mathcal{C}^{1}_{b}$ so that $(M^{n}_{t}(\varphi_{k}))_{t\geq 0}$ is an $\mathbb{F}$-martingale (Proposition \ref{prop:decomposition:eta}). Using Doob's inequality for real-valued martingales \cite[Theorem 1.43.]{Jacod_2003} and Equation \eqref{eq:bracket:M:n:(phi)}, one has
\begin{multline*}
\mathbb{E}\left[\sup_{t\in [0,\theta]} ||M^{n}_{t}||^{2}_{-1,\alpha} \right] \leq \sum_{k\geq 1} \mathbb{E}\left[ \sup_{t\in [0,\theta]} M^{n}_{t}(\varphi_{k})^{2} \right] \\
\leq C \sum_{k\geq 1} \mathbb{E}\left[  M^{n}_{\theta}(\varphi_{k})^{2} \right] \leq C || \Psi ||_{\infty} \mathbb{E}\left[ \int_{0}^{\theta} \sum_{k\geq 1} R\varphi_{k} (S^{n,1}_{z-})^{2} dz \right],
\end{multline*}
where the last inequality comes from exchangeability and boundedness of the intensity. Noticing that $R\varphi_{k} (S^{n,1}_{z-})=D_{0,S^{n,1}_{z-}}(\varphi_{k})$ and then using Lemma \ref{lem:control:normes:D:H} as we have done in the proof of $(i)$, it follows that
\begin{equation*}
\mathbb{E}\left[ \int_{0}^{\theta} \sum_{k\geq 1} R\varphi_{k} (S^{n,1}_{z-})^{2} dz \right] \leq (C_{2})^{2} \int_{0}^{\theta} (1 + (M_{S_{0}}+\theta)^{\alpha})^{2}dz,
\end{equation*}
which does not depend on $n$ and gives \eqref{eq:control:M:n:-1:alpha}. Moreover, gathering the integrability property given by \eqref{eq:control:M:n:-1:alpha} and the fact that, for all $k\geq 1$, the process $(M^{n}_{t}(\varphi_{k}))_{t\geq 0}$ is an $\mathbb{F}$-martingale, we have that $M^{n}$ is a $\mathcal{W}^{-1,\alpha}_{0}$-valued $\mathbb{F}$-martingale.

It remains to show that $(M^{n}_{t})_{t\geq 0}$ is càdlàg. First remark that for any $k$, the $\mathbb{F}$-martingale $(M^{n}_{t}(\varphi_{k}))_{t\geq 0}$ is càdlàg. Let $\varepsilon>0$ and $t_{0}>0$. For any $n\geq 1$,
\begin{equation*}
\mathbb{E}\left[ \sum_{k\geq 1} \sup_{t\in [0,t_{0}+1]}  M^{n}_{t}(\varphi_{k})^{2} \right] <+\infty,
\end{equation*}
so there exists a set $\Omega^{n}$ such that $\mathbb{P}(\Omega^{n})=1$ and for all $\omega$ in $\Omega^{n}$,
\begin{equation*}
\sum_{k\geq 1} \sup_{t\in [0,t_{0}+1]}  \left<M^{n}_{t}(\omega),\varphi_{k}\right>^{2}<+\infty.
\end{equation*}

Once $\omega$ is fixed in $\Omega^{n}$, there exists an integer $k_{0}$ (which depends on $\omega$) such that $\sum_{k>k_{0}} \sup_{t\in [0,t_{0}+1]}  \left<M^{n}_{t}(\omega),\varphi_{k}\right>^{2} <\varepsilon$.
Let $t$ be such that $t_{0}<t\leq t_{0}+1$, using the right continuity of $t\mapsto \left<M^{n}_{t}(\omega),\varphi_{k}\right>$, we have, dropping $\omega$ for simplicity of notations,
\begin{eqnarray*}
||M^{n}_{t} - M^{n}_{t_{0}}||^{2}_{-1,\alpha} &=& \sum_{k\geq 1} (M^{n}_{t}(\varphi_{k}) - M^{n}_{t_{0}}(\varphi_{k}))^{2}\\
&\leq & \sum_{k=1}^{k_{0}} (M^{n}_{t}(\varphi_{k}) - M^{n}_{t_{0}}(\varphi_{k}))^{2} + 2 \sum_{k>k_{0}} [ M^{n}_{t}(\varphi_{k})^{2} + M^{n}_{t_{0}}(\varphi_{k})^{2} ]\\
& \leq & \sum_{k=1}^{k_{0}} \varepsilon + 4\varepsilon = (k_{0}+4)\varepsilon,
\end{eqnarray*}
as soon as $|t-t_{0}|$ is small enough. Hence, $t\mapsto M^{n}_{t}(\omega)$ is right continuous with values in $\mathcal{W}^{-1,\alpha}_{0}$.
In the same way, let $(t_{m})_{m\geq 1}$ be a sequence such that $t_{m}<t_{0}$ and $t_{m}\to t_{0}$. For any integers $m$ and $\ell$, we have, dropping $\omega$ for simplicity of notations,
\begin{eqnarray*}
||M^{n}_{t_{m}} - M^{n}_{t_{\ell}}||^{2}_{-1,\alpha} &=& \sum_{k\geq 1} (M^{n}_{t_{m}}(\varphi_{k}) - M^{n}_{t_{\ell}}(\varphi_{k}))^{2}\\
&\leq & \sum_{k=1}^{k_{0}} (M^{n}_{t_{m}}(\varphi_{k}) - M^{n}_{t_{\ell}}(\varphi_{k}))^{2} + 4\varepsilon.
\end{eqnarray*}
Yet, for all $k=1,\dots ,k_{0}$, the sequence $(M^{n}_{t_{m}}(\varphi_{k}))_{m\geq 1}$ is convergent hence Cauchy. It follows that $(M^{n}_{t_{m}}(\omega))_{m\geq 1}$ is a Cauchy sequence and so converges in $\mathcal{W}^{-1,\alpha}_{0}$. Hence, $t\mapsto M^{n}_{t}(\omega)$ admits left limits in $\mathcal{W}^{-1,\alpha}_{0}$. Finally, $t\mapsto M^{n}_{t}$ belongs to $\mathcal{D}(\mathbb{R}_{+},\mathcal{W}^{-1,\alpha}_{0})$ almost surely.

\paragraph{Proof of \hyperref[item:prop:iii]{$(iii)$}.}
Starting from \eqref{eq:An:function:of:Gamma}, we have, by convexity of the square function,
\begin{equation*}
A^{n}_{t}(\varphi)^{2} \leq 2 \Big(\left< \overline{\mu}^{n}_{S_{t}},\frac{\partial \Psi}{\partial y}(\cdot,\overline{\gamma}(t))R\varphi \right>^{2}  (\Gamma^{n}_{t-})^{2} +R^{n,(1)}_{t}(\varphi)^{2} \Big)
\end{equation*}
Let $(\varphi_{k})_{k\geq 1}$ be an orthonormal basis of $\mathcal{W}^{2,\alpha}_{0}$ so that $||A^{n}_{t}||^{2}_{-2,\alpha}=\sum_{k\geq 1} A^{n}_{t}(\varphi_{k})^{2}$. Noticing that $R\varphi_{k} (S^{n,i}_{t-})=D_{0,S^{n,i}_{t-}}(\varphi_{k})$ and then using Lemma \ref{lem:control:normes:D:H} as we have done in the proof of $(i)$, it follows that
\begin{eqnarray*}
\sum_{k\geq 1} \left< \overline{\mu}^{n}_{S_{t}},\frac{\partial \Psi}{\partial y}(\cdot,\overline{\gamma}(t))R\varphi_{k} \right>^{2} &\leq & {\rm Lip}(\Psi)^{2} \frac{1}{n} \sum_{i=1}^{n} \left( \sum_{k\geq 1} R\varphi_{k}(S^{n,i}_{t-})^{2} \right) \\
& \leq & {\rm Lip}(\Psi)^{2} (C_{2})^{2}(1+ (M_{S_{0}}+\theta)^{\alpha})^{2},
\end{eqnarray*}
and in the same way,
\begin{equation*}
\sum_{k\geq 1} R^{n,(1)}_{t}(\varphi_{k})^{2} \leq {\rm Lip}(\Psi)^{2} (C_{2})^{2}(1+ (M_{S_{0}}+\theta)^{\alpha})^{2} \frac{1}{n} \sum_{i=1}^{n} (\sqrt{n}r^{n,i}_{t})^{2}.
\end{equation*}
Hence,
\begin{equation*}
\sum_{k\geq 1} A^{n}_{t}(\varphi_{k})^{2}\leq 2 {\rm Lip}(\Psi)^{2} (C_{2})^{2}(1+ (M_{S_{0}}+\theta)^{\alpha})^{2} \left( (\Gamma^{n}_{t-})^{2} + \frac{1}{n} \sum_{i=1}^{n} (\sqrt{n}r^{n,i}_{t})^{2} \right).
\end{equation*}
Yet, as a consequence of Proposition \ref{prop:rate:of:convergence:proba:k:tuple:different:ages}, $\xi^{(2)}_{n}(t)=\mathbb{E}[  |\Gamma^{n}_{t-}| ^{2} ]/n \lesssim_{\theta} n^{-1}$ and $\xi^{(4)}_{n}(t)=\mathbb{E}[  |\Gamma^{n}_{t}| ^{4} ]/n^{2}\lesssim_{\theta} n^{-2}$. In particular, uniformly in $t\leq \theta$, the $L^{1}$ norm of $(\Gamma^{n}_{t-})^{2}$ is of order $1$ while the $L^{1}$ norm of the rest term satisfies
$$
\frac{1}{n} \sum_{i=1}^{n} (\sqrt{n}r^{n,i}_{t})^{2} \leq n \Big( \sup_{s,y} |\frac{\partial^{2} \Psi}{\partial y^{2}}(s,y)| \Big)^{2} |\gamma^{n}_{t}-\overline{\gamma}(t)|^{4}/4
$$
and so vanishes to $0$ as $n$ goes to infinity. Hence,
\begin{equation*}
\sup_{n\geq 1} \sup_{t\in [0,\theta]} \mathbb{E}\left[ ||A^{n}_{t}||^{2}_{-2,\alpha} \right] = \sup_{n\geq 1} \sup_{t\in [0,\theta]} \mathbb{E}\bigg[ \sum_{k\geq 1} A^{n}_{t}(\varphi_{k})^{2} \bigg] <+\infty.
\end{equation*}

\paragraph{Proof of \hyperref[item:prop:iv]{$(iv)$}.}
By definition of $L_{z}$ and the triangular inequality, 
$$|| L_{z} \varphi ||^{2}_{1,\alpha}\leq 2(||\varphi'||^{2}_{1,\alpha} + || \Psi(\cdot,\overline{\gamma}(z)) R\varphi||^{2}_{1,\alpha}).$$
Firstly, $||\varphi'||^{2}_{1,\alpha}\leq ||\varphi||^{2}_{2,\alpha}$.
Secondly, by Lemma \ref{lem:control:for:linear:operator}, for all $z\leq \theta$,
$$|| \Psi(\cdot,\overline{\gamma}(z)) R\varphi||^{2}_{1,\alpha}\leq C \sup_{z\in [0,\theta]} || \Psi(\cdot,\overline{\gamma}(z))||_{\mathcal{C}^{1}_{b}}^{2} ||R\varphi||^{2}_{1,\alpha}.$$
Finally, \eqref{eq:control:L(f)} follows from \eqref{eq:Psi:dPsi:C2b:locally:in:time} and the continuity of the mapping $R$ (Lemma \ref{lem:R:continuous}).

\subsection{Proof of Proposition \ref{prop:convergence:Wn:W}}
\label{sec:proof:prop:convergence:Wn:W}

As stated in Equation \eqref{eq:W:n:tight}, the sequence $(W^{n})_{n\geq 1}$ is tight. Then, let us consider the following decomposition, for any $\varphi_{1}$ and $\varphi_{2}$ in $\mathcal{W}^{2,\alpha}_{0}$,
\begin{equation*}
\left< {<\!\!<\!W^{n}\!>\!\!>}_{t} (\varphi_{1}), \varphi_{2} \right> - \int_{0}^{ t} \left<u_{z},\varphi_{1}\varphi_{2}\Psi(\cdot,\overline{\gamma}(z)\right> dz= B^{n}_{t}+C^{n}_{t},
\end{equation*}
with
\begin{equation*}
\begin{cases}
\displaystyle B^{n}_{t}:= \frac{1}{n}\sum_{i=1}^{n} \int_{0}^{t} \varphi_{1}(S^{n,i}_{z-})\varphi_{2}(S^{n,i}_{z-}) \left(\lambda_{z}^{n,i}-\Psi(S^{n,i}_{z-},\overline{\gamma}(z)) \right)dz,\\
\displaystyle C^{n}_{t}:= \int_{0}^{t} \left<\overline{\mu}^{n}_{S_{z}}-u_{z},\varphi_{1}\varphi_{2}\Psi(\cdot,\overline{\gamma}(z)) \right>dz,
\end{cases}
\end{equation*}
where we used the fact that, almost surely, $\overline{\mu}^{n}_{S_{z-}}=\overline{\mu}^{n}_{S_{z}}$ for almost every $z$ in $\mathbb{R}_{+}$.
The first term $B^{n}$ converges in $L^{1}$ to $0$ by using the Lipschitz continuity of $\Psi$ and the convergence of $\gamma^{n}$ to $\overline{\gamma}$ given by Proposition \ref{prop:rate:of:convergence:proba:k:tuple:different:ages}.
From the convergence 
$$\frac{1}{n}\sum_{i = 1}^n \delta_{(S^{n,i}_{t})_{t\geq 0}}\xrightarrow[n\rightarrow \infty]{} \mathcal{L}\big(( \overline{S}^{1}_{t})_{t\geq 0}\big),$$
which is given in \cite[Corollary IV.4]{chevallier2015mean}, one can deduce that for almost every $z$, 
$$\frac{1}{n}\sum_{i=1}^{n} \delta_{S^{n,i}_{z}}\xrightarrow[n\rightarrow \infty]{} u_{z}$$
(see for instance \cite[Proposition VI.3.14 and Lemma VI.3.12]{Jacod_2003}). Then, dominated convergence implies that the second term $C^{n}$ converges in expectation to $0$. Hence, the bracket of $W^{n}$ \eqref{eq:bracket:W:n} converges to the covariance \eqref{eq:covariance:W} for $t'=t$.

Furthermore, as for $M^{n}$ (see the proof of Remark \ref{rem:C:tight}), the maximum jump size of $W^{n}$ converges to $0$.
Hence, Rebolledo's central limit theorem for local martingales \cite{Rebolledo_80} gives, for every $\varphi_{1},\dots ,\varphi_{k}$ in $\mathcal{W}^{2,\alpha}_{0}$ and $t_{1},\dots ,t_{k}\geq 0$, the convergence of $(W^{n}_{t_{1}}(\varphi_{1}),\dots ,W^{n}_{t_{k}}(\varphi_{k}))$ to a Gaussian vector with the prescribed covariance \eqref{eq:covariance:W}. The limit law of $(W^{n})_{n\geq 1}$ is then characterized as the law of a continuous Gaussian process with covariance \eqref{eq:covariance:W}.

\subsection{Proof of Corollary \ref{cor:convergence:couple}}
\label{sec:proof:cor:convergence:couple}

First, the tightness (and convergence) of $(R^{*}W^{n})_{n\geq 1}$ comes from the continuity of $R^{*}$ as a mapping from $\mathcal{W}^{-2,\alpha}_{0}$  to $\mathcal{W}^{-2,\alpha}_{0}$ which comes from the continuity of $R$ as a mapping from $\mathcal{W}^{2,\alpha}_{0}$ to $\mathcal{W}^{2,\alpha}_{0}$ (Lemma \ref{lem:R:continuous}). Then, let us show that $(V^{n})_{n\geq 1}$ is tight in $\mathcal{D}(\mathbb{R}_{+},\mathbb{R})$.\\

Assume that $h(0)=0$ and extend the function $h$ to the whole real line by the value $0$ on the negative real numbers.

-{\it (i)} For all $n\geq 1$, $V^{n}_{0}=0$ a.s. so $(V^{n}_{0})_{n\geq 1}$ is clearly tight. 

For any $t>r\geq 0$, since $h(r-z)=0$ as soon as $z\geq r$, one has
\begin{equation*}
V^{n}_{t}-V^{n}_{r}= \int_{0}^{t} \left[ h(t-z)-h(r-z) \right] dW^{n}_{z}({\bf 1}).
\end{equation*}
Let us denote, for all $x\geq 0$, $\texttt{V}_{r,t}^{n}(x)=\int_{0}^{x}\left[h\left(t-z\right)-h\left(r-z\right)\right] dW^{n}_{z}({\bf 1}).$ It is a martingale with respect to $x$. Burkholder-Davis-Gundy inequality \cite[p. 894]{shorack2009empirical} gives the existence of a universal constant $C_{p}$ such that
$$
\mathbb{E}\left[ \sup_{x\leq t} \left|\texttt{V}_{r,t}^{n}(x)\right|^{2p} \right] \leq C_{p} \mathbb{E}\left[ \left[\texttt{V}_{r,t}^{n}\right]_{t}^{p} \right].
$$
Yet, the quadratic variation of $\texttt{V}_{r,t}^{n}$ is given by
\begin{equation}\label{eq:quadratic:variation:C:n}
\left[\texttt{V}_{r,t}^{n}\right]_{x} = \frac{1}{n}\sum_{j=1}^{n} \int_{0}^{x}\left[h\left(t-z\right)-h\left(r-z\right)\right]^{2} N^{n,j}(dz) \leq {\rm H\ddot{o}l}(h)^{2} \left|t-r\right|^{2\beta(h)} \frac{1}{n}\sum_{j=1}^{n} N_{x}^{n,j}.
\end{equation}
So, using the exchangeability, we have for all $p\geq 0$,
\begin{equation}\label{eq:control:kolmogorov:V:n}
\mathbb{E}\left[ |V^{n}_{t}-V^{n}_{r}|^{2p} \right] \leq C_{p} {\rm H\ddot{o}l}(h)^{2p} \left|t-r\right|^{2\beta(h) p} \mathbb{E}\left[ |N_{t}^{n,1}|^{p} \right].
\end{equation}
Yet, the intensity of $N^{n,1}$ is bounded so that $N^{n,1}$ is stochastically dominated by a Poisson process with intensity $||\Psi||_{\infty}$. Hence, $\mathbb{E}[ |N^{n,1}_{t}|^{p} ]\leq \mathbb{E}[{\rm Poiss}(t||\Psi||_{\infty})^{p}]$ where ${\rm Poiss}(t||\Psi||_{\infty})$ is a Poisson variable with parameter $t||\Psi||_{\infty}$. This implies that $\mathbb{E}[ |N^{n,1}_{t}|^{p} ]$ is bounded uniformly in $n$ by a locally bounded function of the time $t$, say $\tilde{C}_{p}(t)$ (which can be assumed to be increasing continuous without any loss of generality).
Then, taking $p=1$, $t=\delta$ and $r=0$ and using Markov's inequality gives

-{\it (ii)} for all $\varepsilon>0$, $\lim_{\delta\to 0} \limsup_{n} \mathbb{P}(|V^{n}_{\delta}-V^{n}_{0}|>\varepsilon)=0$.

\noindent Finally, taking $p=1/\beta(h)$ and using Markov's inequality gives

-{\it (iii)} for all $\nu>0$, $\mathbb{P}(|V^{n}_{t}-V^{n}_{r}|>\nu)\leq \nu^{-2/\beta(h)} |F(t)-F(s)|^{2}$,

\noindent where $F(t):=(C_{\frac{1}{\beta(h)}}\tilde{C}_{\frac{1}{\beta(h)}}(t) {\rm H\ddot{o}l}(h)^{2/\beta(h)} )^{1/2} \,t$ defines an increasing continuous function.

Hence, $(i)$, $(ii)$ and $(iii)$ allow to apply Billingsley's criterion for tightness \cite[Theorem VI.4.1]{Jacod_2003} to deduce that $(V^{n})_{n\geq 1}$ is tight.\\

Now, if $h(0)\neq 0$, one can use the following decomposition,
$$
V_{t}^{n}=\int_{0}^{t}\left[h\left(t-z\right)-h\left(0\right)\right] dW^{n}_{z}({\bf 1}) + h\left(0\right) W^{n}_{t}({\bf 1}).
$$
The first term is tight thanks to what we have done in the case $h\left(0\right)=0$ whereas the second term is converging since $(W^{n})_{n}$ is converging, whence $(V^{n})_{n\geq 1}$ is tight.\\

Now, since the limit trajectories of $R^{*}W^{n}$ are continuous, $(R^{*}W^{n},V^{n})_{n\geq 1}$ is tight in $\mathcal{D}(\mathbb{R}_{+},\mathcal{W}^{-2,\alpha}_{0}\times \mathbb{R})$. It now suffices to characterize the limiting finite dimensional distributions. Recall that $V_{t}=\int_{0}^{t}h(t-z)dW_{z}({\bf 1})$ and denote by $(t_{1},\dots ,t_{k})$ a $k$-tuple of positive times.

\begin{sloppypar}
First, suppose that $h$ is piecewise constant. In that case, the convergence of $W^{n}$ towards $W$ easily implies the convergence of $((R^{*}W^{n}_{t_{1}},V^{n}_{t_{1}}),\dots ,(R^{*}W^{n}_{t_{k}},V^{n}_{t_{k}}))$ to $((R^{*}W_{t_{1}},V_{t_{1}}),\dots ,(R^{*}W_{t_{k}},V_{t_{k}}))$ (use the fact that $h$ is a piecewise function to write $V^{n}_{t_{i}}$ as a sum of increments of $W^{n}({\bf 1})$).
\end{sloppypar}

Then, since $h$ is continuous, one can find, for each $\varepsilon>0$, a  piecewise constant function $h^{\varepsilon}$ such that $||h-h^{\varepsilon}||_{\infty}\leq\varepsilon$.
Denote $V^{n,\varepsilon}_{t}:=\int_{0}^{t} h^{\varepsilon}(t-z) dW^{n}_{z}({\bf 1})$ and notice that $\mathbb{E}\left[ |V^{n}_{t}-V^{n,\varepsilon}_{t}|^{2} \right]\leq 2\varepsilon^{2} \mathbb{E}\left[ <W^{n}({\bf 1})>_{t} \right]\leq 2\varepsilon^{2}||\Psi||_{\infty} t\to 0$ as $\varepsilon\to 0$. In the same way, denote $V^{\varepsilon}_{t}:=\int_{0}^{t} h^{\varepsilon}(t-z) dW_{z}({\bf 1})$ and remark that $\mathbb{E}\left[ |V_{t}-V^{\varepsilon}_{t}|^{2} \right]\leq 2\varepsilon^{2}||\Psi||_{\infty} t\to 0$ as $\varepsilon\to 0$. Yet, the previous point gives the convergence, in terms of finite dimensional distributions, of $V^{n,\varepsilon}$ to $V^{\varepsilon}$ for all $\varepsilon>0$ so the convergence, in terms of finite dimensional distributions, of $V^{n}$ to $V$ follows which ends the proof.

\subsection{Proof of Proposition \ref{prop:tightness:Gamma:n}}
\label{sec:proof:prop:tightness:Gamma:n}

The idea is to use \eqref{eq:closed:equation:n:2}. The first step is to simplify \eqref{eq:closed:equation:n:2} by using the following convergences
\begin{equation}\label{eq:convergence:to:0:rest:terms:equation:Gamma}
\begin{cases}
\displaystyle \mathbb{E}\left[\left| \sup_{t\in [0,\theta]} \int_{0}^{t} h(t-z) R^{n,(2)}_{z}dz \right|\right] \to 0,\\
\displaystyle \mathbb{E}\left[\left| \sup_{t\in [0,\theta]} \int_{0}^{t} h(t-z) \left< \overline{\mu}^{n}_{S_{z}}-u_{z} ,\frac{\partial \Psi}{\partial y}(\cdot,\overline{\gamma}(z))\right> \Gamma^{n}_{z}dz \right|\right] \to 0.
\end{cases}
\end{equation}
These two convergences follow from the two following claims: by \eqref{eq:power:Gamma:n:locally:bounded},
$$\sup_{z\in [0,\theta]} \mathbb{E}\left[ \left| R^{n,(2)}_{z} \right| \right] \leq {\rm Lip}(\Psi) C n^{-1/2} \sup_{z\in [0,\theta]} \mathbb{E}\left[ |\Gamma^{n}_{z-}|^{2} \right] \to 0,$$
and, by Cauchy-Schwarz inequality,
\begin{equation}\label{eq:convergence:ou:on:utilise:CS}
\sup_{z\in [0,\theta]} \mathbb{E}\left[ \left| \left< \overline{\mu}^{n}_{S_{z}}-u_{z},\frac{\partial \Psi}{\partial y}(\cdot,\overline{\gamma}(z))\right> \Gamma^{n}_{z} \right| \right] \to 0.
\end{equation}
Indeed,
\begin{multline*}
\mathbb{E}\left[ \left| \left< \overline{\mu}^{n}_{S_{z}}-u_{z},\frac{\partial \Psi}{\partial y}(\cdot,\overline{\gamma}(z))\right> \Gamma^{n}_{z} \right| \right] \leq \mathbb{E}\left[ \left| \frac{1}{\sqrt{n}} \left< \eta^{n}_{z}, \frac{\partial \Psi}{\partial y}(\cdot,\overline{\gamma}(z)) \right> \right|^{2} \right]^{1/2} \mathbb{E}\left[ |\Gamma^{n}_{z}|^{2} \right]^{1/2}\\
\leq  \frac{1}{\sqrt{n}} \mathbb{E}\left[ ||\eta^{n}_{z}||^{2}_{-1,\alpha} \right]^{1/2} \left\Vert \frac{\partial \Psi}{\partial y}(\cdot,\overline{\gamma}(z)) \right\Vert_{1,\alpha} \mathbb{E}\left[ |\Gamma^{n}_{z}|^{2} \right]^{1/2},
\end{multline*}
for any $\alpha>1/2$ and \eqref{eq:convergence:ou:on:utilise:CS} follows from Proposition \ref{prop:control:-1:alpha:combined}-$(i)$ and Equations \eqref{eq:f:k:alpha:leq:f:Ckb}, \eqref{eq:Psi:dPsi:C2b:locally:in:time} and \eqref{eq:power:Gamma:n:locally:bounded}.\\

Return to \eqref{eq:closed:equation:n:2}. The right-hand side is tight since it is convergent  (Corollary \ref{cor:convergence:couple}) and the last term in the left hand side is tight since $(\eta^{n})_{n\geq 1}$ is tight (with continuous limit points) and $\eta\mapsto \int_{0}^{t} h(t-z) \left< \eta_{z}, \Psi(\cdot,\overline{\gamma}(z)\right> dz$ is continuous at every point $\eta$ in $\mathcal{C}(\mathbb{R}_{+},\mathcal{W}^{-2,\alpha}_{0})$ thanks to Lemma \ref{lem:continuity:mappings:eta} (remind \eqref{eq:f:k:alpha:leq:f:Ckb} and \eqref{eq:Psi:dPsi:C2b:locally:in:time}). Moreover, the term in the middle may be simplified by means of \eqref{eq:convergence:to:0:rest:terms:equation:Gamma}. Hence it remains to prove the tightness of the sequence of continuous processes $(I^{n})_{n\geq 1}$ defined, for all $t\geq 0$, by
\begin{equation*}
I^{n}_{t}:=\int_{0}^{t} h(t-z) \left< u_{z}, \frac{\partial \Psi}{\partial y}(\cdot,\overline{\gamma}(z)) \right> \Gamma^{n}_{z} dz.
\end{equation*}
We use Aldous criterion \cite[Theorem 16.10.]{Billing_Convergence}, that is the simplified version of the one stated on page \pageref{sec:tightness:criterion} but for real valued processes. First, for all $\theta\geq 0$,
\begin{equation*}
\mathbb{E}\left[ \sup_{t\in [0,\theta]} |I^{n}_{t}| \right]\leq h_{\infty}(\theta) {\rm Lip}(\Psi) \int_{0}^{\theta} \mathbb{E}\left[ |\Gamma^{n}_{z}| \right] dz,
\end{equation*}
is bounded uniformly with respect to $n$ thanks to Equation \eqref{eq:power:Gamma:n:locally:bounded}. And Markov's inequality implies that, for every $\theta\geq 0$ and $\varepsilon>0$, there exists $a>0$ such that 
$$\sup_{n\geq 1} \mathbb{P}\left( \sup_{t\in[0,\theta]} |I^{n}_{t}| \geq a \right)\leq \varepsilon,$$ 
which is the standard compactness condition.

Then, for the Aldous criterion, let us consider $\delta_{0}>0$, $\delta\leq \delta_{0}$ and for all $n\geq 1$, an $\mathbb{F}$-stopping time smaller than $\theta$ denoted by $\tau_{n}$. Assume for a while that $h(0)=0$ and extend the function $h$ to the whole real line by setting $0$ on the negative real numbers. As for Equation \eqref{eq:quadratic:variation:C:n}, we have
\begin{equation*}
|I^{n}_{\tau_{n}+\delta}-I^{n}_{\tau_{n}}| \leq {\rm H\ddot{o}l}(h) \delta_{0}^{\beta(h)} {\rm Lip}(\Psi) \int_{0}^{\theta+\delta_{0}} |\Gamma^{n}_{z}| dz.
\end{equation*}
Hence, as before, \eqref{eq:power:Gamma:n:locally:bounded} implies that $\sup_{n\geq 1} \mathbb{E}[|I^{n}_{\tau_{n}+\delta}-I^{n}_{\tau_{n}}|]\leq C (\theta+\delta_{0}) \delta_{0}^{\beta(h)}$ which is arbitrary small for $\delta_{0}$ small enough and Markov's inequality gives that, for any $\varepsilon_{1},\varepsilon_{2}>0$, there exists $\delta_{0}$ such that $\sup_{n\geq 1} \sup_{\delta\leq \delta_{0}} \mathbb{P}\left( |I^{n}_{\tau_{n}+\delta}-I^{n}_{\tau_{n}}| \geq \varepsilon_{1} \right)\leq \varepsilon_{2},$ that is Aldous criterion. Hence, $(I^{n})_{n\geq 1}$ is tight in $\mathcal{C}(\mathbb{R}_{+},\mathbb{R})$.\\

Now, if $h(0)\neq 0$, one can use the following decomposition,
$$
I^{n}(t)=\int_{0}^{t} (h(t-z)-h(0))\left< u_{z}, \frac{\partial \Psi}{\partial y}(\cdot,\overline{\gamma}(z)) \right> \Gamma^{n}_{z} dz+ h(0) \int_{0}^{t} \left< u_{z}, \frac{\partial \Psi}{\partial y}(\cdot,\overline{\gamma}(z)) \right> \Gamma^{n}_{z} dz.
$$
The first term is tight thanks to what we have done in the case $h(0)=0$ whereas the tightness of the second one is simpler and left to the reader (use Equation \eqref{eq:power:Gamma:n:locally:bounded}).\\

It only remains to check that the limit points are continuous. The idea is the same as for Remark \ref{rem:C:tight}. According to \cite[Theorem 13.4.]{Billing_Convergence}, it suffices to prove that for all $\theta\geq 0$, the maximal jump size of $\Gamma^{n}$ on $[0,\theta]$ converges to $0$. Yet, using the continuity of $\overline{\gamma}$ and the reverse triangle inequality, we have
\begin{equation*}
\Delta \Gamma^{n}_{t}:= |\Gamma^{n}_{t} - \Gamma^{n}_{t-}|\leq \sqrt{n} |\gamma^{n}_{t+}-\gamma^{n}_{t}|,
\end{equation*}
where we remind the definition $\gamma^{n}_{t}=n^{-1}  \sum_{j=1}^n \int_{0}^{t-} h(t-z)N^{n,j}(dz)$ and we define $\gamma^{n}_{t+}:=n^{-1}  \sum_{j=1}^n \int_{0}^{t} h(t-z)N^{n,j}(dz)$. Now, Assumption (\hyperref[ass:h:Holder]{$\mathcal{A}^{h}_{\rm H\ddot{o}l}$}) implies that $h$ is continuous and so we deduce that
\begin{equation*}
\Delta \Gamma^{n}_{t}\leq \sqrt{n} \frac{1}{n} \sum_{i=1}^{n} h(0) \mathds{1}_{t\in N^{n,i}}.
\end{equation*}
Since almost surely there is no common point to any two of the point processes $(N^{n,i})_{i=1,\dots ,n}$, there is, almost surely, for all $t\geq 0$, at most one of the $\mathds{1}_{t\in N^{n,i}}$ which is non null. Hence, $\sup_{t\in[0,\theta]} \Delta \Gamma^{n}_{t}\leq h(0) n^{-1/2}$ a.s., which gives the desired convergence to $0$.

\subsection{Proof of Theorem \ref{thm:limit:equation}}
\label{sec:proof:thm:limit:equation}

First, let us notice that we use the two following statements whose proofs are similar to those of Proposition \ref{prop:control:-1:alpha:combined}-$(iv)$ and Remark \ref{rem:control:L(f):etoile}:
for any $\alpha>1/2$ and $\theta\geq 0$,
\begin{equation}\label{eq:control:Lz:3:alpha:to:2:alpha}
\text{for all $\varphi$ in $\mathcal{W}^{3,\alpha}_{0}$,} \quad \sup_{z\in [0,\theta]} \frac{|| L_{z} \varphi ||^{2}_{2,\alpha}}{|| \varphi ||^{2}_{3,\alpha}}<+\infty,
\end{equation}
\begin{equation}\label{eq:control:Lz:etoile:-2,alpha:to:-3,alpha}
\text{and, for all $w$ in $\mathcal{W}^{-2,\alpha}_{0}$,} \quad \sup_{z\in [0,\theta]} \frac{|| L_{z}^{*} w ||^{2}_{-3,\alpha}}{|| w ||^{2}_{-2,\alpha}}<+\infty.
\end{equation}\\

As a consequence of tightness and continuity of the limit trajectories, we have tightness of the process $(\eta^{n},\Gamma^{n},W^{n},V^{n})_{n\geq 1}$ in $\mathcal{D}(\mathbb{R}_{+},\mathcal{W}^{-2,\alpha}\times\mathbb{R}\times\mathcal{W}^{-2,\alpha}_{0}\times \mathbb{R})$. Hence, let us assume without loss of generality that the sequence converges to $(\eta,\Gamma,W,V)$ in $\mathcal{D}(\mathbb{R}_{+},\mathcal{W}^{-2,\alpha}\times\mathbb{R}\times\mathcal{W}^{-2,\alpha}_{0}\times \mathbb{R})$.

Then, let $(\varphi_{k})_{k\geq 1}$ be an orthonormal basis of $\mathcal{W}^{3,\alpha}_{0}$ and define the following applications: for all $k\geq 1$, $F_{k}:\mathcal{D}(\mathbb{R}_{+},\mathcal{W}^{-2,\alpha}\times\mathbb{R}\times\mathcal{W}^{-2,\alpha}_{0}) \to \mathcal{D}(\mathbb{R}_{+},\mathbb{R})$ satisfy for all $t\geq 0$,
\begin{multline*}
F_{k}(f^{1},f^{2},f^{3})(t):= \left< f^{1}_{t},\varphi_{k} \right>-\left< f^{1}_{0},\varphi_{k} \right>-\int_{0}^{t} \left< f^{1}_{z},L_{z}\varphi_{k} \right> dz \\
- \int_{0}^{t} \left< u_{z},R\varphi_{k}\,\frac{\partial \Psi}{\partial y}(\cdot,\overline{\gamma}(z)) \right> f^{2}_{z} dz - f^{3}_{t}(R\varphi_{k}),
\end{multline*}
and $G:\mathcal{D}(\mathbb{R}_{+},\mathcal{W}^{-2,\alpha}\times\mathbb{R}\times\mathbb{R}) \to \mathcal{D}(\mathbb{R}_{+},\mathbb{R})$ satisfy for all $t\geq 0$,
\begin{multline*}
G(g^{1},g^{2},g^{3})(t):= g^{2}_{t} - \int_{0}^{t} h(t-z) \left< g^{1}_{z}, \Psi(\cdot,\overline{\gamma}(z)\right> dz \\
- \int_{0}^{t} h(t-z) \left< u_{z},\frac{\partial \Psi}{\partial y}(\cdot,\overline{\gamma}(z))\right> g^{2}_{z} dz - g^{3}_{t}.
\end{multline*}
Notice that the system \eqref{eq:closed:equation:limit:1}-\eqref{eq:closed:equation:limit:2} is equivalent to 
\begin{equation}\label{eq:Fk:G:equivalent:system}
\begin{cases}
\forall k\geq 1, \quad F_{k}(\eta,\Gamma,W)=0\\
G(\eta,\Gamma,V)=0.
\end{cases}
\end{equation}

\paragraph{Step one.}
Let us show that the first line of \eqref{eq:Fk:G:equivalent:system} is satisfied. First, we prove that for all $k\geq 1$, $F_{k}$ is continuous at every point $(\tilde{f}^{1},\tilde{f}^{2},\tilde{f}^{3})$ in $\mathcal{C}(\mathbb{R}_{+},\mathcal{W}^{-2,\alpha}\times\mathbb{R}\times\mathcal{W}^{-2,\alpha}_{0})$. To state continuity of $F_{k}$ at a continuous trajectory, it suffices to show continuity with respect to each coordinate $f^{1}$, $f^{2}$ and $f^{3}$.

- Equation \eqref{eq:control:Lz:3:alpha:to:2:alpha} implies that $z\mapsto || L_{z} \varphi_{k} ||_{2,\alpha}$ is locally bounded and  Lemma \ref{lem:continuity:mappings:eta} gives the following: 
$$f^{1}\mapsto \Big(t\mapsto \left< f^{1}(t),\varphi_{k} \right> - \left< f^{1}(0),\varphi_{k} \right> - \int_{0}^{t} \left< f^{1}(z),L_{z}\varphi_{k} \right> dz \Big)$$
is a mapping from $\mathcal{D}(\mathbb{R}_{+},\mathcal{W}^{-2,\alpha}_{0})$ into $\mathcal{D}(\mathbb{R}_{+},\mathbb{R})$ which is continuous at every point $\tilde{f}^{1}$ in $\mathcal{C}(\mathbb{R}_{+},\mathcal{W}^{-2,\alpha}_{0})$.

- Then, notice that $|\big< u_{z},R\varphi_{k}\,\frac{\partial \Psi}{\partial y}(\cdot,\overline{\gamma}(z)) \big>| \leq {\rm Lip}(\Psi) \mathbb{E}[ |D_{0,\overline{S}^{1}_{z}}(\varphi_{k})| ]$ and Lemma \ref{lem:control:normes:D:H} gives that
\begin{equation*}
z\mapsto \left< u_{z},R\varphi_{k}\,\frac{\partial \Psi}{\partial y}(\cdot,\overline{\gamma}(z)) \right> \text{ is locally bounded,}
\end{equation*}
so that applying Lemma \ref{lem:convergence:Gamma} gives the continuity of $F_{k}$ with respect to $f^{2}$.

- Finally, $F_{k}$ is clearly continuous with respect to $f^{3}$.\\

Notice that \eqref{eq:closed:equation:n:1} gives for any $k\geq 1$,
\begin{multline}\label{eq:closed:equation:Fk}
\forall t\geq 0, \  F_{k}(\eta^{n},\Gamma^{n},W^{n})(t) - \int_{0}^{t} R^{n,(1)}_{z}(\varphi_{k}) dz \\
- \int_{0}^{t} \left< \overline{\mu}^{n}_{S_{z}}-u_{z}, \frac{\partial \Psi}{\partial y}(\cdot,\overline{\gamma}(z))R\varphi_{k}\right> \Gamma^{n}_{z} dz=0.
\end{multline}
Yet, we have, on the one hand,  for all $\theta\geq 0$,
\begin{multline*}
\sup_{z\in [0,\theta]} \mathbb{E}\left[ \big| R^{n,(1)}_{z}(\varphi_{k}) \big| \right]  \leq  \sqrt{n} {\rm Lip}(\Psi) \sup_{z\in [0,\theta]} \mathbb{E}\left[ \frac{1}{n} \sum_{i=1}^{n} |D_{0,S^{n,i}_{z}}(\varphi_{k})| |r^{n,i}_{z}| \right]\\
\leq  n^{-1/2} C {\rm Lip}(\Psi) (1+(M_{T_{0}}+\theta)^{\alpha}) ||\varphi_{k} ||_{2,\alpha} \sup_{z\in [0,\theta]} \mathbb{E}\left[ |\Gamma^{n}_{z-}|^{2} \right] \to 0,
\end{multline*}
where we used Lemma \ref{lem:control:normes:D:H}, Equations \eqref{eq:bound:rni:Gamma} and \eqref{eq:power:Gamma:n:locally:bounded}, and on the other hand,
$$\sup_{z\in [0,\theta]} \mathbb{E}\left[ \big| \left< \overline{\mu}^{n}_{S_{z}}-u_{z}, \frac{\partial \Psi}{\partial y}(\cdot,\overline{\gamma}(z))R\varphi_{k}\right> \Gamma^{n}_{z} \big| \right] \to 0,$$
which follows from Cauchy-Schwarz inequality as we have done for \eqref{eq:convergence:ou:on:utilise:CS}.

These two convergences above imply
\begin{equation}\label{eq:convergence:to:0:rest:terms:equation:eta}
\begin{cases}
\displaystyle \mathbb{E}\left[\left| \sup_{t\in [0,\theta]} \int_{0}^{t} R^{n,(1)}_{z}(\varphi_{k}) dz \right|\right] \to 0,\\
\displaystyle \mathbb{E}\left[\left| \sup_{t\in [0,\theta]} \int_{0}^{t} \left< \overline{\mu}^{n}_{S_{z}}-u_{z}, \frac{\partial \Psi}{\partial y}(\cdot,\overline{\gamma}(z))R\varphi_{k}\right> \Gamma^{n}_{z} dz \right|\right] \to 0.
\end{cases}
\end{equation}

On the one hand, gathering \eqref{eq:closed:equation:Fk} and \eqref{eq:convergence:to:0:rest:terms:equation:eta} gives the convergence of $F_{k}(\eta^{n},\Gamma^{n},W^{n})$ to $0$ in probability and, on the other hand, applying the continuous mapping theorem gives the convergence in law of $F_{k}(\eta^{n},\Gamma^{n},W^{n})$ to $F_{k}(\eta,\Gamma,W)$. Identifying the limits gives $F_{k}(\eta,\Gamma,W)=0$ which ends this step.

\paragraph{Step two.} 
Let us show that the second line of \eqref{eq:Fk:G:equivalent:system} is satisfied. First, we prove that $G$ is continuous at every point $(\tilde{g}^{1},\tilde{g}^{2},\tilde{g}^{3})$ in $\mathcal{C}(\mathbb{R}_{+},\mathcal{W}^{-2,\alpha}\times\mathbb{R}\times\mathbb{R})$. To state continuity of $G$ at a continuous trajectory, it suffices to show continuity with respect to each coordinate $g^{1}$, $g^{2}$ and $g^{3}$.

- Equations \eqref{eq:f:k:alpha:leq:f:Ckb}, \eqref{eq:Psi:dPsi:C2b:locally:in:time} and Lemma \ref{lem:continuity:mappings:eta} give the following: 
$$g^{1}\mapsto \Big( t\mapsto \int_{0}^{t} h(t-z) \left< g^{1}_{z}, \Psi(\cdot,\overline{\gamma}(z)\right> dz \Big)$$
is a mapping from $\mathcal{D}(\mathbb{R}_{+},\mathcal{W}^{-2,\alpha}_{0})$ into $\mathcal{C}(\mathbb{R}_{+},\mathbb{R})$ which is continuous at every point $\tilde{g}^{1}$ in $\mathcal{C}(\mathbb{R}_{+},\mathcal{W}^{-2,\alpha}_{0})$.

- Then, notice that $|\big< u_{z},\frac{\partial \Psi}{\partial y}(\cdot,\overline{\gamma}(z)) \big>|\leq {\rm Lip}(\Psi)$ so that
\begin{equation*}
z\mapsto \left< u_{z},\frac{\partial \Psi}{\partial y}(\cdot,\overline{\gamma}(z))\right> \text{ is locally bounded,}
\end{equation*}
so that applying Lemma \ref{lem:convergence:Gamma} gives the continuity of $G$ with respect to $g^{2}$.

- Finally, $G$ is clearly continuous with respect to $g^{3}$.\\

Notice that \eqref{eq:closed:equation:n:2} gives
\begin{multline*}
\forall  t\geq 0, \  G(\eta^{n},\Gamma^{n},V^{n})(t) - \int_{0}^{t} h(t-z)R^{n,(2)}_{z} dz \\
- \int_{0}^{t} h(t-z) \left< \overline{\mu}^{n}_{S_{z}}-u_{z}, \frac{\partial \Psi}{\partial y}(\cdot,\overline{\gamma}(z)) \right> \Gamma^{n}_{z} dz=0.
\end{multline*}
Finally, the argument used to end the previous step also applies here.\\

To conclude, the two steps above give \eqref{eq:Fk:G:equivalent:system} which gives that the process $(\eta,\Gamma)$ is a solution of \eqref{eq:closed:equation:limit:1}-\eqref{eq:closed:equation:limit:2}. Finally, its trajectories are supported in $\mathcal{C}(\mathbb{R}_{+},\mathcal{W}^{-2,\alpha}_{0}\times \mathbb{R})$ since $\eta$ is supported in $\mathcal{C}(\mathbb{R}_{+},\mathcal{W}^{-2,\alpha}_{0})$ and $\Gamma$ is supported in $\mathcal{C}(\mathbb{R}_{+},\mathbb{R})$ as a solution of \eqref{eq:closed:equation:limit:2} (remind that $h$ is Hölder continuous).

\subsection{Proof of Proposition \ref{prop:almost:solution}}
\label{sec:proof:prop:almost:solution}

First, the following Lemma states the well-posedness of $\hat{\gamma}^n$ as defined by \eqref{eq:defintion:X:hat:n}.

\begin{lem}\label{lem:well:posed:X:hat:n}
Under (\hyperref[ass:for:CLT]{$\mathcal{A}_{\text{\tiny{CLT}}}$}), assume furthermore that $\Psi$ is in $\mathcal{C}^{4}_{b}$.
For all $T\geq 0$, there exists a pathwise unique of solution $(\hat{\gamma}^n_t)_{t\in [0,T]}$ of Equation \eqref{eq:defintion:X:hat:n}. Furthermore, the solution $\hat{\gamma}^n$ has continuous paths.
\end{lem}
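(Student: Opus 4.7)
The equation \eqref{eq:defintion:X:hat:n} is a Volterra-type integral equation of the form $\hat{\gamma}^n_t = \mathcal{F}_t(\hat{\gamma}^n) + V^n_t$, where the forcing term
\[
V^n_t := \int_0^t h(t-z)\,d\hat W^n_z({\bf 1}) = \frac{1}{\sqrt n}\int_0^t h(t-z)\,dW_z({\bf 1})
\]
does not depend on $\hat{\gamma}^n$, and
\[
\mathcal{F}_t(\gamma) := \int_0^t h(t-z)\left<\hat u^n_z,\Psi(\cdot,\gamma_z)\right>dz.
\]
My first step would be to check that $V^n$ has a.s. continuous paths on $[0,T]$. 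This can be obtained by the very same Kolmogorov-type argument used in the proof of Corollary \ref{cor:convergence:couple} (see Section \ref{sec:proof:cor:convergence:couple}): Burkholder-Davis-Gundy applied to the martingale $x\mapsto \int_0^x [h(t-z)-h(r-z)]dW_z({\bf 1})$ together with the Hölder regularity (\hyperref[ass:h:Holder]{$\mathcal{A}^h_{\rm H\ddot ol}$}) of $h$ yields moment bounds of the form $\mathbb{E}[|V^n_t-V^n_r|^{2p}]\lesssim |t-r|^{2\beta(h)p}$, which is enough to conclude continuity.

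The second step is to fix $\omega$ in the intersection of the full-measure sets on which $V^n$ is continuous on $[0,T]$ and on which $\eta\in\mathcal{C}(\mathbb{R}_+,\mathcal{W}^{-2,\alpha}_0)$ (remind Remark \ref{rem:C:tight}), so that
\[
K_T(\omega):=\sup_{z\in[0,T]}\left(1+\frac{C}{\sqrt n}\|\eta_z(\omega)\|_{-2,1}\right)<+\infty.
\]
Working with this fixed $\omega$, Lemma \ref{lem:estimates:u:hat} combined with Assumption (\hyperref[ass:h:infty]{$\mathcal{A}^h_\infty$}) and the fact that $\Psi\in\mathcal{C}^4_b$ gives that, for any two continuous functions $\gamma,\tilde\gamma:[0,T]\to\mathbb{R}$,
\[
|\mathcal{F}_t(\gamma)-\mathcal{F}_t(\tilde\gamma)|\leq h_\infty(T)\,K_T(\omega)\,\|\Psi\|_{\mathcal{C}^3_b}\int_0^t|\gamma_z-\tilde\gamma_z|\,dz.
\]
This is the key Lipschitz estimate. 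Existence is then obtained by Picard iteration: setting $\gamma^{(0)}_t=V^n_t$ and $\gamma^{(k+1)}=V^n+\mathcal{F}(\gamma^{(k)})$, the above Lipschitz bound yields $\|\gamma^{(k+1)}-\gamma^{(k)}\|_{\infty,[0,T]}\leq (h_\infty(T)K_T(\omega)\|\Psi\|_{\mathcal{C}^3_b}T)^k/k!\,\|\gamma^{(1)}-\gamma^{(0)}\|_{\infty,[0,T]}$, so the iterates converge uniformly on $[0,T]$ to some $\hat\gamma^n$. Pathwise uniqueness follows in the same way: any two solutions $\hat\gamma^n,\tilde{\hat\gamma}^n$ satisfy $|\hat\gamma^n_t-\tilde{\hat\gamma}^n_t|\leq h_\infty(T)K_T(\omega)\|\Psi\|_{\mathcal{C}^3_b}\int_0^t|\hat\gamma^n_z-\tilde{\hat\gamma}^n_z|dz$, and Gronwall's lemma (e.g. Lemma \ref{lem:Gronwall:Generalization}) forces them to coincide.

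Finally, continuity of $t\mapsto\hat\gamma^n_t$ follows at no cost: $V^n$ is continuous by the first step, and for any continuous $\gamma$ the map $t\mapsto\int_0^t h(t-z)\langle \hat u^n_z,\Psi(\cdot,\gamma_z)\rangle dz$ is continuous thanks to the continuity of $h$ (implied by (\hyperref[ass:h:Holder]{$\mathcal{A}^h_{\rm H\ddot ol}$})), the continuity of $z\mapsto\hat u^n_z$ in $\mathcal{W}^{-2,\alpha}_0$ and the local boundedness of the integrand provided by Lemma \ref{lem:estimates:u:hat}. The main (minor) obstacle is precisely that the Lipschitz modulus $K_T(\omega)$ is random and need not be bounded in $\omega$, so the fixed-point argument must be carried out pathwise on a full-measure event rather than via a Banach contraction in some $L^p$ norm; once one commits to this pathwise viewpoint, the proof reduces to a standard Picard-Volterra argument.
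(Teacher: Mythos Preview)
Your proof is correct and follows the same overall approach as the paper: both establish continuity of the stochastic forcing term $V^n$ via the Kolmogorov--BDG argument borrowed from Corollary \ref{cor:convergence:couple}, then work pathwise using the Lipschitz estimate of Lemma \ref{lem:estimates:u:hat} with the random constant $K_T(\omega)$.

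The only difference is in the fixed-point step. The paper chooses a (random) $t_0$ small enough that the map $F_{t_0,g}$ is a strict contraction on $\mathcal{C}([0,t_0])$, then patches together solutions on successive intervals $[0,t_0],[t_0,2t_0],\dots$ up to $T$. You instead run Picard iteration directly on all of $[0,T]$ and use the Volterra structure to get the factorial decay $(LT)^k/k!$, which converges for any $L$ without needing to shrink the interval. Your route is slightly cleaner since it avoids the bookkeeping of concatenating local solutions and gives continuity of $\hat\gamma^n$ for free as a uniform limit of continuous iterates; the paper's version is equally valid and perhaps more familiar to readers used to ODE-style arguments. Either way the substance is identical.
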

\begin{proof}
We first deal with the continuity of $U^n_t:= \int_0^t h(t-z) d\hat{W}^n_z(\mathbf{1})$ appearing in \eqref{eq:defintion:X:hat:n}. Following the arguments given in the proof of Corollary \ref{cor:convergence:couple} to get the control \eqref{eq:control:kolmogorov:V:n}, one can prove that, for all $r<t<T$ and $p\geq 0$, there exists a universal constant $C_p$ such that
\begin{equation*}
\mathbb{E}\left[ |U^{n}_{t}-U^{n}_{r}|^{2p} \right] \leq C_{p} {\rm H\ddot{o}l}(h)^{2p} \left|t-r\right|^{2\beta(h) p} \int_0^t \frac{\left< u_z, \Psi(\cdot,\overline{\gamma}_z)\right>}{n} dz.
\end{equation*}
The integral above being clearly bounded by $T||\Psi||_{\infty}$, one can for instance take $p=1/\beta(h)$ in the equation above in order to apply Kolmogorov continuity theorem.

Hence, without loss of generality, one can deduce that any solution of \eqref{eq:defintion:X:hat:n} admits a modification with continuous paths. Let $t_0<T$ and $g$ be in $\mathcal{C}([0,T],\mathbb{R})$ and consider the application $F_{t_0,g}:\mathcal{C}([0,t_0],\mathbb{R})\to \mathcal{C}([0,t_0],\mathbb{R})$ defined by
\begin{equation*}
F_{t_0,g}(\gamma)(t):= \int_0^t h(t-z) \left< \hat{u}^n_z, \Psi(\cdot,\gamma(z))\right> dz  + g(t).
\end{equation*}
Remind that $\hat{u}^n_t=u_t+n^{-1/2} \eta_t$ and remark that
\begin{eqnarray*}
| \left< \eta_t, \Psi(\cdot,\gamma_1(t))-\Psi(\cdot,\gamma_2(t))\right> | & \leq & C ||\eta_t||_{-2,1} ||\Psi(\cdot,\gamma_1(t))-\Psi(\cdot,\gamma_2(t))||_{\mathcal{C}^2_b}\\
&\leq & C ||\eta_t||_{-2,1} |\gamma_1(t)-\gamma_2(t)|,
\end{eqnarray*}
since $\Psi$ is $\mathcal{C}^4_b$, where the $C$'s are deterministic constants. Hence, we have
\begin{multline*}
||F_{t_0,g}(\gamma_1)-F_{t_0,g}(\gamma_2)||_{\mathcal{C}([0,t_0])}\leq t_0h_{\infty}(t_0) ||\gamma_1-\gamma_2 ||_{\mathcal{C}([0,t_0])} \big( {\rm Lip}(\Psi) \\
 +  C \sup_{t\in [0,T]}  ||\eta_t||_{-2,1} \big).
\end{multline*}
Yet, $\sup_{t\in [0,T]}  ||\eta_t||_{-2,1} $ is almost surely finite as a consequence of \eqref{eq:C:tight} so one can find $t_0$ small enough\footnote{The time $t_0$ is random and depends on the time horizon $T$.} such that $F_{t_0,g}$ is a contraction. The proof is then completed by iteration. Let us mention how goes the second step: fix $t_0$ such that $F_{t_0,g}$ is a contraction and consider the application $G_{t_0,g}:\mathcal{C}([t_0,2t_0],\mathbb{R})\to \mathcal{C}([t_0,2t_0],\mathbb{R})$ defined by
\begin{equation*}
G_{t_0,g}(\gamma)(t):= \int_0^t h(t-z) \left< \hat{u}^n_z, \Psi(\cdot,\gamma(z))\right> dz  + g(t),
\end{equation*}
where the value of $\gamma(t)$ for $t$ in $[0,t_0]$ is given by the unique Banach-Picard fixed point of $F_{t_0,g}$. The same kind of computations as before gives
\begin{multline*}
||G_{t_0,g}(\gamma_1)-G_{t_0,g}(\gamma_2)||_{\mathcal{C}([t_0,2t_0])}\leq t_0 h_{\infty}(t_0) ||\gamma_1-\gamma_2 ||_{\mathcal{C}([t_0,2t_0])} \big( {\rm Lip}(\Psi)  \\
+  C \sup_{t\in [0,T]}  ||\eta_t||_{-2,1} \big),
\end{multline*}
so that $G_{t_0,g}$ is also a contraction.
\end{proof}

Then, the following Lemma relates $\hat{\gamma}^n_t$ with its approximation appearing through the CLT, namely $\check{\gamma}^n_t:=\overline{\gamma}(t)+n^{-1/2}\Gamma_t$.

\begin{lem}\label{lem:control:X:hat:n:CLT}
We have
\begin{equation}\label{eq:control:X:hat:n:CLT}
\mathbb{E}\left[\left| \hat{\gamma}^n_t - \check{\gamma}^n_t \right| \right] \lesssim_t n^{-1}.
\end{equation}
Furthermore, we show within the proof that
\begin{equation}\label{eq:control:X:hat:n:X:bar:square}
\mathbb{E}[ | \hat{\gamma}^n_t - \overline{\gamma}(t) |^2 ]\lesssim_t n^{-1}.
\end{equation}
\end{lem}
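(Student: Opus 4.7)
The proof proceeds in two steps. \textbf{Step 1} establishes a strengthened version of the auxiliary estimate \eqref{eq:control:X:hat:n:X:bar:square}, namely $\mathbb{E}[|\hat\gamma^n_t - \overline\gamma(t)|^{2p}]\lesssim_{t,p} n^{-p}$ for every integer $p\geq 1$ (the case $p=1$ is \eqref{eq:control:X:hat:n:X:bar:square}, and the case $p=2$ will be needed in Step 2). Subtracting the fixed point equation of $\overline\gamma$ from \eqref{eq:defintion:X:hat:n} and using $\hat W^n = n^{-1/2}W$ yields
\begin{equation*}
\hat\gamma^n_t - \overline\gamma(t) = \int_0^t h(t-z)\bigl[\langle \hat u^n_z, \Psi(\cdot,\hat\gamma^n_z) - \Psi(\cdot,\overline\gamma(z))\rangle + n^{-1/2}\langle \eta_z,\Psi(\cdot,\overline\gamma(z))\rangle\bigr]dz + n^{-1/2}V_t,
\end{equation*}
with $V_t := \int_0^t h(t-z) dW_z(\mathbf{1})$. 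By \eqref{eq:lipschitz:control:vs:eta} the first bracket is controlled by $\|\Psi\|_{\mathcal{C}^3_b}(1+Cn^{-1/2}\|\eta_z\|_{-2,1})|\hat\gamma^n_z - \overline\gamma(z)|$, the second by $n^{-1/2}C\|\Psi\|_{\mathcal{C}^2_b}\|\eta_z\|_{-2,1}$, and Itô isometry gives $\mathbb{E}[V_t^2]\lesssim_t\|\Psi\|_\infty$. Since $\eta_z$ has polynomial moments of all orders (Gaussian structure of the limit equation together with \eqref{eq:C:tight}), a Gronwall argument at order $2p$---tame the cross term $\|\eta_z\|_{-2,1}|\hat\gamma^n_z - \overline\gamma(z)|$ via Cauchy--Schwarz and the inductive hypothesis at a smaller value of $p$---concludes by Lemma \ref{lem:Gronwall:Generalization}.

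\textbf{Step 2} exploits the cancellation prescribed by the CLT. Taylor expand (legitimate since $\Psi\in\mathcal{C}^4_b$),
\begin{equation*}
\Psi(s,\hat\gamma^n_z) - \Psi(s,\overline\gamma(z)) = \frac{\partial \Psi}{\partial y}(s,\overline\gamma(z))(\hat\gamma^n_z - \overline\gamma(z)) + r_z(s),
\end{equation*}
with $\|r_z\|_{\mathcal{C}^2_b}\lesssim\|\Psi\|_{\mathcal{C}^4_b}|\hat\gamma^n_z - \overline\gamma(z)|^2$, substitute $\hat u^n_z = u_z + n^{-1/2}\eta_z$ and $\hat\gamma^n_z - \overline\gamma(z) = (\hat\gamma^n_z - \check\gamma^n_z) + n^{-1/2}\Gamma_z$ into the identity of Step 1, and subtract $n^{-1/2}$ times \eqref{eq:closed:equation:limit:2}. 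The terms $n^{-1/2}\int h(t-z)\langle \eta_z,\Psi(\cdot,\overline\gamma(z))\rangle dz$, $n^{-1/2}V_t$ and $n^{-1/2}\int h(t-z)\langle u_z,\frac{\partial \Psi}{\partial y}(\cdot,\overline\gamma(z))\rangle\Gamma_z dz$ cancel exactly, leaving the linear Volterra equation
\begin{equation*}
\hat\gamma^n_t - \check\gamma^n_t = \int_0^t h(t-z)\left\langle \hat u^n_z, \frac{\partial \Psi}{\partial y}(\cdot,\overline\gamma(z))\right\rangle(\hat\gamma^n_z - \check\gamma^n_z) dz + \mathcal{R}_t,
\end{equation*}
where $\mathcal{R}_t$ is the sum of the cross term $n^{-1}\int h(t-z)\langle \eta_z,\frac{\partial \Psi}{\partial y}(\cdot,\overline\gamma(z))\rangle\Gamma_z dz$ and the Taylor residue $\int h(t-z)\langle \hat u^n_z,r_z\rangle dz$. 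Both have $L^1$ norm of order $n^{-1}$: the first by Cauchy--Schwarz, using \eqref{eq:C:tight} and \eqref{eq:power:Gamma:locally:bounded:limit:version}; the second by applying Lemma \ref{lem:estimates:u:hat} (with the function $s\mapsto r_z(s)/|\hat\gamma^n_z-\overline\gamma(z)|^2$ playing the role of $\Psi(\cdot,y_1)-\Psi(\cdot,y_2)$) together with the $L^2$ and $L^4$ controls from Step 1.

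Finally, bounding the Volterra kernel by $|\langle \hat u^n_z,\frac{\partial \Psi}{\partial y}(\cdot,\overline\gamma(z))\rangle|\leq {\rm Lip}(\Psi)(1+Cn^{-1/2}\|\eta_z\|_{-2,1})$ via \eqref{eq:lipschitz:control:vs:eta} and absorbing the $n^{-1/2}$ correction into $\mathcal{R}_t$ by Cauchy--Schwarz---here using the elementary bound $\mathbb{E}[|\hat\gamma^n_z - \check\gamma^n_z|^2]\leq 2\mathbb{E}[|\hat\gamma^n_z - \overline\gamma(z)|^2] + 2n^{-1}\mathbb{E}[\Gamma_z^2]\lesssim_t n^{-1}$---leads to $\mathbb{E}[|\hat\gamma^n_t - \check\gamma^n_t|]\leq {\rm Lip}(\Psi)h_\infty(t)\int_0^t\mathbb{E}[|\hat\gamma^n_z - \check\gamma^n_z|]dz + Cn^{-1}$, and Lemma \ref{lem:Gronwall:Generalization} concludes. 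The main technical obstacle throughout is that $\hat u^n$ is a signed distribution rather than a probability measure, which Lemma \ref{lem:estimates:u:hat} systematically handles at the cost of the multiplicative correction $(1+Cn^{-1/2}\|\eta_z\|_{-2,1})$; the crucial structural point is the exact cancellation of the $n^{-1/2}$-order term at the start of Step 2, which is precisely why $\check\gamma^n$ is defined as $\overline\gamma + n^{-1/2}\Gamma$ and which explains why the final rate is $n^{-1}$ rather than $n^{-1/2}$.
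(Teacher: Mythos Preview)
Your two-step strategy---first a rough $L^2$ bound on $\hat\gamma^n-\overline\gamma$, then a refined $L^1$ bound on $\hat\gamma^n-\check\gamma^n$ via Taylor expansion, cancellation against \eqref{eq:closed:equation:limit:2}, and Gr\"onwall---is the same as the paper's. The organisational difference lies in \emph{where} you Taylor-expand in Step~2: you expand $\Psi(\cdot,\hat\gamma^n_z)$ around $\overline\gamma(z)$, so the residue is $\lesssim|\hat\gamma^n_z-\overline\gamma(z)|^2$; the paper instead writes $\langle\hat u^n_z,\Psi(\cdot,\hat\gamma^n_z)-\Psi(\cdot,\check\gamma^n_z)\rangle$ as the Gr\"onwall term and expands $\Psi(\cdot,\check\gamma^n_z)$ around $\overline\gamma(z)$, so the residue is $\lesssim n^{-1}|\Gamma_z|^2$. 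The paper's choice is more economical: its residue is automatically $O(n^{-1})$ and only needs moments of $\Gamma$ (available from \eqref{eq:power:Gamma:locally:bounded:limit:version}), so the $p=1$ case of your Step~1 together with \eqref{eq:C:tight} suffices. Your route forces the $L^4$ bound $\mathbb{E}[|\hat\gamma^n_z-\overline\gamma(z)|^4]\lesssim_t n^{-2}$, hence your strengthened Step~1.

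That strengthening is correct in principle (the limit system is linear with Gaussian initial data and Gaussian noise, so $\|\eta_z\|_{-2,1}$ and $V_t$ have moments of all orders), but the inductive mechanism you sketch---``Cauchy--Schwarz on the cross term and the inductive hypothesis at a smaller $p$''---does not close as written: applying Cauchy--Schwarz to $\mathbb{E}[\|\eta_z\|_{-2,1}^{2p}|\hat\gamma^n_z-\overline\gamma(z)|^{2p}]$ pushes you to order $4p$, not down. The clean fix is to split $\hat u^n_z=u_z+n^{-1/2}\eta_z$ in Step~1 (exactly the $A^n_1,A^n_2,A^n_3$ decomposition the paper uses for $p=1$): then the Gr\"onwall kernel becomes the deterministic ${\rm Lip}(\Psi)h_\infty(t)$ coming from $\langle u_z,\cdot\rangle$, and the $\eta$-contribution $n^{-1/2}\langle\eta_z,\Psi(\cdot,\hat\gamma^n_z)\rangle$ sits entirely in the source term, so the $2p$-th moment bound follows directly from Lemma~\ref{lem:Gronwall:Generalization} once $\mathbb{E}[\|\eta_z\|_{-2,1}^{2p}]<\infty$ is granted.
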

\begin{proof}
Let us prove the a priori rough bound,
\begin{equation}\label{eq:rough:bound:gamma:hat:gamma:check}
\mathbb{E}\left[\left| \hat{\gamma}^n_t - \check{\gamma}^n_t \right|^2 \right]^{1/2} \lesssim_t n^{-1/2}.
\end{equation}
We use the decomposition $\hat{\gamma}^n_t - \check{\gamma}^n_t=( \hat{\gamma}^n_t - \overline{\gamma}(t)) - n^{-1/2}\Gamma_t$. On the one hand, $\mathbb{E}\left[|n^{-1/2}\Gamma_t |^2 \right]^{1/2}\lesssim_t n^{-1/2}$ thanks to \eqref{eq:power:Gamma:locally:bounded:limit:version}. On the other hand, we use the decomposition 
\begin{equation*}
\mathbb{E}\left[ \left| \hat{\gamma}^n_t - \overline{\gamma}(t) \right|^2 \right] \leq 3(  A^{n}_1(t) + A^{n}_2(t)+ A^{n}_3(t)),
\end{equation*}
where
\begin{equation*}
\begin{cases}
A^{n}_1(t):= \mathbb{E}\left[ \left| \int_{0}^{t} h(t-z) \left< \hat{u}^n_z -u_z, \Psi(\cdot,\hat{\gamma}^n_z) \right> dz \right|^{2} \right],\\
A^{n}_2(t):= \mathbb{E}\left[ \left| \int_{0}^{t} h(t-z) \left< u_z, \Psi(\cdot,\hat{\gamma}^n_z) - \Psi(\cdot,\overline{\gamma}(z)) \right> dz \right|^{2} \right],\\
A^{n}_3(t):= \mathbb{E}\left[ \left| \int_{0}^{t} h(t-z) d\hat{W}^n_z(\mathbf{1}) \right|^{2} \right].\\
\end{cases}
\end{equation*}

- \underline{Study of $A^{n}_1(t)$.}
Using that $\hat{u}^n_z -u_z=n^{-1/2}\eta_z$ and $\Psi$ belongs to $\mathcal{C}^2_b$, we have
\begin{equation*}
A^n_1(t)  \leq  t^2 h_{\infty}(t)^2 n^{-1} \mathbb{E}\left[ \sup_{z\in [0,t]} ||\eta_z||^2_{-2,1}\right] ||\Psi||^2_{\mathcal{C}^2_b}\lesssim_t n^{-1},
\end{equation*}
where we used \eqref{eq:C:tight}.

- \underline{Study of $A^{n}_2(t)$.}
Using the Lipschitz continuity of $\Psi$, we have
\begin{equation*}
A^n_2(t)  \leq  t h_{\infty}(t)^2 {\rm Lip}(\Psi) \int_0^t \mathbb{E}\left[ \left| \hat{\gamma}^n_z - \overline{\gamma}(z) \right|^2 \right] dz  \lesssim_t \int_0^t \mathbb{E}\left[ \left| \hat{\gamma}^n_z - \overline{\gamma}(z) \right|^2 \right] dz,
\end{equation*}
which is convenient to apply the Grönwall-type Lemma \ref{lem:Gronwall:Generalization}.

- \underline{Study of $A^{n}_3(t)$.}
By definition of the bracket of $\hat{W}^n$, we have
\begin{equation*}
A^n_3(t)  =  \int_0^t h(t-z)^2 n^{-1} \left< u_z, \Psi(\cdot,\overline{\gamma}(z)) \right> dz  \lesssim_t n^{-1}.
\end{equation*}

As expected, applying Lemma \ref{lem:Gronwall:Generalization} gives $\mathbb{E}[ | \hat{\gamma}^n_t - \overline{\gamma}(t) |^2 ]\lesssim_t n^{-1}$, that is Equation \eqref{eq:control:X:hat:n:X:bar:square}. Then, gathering the two steps above proves \eqref{eq:rough:bound:gamma:hat:gamma:check}.\\

Then, let us show how to use \eqref{eq:rough:bound:gamma:hat:gamma:check} in order to prove \eqref{eq:control:X:hat:n:CLT}. We have,
\begin{multline*}
\check{\gamma}^n_t = \int_0^t h(t-z) \left[ \left< \hat{u}^n_z, \Psi(\cdot,\overline{\gamma}(z))\right> + \left< u_{z},\frac{\partial \Psi}{\partial y}(\cdot,\overline{\gamma}(z)) \right> n^{-1/2} \Gamma_{z}\right] dz\\
+ \int_0^t h(t-z) d\hat{W}^n_z(\mathbf{1}).
\end{multline*}
Hence we use the decomposition 
\begin{equation*}
\mathbb{E}\left[ \left| \hat{\gamma}^n_t - \check{\gamma}^n_t \right| \right] \leq 3(  B^{n}_1(t) + B^{n}_2(t)+ B^{n}_3(t)),
\end{equation*}
where
\begin{equation*}
\begin{cases}
B^{n}_1(t):= \mathbb{E}\left[ \left| \int_{0}^{t} h(t-z) \left< \hat{u}^n_z, \Psi(\cdot,\hat{\gamma}^n_z) - \Psi(\cdot,\check{\gamma}^n_z) \right> dz \right| \right],\\
B^{n}_2(t):= \mathbb{E}\left[ \left| \int_{0}^{t} h(t-z) \left< \hat{u}^n_z, \Psi(\cdot,\check{\gamma}^n_z ) - \Psi(\cdot,\overline{\gamma}(z)) - \frac{\partial \Psi}{\partial y}(\cdot,\overline{\gamma}(z)) \frac{\Gamma_{z}}{\sqrt{n}} \right> dz \right| \right],\\
B^{n}_3(t):= \mathbb{E}\left[ \left| \int_0^t h(t-z) \left< \hat{u}^n_z - u_z, \frac{\partial \Psi}{\partial y}(\cdot,\overline{\gamma}(z))  \right> \frac{\Gamma_{z}}{\sqrt{n}} dz \right| \right].
\end{cases}
\end{equation*}

- \underline{Study of $B^{n}_1(t)$.}
Using Lemma \ref{lem:estimates:u:hat}, Cauchy-Schwarz inequality and finally \eqref{eq:C:tight} and the a priori rough bound \eqref{eq:rough:bound:gamma:hat:gamma:check}, we have
\begin{eqnarray*}
B^n_1(t)  &\leq & h_{\infty}(t)  \int_0^t \mathbb{E}\left[ \left(1 + \frac{C}{\sqrt{n}} ||\eta_t||_{-2,1}\right)  ||\Psi||_{\mathcal{C}^3_b} |\hat{\gamma}^n_z-\check{\gamma}^n_z| \right] dz ,\\
&\lesssim_t & \int_0^t \mathbb{E}\left[ |\hat{\gamma}^n_z-\check{\gamma}^n_z| \right] dz + n^{-1/2} \int_0^t \mathbb{E}\left[ ||\eta_t||_{-2,1}^2 \right]^{1/2} \mathbb{E}\left[ |\hat{\gamma}^n_z-\check{\gamma}^n_z|^2 \right]^{1/2} dz\\
&\lesssim_t &  \int_0^t \mathbb{E}\left[ |\hat{\gamma}^n_z-\check{\gamma}^n_z| \right] dz + n^{-1}
\end{eqnarray*}
where we used \eqref{eq:C:tight}.

- \underline{Study of $B^{n}_2(t)$.}
From Taylor's inequality it follows that
\begin{equation*}
\left\Vert \Psi(\cdot,\check{\gamma}^n_z ) - \Psi(\cdot,\overline{\gamma}(z)) - \frac{\partial \Psi}{\partial y}(\cdot,\overline{\gamma}(z)) \frac{\Gamma_{z}}{\sqrt{n}} \right\Vert_{\mathcal{C}^2_b} \leq ||\Psi||_{\mathcal{C}^4_b} \frac{|\Gamma_z|^2}{n},
\end{equation*}
and so, using Lemma \ref{lem:estimates:u:hat}, Cauchy-Schwarz inequality and finally \eqref{eq:C:tight} and \eqref{eq:power:Gamma:locally:bounded:limit:version}, we have
\begin{eqnarray*}
B^n_2(t) & \leq & h_{\infty}(t) \int_0^t \mathbb{E}\left[ \left(1 + \frac{C}{\sqrt{n}} ||\eta_t||_{-2,1}\right)  ||\Psi||_{\mathcal{C}^4_b} \frac{|\Gamma_z|^2}{n} \right] dz, \\
 &\lesssim_t & n^{-1} \int_0^t \mathbb{E}\left[ |\Gamma_z|^2 \right] dz + n^{-3/2} \int_0^t \mathbb{E}\left[ ||\eta_z||_{-2,1}^2 \right]^{1/2}  \mathbb{E}\left[ |\Gamma_z|^4 \right]^{1/2} dz,\\
&\lesssim_t & n^{-1}.
\end{eqnarray*}

- \underline{Study of $B^{n}_3(t)$.}
By definition of $\hat{u}^n=u-n^{-1/2}\eta$ and doing as above, we have
\begin{equation*}
B^n_3(t) \leq  h_{\infty}(t) n^{-1} \int_0^t \mathbb{E}\left[ ||\eta_z||_{-2,1}  \left\Vert \frac{\partial \Psi}{\partial y}(\cdot,\overline{\gamma}(z))\right\Vert_{\mathcal{C}^2_b} |\Gamma_z| \right] dz \lesssim_t  n^{-1}.
\end{equation*}

Finally, applying Lemma \ref{lem:Gronwall:Generalization} gives \eqref{eq:control:X:hat:n:CLT} and ends the proof.
\end{proof}

We are now in position to prove Proposition \ref{prop:almost:solution}. Let us use the decomposition
$$
\mathbb{E}\left[|r^n_t(\varphi)|\right] \leq \int_0^t A^n_1(z)+A^n_2(z)+A^n_3(z) dz,
$$
where
\begin{equation*}
\begin{cases}
A^{n}_1(t):= \mathbb{E}\left[ \left| \left< \hat{u}^{n}_{t}-u_t, (L_t-\hat{L}_t^n) \varphi\right> \right| \right],\\
A^{n}_2(t):= \mathbb{E}\left[ \left| \left< u_{t}, \left[ \Psi(\cdot,\overline{\gamma}(t)) + \frac{\partial \Psi}{\partial y}(\cdot,\overline{\gamma}(t)) n^{-1/2} \Gamma_{t} - \Psi(\cdot,\check{\gamma}^n_t)\right]  R\varphi \right> \right| \right],\\
A^{n}_3(t):= \mathbb{E}\left[ \left| \left< u_{t}, \left[ \Psi(\cdot,\check{\gamma}^n_t) - \Psi(\cdot,\hat{\gamma}^n_t)\right]  R\varphi \right> \right| \right].\\
\end{cases}
\end{equation*}

- \underline{Study of $A^{n}_1(t)$.}
Using that $\hat{u}^n_t -u_t=n^{-1/2}\eta_t$, $\Psi$ belongs to $\mathcal{C}^3_b$, and an inequality similar to the second line of \eqref{eq:lipschitz:control:vs:eta}, we have
\begin{equation*}
A^n_1(t)  \lesssim_t  n^{-1/2} \mathbb{E}\left[ ||\eta_t||_{-2,1} ||\Psi||_{\mathcal{C}^3_b} |\overline{\gamma}(t)-\hat{\gamma}^n_t| \right] ||\varphi||_{2,1}.
\end{equation*}
Cauchy-Schwarz inequality with Equations \eqref{eq:C:tight} and \eqref{eq:control:X:hat:n:X:bar:square} gives $A^n_1(t)  \lesssim_t n^{-1}$.

- \underline{Study of $A^{n}_2(t)$.}
From Taylor's inequality and then \eqref{eq:embedding:Ck:sobolev} and \eqref{eq:power:Gamma:locally:bounded:limit:version}, it follows that
\begin{equation*}
A^n_2(t)  \lesssim_t  n^{-1} ||\Psi||_{\mathcal{C}^2_b}  \mathbb{E}\left[ |\Gamma_t|^2 \right] ||\varphi||_{\infty} \lesssim_t n^{-1} ||\varphi||_{2,1}.
\end{equation*}

- \underline{Study of $A^{n}_3(t)$.}
Finally, using \eqref{eq:control:X:hat:n:CLT} and then \eqref{eq:embedding:Ck:sobolev}, we have
\begin{equation*}
A^n_3(t)  \lesssim_t  {\rm Lip}(\Psi) \mathbb{E}\left[\left| \hat{\gamma}^n_t - \check{\gamma}^n_t \right| \right]  ||\varphi||_{\infty} \lesssim_t n^{-1} ||\varphi||_{2,1}.
\end{equation*}
Gathering the computations above, we prove the first part of Proposition \ref{prop:almost:solution}. For the second part, let us use the decomposition,
$$
\mathbb{E}\left[ \left| {\rm DM}_t(\varphi_1,\varphi_2) - \hat{\rm DM}_t(\varphi_1,\varphi_2) \right| \right] \leq \frac{1}{n} \int_0^{t} B^n_1(z)+B^n_2(z) dz,
$$
where
\begin{equation*}
\begin{cases}
B^{n}_1(t):= \mathbb{E}\left[ \left| \left< u_{t}, \left[ \Psi(\cdot,\hat{\gamma}^n_t) - \Psi(\cdot,\overline{\gamma}(t)) \right] \varphi_1\varphi_2 \right> \right| \right],\\
B^{n}_2(t):= n^{-1/2} \mathbb{E}\left[ \left| \left< \eta_{t}, \Psi(\cdot,\hat{\gamma}^n_t)\varphi_1\varphi_2 \right> \right| \right].
\end{cases}
\end{equation*}

- \underline{Study of $B^{n}_1(t)$.}
Using the Lipschitz continuity of $\Psi$ and then \eqref{eq:control:X:hat:n:X:bar:square}, we have
\begin{equation*}
B^n_1(t)  \leq {\rm Lip}(\Psi) ||\varphi_1||_{\infty} ||\varphi_2||_{\infty} \mathbb{E}\left[\left| \hat{\gamma}^n_t - \overline{\gamma}(t) \right| \right]  \lesssim_t n^{-1/2} ||\varphi_1||_{\infty} ||\varphi_2||_{\infty}.
\end{equation*}

- \underline{Study of $B^{n}_2(t)$.}
Using Lemma \ref{lem:control:for:linear:operator}, we have,
\begin{equation*}
\left| \left< \eta_{t}, \Psi(\cdot,\hat{\gamma}^n_t)\varphi_1\varphi_2 \right> \right| \leq C ||\eta_t||_{-2,1} ||\Psi(\cdot,\hat{\gamma}^n_t)||_{\mathcal{C}^2_b(\mathbb{R}_+)} ||\varphi_1\varphi_2||_{2,1}.
\end{equation*}
Yet, $||\Psi(\cdot,\hat{\gamma}^n_t)||_{\mathcal{C}^2_b(\mathbb{R}_+)}\leq |\Psi||_{\mathcal{C}^2_b(\mathbb{R}_+\times\mathbb{R})}$ and, by combining Lemma \ref{lem:control:for:linear:operator} and \eqref{eq:embedding:Ck:sobolev}, $||\varphi_1\varphi_2||_{2,1}\leq C ||\varphi_1||_{\mathcal{C}^2_b} ||\varphi_2||_{2,1}\leq C ||\varphi_1||_{3,1} ||\varphi_2||_{3,1}$. Hence, Equation \eqref{eq:C:tight} gives $B^{n}_2(t) \lesssim_t n^{-1/2} ||\varphi_1||_{3,1} ||\varphi_2||_{3,1}$.
Gathering the computations above ends the proof.

\section{Lemmas}
The following lemma is a generalization of the standard Grönwall lemma.
\begin{lem}\label{lem:Gronwall:Generalization}
Let $f,g:\mathbb{R}_{+}\rightarrow\mathbb{R}_{+}$ be two locally bounded non-negative measurable functions. Assume that for all $t\geq 0$,
\begin{equation}\label{eq:assumption:Gronwall}
f(t)\lesssim_{t} g(t) + \int_{0}^{t} f(s)ds.
\end{equation}
Then, for any $\theta\geq 0$, $\sup_{t\in \left[0,\theta\right]} f(t) \lesssim_{\theta} \sup_{t\in \left[0,\theta\right]}g(t)$.
\end{lem}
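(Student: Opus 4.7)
The plan is to unpack the notation $\lesssim_{t}$, reduce to the classical Grönwall inequality, and then use monotonicity to move the supremum inside.

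First, I would fix $\theta\geq 0$ and make the constants explicit. By hypothesis, there exists a locally bounded function $C:\mathbb{R}_{+}\to\mathbb{R}_{+}$ such that $f(t)\leq C(t)\bigl[g(t)+\int_{0}^{t}f(s)ds\bigr]$ for all $t\geq 0$. Set $C_{\theta}:=\sup_{t\in[0,\theta]}C(t)$ and $G_{\theta}:=\sup_{t\in[0,\theta]}g(t)$, which are both finite by local boundedness. Then for every $t\in[0,\theta]$,
\begin{equation*}
f(t)\leq C_{\theta}G_{\theta}+C_{\theta}\int_{0}^{t}f(s)ds.
\end{equation*}

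Next, I would introduce $F(t):=\sup_{s\in[0,t]}f(s)$, which is finite for $t\in[0,\theta]$ because $f$ is locally bounded. The right-hand side above is non-decreasing in $t$ (as $f\geq 0$), so taking the supremum over $[0,t]$ on the left yields $F(t)\leq C_{\theta}G_{\theta}+C_{\theta}\int_{0}^{t}F(s)ds$ on $[0,\theta]$. Since $F$ is non-negative and non-decreasing (in particular measurable and locally bounded), the classical Grönwall inequality applies and gives
\begin{equation*}
F(t)\leq C_{\theta}G_{\theta}\, e^{C_{\theta}t},\qquad t\in[0,\theta].
\end{equation*}
Evaluating at $t=\theta$ produces $\sup_{t\in[0,\theta]}f(t)\leq \bigl(C_{\theta}e^{C_{\theta}\theta}\bigr)\sup_{t\in[0,\theta]}g(t)$.

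Finally, I would verify that the multiplicative constant $D(\theta):=C_{\theta}e^{C_{\theta}\theta}$ is a locally bounded function of $\theta$: this is immediate because $\theta\mapsto C_{\theta}$ is non-decreasing and finite on each compact, hence locally bounded, and the exponential of a locally bounded function is itself locally bounded. This is exactly the meaning of $\sup_{t\in[0,\theta]}f(t)\lesssim_{\theta}\sup_{t\in[0,\theta]}g(t)$. There is no real obstacle here beyond bookkeeping of the $\lesssim_{t}$ convention: the main (minor) subtlety is noticing that, because $f\geq 0$, the integral term is already non-decreasing so one can freely introduce $F$ and invoke scalar Grönwall without appealing to a more refined differential inequality.
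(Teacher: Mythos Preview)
Your proof is correct and follows essentially the same approach as the paper: fix $\theta$, replace the locally bounded constant and $g$ by their suprema on $[0,\theta]$, and apply the standard integral Grönwall inequality to obtain $\sup_{t\in[0,\theta]}f(t)\leq C_{\theta}e^{C_{\theta}\theta}\sup_{t\in[0,\theta]}g(t)$. Your introduction of $F(t)=\sup_{s\in[0,t]}f(s)$ and the explicit check that $\theta\mapsto C_{\theta}e^{C_{\theta}\theta}$ is locally bounded are slightly more detailed than the paper's presentation, but the argument is the same.
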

\begin{proof}
For a fixed $\theta$, Equation \eqref{eq:assumption:Gronwall} implies that there exists a constant $C$ such that for all $t\leq \theta$, $f(t)\leq C(\sup_{t\in[0,\theta]} g(t) +\int_{0}^{t} f(s)ds)$. Hence, standard Grönwall's inequality gives $\sup_{t\in[0,\theta]} f(t) \leq C\sup_{t\in[0,\theta]} g(t) e^{C\theta}$ which ends the proof.
\end{proof}

The next lemma proves continuity in time for the law of the age process associated with a point process.
\begin{lem}\label{lem:continuity:Pt}
Assume that $N$ admits the bounded $\mathbb{F}$-intensity $\lambda_{t}$ and satisfy Assumption (\hyperref[ass:initial:condition:density:compact:support]{$\mathcal{A}^{u_0}_{\infty}$}). Denote by $(S_{t})_{t\geq 0}$ its associated age process. Then, the law of $S_{t}$ denoted by $w_{t}$ is such that $t\mapsto w_{t}$ belongs to $\mathcal{C}(\mathbb{R}_{+},\mathcal{W}^{-2,\alpha}_{0})$ for any $\alpha>1/2$.
\end{lem}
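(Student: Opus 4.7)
The plan is to prove the stronger statement that $t \mapsto w_t$ is locally Lipschitz with values in $\mathcal{W}^{-2,\alpha}_0$, which clearly implies continuity. The strategy mirrors the derivation of \eqref{eq:decomposition:P:fine} in the proof of Proposition \ref{prop:decomposition:eta}, and then dualizes the resulting evolution identity using the Sobolev embedding \eqref{eq:embedding:Ck:sobolev}.

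First, I would apply the standard pathwise decomposition of $\varphi(S_t)$ along the trajectory of the age process: since $(S_t)_{t\geq 0}$ is piecewise continuous with slope $1$ between jumps and jumps from $S_{t-}$ to $0$ at each point of $N$, for any $\varphi$ of class $\mathcal{C}^1_b$ one has
$$\varphi(S_t) - \varphi(S_s) = \int_s^t \varphi'(S_z)\,dz + \int_s^t R\varphi(S_{z-})\,N(dz).$$
Since $\lambda$ is bounded and $R\varphi$ is bounded, $\int_s^t R\varphi(S_{z-})(N(dz) - \lambda_z\,dz)$ is a zero-mean martingale, so taking expectations and applying Fubini's theorem (as in the proof of Proposition \ref{prop:decomposition:eta}) yields
$$\langle w_t - w_s, \varphi\rangle = \int_s^t \langle w_z, \varphi'\rangle\,dz + \int_s^t \mathbb{E}[R\varphi(S_{z-})\,\lambda_z]\,dz.$$

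Next, fix $\theta \geq 0$ and $0 \leq s \leq t \leq \theta$. By the Sobolev embedding theorem \eqref{eq:embedding:Ck:sobolev} with $m=k=1$, any $\varphi \in \mathcal{W}^{2,\alpha}_0$ satisfies $\|\varphi\|_{\mathcal{C}^{1,\alpha}} \leq C\|\varphi\|_{2,\alpha}$, hence both $|\varphi(x)|$ and $|\varphi'(x)|$ are bounded by $C\|\varphi\|_{2,\alpha}(1+|x|^\alpha)$, and in particular $|\varphi(0)| \leq C\|\varphi\|_{2,\alpha}$. Combined with the a.s.\ bound $S_{z-} \leq M_{S_0}+z$ from \eqref{eq:control:age:uniform} (a direct consequence of $\left(\mathcal{A}^{u_0}_\infty\right)$) and the boundedness of $\lambda$ by some constant $L$, both integrands in the identity above are bounded by $C_\theta\|\varphi\|_{2,\alpha}$ for a constant $C_\theta$ depending only on $\theta$, $M_{S_0}$, $L$ and $\alpha$. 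Hence
$$|\langle w_t - w_s, \varphi\rangle| \leq C_\theta\,\|\varphi\|_{2,\alpha}\,(t-s).$$

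Finally, since $\mathcal{C}^\infty_c(\mathbb{R}_+) \subset \mathcal{C}^1_b$ is dense in $\mathcal{W}^{2,\alpha}_0$ by definition, the inequality holds on a dense subset that already computes the dual norm; taking the supremum over $\varphi \in \mathcal{W}^{2,\alpha}_0$ with $\|\varphi\|_{2,\alpha} \leq 1$ gives
$$\|w_t - w_s\|_{-2,\alpha} \leq C_\theta (t-s), \qquad 0 \leq s \leq t \leq \theta,$$
which proves local Lipschitz continuity of $t \mapsto w_t$ in $\mathcal{W}^{-2,\alpha}_0$, hence continuity. There is no real obstacle: the only mild point is checking that $w_t$ belongs to $\mathcal{W}^{-2,\alpha}_0$ at all, but the same pointwise bound $|\langle w_t,\varphi\rangle| = |\mathbb{E}[\varphi(S_t)]| \leq C\|\varphi\|_{2,\alpha}(1+(M_{S_0}+t)^\alpha)$ settles this, and the argument requires only the boundedness of $\lambda$ and the compactness of the support of $u_0$, both of which are given.
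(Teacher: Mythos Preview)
Your proof is correct, and it actually establishes the slightly stronger local Lipschitz property. The route you take, however, differs from the paper's.

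The paper argues pointwise on the age process itself: it writes $S_{t+t'}=S_t+t'$ on the event $\{N([t,t+t'])=0\}$, so that
\[
|\varphi(S_{t+t'})-\varphi(S_t)|\leq \|D_{S_t+t',S_t}\|_{-2,\alpha}\,\|\varphi\|_{2,\alpha}+(|\varphi(S_{t+t'})|+|\varphi(S_t)|)\,\mathds{1}_{N([t,t+t'])\neq 0},
\]
and then uses a refined bound $\|D_{x,y}\|_{-2,\alpha}\leq C|x-y|(1+\max(|x|,|y|)^\alpha)$ together with $\mathbb{P}(N([t,t+t'])\neq 0)\leq \mathbb{E}\big[\int_t^{t+t'}\lambda_z\,dz\big]\to 0$. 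This is a direct ``split on whether a jump occurred'' argument that avoids any evolution equation.

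Your approach instead integrates the generator: you pass through the semimartingale decomposition of $\varphi(S_t)$, take expectations, and bound the resulting drift terms via the Sobolev embedding $\mathcal{W}^{2,\alpha}_0\hookrightarrow\mathcal{C}^{1,\alpha}$. This is closer in spirit to the derivation of \eqref{eq:decomposition:P:fine} and fits naturally into the paper's Hilbertian framework; it also makes the Lipschitz constant explicit in one line. The paper's argument, by contrast, is more self-contained (no need for the martingale property or Fubini) and isolates the role of the jump probability, which is conceptually what drives the regularity. Both methods ultimately rely on the same two ingredients: the bounded intensity and the uniform age bound \eqref{eq:control:age:uniform}.
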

\begin{proof}
This continuity result comes from the fact that the probability that $N$ has a point in an interval goes to $0$ as the size of the interval goes to $0$.
Fix $\alpha>1/2$ and let $t,t'$ be positive real numbers. First, remark that $S_{t+t'}=S_{t}+t'$ as soon as there is no point of $N$ in the interval $[t,t+t']$ and so one has for all $\varphi$ in $\mathcal{W}^{2,\alpha}_{0}$,
\begin{equation*}
|\varphi(S_{t+t'})-\varphi(S_{t})| \leq  ||D_{S_{t}+t',S_{t}}||_{-2,\alpha} ||\varphi||_{2,\alpha} + (|\varphi(S_{t+t'})|+|\varphi(S_{t})|) \mathds{1}_{N([t,t+t'])\neq 0}.
\end{equation*}
The bound obtained in Lemma \ref{lem:control:normes:D:H} for the operator $D_{x,y}$ is too rough here. We need a finer bound: it holds that there exists a constant $C$ such that $||D_{x,y}||_{-2,\alpha}\leq C |x-y| (1+\max(|x|^{\alpha},|y|^{\alpha}))$. Indeed, by density, let us assume that $\varphi$ is $\mathcal{C}^{\infty}$ with compact support and remark that
\begin{multline*}
|\varphi(x)-\varphi(y)|\leq |x-y| \sup_{z, \, |z|\leq \max(|x|,|y|)} |\varphi'(z)| \leq |x-y| (1+\max(|x|^{\alpha},|y|^{\alpha})) ||\varphi||_{\mathcal{C}^{1,\alpha}} \\
\leq C |x-y| (1+\max(|x|^{\alpha},|y|^{\alpha})) ||\varphi||_{2,\alpha},
\end{multline*}
where we used \eqref{eq:embedding:Ck:sobolev} in the last inequality. Since (\hyperref[ass:initial:condition:density:compact:support]{$\mathcal{A}^{u_0}_{\infty}$}) is satisfied, $S_{t+t'}$ and $S_{t}$ are upper bounded by $M_{S_{0}}+t+t'$ so that
\begin{equation*}
\begin{cases}
||D_{S_{t}+t',S_{t}}||_{-2,\alpha}\leq C t' (1+(M_{S_{0}}+t+t')^{\alpha})\\
(|\varphi(S_{t+t'})|+|\varphi(S_{t})|)\leq 2 (1+(M_{S_{0}}+t+t')^{\alpha}) ||\varphi||_{\mathcal{C}^{0,\alpha}}.
\end{cases}
\end{equation*}
Hence, \eqref{eq:embedding:Ck:sobolev} gives
\begin{equation*}
|\varphi(S_{t+t'})-\varphi(S_{t})| \leq  C(t'+ \mathds{1}_{N([t,t+t'])\neq 0}) ||\varphi||_{2,\alpha}.
\end{equation*}
Yet, $\mathbb{P}\left( N([t,t+t'])\neq 0 \right)\leq \mathbb{E}\left[ N([t,t+t']) \right] = \mathbb{E}[ \int_{t}^{t+t'} \lambda_{z} dz ]$ goes to $0$ as $t'$ goes to $0$. 
The same argument for $t'<0$ gives continuity.
\end{proof}

The three lemmas below are used to get the limit equation satisfied by the fluctuations.

\begin{lem}\label{lem:continuity:mappings:eta}
Let $h$ be a locally bounded function and $(\varphi_{t})_{t\geq 0}$ be a family of test functions in $\mathcal{W}^{2,\alpha}_{0}$ such that $t\mapsto ||\varphi_{t}||_{2,\alpha}$ is locally bounded. Then, $F: g\mapsto \int_{0}^{t} h(t-z) \left< g(z),\varphi_{z} \right> dz$ is a mapping from $\mathcal{D}(\mathbb{R}_{+},\mathcal{W}^{-2,\alpha}_{0})$ to $\mathcal{C}(\mathbb{R}_{+},\mathbb{R})$ which is continuous at every point $g_{0}$ in $\mathcal{C}(\mathbb{R}_{+},\mathcal{W}^{-2,\alpha}_{0})$.
\end{lem}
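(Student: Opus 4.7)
The plan is to separate the two tasks: first verifying that $F(g)$ actually lies in $\mathcal{C}(\mathbb{R}_+,\mathbb{R})$ for every $g\in\mathcal{D}(\mathbb{R}_+,\mathcal{W}^{-2,\alpha}_0)$, and then establishing the continuity of $F$ at any continuous trajectory. Both rely on a single elementary pointwise bound: for $z$ in a compact $[0,\theta]$,
\begin{equation*}
\bigl|\langle g(z),\varphi_z\rangle\bigr| \leq \|g(z)\|_{-2,\alpha}\,\|\varphi_z\|_{2,\alpha}.
\end{equation*}

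First I would fix $\theta\geq 0$ and observe that a càdlàg function $z\mapsto g(z)$ with values in a Banach space is locally bounded, so $\sup_{z\in[0,\theta]}\|g(z)\|_{-2,\alpha}<+\infty$. Combined with the local boundedness of $h$ and of $z\mapsto\|\varphi_z\|_{2,\alpha}$, this makes the integrand in $F(g)(t)=\int_0^t h(t-z)\langle g(z),\varphi_z\rangle dz$ dominated by a locally bounded function of $z$, hence integrable. To get $F(g)\in\mathcal{C}(\mathbb{R}_+,\mathbb{R})$, I would split for $t<t'\leq\theta$
\begin{equation*}
F(g)(t')-F(g)(t)=\int_t^{t'} h(t'-z)\langle g(z),\varphi_z\rangle dz+\int_0^t \bigl(h(t'-z)-h(t-z)\bigr)\langle g(z),\varphi_z\rangle dz,
\end{equation*}
handling the first term by $|t'-t|$ times the local bound on the integrand, and the second term by dominated convergence as $t'\to t$ (using local boundedness of $h$ to get a dominating function and continuity of $h$ on a full Lebesgue-measure set would suffice; in fact, local boundedness of $h$ combined with the fact that its set of discontinuities is at most countable, or alternatively a direct continuity assumption, is enough here since $h$ is continuous in the paper's context).

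Next, for the continuity at $g_0\in\mathcal{C}(\mathbb{R}_+,\mathcal{W}^{-2,\alpha}_0)$, I would exploit the standard fact that convergence in the Skorokhod topology on $\mathcal{D}(\mathbb{R}_+,\mathcal{W}^{-2,\alpha}_0)$ toward a \emph{continuous} limit is equivalent to locally uniform convergence (cf.\ \cite[Lemma VI.1.22]{Jacod_2003}). Thus if $g_n\to g_0$ in $\mathcal{D}$, then for every $\theta\geq 0$,
\begin{equation*}
\varepsilon_n(\theta):=\sup_{z\in[0,\theta]}\|g_n(z)-g_0(z)\|_{-2,\alpha}\xrightarrow[n\to+\infty]{}0.
\end{equation*}
From the pointwise bound above applied to $g_n-g_0$,
\begin{equation*}
\sup_{t\in[0,\theta]}\bigl|F(g_n)(t)-F(g_0)(t)\bigr|\leq \theta\cdot \sup_{u\in[0,\theta]}|h(u)|\cdot\sup_{z\in[0,\theta]}\|\varphi_z\|_{2,\alpha}\cdot\varepsilon_n(\theta),
\end{equation*}
which tends to zero. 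This is precisely convergence of $F(g_n)$ to $F(g_0)$ in $\mathcal{C}(\mathbb{R}_+,\mathbb{R})$ equipped with the topology of uniform convergence on compact sets.

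The only genuinely non-routine input is the reduction of Skorokhod convergence to uniform convergence on compacts when the limit is continuous; everything else is direct from the hypotheses of local boundedness. In particular, no tightness or stochastic argument is needed, and the linearity of $F$ means continuity at $g_0$ is actually continuity at every continuous point, consistent with the statement.
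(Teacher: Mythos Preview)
Your proof is correct and follows essentially the same approach as the paper: reduce Skorokhod convergence toward a continuous limit to locally uniform convergence, then bound $\sup_{t\in[0,\theta]}|F(g_n)(t)-F(g_0)(t)|$ by a product involving $\sup_{z\in[0,\theta]}\|g_n(z)-g_0(z)\|_{-2,\alpha}$. The only difference is that you additionally verify that $F$ actually maps into $\mathcal{C}(\mathbb{R}_+,\mathbb{R})$, which the paper's proof omits; your inclusion of this step makes the argument more complete.
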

\begin{proof}
Let $(g_{n})_{n\geq 1}$ be any sequence such that $g_{n}\to g_{0}$ for the Skorokhod topology. Since $g_{0}$ is continuous, the convergence also holds true for the local uniform topology \cite[Proposition VI.1.17.]{Jacod_2003}. We have for all $\theta\geq 0$,
\begin{equation}\label{eq:control:Getan:Geta0}
\sup_{t\in [0,\theta]} |F(g_{n})(t)-F(g_{0})(t)|\leq \sup_{z\in [0,\theta]} h(z) \sup_{z\in [0,\theta]} ||g_{n}(z)-g_{0}(z)||_{-2,\alpha} \sup_{z\in [0,\theta]} ||\varphi_{z}||_{2,\alpha}.
\end{equation}
Yet, the right hand side of \eqref{eq:control:Getan:Geta0} goes to $0$ as $n$ goes to infinity, which ends the proof.
\end{proof}

\begin{lem}\label{lem:convergence:Gamma}
Assume that $(g^{n})_{n\geq 1}$ converges to $g$ for the Skorokhod topology in $\mathcal{D}(\mathbb{R}_{+},\mathbb{R})$. If $h$ satisfies (\hyperref[ass:h:Holder]{$\mathcal{A}^{h}_{\rm H\ddot{o}l}$}) and $f$ is locally bounded, then
\begin{equation*}
\int_{0}^{t} h(t-z)f(z)g^{n}(z) dz \xrightarrow[n\to +\infty]{} \int_{0}^{t} h(t-z)f(z)g(z) dz ,
\end{equation*}
as functions of $t$ in $\mathcal{C}(\mathbb{R}_{+},\mathbb{R})$ for the local uniform topology. In particular, the application $F$ from $\mathcal{D}(\mathbb{R}_{+},\mathbb{R})$ to $\mathcal{D}(\mathbb{R}_{+},\mathbb{R})$ defined by
\begin{equation*}
F(g)(t):=\int_{0}^{t} h(t-z)f(z)g(z) dz,
\end{equation*}
is continuous.
\end{lem}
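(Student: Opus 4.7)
The plan is to combine a uniform equicontinuity estimate for the functions $F(g^n)$, which follows directly from the Hölder regularity (\hyperref[ass:h:Holder]{$\mathcal{A}^{h}_{\rm H\ddot{o}l}$}), with a pointwise convergence statement obtained from dominated convergence. These two ingredients together yield the desired local uniform convergence.

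First, I would record a uniform bound on $(g^n)_{n\geq 1}$: since $g^n \to g$ in $\mathcal{D}(\mathbb{R}_+,\mathbb{R})$, for every $\theta \geq 0$ one has $M_{\theta}:=\sup_{n\geq 1}\sup_{t\in[0,\theta+1]} |g^n(t)| + \sup_{t\in[0,\theta+1]} |g(t)| <+\infty$. Second, the set of discontinuities of $g$ is at most countable, so it has zero Lebesgue measure; and Skorokhod convergence implies $g^n(z) \to g(z)$ at every continuity point $z$ of $g$. Hence $g^n(z) \to g(z)$ for Lebesgue-a.e. $z\in \mathbb{R}_+$.

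Next, fix $\theta \geq 0$. For each $t \in [0,\theta]$, the functions $z\mapsto h(t-z)f(z)g^n(z)\mathds{1}_{[0,t]}(z)$ are dominated by $h_{\infty}(\theta)\sup_{z\in[0,\theta]}|f(z)|M_{\theta}$, which is integrable on $[0,\theta]$, so Lebesgue's dominated convergence theorem gives the pointwise convergence $F(g^n)(t)\to F(g)(t)$ for every $t\in[0,\theta]$. For uniform equicontinuity, write, for $0\leq t' \leq t \leq \theta$,
\begin{equation*}
F(g^n)(t)-F(g^n)(t') = \int_{t'}^{t} h(t-z) f(z)g^n(z)\,dz + \int_{0}^{t'}\bigl(h(t-z)-h(t'-z)\bigr)f(z)g^n(z)\,dz,
\end{equation*}
and bound the two terms using (\hyperref[ass:h:Holder]{$\mathcal{A}^{h}_{\rm H\ddot{o}l}$}) and the uniform bound $M_{\theta}$ by $h_{\infty}(\theta)\|f\|_{\infty,[0,\theta]}M_{\theta}|t-t'| + \text{Höl}(h)\,\theta\,\|f\|_{\infty,[0,\theta]}M_{\theta}|t-t'|^{\beta(h)}$. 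This estimate is independent of $n$, so $(F(g^n))_{n\geq 1}$ is uniformly equicontinuous on $[0,\theta]$.

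The combination of pointwise convergence and uniform equicontinuity on the compact interval $[0,\theta]$ yields uniform convergence $F(g^n)\to F(g)$ on $[0,\theta]$ by a standard Arzelà-Ascoli type argument. Since $\theta$ is arbitrary, this gives local uniform convergence in $\mathcal{C}(\mathbb{R}_{+},\mathbb{R})$. The same equicontinuity estimate applied to $F(g)$ shows $F(g)\in \mathcal{C}(\mathbb{R}_{+},\mathbb{R})$, so the continuity of $F:\mathcal{D}(\mathbb{R}_{+},\mathbb{R})\to\mathcal{D}(\mathbb{R}_{+},\mathbb{R})$ follows from the fact that Skorokhod convergence to a continuous limit is equivalent to local uniform convergence \cite[Proposition VI.1.17.]{Jacod_2003}. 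The only mildly subtle point in the proof is the use of Skorokhod convergence to secure both a.e.~pointwise convergence of $g^n$ and the uniform bound $M_\theta$; once this is in place, everything else is routine.
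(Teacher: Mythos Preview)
Your proof is correct and follows essentially the same approach as the paper: a uniform bound on $(g^n)$ from Skorokhod convergence, a.e.\ pointwise convergence $g^n(z)\to g(z)$, dominated convergence for the pointwise limit, Hölder regularity of $h$ for equicontinuity, and an Arzelà--Ascoli argument to upgrade to local uniform convergence. The only cosmetic difference is that the paper first reduces to the case $h(0)=0$ (extending $h$ by zero on the negatives) before estimating the modulus of continuity, whereas you split $F(g^n)(t)-F(g^n)(t')$ directly into $\int_{t'}^{t}$ and $\int_{0}^{t'}$ pieces; your version avoids the case distinction and is slightly more direct.
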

\begin{proof}
Let $c^{n}(t):=\int_{0}^{t} h(t-z)f(z)g^{n}(z) dz$ and $c(t):=\int_{0}^{t} h(t-z)f(z)g(z) dz $. Assume for a while that $h(0)=0$ and extend the function $h$ to the whole real line by  setting $0$ on the negative real numbers. Then, for all $t,\delta\geq 0$,
\begin{eqnarray*}
|c^{n}(t+\delta)-c^{n}(t)| & \leq & \int_{0}^{t+\delta} |h(t+\delta-z)-h(t-z)| |f(z)| |g^{n}(z)| dz \\
&\leq &(t+\delta) {\rm H\ddot{o}l}(h) \sup_{z\in [0,t+\delta]} |f(z)| \sup_{z\in [0,t+\delta]}|g^{n}(z)| \delta^{\beta(h)}.
\end{eqnarray*}
Yet, since $g^{n}$ is convergent, we have $\sup_{n\geq 1} \sup_{z\in [0,t+\delta]}|g^{n}(z)|<+\infty$ (see \cite[Proposition VI.2.4.]{Jacod_2003} for instance) which implies that for all $\theta\geq 0$,
\begin{equation*}
\sup_{n\geq 1} \sup_{t\in [0,\theta]} |c^{n}(t+\delta)-c^{n}(t)| \to 0 \quad \text{ as } \delta\to 0.
\end{equation*}
Hence, the sequence $(c^{n})_{n\geq 1}$ is uniformly continuous. Moreover, for all $n\geq 1$, $c^{n}(0)=0$ and the uniform continuity gives the uniform boundedness
\begin{equation*}
\sup_{n\geq 1} \sup_{t\in [0,\theta]} |c^{n}(t)| <+\infty.
\end{equation*} 
Then, Ascoli-Arzela theorem implies that the sequence $(c^{n})_{n\geq 1}$ is relatively compact. It only remains to identify the limit for all $t\geq 0$. Yet, as a consequence of the dominated convergence and the fact that for almost every $z$, $g^{n}(z)\to g(z)$, we have $\int_{0}^{t} h(t-z)f(z)g^{n}(z) dz \to \int_{0}^{t} h(t-z)f(z)g(z) dz $.

Now, if $h(0)\neq 0$, one can use the following decomposition,
$$
c^{n}(t)=\int_{0}^{t} (h(t-z)-h(0))f(z)g^{n}(z) dz+ h(0) \int_{0}^{t} f(z)g^{n}(z) dz.
$$
The first term is convergent thanks to what we have done in the case $h(0)=0$ whereas the convergence of the second one is simpler and left to the reader.
\end{proof}

\paragraph{Acknowledgement}
This research was partly supported by the french Agence Nationale de la Recherche (ANR 2011 BS01 010 01 projet Calibration), by the interdisciplanary axis MTC-NSC of the University of Nice Sophia-Antipolis and by the Labex MME-DII (ANR11-LBX-0023-01).
The author would like to thank François Delarue for helpful discussions which improved this paper.

\newpage
\pagestyle{plain}

\bibliographystyle{abbrv}		% ou unsrt
{\footnotesize \bibliography{references.bib}}

\end{document}